\theoremstyle{definition}
\newtheorem{definition}{Definition}[section]
\theoremstyle{plain}
\newtheorem{lemma}[definition]{Lemma}
\newtheorem{theorem}[definition]{Theorem}
\newtheorem{proposition}[definition]{Proposition}
\newtheorem{corollary}[definition]{Corollary}
\theoremstyle{remark}
\newtheorem{remark}[definition]{Remark}
\newtheorem{example}[definition]{Example}
\newcommand{\mycl}{\operatorname{cl}}
\newcommand{\myint}{\operatorname{int}}
\newcommand{\myx}{\fbox{\phantom{123}}}
\newcommand{\myp}{\frak P}
\newcommand{\myfdim}{\operatorname{fdim}}
\newcommand{\mydcl}{\operatorname{discl}}
\newcommand{\myrk}{\operatorname{rk}}
\newcommand{\mydim}{\operatorname{D}}
\begin{document}
\title[Uniform local weak o-minimality]{Uniform local weak o-minimality and $*$-local weak o-minimality}
\author[M. Fujita]{Masato Fujita}
\address{Department of Liberal Arts,
Japan Coast Guard Academy,
5-1 Wakaba-cho, Kure, Hiroshima 737-8512, Japan}
\email{fujita.masato.p34@kyoto-u.jp}

\begin{abstract}
We propose the notions of uniform local weak o-minimality and $*$-local weak o-minimality.
Local monotonicity theorems hold in definably complete locally o-minimal structures and uniformly locally o-minimal structures of the second kind.
In this paper, we demonstrate new local monotonicity theorems for uniformly locally weakly o-minimal structures of the second kind and for locally o-minimal structures under the assumption called the univariate $*$-continuity property.
We also prove that several formulas for dimension of definable sets which hold in definably complete locally o-minimal structures also hold in $*$-locally weakly o-minimal structures possessing the univariate $*$-continuity property.
%As a corollary, in $*$-locally weakly o-minimal structures, the addition property of dimension holds if and only if it possesses the univariate $*$-continuity property.
\end{abstract}

\subjclass[2020]{Primary 03C64}

\keywords{Uniform local weak o-minimality; $*$-local weak o-minimality}
\maketitle

\section{Introduction}\label{sec:intro}
Local o-minimality \cite{TV} is a variant of o-minimality.
Readers who are not familiar with o-minimal structures should consult the standard textbook \cite{vdD}. 
Locally o-minimal structures have been studied in the papers \cite{Fornasiero,KTTT,Fuji_uniform,Fuji_almost,Fuji_tame,FKK,Fuji_pregeo}, and we assumed definable completeness in most of them.
The purpose of this paper is to prove several assertions in them still hold without the assumption of definable completeness.
More concretely, we conduct the following in this paper:
\begin{itemize}
	\item We propose several new category of expansions of dense linear order without endpoints in a unified way and in the same manner as we defined local o-minimality and other variants of local o-minimality using the definition of o-minimality;
	\item We show that the local monotonicity theorems proved in \cite{Fuji_uniform,FKK} hold under more relaxed condition without assuming definable completeness;
	\item We show that the formulas on dimension of definable sets proved in \cite{Fuji_tame,FKK,Fuji_pregeo} hold under some reasonable conditions but not assuming definable completeness. 
\end{itemize}

This paper is organized as follows:
We first define localized versions of several variants of o-minimality such as weak o-minimality in a unified way in Section \ref{sec:definition}.
For instance, it is already known that an expansion of a dense linear order without endpoints is locally o-minimal if and only if every unary definable set is a union of an open set and a discrete closed set under the assumption that the structure is definably complete \cite[Lemma 2.3]{Fuji_tame}.
The structures satisfying the latter condition are called locally l-visceral in this paper and we investigate features of sets definable in locally l-visceral structures. 
We propose the notion of $*$-locally weakly o-minimal structures which is a subcategory of locally o-minimal structures in this section.
New structures introduced in this section are defined in Section \ref{sec:definition}. 

Section \ref{sec:uniform} treats local monotonicity theorems.
We have already obtained local monotonicity theorems in two cases. 
In the first case, the structures in consideration are uniformly locally o-minimal structures of the second kind \cite[Section 3]{Fuji_uniform}.
They are not necessarily definably complete.
Note that the monotonicity theorem obtained in this case is parametrized.
We show this parametrized local monotonicity theorem holds under a slightly relaxed assumption
that the structure is a uniformly locally weakly o-minimal structure of the second kind in Section \ref{sec:uniform}.
As a corollary, we get a new result on uniformly locally weakly o-minimal structures of the first kind (Corollary \ref{cor:mono1}).
We have already demonstrated a local monotonicity theorem for definably complete locally o-minimal structures in \cite{FKK}.
We prove that the same assertion holds even if we do not assume definable completeness but we assume the univariate $*$-continuity property defined in Section \ref{sec:star-cont}.

Section \ref{sec:dimension} is devoted for dimension theory.
We recall the definition of topological dimension, which is often adopted as dimension of definable sets, in Section \ref{sec:topological}.
We treat locally l-visceral structures satisfying technical conditions in Section \ref{sec:preliminary} and $*$-locally weakly o-minimal structures with the univariate $*$-continuity property in Section \ref{sec:*-local}, respectively.
%The univariate $*$-continuity property is defined in Section \ref{sec:star-cont}.
The formulas proven in this section are the same as the formulas given in \cite{FKK, Fuji_tame}, but we emphasize that definable completeness is not assumed differently from previous studies.
Among these formulas, Corollary \ref{cor:wencel} deserves a special attention.
It is a generalization of Wencel's weakly o-minimal result in \cite[Theorem 4.2]{W} to the $*$-locally weakly o-minimal cases, which asserts that a dimension formula called the addition property is equivalent to the univariate $*$-continuity property.
In Section \ref{sec:decomposition}, we prove a naive decomposition theorem similar to \cite[Theorem 4.5]{Fuji_tame} for later use.
In Section \ref{sec:other}, we introduce alternative definitions of dimension and prove that they coincide with the topological dimension when the structure is a $*$-locally weakly o-minimal structure possessing the univariate $*$-continuity property.

As it is already introduced, many assertions in this paper just say that some assertions in \cite{Fuji_uniform,Fuji_tame,FKK,Fuji_pregeo} hold under some assumptions which are not identical to the assumptions in original papers.
Some of them are proved by the same proof but using some assertions shown in this paper in place of the assertions in the original papers.
We omit the proofs in such cases by just referring the counterparts in the original paper and which assertions in this paper are used for the proof.   
In a preprint version of this paper available in arXiv, we give proofs to such assertions as an appendix, but they are almost copies of the proofs in the original papers.

We introduce the terms and notations used in this paper. 
Throughout, the term ‘definable’ means ‘definable in the given structure with parameters’ unless  we describe the parameter set explicitly.
For any expansion of a linear order whose universe is $M$, we assume that $M$ is equipped with the order topology induced from the linear order $<$ and the topology on $M^n$ is the product topology of the order topology on $M$ unless the topology in consideration is explicitly described.
An open interval is a set of the form $(b_1,b_2):=\{x \in M\;|\; b_1<x<b_2\}$ for some $b_1,b_2 \in M \cup \{\pm \infty\}$ and open box is the product of open intervals.
For a topological space $X$, $\myint_{X}(A)$, $\mycl_{X}(A)$ and $\partial_{X}(A)$ denote the interior, the closure and the frontier of a subset $A$ of $X$, respectively.
We drop the subscript $X$ when the topological space $X$ is obvious from the context.

\section{Definitions and examples}\label{sec:definition}

The purpose of this section is to introduce some families of expansions of dense linear orders without endpoints called locally l-visceral structures and $*$-locally weakly o-minimal structures etc.
We first recall the notions of o-minimality, local o-minimality and their relatives.
\begin{definition}\label{def:o-min}
	An \textit{o-minimal} structure $\mathcal M=(M,<,\ldots)$ is an expansion of a dense linear order without endpoints such that every definable subset $X$ of $M$ is a union of a finite set and finitely many open intervals \cite{vdD}.
	
	A \textit{locally o-minimal} structure $\mathcal M=(M,<,\ldots)$ is an expansion of a dense linear order without endpoints such that, for every definable subset $X$ of $M$ and for every point $a\in M$, we can choose an open interval $I$ containing the point $a$ so that $X \cap I$ is a union of a finite set  and finitely many open intervals \cite{TV}.
	
	A locally o-minimal structure $\mathcal M=(M,<,\ldots)$ is a \textit{uniformly locally o-minimal structure of the second kind} if, for any positive integer $n$, any definable set $X \subseteq M^{n+1}$, $a \in M$ and $b \in M^n$, there exist an open interval $I$ containing the point $a$ and an open box $B$ containing $b$ such that the definable set $X_x \cap I$ is a union of a finite set  and finitely many open intervals for each $x \in B$, where $X_x$ is the fiber given by $X_x:=\{y \in M\;|\; (x,y) \in X\}$ \cite{Fuji_uniform}.
	We call it a \textit{uniformly locally o-minimal structure of the first kind} when we can choose $B=M^n$ \cite{Fuji_uniform}. 
	A uniformly locally o-minimal structure of the first kind is called a uniformly locally o-minimal structure in \cite{KTTT}.
	
	A locally o-minimal structure $\mathcal M=(M,<,\ldots)$ is called a \textit{strongly locally o-minimal structure} if, for every point $a \in M$, there exists an open interval $I$ containing the point $a$ such that $X \cap I$ is a union of a finite set  and finitely many open intervals for every univariate definable set $X$ \cite{TV}.  
	
	An expansion of a dense linear order without endpoints called \textit{almost o-minimal} if any univariate bounded definable set is a union of a finite set  and finitely many open intervals \cite{Fuji_almost}.
\end{definition}

In the above definition, the term `o-minimal' and the phrase `a union of a finite set  and finitely many intervals' are repeated many times.
We can generate similar notions by changing them with other terms and phrases.

Observe that the definition of o-minimality is of the following form:
A $\myx$ structure is an expansion of a dense linear order without endpoints $\mathcal M=(M,<,\ldots)$ such that each  definable subset $X$ of $M$ enjoys property $\myp(X)$.
Here, the blank $\myx$ and the property $\myp(X)$ are given as follows:
\begin{align*}
&\myx = \text{o-minimal}; \\
&\myp(X)=\text{``}X \text{ is a union of a finite set  and finitely many open intervals.''}
\end{align*}
The definition of weak o-minimality \cite{MMS} is of the same form, and the blank $\myx$ and the property $\myp(X)$ are given by 
\begin{align*}
	&\myx = \text{weakly o-minimal}; \\
	&\myp(X)=\text{``}X \text{ is a union of a finite set and finitely many open convex sets.''}
\end{align*}
Mathews proposed the notion of t-minimality in \cite[Definition 2.4]{Mathews}.
The blank $\myx$ and the property $\myp(X)$ are given as follows:
\begin{align*}
	&\myx = \text{t-minimal}; \\
	&\myp(X)=\text{``the boundary of }X \text{ is a finite set.''}
\end{align*}

The definitions of d-minimality and viscerality have slightly different tastes but similar structures.
They are defined so that every structure elementary equivalent to $\mathcal M$ rather than $\mathcal M$ alone possesses a simple property.
Miller and Fornasiero proposed the notion of d-minimality \cite{Fornasiero,Miller-dmin}.
We set
\begin{align*}
	&\myp(X)=\text{``}X \text{ is a union of an open set and finitely many discrete sets.''}
\end{align*}
An expansion of a dense linear order without endpoints $\mathcal M=(M,<,\ldots)$ is \textit{d-minimal} if it is definably complete and every univariate set $X$ definable in every structure elementary equivalent to $\mathcal M$ possesses the above property $\myp(X)$.

Dolich and Goodrick proposes the notion of viscerality \cite{DG}, and  $\myp(X)$ is given as follows:
\begin{align*}
&\myp(X)=\text{``}X \text{ is a union of an open set and a finite set.''} 
\end{align*}
An expansion of a dense linear order without endpoints $\mathcal M=(M,<,\ldots)$ is \textit{visceral} if every univariate set $X$ definable in every structure elementary equivalent to $\mathcal M$ possesses the above property $\myp(X)$.
%However, in this paper, we call the structure $\mathcal M$ \textit{visceral} if every univariate definable set $X$ possesses the property $\myp(X)$ by abuse of terminology.
%We do not care about structures elementarily equivalent to $\mathcal M$.  
Note that we consider t-minimality and viscerality in a restricted case. They are defined in \cite{DG,Mathews} even when the structure is not an expansion of an order.
The reference \cite{Johnson_visceral} also treats viscerality comprehensively.
We define a lesser version of viscerality for later use.
\begin{definition}
	An expansion of a dense linear order without endpoints $\mathcal M=(M,<,\ldots)$ is called \textit{l-visceral} (lesser-visceral) if every definable subset of $M$ is a union of a finite set and an open set. 
\end{definition}
Note that, in the above definition, we do not give any constraint to structures elementarily equivalent to $\mathcal M$. 
%
%In the above examples, 
%\begin{center}
%if both $\myp(X_1)$ and $\myp(X_2)$ hold, $\myp(X_1 \cup X_2)$ holds.
%\end{center}
%In this paper, we only consider the property $\myp$ satisfying the above implication. 

We say that \textit{$\myx$ structures are characterized by property $\myp(X)$ for every univariate definable set $X$} when a structure is a $\myx$ structures if and only if the property $\myp(X)$ holds for every univariate definable set $X$.
O-minimal structures, weakly o-minimal structures, t-minimal structures and l-visceral structures fall into this category, but d-minimal structures and visceral structures do not.

%We demonstrated that many definition of model-theoretic notions have similar structures.
In the same manner as Definition \ref{def:o-min}, we can define locally $\myx$ structure etc. in a generalized situation.
\begin{definition}
	Assume that $\myx$ structures are characterized by property $\myp(X)$ for every univariate definable set $X$.
	
	An expansion of a dense linear order without endpoints $\mathcal M=(M,<,\ldots)$ is \textit{locally $\myx$} if, for every definable subset $X$ of $M$ and for every point $a\in M$, there exists an open interval $I$ containing the point $a$ such that $\myp(X \cap I)$ holds.
	
	A locally $\myx$ structure $\mathcal M=(M,<,\ldots)$ is a \textit{uniformly locally $\myx$ structure of the second kind} if, for any positive integer $n$, any definable set $X \subseteq M^{n+1}$, $a \in M$ and $b \in M^n$, there exist an open interval $I$ containing the point $a$ and an open box $B$ containing $b$ such that $\myp(X_x \cap I)$ hold for all $x \in B$.
	We call it a \textit{uniformly locally $\myx$ structure of the first kind} when we can choose $B=M^n$. 
	
	A locally $\myx$ structure $\mathcal M=(M,<,\ldots)$ is called a \textit{strongly locally $\myx$ structure} if, for every point $a \in M$, there exists an open interval $I$ containing the point $a$ such that $\myp(X \cap I)$ holds for every univariate definable set $X$.  
	
	An expansion of a dense linear order without endpoints called \textit{almost $\myx$} if any univariate bounded definable set $X$ possesses the property $\myp(X)$.
\end{definition}

The following implication is obvious:
\begin{align*}
	&\myx \Rightarrow \text{almost } \myx \Rightarrow \text{strong local } \myx\\ & \Rightarrow \text{uniform local } \myx \text{ of the first kind}\\
	 & \Rightarrow \text{uniform local }  \myx \text{ of the second kind} \Rightarrow \text{local } \myx
\end{align*}

We do not always obtain a new concept every time when we substitute different terms into the blank. 
For instance, the notion of local weak o-minimality coincides with that of local o-minimality.
\begin{proposition}\label{prop:local_ominmal}
A locally weakly o-minimal structure is locally o-minimal.
\end{proposition}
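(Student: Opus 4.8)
The plan is to reduce to the case of a single open convex set and to show that, after shrinking the interval around $a$, such a set looks like a genuine open interval. The two relevant instances of $\myp(X)$ differ only slightly: weak o-minimality asks that $X$ be a union of a finite set and finitely many open convex sets, while o-minimality asks that $X$ be a union of a finite set and finitely many open intervals. Since a finite set and a finite union of open intervals already satisfy both, the only gap is that an open convex set need not be an open interval: one of its boundaries may be an unrealized cut of $(M,<)$, i.e.\ a cut whose lower part has no maximum and whose upper part has no minimum (as with $\{x \in \mathbb{Q} : x^2 < 2\}$). The guiding idea is that a fixed element $a \in M$ can always be separated from such a cut by passing to a smaller interval.

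First I would isolate the key claim: for every open convex set $C \subseteq M$ and every $a \in M$ there is an open interval $J$ containing $a$ such that $C \cap J$ is an open interval (possibly empty). I would prove it by distinguishing whether $a \in C$. If $a \in C$, then since $C$ is open I can choose an open interval $J \ni a$ with $J \subseteq C$, so $C \cap J = J$. If $a \notin C$, convexity forces $a$ to lie entirely above or entirely below $C$; treating the case ``$a$ above $C$'', I examine the right boundary cut $s$ of $C$. If $s$ is realized by an element of $M$, then either $C = (\,\cdot\,,s)$ with right endpoint $s$, and shrinking $J$ keeps $s$ as a genuine endpoint when $a=s$, or $a>s$ and density lets me choose $a'$ with $s < a' < a$, forcing $C \cap J = \emptyset$. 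If $s$ is unrealized, then the upper complement of $C$ has no minimum, so there is $a' \in M$ with $a' < a$ still lying above $C$, and taking $J=(a',a'')$ for any $a''>a$ again gives $C \cap J = \emptyset$. The symmetric argument covers ``$a$ below $C$''.

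With the claim established, I would conclude as follows. Given a definable $X \subseteq M$ and $a \in M$, local weak o-minimality provides an open interval $I \ni a$ with $X \cap I = F \cup C_1 \cup \cdots \cup C_k$ for a finite set $F$ and open convex sets $C_i$. Applying the claim to each $C_i$ yields open intervals $J_i \ni a$ with $C_i \cap J_i$ an open interval, and I set $I' = I \cap J_1 \cap \cdots \cap J_k$, again an open interval containing $a$. Then $X \cap I' = (F \cap I') \cup \bigcup_{i=1}^{k}(C_i \cap I')$, and each $C_i \cap I' = (C_i \cap J_i) \cap I'$ is an intersection of two open intervals, hence an open interval or empty. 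Thus $X \cap I'$ is a union of a finite set and finitely many open intervals, which is exactly the o-minimality property localized at $a$.

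The main obstacle I anticipate is the case $a \notin C$ with an unrealized boundary cut: here the argument genuinely uses that an unrealized cut has no extremal element on either side, so that density lets me slip an element of $M$ between $a$ and the cut and thereby empty out $C$ near $a$. Everything else—the case $a \in C$ and the bookkeeping for the finite union—is routine, since an intersection of finitely many open intervals containing $a$ is again an open interval containing $a$.
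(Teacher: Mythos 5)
Your proposal is correct and takes essentially the same route as the paper: both localize around the fixed point $a \in M$ and exploit that $a$, being an actual element of $M$, can be separated from any unrealized boundary cut of a convex piece, so that after shrinking the interval each open convex set either contains an interval abutting $a$ or misses a neighborhood of $a$ entirely. The paper packages this slightly differently—choosing $b_1 < a < b_2$ directly so that $X \cap (b_1,b_2)$ is a union of a subfamily of $\{\{a\},(b_1,a),(a,b_2)\}$, a normal form it reuses in Remark \ref{rem:local_omin}—but this is only bookkeeping relative to your per-convex-set claim, which yields the same configuration after intersecting the finitely many intervals $J_i$.
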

\begin{proof}
	We proceed the proof similarly to \cite[Proposition 2.2]{TV}.
	%We give a proof here for readers' convenience.
	Let $M$ be the universe of a locally weakly o-minimal structure and fix an arbitrary point $a$ in $M$.
	Let $X$ be a definable subset of $M$.
	We prove that there exists an open interval $I$ containing the point $a$ such that $X \cap I$ is a union of a finite set and finitely many open intervals.
	
	We can take an open interval $J$ such that $X \cap J$ is a union of a finite set and finitely many convex open sets.
	We can find $b_1 \in J$ such that $b_1<a$ and either $(b_1,a) \cap X=\emptyset$ or $(b_1,a) \subseteq X$.
	We can also take $b_2 \in J$ such that $a<b_2$ and either $(a,b_2) \cap X=\emptyset$ or $(a,b_2) \subseteq X$.
	Set $I=(b_1,b_2)$.
	Then $X \cap I$ is a union of elements of a subfamily of $\{\{a\},(b_1,a),(a,b_2)\}$.
	It means that $X \cap I$ is a union of a finite set and finitely many open intervals.
\end{proof}

\begin{remark}\label{rem:local_omin}
The proof of Proposition \ref{prop:local_ominmal} implies that locally o-minimal structures $\mathcal M=(M,<,\ldots)$ are characterized as follows:
\begin{quote}
	For any point $a \in M$ and any univariate definable set $X$, we can find $b_1,b_2 \in M$ such that $b_1<a<b_2$ and the intersection $X \cap (b_1,b_2)$ is a union of elements of a subfamily of $\{\{a\},(b_1,a),(a,b_2)\}$.
\end{quote} 
\end{remark}

Proposition \ref{prop:local_ominmal} means that the notion of local weak o-minimality does not provide a new notion.
It is not the case for the notion of uniform local weak o-minimality of the second kind.
\begin{example}
	We gave an example of a weakly o-minimal structure which is not a uniformly locally o-minimal structure of the second kind in \cite[Example 2.3]{Fuji_uniform}.
	Recall that a weakly o-minimal structure is a uniformly locally weakly o-minimal structure of the second kind.
	It illustrates that a uniformly locally weakly o-minimal structure of the second kind is not necessarily a uniformly locally o-minimal structure of the second kind.
\end{example}

We next consider local l-viscerality.
By definition, a locally l-visceral structure $\mathcal M=(M,<,\ldots)$ is an expansion of a dense linear order without endpoints such that, for every element $a \in M$ and every univariate definable set $X$, the intersection $I \cap X$ is a union of a finite set and an open set for some open interval $I$ containing the point $a$.
We give another characterization of local l-viscerality.
\begin{proposition}\label{prop:char_lvisceral}
An expansion of a dense linear order without endpoints is locally l-visceral if and only if any univariate definable set is a union of an open set and a discrete closed set. 
In particular, local l-viscerality is preserved under elementary equivalence and an ultraproduct of locally l-visceral structures is locally l-visceral.
\end{proposition}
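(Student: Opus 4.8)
The plan is to prove the stated equivalence by two direct inclusions and then read off the preservation claims from a first-order axiomatization. Throughout I abbreviate by $\myp(Y)$ the property ``$Y$ is a union of a finite set and an open set'', and I will use the elementary observation that in the order topology on $M$ (which is Hausdorff, hence $T_1$) a subset $D\subseteq M$ is discrete and closed if and only if it is \emph{locally finite}, i.e.\ every point of $M$ has an open neighbourhood meeting $D$ in a finite set. One direction is immediate; for the other, a limit point of a locally finite set lies in the closure of a finite subset $D\cap I$ and therefore belongs to $D$, while isolation of each point of $D$ follows by shrinking a neighbourhood that already meets $D$ finitely.

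For the implication that the decomposition condition forces local l-viscerality, I would fix a univariate definable set $X=U\cup D$ with $U$ open and $D$ discrete and closed, together with a point $a\in M$. Since $D$ is discrete and closed it is locally finite, so some open interval $I\ni a$ satisfies that $D\cap I$ is finite; then $X\cap I=(U\cap I)\cup(D\cap I)$ exhibits $X\cap I$ as a union of an open set and a finite set, which is exactly $\myp(X\cap I)$.

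The reverse implication is the main point. Given local l-viscerality and a univariate definable $X$, I would set $U=\myint(X)$ and $D=X\setminus U$, so that $X=U\cup D$ with $U$ open, both sets being definable. It remains to show $D$ is discrete and closed, and the efficient route is to show $D$ is locally finite. Fix $a\in M$ and use local l-viscerality to obtain an open interval $I\ni a$ with $X\cap I=F\cup V$, $F$ finite and $V$ open. Since $V\subseteq X\cap I\subseteq X$ and $V$ is open, one has $V\subseteq\myint(X)=U$, whence
\[
D\cap I=(X\cap I)\setminus U\subseteq (F\cup V)\setminus V\subseteq F,
\]
so $D\cap I$ is finite. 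As $a$ was arbitrary, $D$ is locally finite, hence discrete and closed by the observation above. I expect the closedness of $D$ to be the only delicate point: $D=X\setminus\myint(X)$ is not closed in general, and it is precisely the local finiteness extracted from the local decomposition of $X$ that rescues it.

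Finally, for the preservation statements I would record that the characterizing condition is first order. For each formula $\varphi(x,\bar y)$ the set $D_{\bar y}:=\varphi(M,\bar y)\setminus\myint(\varphi(M,\bar y))$ is uniformly definable (interior is a definable operation), and the assertions that $D_{\bar y}$ is discrete and that it is closed are each expressed by a first-order formula in $\bar y$; the chain of equivalences above shows that local l-viscerality is equivalent to the axiom scheme stating, for every $\varphi$, that $D_{\bar y}$ is discrete and closed for all $\bar y$. Being axiomatized by first-order sentences, local l-viscerality is preserved under elementary equivalence, and by {\L}o\'s's theorem an ultraproduct of locally l-visceral structures again satisfies every axiom of the scheme and is therefore locally l-visceral.
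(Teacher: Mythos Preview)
Your proof is correct and follows essentially the same approach as the paper: in both directions you and the paper work with $D=X\setminus\myint(X)$, use local l-viscerality to make $D$ locally finite (hence discrete and closed), and for the ``in particular'' part observe that the characterizing condition is expressible by a first-order scheme. Your argument is slightly more explicit---you spell out the locally-finite-iff-discrete-and-closed equivalence and display the inclusion $D\cap I\subseteq F$, whereas the paper simply asserts that $I\cap Y$ is finite---but the underlying idea is identical.
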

\begin{proof}
	Let $M$ be the universe.
	We first prove `only if' part.
	Let $X$ be a univariate definable set.
	Set $Y=X \setminus \myint(X)$.
	We have only to prove that $Y$ is discrete and closed.
	Let $x \in M$ be an arbitrary point.
	By local l-viscerality, there exists an open interval $I$ containing the point $x$ such that $I \cap Y$ is a finite set.
	It implies that $Y$ is discrete and closed.
	
	We next consider the `if' part.
	Let $X$ be a univariate definable set.
	The definable set $X$ is the union of an open set $U$ and a discrete closed set $D$.
	Let $x \in M$ be an arbitrary point.
	Since $D$ is discrete and closed, we can take an open interval $I$ containing the point $x$ such that $I \cap D$ consists of at most one point.
	The intersection $I \cap X$ is a union of the finite set $I \cap D$ and the open set $I \cap U$.
	It implies that the structure is locally l-visceral.
	
	We finally prove `in particular' part.
	We can describe that a univariate definable set is discrete and closed after removing its interior by a first-order formula using the formula defining the univariate definable set.
	Therefore, local l-viscerality is preserved under elementary equivalence, and an ultraproduct of locally l-visceral structures is locally l-visceral.
\end{proof}

We next recall the definition of definable completeness.
\begin{definition}[\cite{M}]
	An expansion of a dense linear order without endpoints $\mathcal M=(M,<,\ldots)$ is \textit{definably complete} if any definable subset $X$ of $M$ has the supremum and  infimum in $M \cup \{\pm \infty\}$.
\end{definition}

When we study structures which are not necessarily definably complete, we sometimes need to consider definable Dedekind completion such as the study of weakly o-minimal structures \cite{MMS}.
\begin{definition}
	Let $\mathcal M=(M,<,\ldots)$ be an expansion of a dense linear order without endpoints.
	A \textit{definable gap} is a pair $(A,B)$ of nonempty definable subsets of $M$ such that 
	\begin{itemize}
		\item $M=A \cup B$;
		\item $a<b$ for all $a \in A$ and $b \in B$;
		\item $A$ does not have a largest element and $B$ does not have a smallest element.
	\end{itemize}
	Note that 'being a definable gap' is expressed by a first-order formula when formulas defining $A$ and $B$ are given.
	
	Set $\overline{M}=M \cup \{\text{definable gaps in }M\}$. 
	We can naturally extend the order $<$ in $M$ to an order in $\overline{M}$, which is denoted by the same symbol $<$.
	For instance, we define $a<(A,B)$ by $a \in A$ when $a \in M$ and $(A,B)$ is a definable gap.
	We define $(A_1,B_1) \leq (A_2, B_2)$ by $A_1 \subseteq A_2$ for definable gaps  $(A_1,B_1)$ and $(A_2, B_2)$.
	The linearly ordered set $(\overline{M},<)$ is called the \textit{definable Dedekind completion} of $(M,<)$.
	
	An arbitrary open interval $I$ in $M$ is of the form $(b_1,b_2)$, where $b_1,b_2 \in M \cup \{\pm \infty\}$.
	We set $\overline{I}=\{x \in \overline{M}\;|\; b_1<x<b_2\}$.
	We define $\overline{I}$ for every interval $I$ in $M$ in the same manner.
	Let $B$ be a definable open box in $M^n$; that is, there are open intervals $I_i$ in $M$ for each $1 \leq i \leq n$ such that $B=\prod_{i=1}^n I_i \subseteq M^n$.
	We set $\overline{B}=\prod_{i=1}^n \overline{I_i} \subseteq \overline{M}^n$.
	Note that $I$ and $M$ are subsets of $M$ and $M^n$, respectively, but overlined ones $\overline{I}$ and $\overline{B}$ are subsets of $\overline{M}$ and $\overline{M}^n$, respectively.
	Throughout, we use these overlined notations to represent Dedekind completions and their subsets defined above. 
	
	For a nonempty definable subset $X$ of $M$, we define $\sup X \in \overline{M} \cup \{+\infty\}$ as follows:
	We set $\sup X=+\infty$ when, for any $a \in M$, there exists $x \in X$ with $x>a$.
	Assume that there exists $z \in M$ such that $x<z$ for every $x \in X$.
	Set $B=\{y \in M\;|\; \forall x \in X\ y>x\}$ and $A=M \setminus B$.
	If $B$ has a smallest element $m$, we set $\sup X=m$.
	If $A$ has a largest element $m'$, we set $\sup X=m'$.
	Finally, if $(A,B)$ is a definable gap, we set $\sup X=(A,B) \in \overline{M}$.
	We define $\inf X$ similarly.   
\end{definition}

We also give a definition of definable function into the definable Dedekind completion of the underlying set.
\begin{definition}
	Let $\mathcal M=(M,<,\ldots)$ be an expansion of a dense linear order without endpoints.
	Let $X$ be a definable subset of $M^n$.
	A \textit{definable function} $F:X \to \overline{M}$ is defined as follows:
	Let $\pi:M^{n+1} \to M^n$ be the coordinate projection forgetting the last coordinate.
	There exists a definable subset $Y$ of $M^{n+1}$ such that $\pi(Y)=X$ and $F(x)=\sup Y_x$ for $x \in X$, where $Y_x:=\{y \in M\;|\; (x,y) \in Y\}$.
	We consider that definable subsets $Y$ and $Y'$ of $M^{n+1}$ define an identical definable function $F:X \to \overline{M}$ if $\pi(Y)=\pi(Y')=X$ and $\sup Y_x=\sup Y'_x$ for each $x \in X$. 
		
		A definable function $F:X \to \overline{M} \cup \{\pm \infty\}$ is a pair of a decomposition $X=X_{\overline{M}} \cup X_{+\infty} \cup X_{-\infty}$ into definable sets and a definable function $f:X_{\overline{M}} \to \overline{M}$.
		We set $F(x)=f(x)$ when $x \in X_{\overline{M}}$, $F(x)=+\infty$ when $x \in X_{+\infty}$ and $F(x)=-\infty$ when $x \in X_{-\infty}$.
		We consider that $+\infty$ is larger than all element in $\overline{M}$ and $-\infty$, and $-\infty$ is smaller than all element in $\overline{M}$ and $+\infty$.
\end{definition}

We next give an alternative version of definition of a locally $\myx$ structure called a $*$-locally $\myx$ structure.
\begin{definition}
	Assume that $\myx$ structures are characterized by property $\myp(X)$ for every univariate definable set $X$.
	An expansion of a dense linear order without endpoints $\mathcal M=(M,<,\ldots)$ is a \textit{$*$-locally $\myx$ structure} if, for every definable subset $X$ of $M$ and for every point $\overline{a}\in \overline{M}$, there exists an open interval $I$  such that $\overline{a} \in \overline{I}$ and $\myp(X \cap I)$ holds.
	It is trivial that a $*$-locally $\myx$ structure is a locally $\myx$ structure.
	
	A $*$-locally $\myx$ structure $\mathcal M=(M,<,\ldots)$ is called a \textit{strongly $*$-locally $\myx$ structure} if, for every point $\overline{a} \in \overline{M}$, there exists an open interval $I$ such that $\overline{a} \in \overline{I}$ and $\myp(X \cap I)$ holds for every univariate definable set $X$.  
\end{definition}

Note that a $\myx$ structure is a strongly $*$-locally $\myx$ structure because we can take $I=M$ in the definition.
We can slightly relax the assumption of this observation.

\begin{proposition}\label{prop:almostimplies}
	Assume that $\myx$ structures are characterized by property $\myp(X)$ for every univariate definable set $X$.
	An almost $\myx$ structure is a strongly $*$-locally $\myx$ structure.
\end{proposition}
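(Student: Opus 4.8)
The plan is to show that around every point of the definable Dedekind completion one can trap the point inside a \emph{bounded} open interval whose endpoints lie in $M$, and then to exploit that intersecting with such an interval turns any univariate definable set into a bounded one, so that the almost $\myx$ hypothesis applies directly and uniformly. In other words, the single interval produced at a point $\overline{a}$ will serve all univariate definable sets $X$ at once, which is exactly what the ``strongly'' clause demands.

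Concretely, I would fix a point $\overline{a} \in \overline{M}$ and first produce a bounded open interval $I = (b_1,b_2)$ with $b_1,b_2 \in M$ and $\overline{a} \in \overline{I}$, splitting into the two possibilities for $\overline{a}$. If $\overline{a} = a$ already lies in $M$, then since $M$ is a dense linear order without endpoints I can pick $b_1,b_2 \in M$ with $b_1 < a < b_2$. If instead $\overline{a} = (A,B)$ is a definable gap, then $A$ and $B$ are both nonempty, so choosing $b_1 \in A$ and $b_2 \in B$ yields $b_1 < \overline{a} < b_2$ by the very definition of the order on $\overline{M}$ (recall that, for $c \in M$, one has $c < (A,B)$ iff $c \in A$ and $(A,B) < c$ iff $c \in B$). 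In either case $b_1 < b_2$, so $I=(b_1,b_2)$ is a bounded interval with $\overline{a} \in \overline{I}$.

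With $I$ fixed, I would then let $X$ be an arbitrary univariate definable set and observe that $X \cap I \subseteq (b_1,b_2)$ is a bounded definable subset of $M$. The defining condition of an almost $\myx$ structure is precisely that every bounded univariate definable set enjoys the property $\myp$, so $\myp(X \cap I)$ holds. Since the same interval $I$ works simultaneously for every univariate definable set $X$, and since $\overline{a} \in \overline{M}$ was arbitrary, this is exactly the definition of a strongly $*$-locally $\myx$ structure.

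The argument is essentially bookkeeping, and I do not expect a genuine obstacle; the only point needing care is the handling of the definable Dedekind completion, namely verifying that an interval with both endpoints in $M$ really does contain a gap $\overline{a}$ in its completion $\overline{I}$. This is immediate from the definitions of the extended order recalled above, but it is the one place where one must reason inside $\overline{M}$ rather than purely inside $M$.
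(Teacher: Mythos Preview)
Your proof is correct and follows essentially the same approach as the paper's own proof. The paper's version is terser---it simply says ``Take a bounded open interval $I$ such that $\overline{a} \in \overline{I}$'' without spelling out the case split between $\overline{a} \in M$ and $\overline{a}$ a definable gap---but the underlying argument is identical: find a single bounded interval around $\overline{a}$ and invoke the almost $\myx$ hypothesis on $X \cap I$ for every $X$ at once.
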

\begin{proof}
	Let $\mathcal M=(M,<,\ldots)$ be an almost $\myx$ structure.
	Take an arbitrary element $\overline{a} \in \overline{M}$.  
	Take a bounded open interval $I$ such that $\overline{a} \in \overline{I}$.
	Let $X$ be an arbitrary univariate definable set.
	We have $\myp(X \cap I)$ because $\mathcal M$ is an almost $\myx$ structure.
	This means that $\mathcal M$ is a strongly $*$-locally $\myx$ structure.
\end{proof}

The notion of $*$-locally o-minimal structure does not provide a new category of locally o-minimal structures.
\begin{proposition}\label{prop:*-local_omin}
A $*$-locally o-minimal structure is definably complete.
\end{proposition}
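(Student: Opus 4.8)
The plan is to argue by contraposition, reducing definable completeness to the absence of definable gaps and then contradicting the existence of a gap using the finiteness built into o-minimality. First I would record the equivalence that $\mathcal M$ is definably complete if and only if $M$ has no definable gaps, i.e. $\overline{M}=M$. Indeed, if $(A,B)$ is a definable gap, then $A$ is definable, bounded above by any element of $B$, and $\sup A=(A,B)\notin M\cup\{\pm\infty\}$, so $\mathcal M$ is not definably complete; conversely, if there are no definable gaps, then for any definable $X$ the pair used to compute $\sup X$ (respectively $\inf X$) must have a largest (respectively smallest) element, forcing the supremum (infimum) into $M\cup\{\pm\infty\}$. So it suffices to derive a contradiction from the existence of one definable gap.

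Assume then that $\overline{a}=(A,B)$ is a definable gap, viewed as a point of $\overline{M}$, and apply the defining property of $*$-local o-minimality to the definable set $A$ at $\overline{a}$: there is an open interval $I=(c_1,c_2)$ with $\overline{a}\in\overline{I}$ such that $A\cap I$ is a union of a finite set $F$ and finitely many open intervals $J_1,\dots,J_k$. Unwinding $\overline{a}\in\overline{I}$ through the definition of the order on $\overline{M}$ gives $c_1\in A$ and $c_2\in B$. Since $A$ is downward closed and every element of $A$ lies below every element of $B$, one obtains $A\cap I=\{y\in A : y>c_1\}$, a set cofinal in $A$; because $A$ has no largest element, this set is nonempty and has no maximum, and $\sup(A\cap I)=\sup A=\overline{a}\notin M$.

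The core step is to contradict this using the finite description $A\cap I=F\cup J_1\cup\cdots\cup J_k$. As $A\cap I$ is nonempty and bounded above by $c_2$, let $q^*\in M$ be the largest among the finitely many right endpoints of the $J_i$ and the points of $F$; then every element of $A\cap I$ is $\le q^*$, so $q^*=\sup(A\cap I)$. If $q^*\in F$, then $q^*$ is a maximum of $A\cap I$, contradicting the absence of a maximum. Otherwise $q^*$ is the right endpoint of a topmost open interval and does not itself lie in $A\cap I$; I would then check $q^*\in B$ (if $q^*\in A$, then $q^*>c_1$ would place it in $A\cap I$), so $q^*$ is an upper bound of $A$, hence of $A\cap I$. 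Since $B$ has no smallest element there is $b\in B$ with $b<q^*$, and as $b$ is also an upper bound of $A\cap I$ this forces $b\ge\sup(A\cap I)=q^*$, a contradiction.

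I expect the only delicate points to be bookkeeping: translating $\overline{a}\in\overline{I}$ into $c_1\in A$ and $c_2\in B$ via the definition of the order on $\overline{M}$, and correctly identifying $\sup(A\cap I)$ with the gap so that the finite-union description collides with the gap having no representative in $M$. There is no genuine obstacle beyond this, and the argument isolates exactly why o-minimality is stronger than weak o-minimality here: the property $\myp$ forbids the infinitely refined approach to a cut that a true Dedekind gap would require, whereas a single convex set could accommodate it.
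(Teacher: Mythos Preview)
Your proof is correct and follows essentially the same approach as the paper: apply $*$-local o-minimality at the problematic point of $\overline{M}$ and observe that a finite union of points and open intervals has its supremum in $M$. The paper argues directly that $\sup X\in M$ for an arbitrary bounded definable $X$ by applying the property at $\overline{x}=\sup X$, whereas you first reduce to the absence of definable gaps and then run the same computation with $X=A$; your final case analysis simply makes explicit the one-line claim in the paper that the supremum of $I\cap X$ lies in $M$.
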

\begin{proof}
	Let $\mathcal M=(M,<,\ldots)$ be a $*$-locally o-minimal structure.
	Let $X$ be a definable subset of $M$.
	We have only to prove that $\sup X, \inf X \in M \cup \{\pm \infty\}$.
	We only prove $\sup X \in M \cup \{\pm \infty\}$ here, and we can prove $\inf X \in M \cup \{\pm \infty\}$ similarly.
	When $X$ is unbounded above, we always have $\sup X=+\infty$.
	Assume that $X$ is bounded above, we have $\overline{x} = \sup X \in \overline{M}$ by the definition of definable Dedekind completion.
	Take an open interval $I$ such that $\overline{x} \in \overline{I}$ and $I \cap X$ is a union of finitely many open intervals and a finite set.
	We have $\overline{x} = \sup X = \sup (I \cap X)$ because $\overline{x} \in \overline{I}$.
	The supremum $\sup (I \cap X)$ belongs to $M$ because $I \cap X$ is a union of finitely many open intervals and a finite set.
	This means that $\sup X \in M$.
\end{proof}

The equality $\overline{M}=M$ holds if $\mathcal M=(M,<,\ldots)$ is definably complete.
A locally o-minimal structure is definably complete if and only if it is $*$-locally o-minimal by Proposition \ref{prop:*-local_omin} and the above fact.

The following lemma illustrates that an assertion similar to Proposition \ref{prop:*-local_omin} holds for $*$-locally l-visceral structures  in a special case.

\begin{lemma}\label{lem:supinf}
	Consider a $*$-locally l-visceral structure $\mathcal M=(M,<,\ldots)$ and $X$ be a univariate definable subset having an empty interior.
	Then the supremum $\sup X$ and the infimum $\inf X$ belong to $M \cup \{\pm \infty\}$.
	In addition, they belong to $X$ if they do not belong to $\{\pm \infty\}$.
\end{lemma}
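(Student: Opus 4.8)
The plan is to prove the assertion for $\sup X$ and obtain the statement for $\inf X$ by a symmetric argument. First I would dispose of the unbounded case: if $X$ has no upper bound in $M$, then $\sup X = +\infty$ by the definition of the Dedekind completion, and there is nothing to prove. So assume $X$ is bounded above, so that $\overline{x} := \sup X$ is a genuine element of $\overline{M}$, and the goal becomes to show $\overline{x} \in M$ and $\overline{x} \in X$.

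The key step is to feed the point $\overline{x} \in \overline{M}$ into the definition of $*$-local l-viscerality applied to the definable set $X$. This produces an open interval $I = (b_1,b_2)$ with $\overline{x} \in \overline{I}$ such that $X \cap I$ is a union of a finite set and an open set. This is where the empty-interior hypothesis does the real work: writing $X \cap I = F \cup V$ with $F$ finite and $V$ open, we have $V \subseteq \myint(X \cap I) \subseteq \myint(X) = \emptyset$, so $V = \emptyset$ and $X \cap I = F$ is finite. (Alternatively one may first invoke Proposition \ref{prop:char_lvisceral} to see that $X$ is discrete and closed, whence $X \cap I$ is finite.)

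Next I would check that $X \cap I$ is nonempty and extract its maximum. Since $b_1 < \overline{x} = \sup X$, the element $b_1$ is not an upper bound of $X$, so there is some $x_0 \in X$ with $x_0 > b_1$; as every element of $X$ is $\le \overline{x} < b_2$, we get $x_0 \in I$, hence $x_0 \in X \cap I$. Being a nonempty finite subset of $M$, the set $X \cap I$ has a maximum $m \in M$, and by construction $m \in X$.

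The main point, and the step I expect to be the most delicate, is to promote $m$ from being the maximum of the local piece $X \cap I$ to being $\sup X$ itself. Clearly $m \le \overline{x}$ since $m \in X$. For the reverse inequality I would show that $m$ is an upper bound of $X$: if some $x' \in X$ satisfied $x' > m$, then on one hand $x' \le \overline{x} < b_2$, and on the other $x' > m > b_1$ (using $m \in I$), so $x' \in I$ and thus $x' \in X \cap I$, contradicting the maximality of $m$. Hence $m$ is an upper bound, so $\overline{x} = \sup X \le m$, which combined with $m \le \overline{x}$ gives $\sup X = m \in M$ and $\sup X \in X$. The argument for $\inf X$ is entirely symmetric, using an interval $J$ with $\inf X \in \overline{J}$ and the minimum of the finite set $X \cap J$.
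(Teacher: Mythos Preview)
Your proof is correct and follows essentially the same approach as the paper's: apply $*$-local l-viscerality at $\overline{x}=\sup X$ to obtain an interval $I$ with $\overline{x}\in\overline{I}$ and $X\cap I$ finite (using the empty-interior hypothesis), and then observe that $\sup X=\sup(X\cap I)=\max(X\cap I)\in X$. The only difference is that you spell out in detail why $X\cap I$ is nonempty and why $\max(X\cap I)$ bounds all of $X$, whereas the paper condenses this into the remark that $\sup X=\sup(X\cap I)$ is ``obvious by the definition of $s$.''
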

\begin{proof}
	We only consider $\sup X$.
	We can treat $\inf X$ similarly.
	The lemma is obvious when $X$ is not bounded from above.
	We consider the case in which $X$ is bounded from above.
	Set $s=\sup X \in \overline{M}$.
	We can take an open interval $I$ such that $s \in \overline{I}$ and $X \cap I$ is a finite set.
	It is obvious that $s= \sup(X \cap I)$ by the definition of $s$ and $\sup(X \cap I) \in X \cap I \subseteq X$ because the set $X \cap I$ is a finite set.
\end{proof}

We next show that $*$-local weak o-minimality is preserved under elementary equivalence and an ultraproduct of $*$-locally weakly o-minimal structures is $*$-locally weakly o-minimal.
We first prove the following technical lemma:

\begin{lemma}\label{lem:*-technical}
	Consider a $*$-locally weakly o-minimal structure $\mathcal M=(M,<,\ldots)$.
	Let $\overline{a}=(A,B)$ be a definable gap in $M$ and $X$ be a definable subset of $M$.
	Then, there exists $c_1,c_2 \in M$ such that $c_1<\overline{a}<c_2$ and the intersection $I \cap X$ coincides with one of the sets $\emptyset$, $I$, $I \cap A$ and $I \cap B$, where $I$ is the interval $(c_1,c_2)$.
\end{lemma}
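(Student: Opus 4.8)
The plan is to invoke $*$-local weak o-minimality at the gap $\overline{a}$ and then shrink the interval so that every convex piece of $X$ near $\overline{a}$ is forced into one of the four prescribed shapes. First I would apply the defining property of a $*$-locally weakly o-minimal structure to the point $\overline{a} \in \overline{M}$ and the set $X$, obtaining an open interval $I_0$ with $\overline{a} \in \overline{I_0}$ such that $X \cap I_0$ is a union of a finite set and finitely many open convex sets. To guarantee that the endpoints of these convex pieces live in the completion $\overline{M} \cup \{\pm\infty\}$, I would discard the ad hoc convex pieces and instead use the convex components of the definable open set $U := \myint(X \cap I_0)$: there are only finitely many of them, each is definable, and the remainder $F := (X \cap I_0) \setminus U$ is a finite set. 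Writing each component as $U_j = (\alpha_j,\beta_j)$, the construction of $\sup$ and $\inf$ in the excerpt guarantees $\alpha_j, \beta_j \in \overline{M} \cup \{\pm\infty\}$.

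The auxiliary fact I would isolate first is that $M$ is order-densely embedded in $\overline{M} \cup \{\pm\infty\}$, i.e. between any two distinct elements of the completion there is an element of $M$. This follows from a short case check using that a gap $(A,B)$ satisfies $M = A \cup B$, that $A$ has no largest and $B$ no smallest element, and that $<$ is dense without endpoints. This density is exactly what lets me separate $\overline{a}$ from finitely many prescribed elements of $\overline{M} \cup \{\pm\infty\}$ by genuine elements $c_1,c_2 \in M$.

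Next I would run the shrinking. Using density I first choose $c_1 < \overline{a} < c_2$ avoiding the finitely many points of $F$, so that $X \cap I = \bigcup_j (U_j \cap I)$ for $I := (c_1,c_2)$. Then, comparing $\overline{a}$ with $\alpha_j$ and $\beta_j$, I shrink further for each $j$: if $\beta_j < \overline{a}$ I take $c_1 \geq \beta_j$ and if $\alpha_j > \overline{a}$ I take $c_2 \leq \alpha_j$, so that $U_j \cap I = \emptyset$; if $\beta_j = \overline{a}$ I take $\alpha_j < c_1$, which gives $U_j \cap I = \{x \in I : x < \overline{a}\} = I \cap A$; symmetrically $\alpha_j = \overline{a}$ yields $U_j \cap I = I \cap B$; and if $\alpha_j < \overline{a} < \beta_j$ I take $\alpha_j < c_1$ and $c_2 < \beta_j$, so $I \subseteq U_j$ and $U_j \cap I = I$. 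Since there are finitely many components, intersecting all these constraints (take $c_1$ the largest of the required lower bounds and $c_2$ the smallest of the upper bounds, still with $c_1 < \overline{a} < c_2$) produces a single interval $I = (c_1,c_2)$.

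Finally I would assemble the union. Each $U_j \cap I$ equals $\emptyset$, $I$, $I \cap A$, or $I \cap B$, so $X \cap I$ is a union of such sets; if any piece is $I$ the union is $I$, and otherwise it is $\emptyset$, $I \cap A$, $I \cap B$, or $(I \cap A) \cup (I \cap B) = I$, the last equality using $A \cup B = M$. In every case $X \cap I$ is one of $\emptyset$, $I$, $I \cap A$, $I \cap B$, as required. The main obstacle is the bookkeeping that forces the convex pieces to be definable, hence with endpoints in $\overline{M} \cup \{\pm\infty\}$, together with the case analysis of $\overline{a}$ against those endpoints; once the density fact is in hand, the shrinking and the union step are routine.
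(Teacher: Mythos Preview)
Your proof is correct and follows essentially the same approach as the paper: apply $*$-local weak o-minimality at the gap to obtain a finite decomposition into open convex pieces plus a finite set, shrink past the finite set, and then run a case analysis of $\overline{a}$ against the endpoints $\alpha_j,\beta_j \in \overline{M}$ of each convex piece. The only cosmetic differences are that you pass to the convex components of $\myint(X\cap I_0)$ instead of merging the given convex sets, and you make the density of $M$ in $\overline{M}\cup\{\pm\infty\}$ explicit, whereas the paper uses it silently.
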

\begin{proof}
	By $*$-local weak o-minimality, we can take an open interval $J$ such that $\overline{a} \in \overline{J}$ and the intersection $X \cap J$ is a union of a finite set $F$ and finitely many convex open sets $C_1, \ldots, C_m$.
	We may assume that $J$ is bounded without loss of generality.
	We have $\overline{a} \notin F$ because $\overline{a} \notin M$.
	We can take an open interval $J'$ such that $\overline{a} \in \overline{J'}$ and $J' \cap F=\emptyset$ because $F$ is a finite set.
	We may assume that $F=\emptyset$ by replacing $J$ with $J'$.
	Furthermore, we may assume that $C_i \cup C_j$ is not convex whenever $i \neq j$ by replacing $C_i$ and $C_j$  with $C_i \cup C_j$ when $C_i \cup C_j$ is convex. 
	We may further assume that $C_1<C_2< \cdots <C_m$ by permuting the order if necessary.
	Here, the notation $C_i<C_{i+1}$ means that $x_1<x_2$ whenever $x_1 \in C_i$ and $x_2 \in C_{i+1}$.
	We can easily prove that $C_i$ are definable, but we omit the proof. 
	
	Let $\alpha$ and $\beta$ be the left and right endpoints of $J$, respectively.
	Set $\alpha_i=\inf C_i \in \overline{M}$ and $\beta_i = \sup C_i \in \overline{M}$ for $1 \leq i \leq m$.
	Observe that $\alpha_i<\beta_i$ for each $1 \leq i \leq m$ and $\beta_i < \alpha_{i+1}$ for each $1 \leq i <m$.
	We used the assumption that $C_i \cup C_{i+1}$ is not convex when we deduce the second inequality.
	Put $c_1=\alpha$ and $c_2=\beta$ when $X \cap J$ is an empty set.
	We assume that $X \cap J$ is nonempty in the rest of proof.
	We consider the following cases separately:
	\begin{enumerate}
		\item[(1)] The first case is the case in which $\overline{a}<\alpha_1$.
		We can take $c_2 \in M$ so that $\overline{a} < c_2 < \alpha_1$.
		Set $c_1=\alpha$.
		It is obvious that $c_1<\overline{a}<c_2$.
		The intersection $X \cap (c_1,c_2)$ is an empty set in this case.
		\item[(2)] We next consider the case in which $\overline{a}>\beta_m$.
		We can take $c_1,c_2 \in M$ so that $c_1<\overline{a}<c_2$ and the intersection $X \cap (c_1,c_2)$ is an empty set in the same manner as case (1).
		\item[(3)] We consider the case in which $\alpha_k<\overline{a}<\beta_k$ for some $1 \leq k \leq m$.
		Take $c_1,c_2 \in M$ so that $\alpha_k<c_1<\overline{a}<c_2<\beta_k$.
		We have $(c_1,c_2) \cap X=(c_1,c_2)$ in this case.
		\item[(4)] We consider the case in which $\beta_k<\overline{a}<\alpha_{k+1}$ for some $1 \leq k < m$.
		Choose $c_1,c_2 \in M$ so that $\beta_k<c_1<\overline{a}<c_2<\alpha_{k+1}$.
		The intersection $(c_1,c_2) \cap X$ is an empty set.
		\item[(5)] We consider the case in which $\overline{a}=\alpha_k$ for some $1 \leq k \leq m$.
		When $k=1$, put $c_1 =\alpha$ and take $c_2 \in C_1$.
		Then, $(c_1,c_2) \cap X=(c_1,c_2) \cap B$.
		When $k>1$, then we have $\alpha_k=\overline{a}>\beta_{k-1}$.
		Choose $c_1$ so that $\beta_{k-1}<c_1<\overline{a}$. Take $c_2 \in C_k$.
		Then, we get $(c_1,c_2) \cap X=(c_1,c_2) \cap B$.
		\item[(6)] The remaining case is the case in which $\overline{a}=\beta_k$ for some $1 \leq k \leq m$.
		We can choose $c_1,c_2 \in M$ so that the equality $(c_1,c_2) \cap X=(c_1,c_2) \cap A$ holds in the same manner as case (5).
	\end{enumerate}
\end{proof}

The above lemma and Remark \ref{rem:local_omin} imply that 'being *-locally weakly o-minimal' can be represented by first-order formulas.

\begin{proposition}\label{prop:*-local_elementary}
	$*$-local weak o-minimality is preserved under elementary equivalence.
	In other word, an $\mathcal L$-structure $\mathcal N=(N,<,\ldots)$ is a $*$-locally weakly o-minimal structure when it is elementarily equivalent to a $*$-locally weakly o-minimal $\mathcal L$-structure $\mathcal M=(M,<,\ldots)$.
\end{proposition}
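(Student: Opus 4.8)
The plan is to show that ``being $*$-locally weakly o-minimal'' is captured by a set of first-order $\mathcal L$-sentences, so that it transfers from $\mathcal M$ to any elementarily equivalent $\mathcal N$. A definable subset of the universe is cut out by some $\mathcal L$-formula $\varphi(x;\bar y)$ with parameters $\bar y$, and a point of the completion $\overline M$ is either an element of $M$ or a definable gap $(A,B)$ with $A,B$ defined by formulas $\psi_A(x;\bar z),\psi_B(x;\bar z)$ and parameters $\bar z$. I would therefore write one sentence for each such formula (and, in the gap case, each triple of formulas), with the parameters governed by genuine first-order quantifiers $\forall \bar y$ and $\forall \bar z$; the quantification over formulas is a meta-quantification producing a schema. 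The definition of $*$-local weak o-minimality asks, for every definable $X$ and every $\overline a \in \overline M$, for an interval $I$ with $\overline a \in \overline I$ and $\myp(X \cap I)$, and I would split this requirement into the case $\overline a \in M$ and the case where $\overline a$ is a definable gap, verifying that each is first-order expressible for every relevant formula.

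For the case $\overline a = a \in M$, note that $a \in \overline I$ is equivalent to $a \in I$, so the requirement is exactly local weak o-minimality at the point $a$. By Proposition \ref{prop:local_ominmal} this coincides with local o-minimality, and by Remark \ref{rem:local_omin} it is equivalent to the existence of $b_1 < a < b_2$ with $X \cap (b_1,b_2)$ equal to a union of members of a subfamily of $\{\{a\},(b_1,a),(a,b_2)\}$. For a fixed $\varphi(x;\bar y)$ this is a first-order sentence: a universal quantifier over $a$ and $\bar y$, an existential quantifier over $b_1,b_2$, and a finite disjunction over the (eight) subfamilies, each disjunct asserting the equality of two definable unary sets throughout $(b_1,b_2)$.

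For the case where $\overline a = (A,B)$ is a definable gap, I would use that ``$(\psi_A(M;\bar z),\psi_B(M;\bar z))$ is a definable gap'' is itself first-order in $\bar z$, as recorded after the definition of definable gap. Under that hypothesis, Lemma \ref{lem:*-technical} supplies $c_1,c_2 \in M$ with $c_1 < \overline a < c_2$ (equivalently $\psi_A(c_1;\bar z) \wedge \psi_B(c_2;\bar z)$) such that, writing $I=(c_1,c_2)$, the set $I \cap X$ is one of $\emptyset$, $I$, $I \cap A$, $I \cap B$. This is again first-order for fixed $\varphi,\psi_A,\psi_B$: a finite disjunction of four clauses, each asserting equality of $X \cap I$ with the corresponding definable set on $I$. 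The point that needs checking is that this conclusion is genuinely equivalent to the $*$-local-weak-o-minimality requirement at the gap: the forward implication is exactly Lemma \ref{lem:*-technical}, while for the converse I must observe that, because $A$ is downward closed and $B$ upward closed in the cut, the sets $I \cap A$ and $I \cap B$ are convex open subsets of $I$, so each of the four possibilities satisfies $\myp(X \cap I)$.

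Putting the two cases together shows that $\mathcal M$ is $*$-locally weakly o-minimal if and only if it satisfies the above family of sentences, one for each formula $\varphi$ and each triple $(\varphi,\psi_A,\psi_B)$. Since $\mathcal N \equiv \mathcal M$, the structure $\mathcal N$ satisfies the same sentences and is therefore $*$-locally weakly o-minimal. I expect the main obstacle to be precisely the equivalence in the gap case: one must confirm that the finitely many ``shapes'' permitted by Lemma \ref{lem:*-technical} are both necessary and sufficient, and in particular that $I \cap A$ and $I \cap B$ fall into the class $\myp$. Everything else is the routine bookkeeping of converting the meta-quantification over definable sets into a schema indexed by formulas with first-order parameter quantifiers.
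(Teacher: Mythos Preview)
Your proposal is correct and follows essentially the same approach as the paper: split into the case $\overline a\in N$ (handled via local o-minimality) and the case where $\overline a$ is a definable gap (handled via Lemma~\ref{lem:*-technical} and the first-order expressibility of ``is a definable gap''). The only cosmetic difference is that for the first case the paper cites an external result (\cite[Corollary~2.5]{TV}) for the preservation of local o-minimality under elementary equivalence, whereas you unpack this directly using the characterization in Remark~\ref{rem:local_omin}; your version is slightly more self-contained but otherwise identical in content.
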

\begin{proof}
We have only to show that, for any $\overline{a} \in \overline{N}$ and a definable subset $X$ of $N$, there exists an open interval $I$ such that $\overline{a} \in \overline{I}$ and $I \cap X$ is a union of a finite set and finitely many open convex sets.
Here, $\overline{N}$ denotes the definable Dedekind completion of $N$. 

We consider two separate cases.
The first case is the case in which $\overline{a} \in N$.
The structure $\mathcal M$ is locally o-minimal by Proposition \ref{prop:local_ominmal} because a $*$-locally weakly o-minimal structure is obviously locally weakly o-minimal.
Local o-minimality is preserved under elementary equivalence by \cite[Corollary 2.5]{TV}.
This implies that $\mathcal N$ is locally o-minimal.
We can take an open interval $I$ containing the point $\overline{a}$ so that $I \cap X$ is a union of a finite set and finitely many open intervals.

The remaining case is the case in which $\overline{a}$ is a definable gap, say $\overline{a}=(A,B)$.
We can take $\mathcal L$-formulas $\phi_A(x,\overline{y})$, $\phi_B(x,\overline{y})$ and $\phi_X(x,\overline{y})$ so that  $\phi_A(x,\overline{c})$, $\phi_B(x,\overline{c})$ and $\phi_X(x,\overline{c})$ define the definable sets $A$, $B$ and $X$, respectively.
Here, $\overline{c}$ is a tuple of parameters from $N$.
For every $\overline{y}$, we denote the definable set defined by $\phi_A(x,\overline{y})$ by $A(\overline{y})$.
We define $B(\overline{y})$ and $X(\overline{y})$ in the same manner.
Let $\Phi(\overline{y})$ be a first-order $\mathcal L$-formula representing ``the pair $(A(\overline{y}),B(\overline{y}))$ is a definable gap."
Such a formula exists by the definition of definable gaps.
We next consider an $\mathcal L$-formula $\Psi(\overline{y})$ saying that there exist $c_1 \in A(\overline{y})$ and $c_2 \in B(\overline{y})$ such that $I \cap X(\overline{y})$ is one of the sets $\emptyset$, $I$, $I \cap A(\overline{y})$ and $I \cap B(\overline{y})$, where we denote the open interval $(c_1,c_2)$ by $I$. 
%Let $\Theta$ be the $\mathcal L$-sentence defined by $\forall \overline{y} (\Phi(\overline{y} ) \rightarrow \Psi(\overline{y} ))$.
Since $\mathcal M$ is $*$-locally weakly o-minimal, we have $\mathcal M \models \forall \overline{y} (\Phi(\overline{y} ) \rightarrow \Psi(\overline{y} ))$ by Lemma \ref{lem:*-technical}.
We have $\mathcal N \models \forall \overline{y} (\Phi(\overline{y} ) \rightarrow \Psi(\overline{y} ))$ because $\mathcal N$ is elementarily equivalent to $\mathcal M$.
We have $\mathcal N \models \Psi(\overline{c} )$ because  $\mathcal N \models \Phi(\overline{c} )$. 
We can find an open interval $I$ so that $\overline{a} \in \overline{I}$ and $I \cap X$ is either an empty set or an open convex set.
\end{proof}

\begin{proposition}\label{prop:*-local_ultraproduct}
	An ultraproduct of $*$-locally weakly o-minimal structures is $*$-locally weakly o-minimal.
\end{proposition}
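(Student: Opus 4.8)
The plan is to carry over the proof of Proposition \ref{prop:*-local_elementary} almost verbatim, replacing the appeal to elementary equivalence by the fundamental theorem of ultraproducts (\L o\'s's theorem). Write $\mathcal N=(N,<,\ldots)$ for the ultraproduct $\prod_{i\in\Lambda}\mathcal M_i/\mathcal U$, where each $\mathcal M_i$ is a $*$-locally weakly o-minimal $\mathcal L$-structure and $\mathcal U$ is an ultrafilter on $\Lambda$. Fix a definable subset $X$ of $N$ and a point $\overline{a}\in\overline{N}$; the goal is to produce an open interval $I$ with $\overline{a}\in\overline{I}$ such that $I\cap X$ is a union of a finite set and finitely many convex open sets. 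As in Proposition \ref{prop:*-local_elementary}, I would split into the case $\overline{a}\in N$ and the case where $\overline{a}$ is a definable gap.

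For the case $\overline{a}\in N$, I would first argue that $\mathcal N$ is locally o-minimal. Each $\mathcal M_i$ is locally weakly o-minimal, hence locally o-minimal by Proposition \ref{prop:local_ominmal}. By Remark \ref{rem:local_omin}, local o-minimality is expressed by a first-order schema (one sentence for each formula defining a univariate set), so \L o\'s's theorem gives that $\mathcal N$ satisfies this schema and is therefore locally o-minimal. An open interval $I$ containing $\overline{a}$ with $I\cap X$ a finite union of points and open intervals then exists directly. For the case where $\overline{a}=(A,B)$ is a definable gap, I would choose $\mathcal L$-formulas $\phi_A(x,\overline{y})$, $\phi_B(x,\overline{y})$, $\phi_X(x,\overline{y})$ and a parameter tuple $\overline{c}$ from $N$ defining $A$, $B$, $X$, and form the sentences $\Phi(\overline{y})$ and $\Psi(\overline{y})$ exactly as in the proof of Proposition \ref{prop:*-local_elementary}. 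By Lemma \ref{lem:*-technical}, each $\mathcal M_i$ satisfies $\forall\overline{y}\,(\Phi(\overline{y})\rightarrow\Psi(\overline{y}))$, so by \L o\'s's theorem $\mathcal N$ satisfies the same sentence. Since $\mathcal N\models\Phi(\overline{c})$ because $(A,B)$ is a genuine definable gap in $N$, I conclude $\mathcal N\models\Psi(\overline{c})$, which yields the interval $I$ with $\overline{a}\in\overline{I}$ and $I\cap X$ equal to one of $\emptyset$, $I$, $I\cap A$, $I\cap B$, each of which is a convex open set or empty.

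The work is almost entirely bookkeeping, since the genuinely model-theoretic content—that $*$-local weak o-minimality is captured by the first-order schema $\forall\overline{y}\,(\Phi(\overline{y})\rightarrow\Psi(\overline{y}))$ together with the local o-minimality schema—was already established in Lemma \ref{lem:*-technical} and in the proof of Proposition \ref{prop:*-local_elementary}. The only point requiring a little care is that the formulas $\Phi$ and $\Psi$ depend on the chosen defining formulas rather than on a single factor, so one must verify that the \emph{same} sentence $\forall\overline{y}\,(\Phi(\overline{y})\rightarrow\Psi(\overline{y}))$ holds in every $\mathcal M_i$; this is immediate because Lemma \ref{lem:*-technical} applies uniformly to all $*$-locally weakly o-minimal structures. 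I would in fact expect no serious obstacle, and the honest summary is that the statement follows from the proof of Proposition \ref{prop:*-local_elementary} with elementary equivalence replaced by \L o\'s's theorem.
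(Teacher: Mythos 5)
Your proposal is correct and is essentially the paper's own proof: the paper likewise transfers, via \L o\'s's theorem, the first-order schema expressing the condition of Remark \ref{rem:local_omin} (handling points of $N$) together with the schema $\forall\overline{y}\,(\Phi(\overline{y})\rightarrow\Psi(\overline{y}))$ furnished by Lemma \ref{lem:*-technical} (handling definable gaps). No gap to report.
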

\begin{proof}
	We can prove it similarly to Proposition \ref{prop:*-local_elementary} using the \L o\'s's theorem, Remark \ref{rem:local_omin} and Lemma \ref{lem:*-technical}.
	We omit the proof.
\end{proof}

A model of DCTC is defined in  \cite{S}.
We define models of TC and $*$-TC in the same manner as \cite{S}.
\begin{definition}
	Consider an expansion of a dense linear order without endpoints $\mathcal M=(M,<,\ldots)$.
	For any $a \in \overline{M}$, we define a partial type $a^+$ as follows:
	Let $\phi(x)$ be a unary formula with parameters.
	We say $\phi \in a^+$ if there exists $b \in M$ such that $b>a$ and $\mathcal M \models \phi(c)$ for any element $c$ with $a<c<b$.
	We define partial types $a^-$, $\infty^-$ and $(-\infty)^+$, similarly.
	 
	 The structure $\mathcal M$ is called a \textit{model of TC} if $a^+$ and $a^-$ are complete types for every $a \in M \cup \{\pm \infty\}$.
	 It is called a \textit{model of $*$-TC} if $a^+$ and $a^-$ are complete types for every $a \in \overline{M} \cup \{\pm \infty\}$.
	 Here, TC is an abbreviation of 'type completeness.' 
\end{definition}

By Remark \ref{rem:local_omin} and Lemma \ref{lem:*-technical}, the definitions of models of TC and $*$-TC are almost the same as those of local o-minimality and $*$-local weak o-minimality.
The only difference between them is that the additional condition on $\infty^-$ and $(-\infty)^+$ are imposed in the definition of models of TC and $*$-TC.
The following assertion is proven in the same manner as the locally o-minimal case and the $*$-locally weakly o-minimal case. We omit the proof.
\begin{proposition}\label{prop:TC}
The following assertions hold:
\begin{enumerate}
	\item A model of TC is locally o-minimal;
	\item A model of $*$-TC is $*$-locally weakly o-minimal;
	\item A structure elementarily equivalent to a model of $*$-TC is a model of $*$-TC;
	\item An ultraproduct of models of $*$-TC is a model of $*$-TC. 
\end{enumerate}
\end{proposition}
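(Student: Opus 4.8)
The plan is to prove the four assertions by unwinding the definition of the germ types $a^{+}$ and $a^{-}$ and matching them against the characterization of local o-minimality in Remark \ref{rem:local_omin} and the characterization of $*$-local weak o-minimality furnished by Lemma \ref{lem:*-technical}. Assertions (3) and (4) will then follow from the first-order transfer arguments already used for Propositions \ref{prop:*-local_elementary} and \ref{prop:*-local_ultraproduct}. Throughout I use that each $a^{\pm}$ is automatically a consistent partial type (it is closed under finite conjunction and every formula in it is realized just to the appropriate side of $a$), so that being a \emph{complete} type means exactly that it decides every formula.

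For (1), fix $a \in M$ and a univariate definable set $X$ defined by $\phi$. Since $a^{+}$ is complete it decides $\phi$: if $\phi \in a^{+}$ there is $b_2 > a$ with $(a,b_2) \subseteq X$, while if $\neg\phi \in a^{+}$ there is $b_2 > a$ with $(a,b_2) \cap X = \emptyset$. Applying completeness of $a^{-}$ symmetrically yields $b_1 < a$ for which $(b_1,a)$ is either inside or disjoint from $X$. Setting $I=(b_1,b_2)$, the set $X \cap I$ is a union of members of a subfamily of $\{\{a\},(b_1,a),(a,b_2)\}$, which is precisely the condition of Remark \ref{rem:local_omin}; hence $\mathcal M$ is locally o-minimal. (Only the conditions at points $a \in M$ are used; the extra hypotheses at $\pm\infty$ play no role here.)

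For (2), I would run the same argument allowing $\overline a \in \overline M$. If $\overline a \in M$ the point case of (1) gives an interval on which $X$ restricts to a union of a finite set and finitely many open intervals, so $\myp(X \cap I)$ holds. If $\overline a = (A,B)$ is a gap, completeness of $\overline a^{+}$ produces $c_2 \in B$ such that $(\overline a, c_2) = I \cap B$ is either contained in or disjoint from $X$, and completeness of $\overline a^{-}$ produces $c_1 \in A$ with the analogous alternative for $(c_1,\overline a) = I \cap A$; with $I=(c_1,c_2)$ the four combinations give $I \cap X \in \{\emptyset, I, I \cap A, I \cap B\}$, recovering the conclusion of Lemma \ref{lem:*-technical}. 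The only point needing care is that each of these sets is a union of a finite set and finitely many open convex sets: this holds because in a definable gap $A$ is a down-set without maximum and $B$ is an up-set without minimum, so $A$ and $B$, and therefore $I \cap A$ and $I \cap B$, are open and convex. Thus $\myp(X \cap I)$ holds and $\mathcal M$ is $*$-locally weakly o-minimal.

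For (3) and (4) the strategy is to observe that ``being a model of $*$-TC'' is expressible by a first-order scheme. By Remark \ref{rem:local_omin} and Lemma \ref{lem:*-technical}, completeness of $\overline a^{+}$ and $\overline a^{-}$ for all $\overline a \in \overline M$ is captured by exactly the formulas used to encode $*$-local weak o-minimality before Proposition \ref{prop:*-local_elementary}; the extra requirement that $\infty^{-}$ and $(-\infty)^{+}$ be complete is itself first-order, being, for each formula $\phi$, the sentence asserting that some final (respectively initial) segment of $M$ is either contained in or disjoint from the set defined by $\phi$. Consequently part (3) follows verbatim from the argument of Proposition \ref{prop:*-local_elementary}, transferring each instance of the scheme across the elementary equivalence, and part (4) follows from the \L o\'s's theorem exactly as in Proposition \ref{prop:*-local_ultraproduct}. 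The main obstacle, and the only genuinely new ingredient beyond Propositions \ref{prop:*-local_elementary} and \ref{prop:*-local_ultraproduct}, is checking that these two endpoint conditions at $\pm\infty$ are first-order; once this is in hand the elementary-equivalence and ultraproduct arguments are routine.
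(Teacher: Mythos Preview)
Your proof is correct and follows essentially the same approach as the paper, which in fact omits the proof and merely remarks that the definitions of TC and $*$-TC differ from those of local o-minimality and $*$-local weak o-minimality only by the additional conditions at $\pm\infty$, so that the argument is the same as for Propositions \ref{prop:*-local_elementary} and \ref{prop:*-local_ultraproduct}. You have correctly filled in those details, including the observation that completeness of $\infty^{-}$ and $(-\infty)^{+}$ is itself expressible by a first-order scheme.
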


We treat $*$-locally weakly o-minimal structures in Section \ref{sec:star-cont} and Section \ref{sec:*-local}.
Proposition \ref{prop:*-local_ultraproduct} gives a procedure to construct $*$-local weakly o-minimal structures.
Ultraproducts of weakly o-minimal structures are $*$-locally o-minimal structures by Proposition \ref{prop:*-local_ultraproduct}.
In addition, they are model of $*$-TC by Proposition \ref{prop:TC} because weakly o-minimal structures are models of $*$-TC.

We can define three alternative versions of uniformly locally $\myx$ structures.
We do not treat these concepts in this paper, but introduce them for possible future use.

\begin{definition}
	The first version is as follows:
	A $*$-locally $\myx$ structure $\mathcal M=(M,<,\ldots)$ is a \textit{uniformly $*$-locally $\myx$ structure of the second kind} if, for any positive integer $n$, any definable set $X \subseteq M^{n+1}$, $\overline{a} \in \overline{M}$ and $b \in M^n$, there exist an open interval $I$ with $\overline{a} \in \overline{I}$ and an open box $B$ containing $b$ such that $\myp(X_y \cap I)$ holds for all $y \in B$.
	We call it a \textit{uniformly $*$-locally $\myx$ structure of the first kind} when we can choose $B=M^n$. 
	
	The second version is as follows:
	A locally $\myx$ structure $\mathcal M=(M,<,\ldots)$ is a \textit{$*$-uniformly locally $\myx$ structure of the second kind} if, for any positive integer $n$, any definable set $X \subseteq M^{n+1}$, ${a} \in {M}$ and $\overline{b} \in \overline{M}^n$, there exist an open interval $I$ containing the point $a$ and an open box $B$ with $\overline{b} \in \overline{B}$ such that $\myp(X_y \cap I)$ holds for all $y \in B$.
	%We call it a \textit{$*$-uniformly locally $\myx$ structure of the first kind} when we can choose $B=M^n$. 
	
	The final version is as follows:
	A $*$-locally $\myx$ structure $\mathcal M=(M,<,\ldots)$ is a \textit{$*$-uniformly $*$-locally $\myx$ structure of the second kind} if, for any positive integer $n$, any definable set $X \subseteq M^{n+1}$, $\overline{a} \in \overline{M}$ and $\overline{b} \in \overline{M}^n$, there exist an open interval $I$ containing with $\overline{a} \in \overline{I}$ and an open box $B$ with $\overline{b} \in \overline{B}$ such that $\myp(X_y \cap I)$ holds for all $y \in B$.
	%We call it a \textit{$*$-uniformly $*$-locally $\myx$ structure of the first kind} when we can choose $B=M^n$. 
	
	We decorate the term `locally' by the symbol $*$ when we are allowed to choose $\overline{a}$ from the definable Dedekind completion $\overline{M}$, and we decorate the term `uniformly' by $*$ when we may choose $\overline{b}$ from the Cartesian product of the definable Dedekind completions $\overline{M}^n$.
\end{definition}

We can easily construct various locally o-minimal structures using the notion of simple product.
We first recall the definition of simple products.
\begin{definition}[\cite{KTTT}]
	Let $\mathcal M_1=(M_1,\ldots)$ and $\mathcal M_2=(M_2, \ldots)$ be model-theoretic structures.
	The simple product of $\mathcal M_1$ and $\mathcal M_2$ is the structure such that the underlying set is the Cartesian product $M:=M_1 \times M_2$, and a subset of $M^n$ is definable in the simple product if and only if it is a union of finitely many sets of the form:
	\begin{align*}
		\{((x_1,y_1),\ldots, (x_n,y_n)) \in M^n\;|\; (x_1,\ldots, x_n) \in X_1, (y_1,\ldots, y_n) \in X_2\},
	\end{align*}
	where $X_i$ are subsets of $M_i^n$ definable in $\mathcal M_i$ for $i=1,2$.
	When $\mathcal M_1$ and $\mathcal M_2$ have definable orders $<_1$ and $<_2$, respectively, we consider the lexicographical order $<$ on $\mathcal M$.
\end{definition}

\begin{remark}
	Every model of a theory $T$ is not necessarily a simple product even if $T$ has a model which is a simple product.
	The author introduced a procedure to construct a theory whose models are all simple products in \cite{Fuji_product}.
\end{remark}

We construct an almost weakly o-minimal structure which is not l-visceral.
It is also an example of $*$-locally weakly o-minimal structure by Proposition \ref{prop:almostimplies}. 
\begin{example}\label{ex:1}
	Set $\mathcal M_1=(\mathbb Z,<_{\mathbb Z})$ and let $\mathcal M_2=(M_2,<_2,\ldots)$ be a weakly o-minimal structure.
	Their simple product $\mathcal M=(M,<,\ldots)$ is an almost weakly o-minimal structure, but it is not l-visceral.
\end{example}
	\begin{proof}
	We set
	\begin{align*}
		&\myp(X)=\text{``}X \text{ is a union of a finite set and finitely many open convex sets''}.
	\end{align*}
	Let $X$ be a univariate bounded definable set of $M$.
	We can take $(a_1,b_1),  (a_2,b_2)\in M$ so that $(a_1,b_1) <  x < (a_2,b_2)$ for every $x \in X$ because $X$ is bounded.
	By the definition of simple product, there exists a definable subset $X_i$ of $M_2$ such that $(\{i\} \times M_2) \cap X = \{i\} \times X_i$ for every $a_1 \leq i \leq a_2$.
	We have $X=\bigcup_{a_1 \leq i \leq a_2}\{i\} \times X_i$.
	It is obvious $\myp(X)$ holds because $\myp(X_i)$ holds for every $a_1 \leq i \leq a_2$.
	We have shown that $\mathcal M$ is an almost weakly o-minimal structure.
	
	Take $b \in M_2$.
	The set $\mathbb Z \times \{b\}$ is definable in $\mathcal M$.
	It is infinite and discrete.
	It means that $\mathcal M$ is not l-visceral.
\end{proof}

We can construct a strongly locally weakly o-minimal structure which is not $*$-locally weakly o-minimal in the same manner as Example \ref{ex:1}.
\begin{example}\label{ex:2}
	Set $\mathcal M_1=(\mathbb Z,<_{\mathbb Z})$ and let $\mathcal M _2=(M_2,<_2,\ldots)$ be an almost weakly o-minimal structure.
	Assume that there exists a univariate $\mathcal M_2$-definable set $Z$ such that $Z \cap \{x \in M\;|\; x<_2a\}$ and $Z \cap \{x \in M\;|\; a<_2x\}$ is infinite and discrete for every $a \in M_2$. 
	Such an $\mathcal M_2$ exists thanks to Example \ref{ex:1}.
	The simple product of $\mathcal M_1$ and $\mathcal M_2$ is a strongly locally weakly o-minimal structure which is not $*$-locally weakly o-minimal.
\end{example}
\begin{proof}
	We set
	\begin{align*}
		&\myp(X)=\text{``}X \text{ is a union of a finite set and finitely many open convex sets''}.
	\end{align*}
	We first show that $\mathcal M$ is a strongly locally weakly o-minimal structure.
	Take an arbitrary point $(a,b) \in \mathbb Z \times M_2=M$.
	Take $b_1,b_2 \in M_2$ so that $b_1<b<b_2$ and set $I=\{x \in M\;|\; (a,b_1)<x<(a,b_2)\}$.
	For any univariate $\mathcal M$-definable set $X$ in $M$, the intersection $X \cap I$ is of the form $\{a\} \times Y$, where $Y$ is a bounded $\mathcal M_2$-definable subset of $M_2$.
	The property $\myp(Y)$ holds because $\mathcal M_2$ is almost weakly o-minimal.
	We have shown that $\myp(X \cap I)$ holds for every univariate $\mathcal M$-definable set $X$.
	It means that $\mathcal M$ is strongly locally weakly o-minimal.
	
	We next prove that $\mathcal M$ is not $*$-locally weakly o-minimal.
	Take $a_1 \in \mathbb Z$ and set $a_2=a_1+1$.
	Set $A=\{(x,y) \in M=\mathbb Z \times M_2\;|\; x \leq a_1\}$ and $B=\{(x,y) \in M=\mathbb Z \times M_2\;|\; x \geq a_2\}$.
	The pair $\overline{x}=(A,B)$ is a definable gap and an element in $\overline{M}$.
	Consider an $\mathcal M$-definable set $X=\mathbb Z \times Z$, where $Z$ is an $\mathcal M_2$-definable set given in the example.
	Let $I$ be an arbitrary interval with $\overline{x} \in \overline{I}$.
	We want to show that $\myp(X \cap I)$ does not hold.
	Let $J$ be an open interval contained in $I$.
	The property $\myp(X \cap I)$ does not hold if $\myp(X \cap J)$ does not hold.
	Therefore, by shrinking $I$ if necessary, we may assume that the left and right endpoints of $I$ are of the form $(a_1,b_1)$ and $(a_2,b_2)$, respectively, for some $b_1,b_2 \in M_2$.
	Since both $(b_1,\infty) \cap Z$ and $(-\infty,b_2) \cap Z$ are discrete and infinite, $X \cap I$ is discrete and infinite.
	It means that $\myp(X \cap I)$ does not hold.
\end{proof}

\section{Monotonicity}\label{sec:monotonicity}
We prove local monotonicity theorems for uniformly locally weakly o-minimal structures of the first/second kind and locally o-minimal structures enjoying the univariate $*$-continuity property.
We first study uniformly locally weakly o-minimal structures of the first/second kind.

\subsection{Uniform locally weakly o-minimal case}\label{sec:uniform}

We prove a local monotonicity theorem when the structure is uniformly locally weakly o-minimal of the second kind.
We first recall a well-known fact:
\begin{lemma}\label{lem:local_dim0}
	Consider a locally o-minima structure $\mathcal M=(M,<,\ldots)$.
	A definable subset of $M$ is discrete and closed if it has an empty interior.
\end{lemma}
\begin{proof}
	It follows from Proposition \ref{prop:char_lvisceral} because locally o-minimal structures are locally l-visceral.
%	Let $X$ be a definable subset of $M$ which has a nonempty interior.
%	Take an arbitrary point $x \in X$.
%	We can take an open interval $I$ such that $X \cap I$ is a finite set by Proposition \ref{prop:char_lvisceral}.
%	It means that $x$ is an isolated point in $X$.
%	We next take an arbitrary point $x \notin X$.
%	 We can take an open interval $I$ such that $X \cap I$ is a finite set.
%	 It means that $x$ is not an accumulation point of $X$.
%	 We have proven that $X$ is discrete and closed.
\end{proof}

We begin to prove a main result in this section following the argument in \cite[Section 3]{Fuji_uniform}.
We first prove the following lemma:

\begin{lemma}\label{lem:mono_key_lemma2}
	Let $\mathcal M=(M,<,\ldots)$ be a uniformly locally weakly o-minimal structure of the second kind.
	No definable functions defined on open intervals into $\overline{M}$ have local minimums throughout the intervals.
\end{lemma}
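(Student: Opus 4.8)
The plan is to argue by contradiction, reading ``local minimum at $a$'' in the strict sense: there is an open interval $J \ni a$ with $F(a) < F(t)$ for all $t \in J \setminus \{a\}$ (the non-strict reading fails, since a constant function would be a counterexample). So suppose some definable $F \colon I \to \overline{M}$ has a strict local minimum at every point of the open interval $I$. Writing $F(x) = \sup Y_x$ for a definable $Y \subseteq M^2$, the relations $F(x) < F(t)$, $F(x) = F(t)$ and $F(x) > F(t)$ are definable, hence so is the comparison set $C = \{(x,t) \in I^2 \colon F(t) > F(x)\}$. Because $\mathcal{M}$ is locally o-minimal (Proposition \ref{prop:local_ominmal}), the target is to exhibit a definable subset of $I$ that, on the one hand, must have empty interior---hence is discrete and closed by Lemma \ref{lem:local_dim0}---and, on the other hand, is forced to accumulate at a point of $I$; this contradiction will prove the lemma.

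Next I introduce the witnessing-endpoint functions. For $x \in I$ set $b(x) = \sup\{y \in I \colon y > x \text{ and } F(t) > F(x) \text{ for all } t \in (x,y)\}$ and $a(x)$ symmetrically on the left; both are definable into $\overline{M} \cup \{\pm\infty\}$, and strictness gives $a(x) < x < b(x)$, with $(a(x), b(x))$ the maximal interval on which $F > F(x)$ away from $x$. The key local observation I would establish is the propagation inequality: if $b(x) \in I$ then $F(b(x)) \le F(x)$ (and symmetrically for $a$). Indeed, by maximality there are points $t \ge b(x)$ arbitrarily close to $b(x)$ with $F(t) \le F(x)$; since $b(x)$ is itself a strict local minimum, any such $t > b(x)$ satisfies $F(b(x)) < F(t) \le F(x)$, while the remaining possibility forces $F(b(x)) \le F(x)$ directly.

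Here the hypothesis enters: applying uniform local weak o-minimality of the second kind to $C \subseteq M^{1+1}$ at a diagonal point yields an interval $I_0$ and a box $B$ such that the fibres $\{t \in I_0 \colon F(t) > F(x)\}$ are, uniformly for $x \in B$, finite unions of convex open sets and finite sets. This uniform finiteness is what keeps the witnessing intervals from fragmenting pathologically as the parameter $x$ varies, and it is the ingredient that lets me take limits of the endpoint functions on a subinterval. I would then consider the infimum $\mu$ of the values $F(x)$ and the definable level set $A$ on which this extremal value is (nearly) realized; the propagation inequality makes $A$ reproduce itself outward---each of its interior points feeds, via $a$ and $b$, further points of $A$ to its left and right---so $A$ has neither a largest nor a smallest element inside the relevant interval. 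Taking the supremum of $A$ in the Dedekind completion $\overline{M}$ then yields a point of $I$ at which $A$ accumulates, contradicting that $A$, having empty interior, is discrete and closed by Lemma \ref{lem:local_dim0}.

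The step I expect to be the main obstacle is exactly the extremal analysis in the last paragraph: attaining (or correctly locating, in $\overline{M}$) the infimum value, and making the accumulation step legitimate when the relevant suprema land on definable gaps rather than in $M$. Controlling the one-sided behaviour of the definable functions $a$ and $b$ near such a gap---without circularly invoking the monotonicity theorem that this lemma feeds into---is the delicate point, and it is where I would lean on the uniform fibrewise description furnished by uniform local weak o-minimality of the second kind, following the argument of \cite[Section 3]{Fuji_uniform}.
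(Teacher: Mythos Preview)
Your setup matches the paper's: the intervals $(a(x),b(x))$ are exactly the sets $U_x$ in the paper, and the propagation inequality $F(b(x))\le F(x)$ is the content of the paper's assertion (iii) (that $b\in U_a$ implies $U_a\supseteq U_b$). Where you diverge is the endgame, and that is where the proposal has a genuine gap.

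The paper does \emph{not} try to analyse the infimum of the values $F(x)$. Instead it works entirely on the domain side via the partial order $a\prec b \Leftrightarrow U_a\supsetneq U_b$: it shows each $C_a=\{x:x\prec a\}$ contains no interval, uses uniform local weak o-minimality of the second kind (on the \emph{parametrised} family $\{(a,x):x\in C_a\}$, not just on the single comparison set $C$) to shrink $I$ so that every $C_a$ is finite, then partitions $I\setminus K$ into the sets $\widetilde{a}$ of immediate $\prec$-successors. A second application of the uniformity makes each $\widetilde{a}$ finite, whence the set $Y$ of least elements of the $\widetilde{a}$-classes is infinite; a third application makes $Y\cap I$ a finite union of points and convex sets, and the contradiction comes from analysing the \emph{rightmost} convex piece of $Y$ using the order-theoretic facts (vi)--(viii). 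The point is that ``rightmost convex piece'' is available precisely because the structure is weakly o-minimal (finitely many convex pieces), and this is the one step that genuinely needs weak o-minimality rather than mere l-viscerality.

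Your proposed route---take the infimum $\mu$ of the $F$-values, look at a level set $A$, and force $A$ to accumulate---does not obviously close. The infimum of the range of $F$ lives in a completion of $\overline{M}$, not in $\overline{M}$ itself, and is not a priori definable; the ``nearly realised'' level set is never specified; and even granting the propagation $b(x)\in A$ whenever $x\in A$, the points $b(x)$ can march off to the right endpoint of $I$ rather than accumulate inside $I$. You correctly flag this as the main obstacle, but the paper's proof shows that the way around it is to abandon the value-infimum idea entirely in favour of the finite combinatorics of the $\prec$-order, where uniformity of the second kind is invoked three separate times to force finiteness of the relevant fibres.
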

\begin{proof}
	We prove the lemma in the same way as weakly o-minimal structures \cite{A}.
	We lead to a contradiction assuming that $f:I \rightarrow \overline{M}$ is a definable function on an open interval $I$ which have the local minimum throughout $I$.
	
	For any $a \in I$, set $U_a$ as follows:
	\begin{align*}
		U_a&=\{x \in I\;|\; x>a \text{ and } f(y)>f(a) \text{ for all } a < y \leq x\} \cup \{a \} \cup\\
		& \{x \in M\;|\; x<a \text{ and }   f(y)>f(a) \text{ for all } x \leq y <a\}\text{.}
	\end{align*}
	The definable set $U_a$ is convex and it is a neighborhood of the point $a$ because $f$ is locally minimal at the point $a$.
	The notation $a \prec b$ denotes the relation $U_a \varsupsetneq U_b$.
	We can prove the following assertions (i) through (ix) only using the assumption that $f$ has local minimums throughout $I$.
	We can find the proof of the following claims in \cite{A} and \cite[Lemma 3.2]{Fuji_uniform}.
	In \cite[Lemma 3.2]{Fuji_uniform}, $f$ is assumed to be injective, but we can check that the injectivity is not necessary.
	We omit the proofs here.
	\begin{enumerate}[(i)]
		\item $U_a \not= U_b$ if $a \not= b$;
		\item $a \in \myint(U_a)$;
		\item $a \prec b \Leftrightarrow b \in U_a$;
		\item $U_a \cap U_b \not= \emptyset \Rightarrow a \prec b \text{ or } b \prec a$;
		\item $b \prec a \text{ and } c \prec a \Rightarrow b \prec c \text{ or } c \prec b$;
		\item $a \prec b \prec c \text{ and } a < c \Rightarrow a \leq b $;
		\item $a \prec b \prec c \text{ and } a > c \Rightarrow a \geq b$;
		\item $a \prec b \prec c \text{ and } a < b \Rightarrow a<c$;
		\item the definable set 
		\begin{equation*}
			C_a = \{x \in I\;|\; x \prec a\}
		\end{equation*}
		does not contain an open interval for any $a \in I$.
	\end{enumerate}
	Consider the definable set $C=\{(a,x) \in I^2\;|\; x \in C_a\}$.
	By shrinking $I$ if necessary, we may assume that $C_a$ is a union of an open set and a finite set because $\mathcal M$ is uniformly locally weakly o-minimal of the second kind.
	By (ix), $C_a$ is a finite set.

	Consider the following sets:
	\begin{align*}
		K &= \{ x \in I\;|\; y \not\prec x \text{ for all } y \in I\}\text{ and }\\
		\widetilde{a} &= \{ x \in I\;|\; a \prec x \text{ and } \nexists y \in I \ a \prec y \prec x\}\text{,}
	\end{align*}
	where $a$ is an element of $I$.
	Using (iii), (v) and the fact that $C_a$ is finite, we can easily show the following equality:
	\begin{equation}\label{eq:5}
		I \setminus K = \displaystyle\mathop{\dot{\bigcup}}_{a \in I}\widetilde{a}\text{.}
	\end{equation}
	The symbol $\mathop{\dot{\bigcup}}$ represents disjoint union.
	We can also demonstrate that neither $K$ nor $\widetilde{a}$ contain an open interval without difficulty using assertions (ii) and (iii).
	The definable sets $K$ and $\widetilde{a}$ are locally finite because $\mathcal M$ is locally weakly o-minimal.
	Since $\mathcal M$ is a uniformly locally weakly o-minimal structure of the second kind, shrinking $I$ if necessary, we may assume that $\widetilde{a}$ is a finite set in the same manner as \cite[Lemma 3.2]{Fuji_uniform}.
	
	By equality (\ref{eq:5}), the family $\{ \widetilde{a} \}_{a \in I}$ is infinite because $I \setminus K$ is infinite and the set $\widetilde{a}$ is finite for any $a \in I$.
	
	We define a definable relation $E$ on $I \setminus K$ by
	\begin{equation*}
		E(a,b) \Leftrightarrow \mathcal M \models \exists c\ ( a \in \widetilde{c} \wedge b \in \widetilde{c})\text{.}
	\end{equation*}
	It is an equivalence relation by the equality (\ref{eq:5}).
	Set 
	\begin{equation*}
		Y=\{x \in I \setminus K\;|\;\  \mathcal M \models \forall y \in I \setminus K \ (E(x,y) \rightarrow x \leq y)\}\text{.}
	\end{equation*}
	The smallest element of $\widetilde{a}$ belongs to the set $Y$ for all $a \in I$.
	Therefore, the definable set $Y$ is an infinite set because the family $\{ \widetilde{a} \}_{a \in I}$ is infinite.
	We may assume that $Y$ is a finite union of points and open convex sets, shrinking the interval $I$ if necessary.
	We can lead to a contradiction in the same manner as \cite[Lemma 3.2]{Fuji_uniform} using (vi) through (viii). 
	We omit the details of the proof, but we give a comment.
	
	In the course of this proof, we get the rightmost convex set in $Y$ and it is possible because $Y$ is a union of a finite set  and finitely many open convex sets. 
	We need the assumption that $\mathcal M$ is a uniformly locally weakly o-minimal structure of the second kind at this step only.
	The other part of the proof works under the assumption that the structure is uniformly locally l-visceral of the second kind.
\end{proof}

We next recall the definition of local monotonicity.
\begin{definition}[Local monotonicity]
	Consider an expansion of a dense linear order without endpoints $\mathcal M=(M,<,\ldots)$.
	A function $f$ defined on an open subset $I$ of $M$ is \textit{locally constant} if, for any $x \in I$, there exists an open interval $J$ such that $x \in J \subseteq I$ and the restriction $f|_J$ of $f$ to $J$ is constant.
	
	A function $f$ defined on an open subset $I$ of $M$ is \textit{locally strictly increasing} if, for any $x \in I$, there exists an open interval $J$ such that $x \in J \subseteq I$ and $f$ is strictly increasing on the interval $J$.
	We define a \textit{locally strictly decreasing} function similarly. 
	A \textit{locally strictly monotone} function is a locally strictly increasing function or a locally strictly decreasing function.
	A \textit{locally monotone} function is locally strictly monotone or locally constant.
\end{definition}

We next prove the following lemma in a different way from \cite[Lemma 3.1]{Fuji_uniform}.
The author was inspired by \cite{DG} and the strategy employed there was used in our proof of the lemma.  
\begin{lemma}\label{lem:mono_key_lemma1}
	Consider a locally o-minimal structure $\mathcal M=(M,<,\ldots)$.
	Assume that no definable functions defined on open intervals into $\overline{M}$ have local minimums throughout the intervals.
	Let $f:I \to \overline{M}$ be a definable function from an open subset $I$ of $M$.
	Assume that, for each $a \in I$, there exists an open interval $J$ containing the point $a$ and contained in $I$ such that $f(x)<f(a)$ whenever $x \in J$ and $x<a$ and $f(x)>f(a)$ whenever $x \in J$ and $x>a$.
	Then there exists a discrete and definable subset $Z$ of $I$ such that the restriction of $f$ to $I \setminus Z$ is locally strictly increasing.
\end{lemma}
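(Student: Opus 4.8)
The plan is to isolate the set of points at which $f$ genuinely admits a strictly increasing neighborhood and to show that its complement has empty interior, whence it is discrete by Lemma \ref{lem:local_dim0}.

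First I would set
\[
G=\{a\in I\;|\;\text{there is an open interval }J\text{ with }a\in J\subseteq I\text{ and }f|_J\text{ strictly increasing}\}
\]
and put $Z=I\setminus G$. Both sets are definable, since ``$f$ is strictly increasing on $(p,q)$'' is expressible by $\forall s\,\forall t\,(p<s<t<q\rightarrow f(s)<f(t))$ and comparison of the values of $f$ in $\overline{M}$ is a first-order condition. If $a\in G$ is witnessed by an interval $(p,q)$, then every point of $(p,q)$ is witnessed by the same interval, so $G$ is open and $Z$ is closed. Moreover, for $a\in G$ one may shrink a witnessing interval away from the closed set $Z$, so that $f|_{I\setminus Z}$ is locally strictly increasing. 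Consequently the whole statement reduces to proving that $Z$ has empty interior: Lemma \ref{lem:local_dim0} then makes $Z$ discrete and closed, as required.

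Next I would argue by contradiction, assuming $J_0\subseteq Z$ for some open interval $J_0$, which we may take bounded. For $a\in J_0$ set
\[
\rho(a)=\sup\{y\in J_0\;|\;a<y\text{ and }f(t)>f(a)\text{ for all }t\in(a,y)\}\in\overline{M}.
\]
The hypothesis on $f$ gives $\rho(a)>a$. The key observation is that $\rho(a)$ can never be a point $c$ of $J_0$: applying the hypothesis at such a $c$ would force $f(c)>f(a)$ (for $z$ just left of $c$ one has $z\in(a,c)$, so $f(z)>f(a)$, while the hypothesis at $c$ gives $f(z)<f(c)$) and hence $f>f(a)$ on a neighborhood to the right of $c$, pushing the supremum past $c$. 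Therefore $\rho(a)$ is either the right endpoint of $J_0$ or a definable gap strictly inside $J_0$. Since $f$ is strictly increasing on $(p,q)$ exactly when $\rho(s)\geq q$ for all $s\in(p,q)$, the assumption $J_0\subseteq Z$ means that the set of $a$ for which $\rho(a)$ is an interior gap meets every open subinterval of $J_0$; that is, the ``downward jumps'' of $f$ accumulate densely.

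The remaining, and main, difficulty is to convert these densely occurring downward jumps into a contradiction with the hypothesis supplied by Lemma \ref{lem:mono_key_lemma2}, namely that no definable function on an open interval into $\overline{M}$ has local minimums throughout. The obstacle is precisely that, in the absence of definable completeness, each gap $\rho(a)$ may be approached in a complicated way and the intervals furnished by the hypothesis on $f$ degenerate as one approaches such a gap, so that $f$ itself exhibits no local minimum at any point of $M$. My plan is to manufacture, on a suitable subinterval, an auxiliary definable function into $\overline{M}$ whose values record the location (or the limiting value) of the next jump to the right --- built from the map $a\mapsto\rho(a)$ together with the values of $f$ immediately beyond each gap --- and to show that the density of the jumps forces this auxiliary function to attain a local minimum at every point of its domain, contradicting Lemma \ref{lem:mono_key_lemma2}. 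I expect the construction of this function and the verification of the everywhere-local-minimum property, which must be carried out purely order-theoretically and definably in the spirit of \cite{DG}, to be the technical heart of the argument.
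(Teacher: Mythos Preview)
Your reduction is correct and coincides with the paper's: the set $Z$ you define is exactly the paper's $\chi_1(I)\cup\chi_2(I)$, where $\chi_2(x)$ asserts that every left-interval $(x_2,x)$ contains points $y<z$ with $f(z)\le f(y)$, and $\chi_1$ is the symmetric right-hand condition. Your argument that $\rho(a)$ is never a point of $J_0$ is also fine. But you stop exactly where the content is, and the plan you sketch --- building an auxiliary function from $\rho$ together with ``values of $f$ immediately beyond each gap'' --- is both vague and more elaborate than what is actually needed.

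The paper fills the gap as follows. Rather than work on $Z$ directly, it treats $\chi_1(I)$ and $\chi_2(I)$ separately; by local o-minimality a putative interval $J_0\subseteq Z$ contains a subinterval lying in one of them, say $J\subseteq\chi_2(I)$. On $J$ one sets
\[
V_a=\{x<a: f>f(a)\text{ on }[x,a)\}\cup\{a\}\cup\{x>a: f>f(a)\text{ on }(a,x]\},\qquad g(a)=\inf V_a,
\]
so $g$ is the \emph{left} analogue of your $\rho$. The $\chi_2$-hypothesis at $a$ now does all the work: if $g(x)\le g(a)$ held for all $x$ in some $(b,a)$ with $b\in V_a$, pick $c<d$ in $(b,a)\subseteq V_a$ with $f(d)\le f(c)$ (this is exactly what $\chi_2(a)$ provides); then $c\notin V_d$, so $g(d)\ge c>b\ge g(a)$, contradiction. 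A symmetric argument using $\chi_2$ at a point just right of $a$ handles the other side. Hence $g$ has local minimums throughout $J$, contradicting the standing assumption from Lemma~\ref{lem:mono_key_lemma2}.

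The missing idea, then, is not to track where the next jump \emph{lands} or what $f$ does beyond it, but simply to record how far back the condition ``$f>f(a)$'' extends; once you isolate the $\chi_2$-side, the local-minimum property of $g$ is immediate.
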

\begin{proof}
	Consider two formulas defined below:
	\begin{align*}
		\chi_1(x) &= \forall x_1>x \ (\exists y,z \ (x<y<z<x_1) \wedge (f(z) \leq f(y)));\\
		\chi_2(x) &= \forall x_2<x \ (\exists y,z \ (x_2<y<z<x) \wedge (f(z) \leq f(y))).
	\end{align*}
	We first prove that $\chi_2(I):=\{x \in I\;|\; \mathcal M \models \chi_2(x) \}$ has an empty interior.
	Assume for contradiction that a nonempty open interval $J$ is contained in $\chi_2(I)$.
	Set 
	\begin{align*}
		V_a &:=\{x \in J\;|\; x<a \wedge \forall y \in [x,a) \ f(y)<f(a)\} \cup\\
		& \quad \{x \in J\;|\; x>a \wedge \forall y \in (a,x] \ f(y)>f(a)\} \cup \{a\} 
	\end{align*}
	for all $a \in J$.
	Observe that $V_a$ is definable and convex, and we have $a \in \myint(V_a)$ by the assumption.  
	Define a definable function $g:J \to \overline{M}$ by $g(a)=\inf V_a$.
	We show that $g$ has local minimums throughout $J$, which contradicts the assumption.
	
	Fix an arbitrary point $a \in J$.
	Since $\mathcal M$ is locally o-minimal by the assumption, there exists $b \in J$ such that $b<a$ and exactly one of the inequalities $g(x)>g(a)$ and $g(x) \leq g(a)$ holds on the interval $(b,a)$.
	Assume for contradiction that the inequality $g(x) \leq g(a)$ holds for all $b<x<a$.
	We may assume that $b \in V_a$ by choosing a larger $b$ if necessary.
	Since $\mathcal M \models \chi_2(a)$, we can choose $c,d \in V_a$ such that $b<c<d<a$ and $f(d) \leq f(c)$.
	The definition of $g$ implies that $b<c \leq g(d)$.
	We have $g(a) \leq b<g(d)$ because $b \in V_a$.
	This contradicts the assumption that $g(x) \leq g(a)$ holds for all $b<x<a$.
	We have demonstrated that $g(x)>g(a)$ for all $b<x<a$.
	Take $b' \in J$ so that $b'>a$. 
	We can prove $g(x)>g(a)$ for all $a<x<b'$ in the same way as above using the facts $\mathcal M \models \chi_2(b')$ and $a \in V_a$.
	We have demonstrated that $g(a)$ is a local minimum of $g$.
	Since $a$ is arbitrary, $g$ has local minimums throughout $J$, which is a contradiction to the assumption.
	
	We have proven that $\chi_2(I)$ has an empty interior.
	We can prove that $\chi_1(I):=\{x \in I\;|\; \mathcal M \models \chi_1(x) \}$ has an empty interior similarly. 
	The definable set $Z:=\{x \in I\;|\; \mathcal M \models \chi_1(x) \vee \chi_2(x)\}=\chi_1(I) \cup \chi_2(I)$ does not contain an open interval because of local o-minimality.
	Therefore $Z$ is discrete and closed by Lemma \ref{lem:local_dim0}.
	It is obvious that the restriction of $f$ to $I \setminus Z$ is locally increasing by the definition of $Z$.
\end{proof}

\begin{lemma}\label{lem:mono_key_lemma3}
	Let $\mathcal M=(M,<,\ldots)$ be a locally o-minimal structure.
	Strictly monotone definable functions defined on open intervals which are discontinuous everywhere have discrete images.
\end{lemma}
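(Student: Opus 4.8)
The plan is to argue by contraposition on the interior of the image. Since $f$ is definable, its image $f(I)$ is a definable subset of $M$ (the projection of the graph), and because $\mathcal M$ is locally o-minimal, Lemma \ref{lem:local_dim0} guarantees that $f(I)$ is discrete (and closed) as soon as it has empty interior. So it suffices to establish the following claim: a strictly monotone definable function whose image contains an open interval must be continuous at some point of its domain. The contrapositive of this claim, combined with Lemma \ref{lem:local_dim0}, yields the lemma. At the outset I would reduce to the strictly increasing case, the strictly decreasing case being entirely symmetric (one simply reverses the roles of left and right).

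To prove the claim, suppose the image $f(I)$ contains an open interval $J$, and fix a point $a \in I$ with $f(a) \in J$. The key observation is purely order-theoretic: because $J \subseteq f(I)$, every value in $J$ is actually attained by $f$, so one can extract preimages of image values lying arbitrarily close to $f(a)$. Concretely, given an arbitrary open interval $(u,v)$ containing $f(a)$, I would first replace it by $(u,v) \cap J$, which is again an open interval containing $f(a)$ and contained in $J$; then, using density of the order, I would pick $u',v'$ with $u < u' < f(a) < v' < v$ and $u',v' \in J \subseteq f(I)$. Since these values are attained, there are $p,q \in I$ with $f(p)=u'$ and $f(q)=v'$, and strict monotonicity forces $p<a<q$. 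For every $x \in (p,q)$ we then obtain $u' < f(x) < v'$, so $f((p,q)) \subseteq (u,v)$. This exhibits continuity of $f$ at $a$.

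Continuity at $a$ contradicts the hypothesis that $f$ is discontinuous everywhere, which establishes the contrapositive: $f(I)$ has empty interior, and Lemma \ref{lem:local_dim0} then finishes the proof. I do not anticipate a serious obstacle here. The argument invokes local o-minimality only through Lemma \ref{lem:local_dim0}, and the continuity step is elementary and never refers to the definable Dedekind completion $\overline{M}$, which is what keeps the proof clean. The one point deserving care is the surjectivity of $f$ onto $J$: one must remember to exploit $J \subseteq f(I)$ so that the nearby values $u',v'$ possess genuine preimages in $I$, for otherwise the preimage interval $(p,q)$ would be unavailable and the continuity argument would collapse.
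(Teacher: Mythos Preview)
Your argument is correct. The paper itself does not give a proof of this lemma and simply refers to \cite[Lemma 3.3]{Fuji_uniform}, so there is no in-paper proof to compare against; your route via Lemma~\ref{lem:local_dim0} and the elementary observation that a strictly monotone function hitting an open interval of values must be continuous at any preimage of that interval is exactly the standard argument, and it goes through cleanly in the locally o-minimal setting.
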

\begin{proof}
	See \cite[Lemma 3.3]{Fuji_uniform}.
\end{proof}

We are now ready to prove the first main theorem of this section.
\begin{theorem}[Parameterized local monotonicity theorem]\label{thm:mono}
	Consider a uniformly locally weakly o-minimal structure of the second kind $\mathcal M=(M,<,\ldots)$.
	Let $A \subseteq M$ and $P \subseteq M^n$ be definable subsets.
	Assume that $A$ is open.
	Let $f:A \times P \rightarrow \overline{M}$ be a definable function.
	For any $(a,b,p) \in M \times M \times M^n$, any sufficiently small open intervals $I$ and $J$ with $a \in I$ and $b \in J$ and any sufficiently small open box $B$ with $p \in B$, the following assertions hold true:
	\begin{enumerate}[(1)]
		\item The set $f^{-1}(\overline{J}) \cap (I \times \{z\})$ is a finite union of points and open convex sets for all $z \in B$.
		\item There exists a mutually disjoint definable partition $\{X_f, X_-, X_+, X_c\}$ of $f^{-1}(\overline{J}) \cap (I \times B)$ satisfying the following conditions for any $z \in B \cap P$:
		\begin{enumerate}[(i)]
			\item the definable set $X_f \cap (f^{-1}(\overline{J}) \cap (I \times \{z\}))$ is a finite set;
			\item the definable set $X_c \cap (f^{-1}(\overline{J}) \cap (I \times \{z\}))$ is a finite union of open convex sets and $f$ is locally constant on the set;
			\item the definable set $X_- \cap (f^{-1}(\overline{J}) \cap (I \times \{z\}))$ is a finite union of open convex sets and $f$ is locally strictly decreasing  on the set;
			\item the definable set $X_+ \cap (f^{-1}(\overline{J}) \cap (I \times \{z\}))$ is a finite union of open convex sets and $f$ is locally strictly increasing on the set.
		\end{enumerate}
	\end{enumerate}
	Furthermore, we can take $J=M$ and $B=M^n$ when $\mathcal M$ is uniformly locally weakly o-minimal of the first kind.
\end{theorem}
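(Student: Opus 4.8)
The plan is to reduce the parametrized statement to a fiberwise monotonicity analysis, carried out uniformly in the parameter $z$ by means of the uniform local weak o-minimality of the second kind. Fix $(a,b,p)$ and a bounded open interval $J\ni b$, and set $W=f^{-1}(\overline{J})$, a definable subset of $A\times P\subseteq M\times M^{n}$. For $z\in P$ write $g_z:=f(\cdot,z)$. I will define four pairwise disjoint definable sets partitioning $W$: let $W^{\circ}=\{(x,z)\in W: x\in\myint(W_z)\}$, and set $X_{+}$ (resp. $X_{-}$, $X_{c}$) to be the points $(x,z)\in W^{\circ}$ at which $g_z$ is locally strictly increasing (resp. locally strictly decreasing, locally constant), and $X_{f}=W\setminus(X_{+}\cup X_{-}\cup X_{c})$. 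All of these are first-order definable, because inequalities and equalities between the $\overline{M}$-values $f(x,z)$ and $f(x',z)$ are expressible by formulas over $M$ (comparing the suprema defining $f$, exactly as in Lemma \ref{lem:*-technical}). I will also record once and for all that every hypothesis here is symmetric under reversing the order of $\mathcal M$, so that alongside Lemma \ref{lem:mono_key_lemma2} (no local minimums) and Lemma \ref{lem:mono_key_lemma1} I may freely use their analogues for local maximums and for the locally strictly \emph{decreasing} direction.

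The conceptual core is the following fiberwise claim: for each $z$ the fibers $X_{+,z},X_{-,z},X_{c,z}$ are open while $X_{f,z}$ has empty interior. Openness is immediate from the definitions (if $g_z$ is, say, strictly increasing on an open interval $J_0\subseteq W_z$ containing $x$, the same interval witnesses the condition at every one of its points). For the empty-interior assertion, suppose an open interval $(c,d)\subseteq\myint(W_z)$ consisted entirely of points of $X_{f,z}$. Since $\mathcal M$ is locally o-minimal by Proposition \ref{prop:local_ominmal}, each point of $(c,d)$ acquires a well-defined left-type and right-type in $\{<,=,>\}$, recording the comparison of $g_z$ with $g_z(x)$ just to the left and just to the right. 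The type $(<,>)$ is the increasing hypothesis of Lemma \ref{lem:mono_key_lemma1}, so off a discrete set these points lie in $X_{+}$; the type $(>,<)$ is handled by the reversed analogue and lies in $X_{-}$; the types $(>,>)$ and $(<,<)$ are local minimums and local maximums, excluded from having interior by Lemma \ref{lem:mono_key_lemma2} and its reversed analogue; and each mixed type involving $=$ forces $g_z$ to be constant on a subinterval (the side on which $g_z\equiv g_z(x)$ propagates constancy), whence those points are either in $X_{c}$ or of empty interior. Thus $(c,d)$ would be covered by finitely many definable sets of empty interior, each closed and nowhere dense by Lemma \ref{lem:local_dim0}, which is impossible.

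Next I uniformize over the parameter. After the coordinate swap putting the $M$-variable last, I apply uniform local weak o-minimality of the second kind to each of the five definable sets $W,X_{+},X_{-},X_{c},X_{f}$ at the base point $a$ (last coordinate) and $p$ (first $n$ coordinates); intersecting the finitely many resulting intervals and boxes yields a single open interval $I\ni a$ and open box $B\ni p$ such that $\myp$ holds on each of the five fibers for every $z\in B$, and this persists for every smaller interval and box since $\myp$ is inherited under intersection with a subinterval. Now fix $z\in B\cap P$. Part (1) is exactly $\myp(W_z\cap I)$. For (i), the set $X_{f,z}\cap I$ has empty interior (any interior point would lie in $\myint(W_z)$ and contradict the claim above) and satisfies $\myp$, so being a finite union of points and convex open sets with empty interior it is finite. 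For (ii)--(iv), the fibers $X_{c,z}\cap I$, $X_{-,z}\cap I$, $X_{+,z}\cap I$ are open and satisfy $\myp$, hence are finite unions of open convex sets, and the asserted local constancy/monotonicity holds by construction.

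For the final clause, when $\mathcal M$ is uniformly locally weakly o-minimal of the \emph{first} kind I replace the second-kind uniformization by the first-kind one, obtaining $B=M^{n}$; since the $\myp$ properties then hold at every parameter simultaneously, restricting the target is no longer needed for the argument and one may take $J=M$, so that $W$ is the entire domain $A\times P$ and $\overline{J}=\overline{M}$ imposes no constraint. I expect the main obstacle to be precisely the point that, without definable completeness, ``empty interior'' no longer yields ``finite'': a discrete closed definable set may be infinite. This is overcome only by coupling the empty-interior conclusion of the type analysis with the $\myp$ property, and crucially by obtaining that property \emph{uniformly across the box} $B$, which is exactly the strength supplied by uniform local weak o-minimality of the second kind; the secondary difficulty is verifying the empty-interior claim itself, i.e. organizing the nine pointwise types so that each reduces to one of the no-local-extremum lemmas, to Lemma \ref{lem:mono_key_lemma1}, or to local constancy.
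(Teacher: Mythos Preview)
Your argument is correct and reaches the conclusion, but the organization differs from the paper's in an interesting way. The paper proceeds in two stages: first (Claim~1) it separates the locally constant locus $X_c$ from a locally \emph{injective} locus $X_n$, invoking a separate application of uniformity to the set $F=\{(x,x',z):f_z(x)=f_z(x')\}$ to show that $f_z$ is finite-to-one off $X_c$ and that $(X_n)_z$ is dense in the remainder; only then (Claim~2) does it perform a four-type analysis on $X_n$ via the sets $X_-',X_+',X_{\max},X_{\min}$. You instead define the final partition $X_+,X_-,X_c,X_f$ directly and run a single nine-type analysis to show $(X_f)_z$ has empty interior, bypassing the finite-to-one step entirely. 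Your route is more economical, since the mixed types involving ``$=$'' are dispatched by an elementary contradiction rather than by a density argument; the paper's staging, on the other hand, isolates the role of Lemma~\ref{lem:mono_key_lemma2} more visibly and mirrors the classical o-minimal proof more closely.

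One small point: you fix $J$ at the outset and then locate $I,B$ depending on this $J$. For assertion~(1) the theorem is meant to hold for all sufficiently small $J$ with a \emph{single} choice of thresholds for $I,B$; shrinking $J$ replaces $W=f^{-1}(\overline{J})$ by a smaller set whose fibers need not inherit property $\mathfrak P$ from those of $W$. The paper handles this by treating the endpoints of $J$ as two extra parameters, applying second-kind uniformity to the set $\{(x,y_1,y_2,z):y_1<f(x,z)<y_2\}$ at $(a,b,b,p)$, and extracting $J$ afterward. Your argument is easily repaired the same way, and for assertion~(2) neither proof needs more than persistence under shrinking $I$ and $B$.
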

\begin{proof}
	Consider the definable set 
	\begin{equation*}
		X=\{(x,y_1,y_2,z) \in M^3 \times P\;|\; y_1 < b, y_2 > b, y_1 < f(x,z) <y_2\}\text{.}
	\end{equation*}
	Since $\mathcal M$ is a uniformly locally weakly o-minimal structure of the second kind, there exist an open interval $I$ with $a \in I$, open intervals $J_1$, $J_2$ containing $b$ and an open box $B$ with $p \in B$ such that the definable set $X_{(y_1,y_2,z)} \cap I$ is a finite union of points and open convex sets for any $y_1 \in J_1$, $y_2 \in J_2$ and $z \in B$.
	Here, the notation $X_ {(y_1,y_2,z)}$ denotes the fiber of $X$ at the point  $(y_1,y_2,z)$.
	Take $b_1 \in J_1$ and $b_2 \in J_2$ with $b_1 <b$ and $b_2>b$, and set $J=(b_1,b_2)$.
	Then, the set $f^{-1}(\overline{J}) \cap (I \times \{z\})$ is a finite union of points and open convex sets for all $z \in B \cap P$. 
	We have shown the existence of $I$, $J$ and $B$ satisfying the condition (1) of the theorem.
	Note that, even if we shrink $I$, $J$ and $B$, the condition (1) remains true.
	
	When $\mathcal M$ is uniformly locally weakly o-minimal of the first kind, we can take $J=M$ and $B=M^n$ in the above construction.
	In fact, we consider $A \times P$ instead of $X$ in this case.
	We shrink $I$, $J$ and $B$ several times in the rest of the proof.
	For the proof of `furthermore' part, we have only to check that we can take $J=M$ and $B=M^n$ every time when we shrink $I$, $J$ and $B$.
	These checks are done though they are not explicitly described in the rest of proof.
	
	We prove the following claim.
	\medskip
	
	\textbf{Claim 1.}
	Shrinking $I$, $J$ and $B$ if necessary, there exists a partition $f^{-1}(\overline{J}) \cap (I \times B) = X_f' \cup X_c \cup X_n$ such that, for any $z \in B \cap P$,
	\begin{enumerate}[(a)]
		\item the definable set $X_f' \cap (f^{-1}(\overline{J}) \cap (I \times \{z\}))$ is a finite set;
		\item the definable set $X_c$ satisfies the condition (ii) of the theorem; 
		\item the definable set $X_n \cap (f^{-1}(\overline{J}) \cap (I \times \{z\}))$ is a finite union of open convex sets and $f$ is locally injective on the open convex sets.
	\end{enumerate}
	Here, a function $g:I \rightarrow \overline{M}$ is called \textit{locally injective} if, for any $x \in I$, there exists an open interval $I'$ such that  $x \in I' \subseteq I$ and the restriction of $g$ to $I'$ is injective.
	\begin{proof}[Proof of Claim 1]
%		Set $\Phi=\{(x,y,z) \in I \times J \times (B \cap P)\;|\; y=f(x,z)\}$.
%		Since $\mathcal M$ is a uniformly locally weakly o-minimal structure of the second kind, we may assume that the fiber $\Phi_{(y,z)}$ is a union of finitely many points and open convex sets for any $y \in J$ and $z \in B \cap P$.
		The notation $f_z$ denotes the function given by $f(\cdot,z)$.
		Set 
		\begin{align*}
			X'_c&=\{(x,z) \in I \times (B \cap P)\;|\; x \in I \cap f_z^{-1}(\overline{J}) \text{ and }\exists x_1,x_2 \in I, x_1 < x < x_2 \text{ and }\\
			& (x' \in I \cap f_z^{-1}(\overline{J})) \wedge (f_z(x)=f_z(x')) \text{ for all } x' \text{ with } x_1 < x' < x_2\}\text{.}
		\end{align*}
		We put $X_c=\{(x,z) \in X'_c\;|\; x \in \myint((X'_c)_z)\}$.
		The set $X_c$ clearly satisfies the condition (b) of the claim by shrinking $I$ and $B$ if necessary.
		
		Set $E=\{(x,z) \in I \times B\;|\; x \in \partial((X_c)_z)\}$ and $Y=(f^{-1}(\overline{J}) \cap (I \times B)) \setminus (E \cup X_c)$.
		Note that the fiber $Y_z$ is a finite union of open convex sets for all $z \in B$.
		We consider the definable set $F=\{(x,x',z) \in I \times I\times P\;|\; f_z(x)=f_z(x') \text{ and } (x,z) \in Y\ \}$.
		Shrinking $I$ and  $B$ if necessary, we may assume that the fiber $F_{(x',z)}$ is a finite union of points and open convex sets for any $x' \in I$ and $z \in B \cap P$.
		If the fiber $F_{(x',z)}$ contains an open convex set, the function $f_z$ is constant on the interval.
		It contradicts the definition of $Y$.
		The fiber $F_{(x',z)}$ is a finite set; that is, the function $f_z$ is finite-to-one on the fiber $Y_z$ for any $z \in B$.
		
		Set $X_n=\{(x,z) \in Y\;|\; \exists x_1 < x, \exists x_2>x, f_z \text{ is injective on the interval }(x_1,x_2)\}$.
		We showed that $(X_n)_z$ is dense in $Y_z$ for any $z \in B$ in Claim 1 of \cite[Theorem 3.2]{Fuji_uniform}.
		We can prove that $(X_n)_z$ is dense in $Y_z$ for any $z \in B$ in the same manner.
		We omit the details.

		We may assume that $(X_n)_z$ and $(Y \setminus X_n)_z$ are unions of finitely many open convex sets and a finite set for any $z \in B$ by shrinking $I$ and $B$ if necessary.
		Note that $(Y \setminus X_n)_z$ is a finite set because $(X_n)_z$ is dense in $Y_z$.
		The definable set $X_n$ satisfies the condition (c) of the claim.
		Set $X_f'=E \cup (Y \setminus X_n)$, then the set $X_f'$ satisfies the condition (a) of the claim.
	\end{proof}

\textbf{Claim 2.}
Shrinking $I$ and $B$ if necessary, there exists a partition $X_n = X_f'' \cup X_+ \cup X_-$ such that, for any $z \in B$, 
\begin{enumerate}[(a)]
	\item the definable set $X_f'' \cap (f^{-1}(\overline{J}) \cap (I \times \{z\}))$ is a finite set;
	\item the definable set $X_- \cap (f^{-1}(\overline{J}) \cap (I \times \{z\}))$ is a finite union of open convex sets and $f$ is locally strictly decreasing on the open convex sets;  
	\item the definable set $X_+ \cap (f^{-1}(\overline{J}) \cap (I \times \{z\}))$ is a finite union of open convex sets and $f$ is locally strictly increasing on the open convex sets.
\end{enumerate}
\begin{proof}[Proof of Claim 2]
	Define the definable subsets $X_-'$, $X_+'$, $X_{\text{max}}$ and  $X_{\text{min}}$ of $X_n$ as follows:
	\begin{align*}
		X_-' &= \{(x,z) \in X_n\;|\;\exists x_1<x\ \exists x_2>x\ (x_1,z) \in X_n, (x_2,z) \in X_n, \\
		& \qquad \forall x' ((x_1<x'<x) \rightarrow (f(x',z)>f(x,z)) \wedge (x',z) \in X_n),\\
		& \qquad \forall x' ((x<x'<x_2) \rightarrow (f(x,z)>f(x',z)) \wedge (x',z) \in X_n)\}\\
		X_+' &= \{(x,z) \in X_n\;|\;\exists x_1<x\ \exists x_2>x\ (x_1,z) \in X_n, (x_2,z) \in X_n, \\
		& \qquad \forall x' ((x_1<x'<x) \rightarrow (f(x',z)<f(x,z)) \wedge (x',z) \in X_n),\\
		& \qquad \forall x' ((x<x'<x_2) \rightarrow (f(x,z)<f(x',z)) \wedge (x',z) \in X_n)\}\\
		X_{\text{max}}&= \{(x,z) \in X_n\;|\;\exists x_1<x\ \exists x_2>x\ (x_1,z) \in X_n, (x_2,z) \in X_n, \\
		& \qquad \forall x' ((x_1<x'<x) \rightarrow (f(x',z)<f(x,z)) \wedge (x',z) \in X_n),\\
		& \qquad \forall x' ((x<x'<x_2) \rightarrow (f(x,z)>f(x',z)) \wedge (x',z) \in X_n)\}
	\end{align*}
	\begin{align*}	
		X_{\text{min}} &= \{(x,z) \in X_n\;|\;\exists x_1<x\ \exists x_2>x\ (x_1,z) \in X_n, (x_2,z) \in X_n, \\
		& \qquad \forall x' ((x_1<x'<x) \rightarrow (f(x',z)>f(x,z)) \wedge (x',z) \in X_n),\\
		& \qquad \forall x' ((x<x'<x_2) \rightarrow (f(x,z)<f(x',z)) \wedge (x',z) \in X_n)\}
	\end{align*}
	Since $\mathcal M$ is locally o-minimal by Proposition \ref{prop:local_ominmal} and $f$ is locally injective on $X_n$, we have 
	\begin{equation*}
		X_n = X_-' \cup X_+' \cup X_{\text{max}} \cup X_{\text{min}}\text{.}
	\end{equation*}
	Set $E'=\{(x,z)\;|\; x \in ((X_-')_z \setminus \myint((X_-')_z)) \cup ((X_+')_z \setminus \myint((X_+')_z))\}$.
	Set $X_+:=\{(x,z) \in X_+'\setminus E'\;|\; f_z \text{ is locally strictly increasing}\}$, which satisfies the condition (c) of Claim 2 by shrinking $I$ and $B$ if necessary.
	Note that $(X'_+ \setminus X_+)_z$ is discrete and closed by Lemma \ref{lem:mono_key_lemma1} and Lemma \ref{lem:mono_key_lemma2}.
	We may assume that $(X'_+ \setminus X_+)_z$ is a finite set by shrinking $I$ and $B$ again.
	We define $X_-$ in the same manner and we may assume that $(X'_- \setminus X_-)_z$ is a finite set for the same reason as above.
	
	Shrinking $I$ and $B$ if necessary, the fiber $(X_{\text{min}})_z$ is a finite set for any $z \in B$ by Lemma \ref{lem:mono_key_lemma2}.
	In the same way, we may assume that $(X_{\text{max}})_z$ is a finite set.
	The definable set $X'_f=E' \cup (X'_+ \setminus X_+) \cup (X'_- \setminus X_-) \cup X_{\text{min}} \cup X_{\text{max}}$ satisfies the condition (a) of Claim 2.
\end{proof}

Claim 1 and Claim 2 obviously imply the theorem.
\end{proof}

We give two corollaries of Theorem \ref{thm:mono} announced in Section \ref{sec:intro}.
They treat the cases in which the structure is a uniformly locally weakly o-minimal structure of the first and second kind, respectively.
The following corollary is not proved in \cite{Fuji_uniform} even when the structure is uniformly locally o-minimal of the first kind.

\begin{corollary}\label{cor:mono1}
Consider a uniformly locally weakly o-minimal structure of the first kind $\mathcal M=(M,<,\ldots)$.
Let $A \subseteq M$ and $P \subseteq M^n$ be definable subsets.
Let $f:A \times P \rightarrow \overline{M}$ be a definable function.
There exists a decomposition $A \times P = X_+ \cup X_- \cup X_c \cup X_d$ satisfying the following for each $z \in P$:
\begin{enumerate}[(i)]
	\item the definable set $(X_d)_z$ is discrete and closed;
	\item the definable set $(X_c)_z$ is open and $f$ is locally constant on the set;
	\item the definable set $(X_-)_z$ is open and $f$ is locally strictly decreasing  on the set;
	\item the definable set $(X_+)_z$ is open and $f$ is locally strictly increasing on the set.
\end{enumerate}
\end{corollary}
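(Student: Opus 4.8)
The plan is to build the four pieces of the decomposition directly and globally out of the pointwise monotonicity behaviour of the fibres $f_z := f(\cdot,z)$, and then to invoke the parametrised Theorem \ref{thm:mono} (in its first-kind form, where we may take $J=M$ and $B=M^n$) only to control the exceptional piece $X_d$. Note first that a uniformly locally weakly o-minimal structure of the first kind is locally weakly o-minimal, hence locally o-minimal by Proposition \ref{prop:local_ominmal}, so Lemma \ref{lem:local_dim0} is available throughout.

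First I would set
\begin{align*}
X_c &= \{(x,z) \in A \times P : \exists\, J \ni x \text{ open},\ J \subseteq A,\ f_z|_J \text{ constant}\},\\
X_+ &= \{(x,z) \in A \times P : \exists\, J \ni x \text{ open},\ J \subseteq A,\ f_z|_J \text{ strictly increasing}\},\\
X_- &= \{(x,z) \in A \times P : \exists\, J \ni x \text{ open},\ J \subseteq A,\ f_z|_J \text{ strictly decreasing}\},
\end{align*}
and $X_d = (A \times P) \setminus (X_c \cup X_+ \cup X_-)$. Each of these is definable, and $X_c, X_+, X_-$ are pairwise disjoint because on the intersection of two witnessing intervals a fibre cannot be simultaneously constant and strictly monotone, nor strictly increasing and strictly decreasing. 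By construction $(X_c)_z$, $(X_+)_z$, $(X_-)_z$ are open (the witnessing interval $J$ serves every one of its points) and $f$ has the prescribed local behaviour on each, so conditions (ii)--(iv) are immediate. Requiring $J \subseteq A$ also forces every point of $A \setminus \myint(A)$ into $X_d$, which is harmless.

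The substantive step is condition (i): for each $z \in P$ the fibre $(X_d)_z$ is discrete and closed. By Lemma \ref{lem:local_dim0} it suffices to prove that $(X_d)_z$ has empty interior. Suppose to the contrary that some open interval $I_0 \subseteq (X_d)_z$; then $I_0 \subseteq A$ and, being open, $I_0 \subseteq \myint(A)$. I would apply Theorem \ref{thm:mono} to the restriction of $f$ to the open set $\myint(A) \times P$ at an arbitrary point $a \in I_0$: in the first-kind case we may take $J=M$ and $B=M^n$, so the theorem produces a small interval $I \ni a$, which I shrink to lie inside $I_0$, together with a partition $\{X_f^{\mathrm{thm}}, X_c^{\mathrm{thm}}, X_-^{\mathrm{thm}}, X_+^{\mathrm{thm}}\}$ of $(\myint(A)\cap I)\times M^n = I \times M^n$ whose fibre over $z$ enjoys the finiteness and local-monotonicity properties (1) and (2). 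Every point of $(X_c^{\mathrm{thm}} \cup X_-^{\mathrm{thm}} \cup X_+^{\mathrm{thm}})$ over $z$ carries a fibrewise witness of local constancy or strict monotonicity on an interval inside $\myint(A)$, hence lies in the corresponding global set $X_c$, $X_-$ or $X_+$; but $I \subseteq (X_d)_z$ meets none of these. Therefore $I$ is contained in the fibre of $X_f^{\mathrm{thm}}$ over $z$, which property (2)(i) declares finite --- contradicting that $I$ is an open interval. Thus $(X_d)_z$ has empty interior, and Lemma \ref{lem:local_dim0} finishes condition (i).

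The only point needing genuine care is the translation between the local partition of Theorem \ref{thm:mono} and the intrinsic global sets: one must verify that ``$f$ is locally constant (resp.\ strictly monotone) on the set'' in the theorem really places each \emph{individual} point into the globally defined $X_c$, $X_+$ or $X_-$, together with a witnessing interval sitting inside $A$. This matching of the two senses of ``locally'' and the bookkeeping of witnessing intervals is where I expect the main, albeit routine, obstacle to lie; once it is settled, the empty-interior argument and the appeal to Lemma \ref{lem:local_dim0} go through without difficulty.
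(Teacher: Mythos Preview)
Your proof is correct and follows essentially the same approach as the paper's: both define $X_c,X_+,X_-$ globally via the fibrewise local monotonicity behaviour of $f_z$ (your requirement $J\subseteq A$ is equivalent to the paper's restriction to $\myint(A)\times P$), set $X_d$ to be the complement, and then invoke the ``furthermore'' part of Theorem~\ref{thm:mono} to show that the theorem's local pieces $X_c^{\mathrm{thm}},X_\pm^{\mathrm{thm}}$ feed into the global $X_c,X_\pm$, forcing $(X_d)_z$ to have empty interior and hence to be discrete and closed by Lemma~\ref{lem:local_dim0}. The only differences are cosmetic: you phrase the empty-interior step as a contradiction while the paper argues directly, and your partition is of $I\times P$ rather than $I\times M^n$ (a harmless slip, since the theorem's conclusions are stated for $z\in B\cap P$).
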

\begin{proof}
	Set $A'=\myint(A)$.
	Define the definable sets $X_c$, $X_+$, $X_-$ as follows:
	\begin{align*}
		X_c &= \{(x,z) \in A' \times P\;|\; f_z \text{ is constant on an open interval } I \\
		& \qquad \text { containing the point }x\}\\
		X_+ &= \{(x,z) \in A' \times P\;|\; f_z \text{ is strictly increasing on an open interval } I \\
		& \qquad \text { containing the point }x\}\\
		X_- &= \{(x,z) \in A' \times P\;|\; f_z \text{ is strictly decreasing on an open interval } I \\
		& \qquad \text { containing the point }x\}
	\end{align*}
	It is obvious that $X_c$, $X_+$ and $X_-$ satisfy conditions (ii) through (iv).
	We set $X_d=(A \times P) \setminus (X_c \cup X_+ \cup X_-)$.
	Fix a point $x \in A'$.
	We can take an open interval $I$ containing the point $x$ such that $(X_d \cap A')_z \cap I$ is a finite set for each $z \in P$ by the `furthermore' part of Theorem \ref{thm:mono}.
	It means that $(X_d \cap A')_z$ has an empty interior. 
	It is obvious that $A \setminus A'$ has an empty interior.
	The definable set $(X_d)_z =  (X_d \cap A')_z \cup (A \setminus A')$ has an empty interior.
	The definable set $(X_d)_z$ is discrete and closed by Lemma \ref{lem:local_dim0} because $\mathcal M$ is locally o-minimal by Proposition \ref{prop:local_ominmal}. 
\end{proof}

We may assume continuity in Theorem \ref{thm:mono} as follows:
\begin{corollary}\label{cor:mono2}
	Consider a uniformly locally weakly o-minimal structure of the second kind $\mathcal M=(M,<,\ldots)$.
	Let $A \subseteq M$ and $P \subseteq M^n$ be definable subsets.
	Assume that $A$ is open.
	Let $f:A \times P \rightarrow M$ be a definable function.
	For any $(a,b,p) \in M \times M \times M^n$, any sufficiently small open intervals $I$ and $J$ with $a \in I$ and $b \in J$ and any sufficiently small open box $B$ with $p \in B$, the following assertions hold true:
	\begin{enumerate}[(1)]
		\item The set $f^{-1}(J) \cap (I \times \{z\})$ is a finite union of points and open convex sets for all $z \in B$.
		\item There exists a mutually disjoint definable partition $\{X_f, X_-, X_+, X_c\}$ of $f^{-1}(J) \cap (I \times B)$ satisfying the following conditions for any $z \in B \cap P$:
		\begin{enumerate}[(i)]
			\item the definable set $X_f \cap (f^{-1}(J) \cap (I \times \{z\}))$ is a finite set;
			\item the definable set $X_c \cap (f^{-1}(J) \cap (I \times \{z\}))$ is a finite union of open convex sets and $f$ is locally constant on the set;
			\item the definable set $X_- \cap (f^{-1}(J) \cap (I \times \{z\}))$ is a finite union of open convex sets and $f$ is locally strictly decreasing and continuous on the set;
			\item the definable set $X_+ \cap (f^{-1}(J) \cap (I \times \{z\}))$ is a finite union of open convex sets and $f$ is locally strictly increasing and continuous on the set.
		\end{enumerate}
	\end{enumerate}
\end{corollary}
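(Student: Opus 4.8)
The plan is to deduce the corollary from Theorem~\ref{thm:mono} and then to upgrade local strict monotonicity to continuity with the aid of Lemma~\ref{lem:mono_key_lemma3}. First I would view $f$ as a definable function into $\overline{M}$ through the inclusion $M \subseteq \overline{M}$. Since the interval $J=(b_1,b_2)$ may be taken with endpoints $b_1,b_2 \in M$, we have $\overline{J} \cap M = J$, so $f^{-1}(\overline{J}) = f^{-1}(J)$ because $f$ takes values in $M$. Applying Theorem~\ref{thm:mono} to this $f$ then produces, for any sufficiently small $I$, $J$ and $B$, assertion~(1) verbatim together with a mutually disjoint partition $\{X_f, X_-, X_+, X_c\}$ satisfying (i), (ii) and the \emph{monotonicity} parts of (iii) and (iv); only the additional continuity requirement on $X_-$ and $X_+$ remains to be established.

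For the continuity upgrade I would treat $X_+$, the case of $X_-$ being symmetric. Consider the definable set $D_+ = \{(x,z) \in X_+\;|\; f_z \text{ is discontinuous at } x\}$, where $f_z = f(\cdot,z)$. Since $\mathcal M$ is uniformly locally weakly o-minimal of the second kind, after shrinking $I$ and $B$ once more we may assume that each fiber $(D_+)_z$ is a finite union of points and open convex sets, exactly in the style of the proof of Theorem~\ref{thm:mono}. If the open convex pieces can be excluded, then every $(D_+)_z$ is finite; I then transfer $D_+$ into $X_f$ (it keeps condition (i), being finite in each fiber, and stays discrete and closed by Lemma~\ref{lem:local_dim0}), while the complementary part of $X_+$ is still a finite union of open convex sets on which $f_z$ is locally strictly increasing and now continuous, as required. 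Removing a finite set may split a convex component into two, but this only increases the number of convex pieces and preserves local strict monotonicity and continuity on each, so the conclusion is unaffected.

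The hard part will be ruling out the open convex pieces of $(D_+)_z$, and this is precisely where Lemma~\ref{lem:mono_key_lemma3} enters. Suppose some fiber $(D_+)_z$ contained an open convex set; shrinking it I obtain an open interval $I''$ on which $f_z$ is strictly increasing and discontinuous at every point. By Lemma~\ref{lem:mono_key_lemma3} the image $f_z(I'')$ is then discrete, and it is infinite because $f_z$ is injective there. On the other hand $f_z$ restricts to an order isomorphism of the densely ordered interval $I''$ onto $f_z(I'')$, which would make $f_z(I'')$ densely ordered and hence, in a locally o-minimal structure (Proposition~\ref{prop:local_ominmal}), not discrete near an honest point of $M$. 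The delicate point to verify is that the order-accumulation of the discrete set $f_z(I'')$ genuinely conflicts with local finiteness (Lemma~\ref{lem:local_dim0}) rather than hiding at a definable gap of $\overline{M}$; this is exactly the place where the present $M$-valued hypothesis is used, and I would exploit the freedom to shrink $J$ around the honest point $b$ so that the one-sided limits of $f_z$ governing the accumulation are forced to lie in $M$. Once the open convex pieces are excluded, the reduction of the previous paragraph completes the proof.
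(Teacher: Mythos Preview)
Your overall plan matches the paper exactly: apply Theorem~\ref{thm:mono}, isolate the discontinuity locus $D_+$ inside $X_+$, argue each fiber $(D_+)_z$ is finite, and transfer it into $X_f$. The divergence is in how you rule out open convex pieces of $(D_+)_z$, and your argument there is incomplete.

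You obtain from Lemma~\ref{lem:mono_key_lemma3} that $f_z(I'')$ is discrete, and you observe it is infinite and densely ordered (as $f_z$ is a strictly increasing injection on $I''$). You then want a contradiction from local o-minimality. The difficulty you yourself flag is real and not resolved by your suggested fix: the order-accumulation of $f_z(I'')$ can sit at a definable gap in $\overline{J}\setminus J$, and shrinking $J$ around $b$ does nothing to remove gaps lying \emph{inside} $\overline{J}$. Concretely, for $x_0\in I''$ the one-sided limit $\ell^+=\inf\{f_z(x):x>x_0\}$ may well lie in $\overline{M}\setminus M$; the $M$-valuedness of $f$ does not prevent this, and no choice of $J$ forces $\ell^+\in M$. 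So ``densely ordered plus discrete'' is not a contradiction in this setting.

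The paper closes this gap with an additional, essential use of the uniform hypothesis applied to the \emph{image} rather than the domain. One considers the definable family
\[
V=\{(y,z,a_1,a_2)\;:\; y=f_z(x)\text{ for some }(x,z)\in D_+ \text{ with }a_1<x<a_2\},
\]
and invokes uniform local weak o-minimality of the second kind in the $y$-variable: after shrinking $I$, $J$, $B$ once more, the fiber $V_{(z,a_1,a_2)}\cap J$ is a finite union of points and open convex sets for all $z\in B$ and $a_1,a_2\in I$. Hence $f_z(I'')\subseteq J$ is such a finite union; being discrete, it is therefore finite, contradicting injectivity. This extra shrinking of $J$ (with the parameters $z,a_1,a_2$ ranging over $B\times I\times I$) is precisely the step your outline lacks.
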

\begin{proof}
	By Theorem \ref{thm:mono}, we can decompose $f^{-1}(J) \cap (I \times B)=X_f' \cup X_c \cup X_-' \cup X_+'$ into definable sets so that $X_f'$, $X_c$, $X_+'$ and $X_-'$ satisfy conditions (i) through (iv) except continuity, respectively.
	
	Consider the definable sets 
	\begin{align*}
		Z_+&=\{(x,z) \in X_+'\;|\;f_z \text{ is not continuous at }x \}\text{ and }\\
		Z_- &=\{(x,z) \in X_-'\;|\;f_z \text{ is not continuous at }x \}\text{.}
	\end{align*}
	Set $g_+=f|_{Z_+}$.
	The notation $(g_+)_z$ denotes the restriction of $g_+$ to $(Z_+)_z$.
	\medskip
	
	\textbf{Claim 1.}
	The image $(g_+)_z(I')$ of an open subinterval $I'$ of $(Z_+)_z$ is a finite union of points and open convex sets for any $z \in B$ by shrinking $I$, $J$ and $B$ if necessary.
	\begin{proof}[Proof of Claim 1]
	Consider the set
	\begin{equation*}
		V=\{(y,z,a_1,a_2)\;|\;y=(g_+)_z(x) \text{ for some } a_1 < x < a_2\}\text{.}
	\end{equation*}
	The definable set $V_{(z,a_1,a_2)} \cap J$ is a finite union of points and open convex sets for any $z \in B$ and $a_1,a_2 \in I$ by shrinking $I$, $J$ and $B$ if necessary.
	It means that the image $(g_+)_z(I')$ of an open subinterval $I'$ of $(Z_+)_z$ is a finite union of points and convex sets.
\end{proof}
	
	We next demonstrate that $(Z_+)_z$ is a finite set for any $z \in B \cap P$.
	Shrinking $I$, $J$ and $B$ if necessary, we may assume that $(Z_+)_z$ is a finite union of points and convex sets for any $z \in B$.
	Assume that the definable set $(Z_+)_z$ contains an open interval $I'$ for some $z \in B$.
	We may assume that $(g_+)_z$ is strictly increasing on $I'$ by shrinking $I'$ if necessary because $(g_+)_z$ is locally strictly increasing.
	The image $(g_+)_z(I')$ is a finite union of points and open convex sets by Claim 1.
	The image $(g_+)_z(I')$ is a discrete set by Lemma \ref{lem:mono_key_lemma3} because $(g_+)_z$ is strictly increasing on $I'$ and discontinuous everywhere on $I'$.
	Hence, the image $(g_+)_z(I')$ is a finite set.
	On the other hand, the image $(g_+)_z(I')$ is simultaneously an infinite set because $(g_+)_z$ is injective and $I'$ is an infinite set.
	Contradiction.
	We have shown that $(Z_+)_z$ is a finite set.
	We can prove that $(Z_-)_z$ is a finite set similarly.

	Set $X_+=X_+' \setminus Z_+$, $X_-=X_-' \setminus Z_-$ and $X_f=X_f' \cup Z_+ \cup Z_-$.
	Then, the definable sets $X_c$, $X_+$, $X_-$ and $X_f$ satisfy the conditions of the corollary.
\end{proof}

We prove the following two assertions simultaneously.
The strategy of the proof is very similar to that of \cite[Theorem 3.3, Theorem 3.4]{Fuji_uniform}.

\begin{theorem}\label{thm:simple_decomp}
	Consider a uniformly locally weakly o-minimal structure of the second kind $\mathcal M=(M,<,\ldots)$.
	Let $X$ be a definable subset of $M^n$ with a nonempty interior and $X=X_1 \cup X_2$ be a partition of $X$ into two definable subsets.
	Then, at least one of $X_1$ and $X_2$ has a nonempty interior.
\end{theorem}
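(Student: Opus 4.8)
The plan is to argue by induction on the ambient dimension $n$, reducing the multivariate statement to a one–variable ``fibrewise interior'' principle which is then fed back into the induction. Throughout I use that $\mathcal M$ is locally o-minimal by Proposition \ref{prop:local_ominmal}, so Lemma \ref{lem:local_dim0} is available.

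For the base case $n=1$, I would suppose towards a contradiction that both $X_1$ and $X_2$ have empty interior. By Lemma \ref{lem:local_dim0} they are then discrete and closed. Choose an open interval $I \subseteq \myint(X)$ and a point $p \in I$; using discreteness and closedness I can shrink to an open interval $J$ with $p \in J \subseteq I$, $J \cap X_1 \subseteq \{p\}$ and $J \cap X_2 \subseteq \{p\}$. Since $J \subseteq I \subseteq X = X_1 \cup X_2$, this forces $J \subseteq \{p\}$, contradicting the fact that a nonempty open interval in a dense linear order is infinite. Hence one of $X_1,X_2$ has nonempty interior.

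For the inductive step, let $\pi\colon M^n \to M^{n-1}$ forget the last coordinate and fix an open box $V \times U \subseteq \myint(X)$ with $V \subseteq M^{n-1}$ and $U \subseteq M$ open. For each $w \in V$ we have $U \subseteq (X_1)_w \cup (X_2)_w$, so the base case applied to this covering of $U$ places $w$ in $D_i := \{w \in V : (X_i)_w \text{ has nonempty interior}\}$ for some $i$. Thus $V \subseteq D_1 \cup D_2$, so $D_1 \cup D_2$ has nonempty interior, and applying the induction hypothesis to this covering (disjointified if necessary) one of them, say $D_1$, has nonempty interior. It then remains to prove the following principle, which upgrades interiors of fibres to an interior of the total set. \textbf{Claim.} If $Y \subseteq M^m \times M$ is definable and every fibre $Y_w$, for $w$ in some open box $W \subseteq M^m$, has nonempty interior, then $Y$ has nonempty interior. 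Applied to $Y = X_1$, $m = n-1$ and $W \subseteq \myint(D_1)$, this finishes the step. I would prove the Claim by a secondary induction that peels off one coordinate at a time, the decisive instance being a single active variable over a multivariate parameter. Replacing $Y$ by $\{(w,t) : t \in \myint(Y_w)\}$, I may assume each fibre is open. Fixing a base point $w_0 \in W$ and $a \in Y_{w_0}$, uniform local weak o-minimality of the second kind yields an open interval $I \ni a$ and an open box $B' \ni w_0$ on which every $Y_w \cap I$ is a finite union of points and open convex sets, with $Y_{w_0} \cap I = I$ after shrinking $I$. The endpoints of these convex components are definable functions of the distinguished variable, and the parametrized local monotonicity theorem (Theorem \ref{thm:mono}) makes them locally monotone, so their one-sided behaviour near $w_0$ is controlled; tracking the component through $a$ over a subbox then traps a fixed subinterval $I' \subseteq I$ inside $Y_w$, which is precisely an open box contained in $Y$. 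The multivariate instances of the Claim reduce to this one by regrouping a parameter coordinate with the fibre coordinate and invoking the lower-dimensional cases, exactly in the manner of \cite[Theorems 3.3, 3.4]{Fuji_uniform}.

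The main obstacle is the Claim, and within it the phenomenon that the interiors of the fibres may \emph{wander} with the parameter: fibrewise interiors do not in general assemble into a total interior. The whole force of the hypotheses is spent here, since uniform local weak o-minimality of the second kind provides the uniform finite-union description of the fibres on a box, while the parametrized monotonicity theorem supplies enough control of the component endpoints to stabilize a single component over a subbox. Everything else is bookkeeping designed to keep the active variable one-dimensional so that these two tools apply; I expect the careful tracking of the moving endpoints, rather than any single inequality, to be the genuinely delicate part.
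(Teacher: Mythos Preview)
Your inductive scaffold --- base case via local o-minimality, fibre analysis, then an auxiliary ``fibrewise interior $\Rightarrow$ total interior'' principle proved with the parametrized monotonicity theorem --- is exactly the shape of the paper's argument. The difference lies in how the auxiliary step is formulated, and your version has a genuine gap.

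You state the Claim in its raw form (every fibre $Y_w$ has nonempty interior $\Rightarrow$ $Y$ has nonempty interior) and propose to prove it by fixing $a\in Y_{w_0}$ and ``tracking the component through $a$''. But for $w\neq w_0$ the fibre $Y_w$ need not contain $a$ at all; the wandering you flag as the obstacle is not handled by your sketch. There is no well-defined component through $a$, and since the number of convex pieces of $Y_w\cap I$ is not uniformly bounded by the second-kind hypothesis, there are no endpoint \emph{functions} of $w$ to which Theorem~\ref{thm:mono} can be applied. Monotonicity of some loosely specified endpoint does not pin down a common subinterval.

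The paper avoids this by anchoring \emph{before} invoking the auxiliary lemma, not inside it. Fix $a\in I_n$; by local o-minimality each fibre $(X_i)_x$ either contains or omits a one-sided interval $(a,b_x)$, so $Y_i=\{x:\exists b>a,\ (a,b)\subseteq (X_i)_x\}$ covers $\pi(X)$. After the inductive hypothesis gives $\myint(Y_1)\neq\emptyset$, the single definable function
\[
h(x)=\sup\{y>a:\{x\}\times(a,y)\subseteq X_1\}
\]
satisfies $h(x)>a$ on an open box. The auxiliary statement actually proved (Lemma~\ref{lem:func}) is then the clean one: if $f:U\to\overline{M}$ has $f>a$ on an open box, then $\{f>b\}$ has nonempty interior for some $b>a$. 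This is shown by applying Theorem~\ref{thm:mono} in one coordinate to split $U$ into the monotone/constant pieces, using Theorem~\ref{thm:simple_decomp} at level $n$ to land in one piece, and then iterating over the remaining coordinates via Lemma~\ref{lem:func} at level $n-1$; once $f$ is coordinatewise monotone on a box it is bounded below by its value at a corner. No component-tracking is needed. If you rewrite your Claim in this anchored form your outline goes through.
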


\begin{lemma}\label{lem:func}
	Consider a uniformly locally weakly o-minimal structure of the second kind $\mathcal M=(M,<,\ldots)$.
	Let $U$ be an open box in $M^n$ and $f:U \to \overline{M}$ be a definable function.
	Assume that there exists $a \in M$ such that $f(x)>a$ for all $x \in U$.
	Then, there exists $b \in M$ such that $b>a$ and the definable set $\{x \in U\;|\; f(x)>b\}$ has a nonempty interior.
\end{lemma}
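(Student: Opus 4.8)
The plan is to prove Lemma \ref{lem:func} together with Theorem \ref{thm:simple_decomp} by a simultaneous induction on $n$, imitating the argument for \cite[Theorem 3.3, Theorem 3.4]{Fuji_uniform}. Write $A_n$ for the statement of Theorem \ref{thm:simple_decomp} in $M^n$ and $B_n$ for the statement of Lemma \ref{lem:func} for an open box $U\subseteq M^n$. I will also use the order-reversed form of $B_n$, which bounds a definable function from above by some $b<a$ on a set of nonempty interior; it follows from $B_n$ by reversing $<$, since uniform local weak o-minimality of the second kind is symmetric under this reversal.

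\emph{Base case $n=1$.} For $A_1$, if $X=X_1\cup X_2\subseteq M$ and both $X_1,X_2$ had empty interior, then each would be discrete and closed by Lemma \ref{lem:local_dim0}; but a nonempty open interval contained in $X$ cannot be covered by two discrete closed, hence locally finite, sets, so some $X_i$ has nonempty interior. For $B_1$, apply the local monotonicity theorem (Theorem \ref{thm:mono} with a one-point parameter space, or Corollary \ref{cor:mono1}): since the discrete part cannot fill $U$ by $A_1$, after shrinking there is an open subinterval $I'=(p',q')$ on which $f$ is constant or strictly monotone. Fix $e\in I'$; as $a\in M$ lies strictly below $f(e)\in\overline{M}$ there is $b\in M$ with $a<b<f(e)$, and then $\{f>b\}$ contains $I'$, $(e,q')$ or $(p',e)$ according as $f$ is constant, increasing or decreasing there, so $\{f>b\}$ has nonempty interior.

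\emph{Inductive step for $B_n$.} Write $U=U'\times I$ with $U'\subseteq M^{n-1}$ an open box and $I=(p,q)$, which we may take bounded. For each $x'\in U'$ the unary function $f(x',\cdot)$ exceeds $a$ on $I$, so by $B_1$ the definable threshold
\[
  g(x')=\sup\{\,b\in M : b>a \text{ and } \{t\in I : f(x',t)>b\}\text{ has nonempty interior}\,\}
\]
satisfies $g(x')>a$. Applying $B_{n-1}$ to $g$ yields $b^{\ast}>a$ and an open sub-box $U''\subseteq U'$ with $g(x')>b^{\ast}$ for all $x'\in U''$; hence each fiber of $W:=\{(x',t)\in U''\times I : f(x',t)>b^{\ast}\}$ is a nonempty open subset of $I$. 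As $W\subseteq\{f>b^{\ast}\}$, it suffices to prove the uniformisation principle $(\dagger)$: \emph{if $Z\subseteq C'\times I$ is definable, $C'\subseteq M^{n-1}$ is an open box, and every fiber $Z_{x'}$ $(x'\in C')$ is a nonempty open subset of $I$, then $Z$ has nonempty interior.} Granting $(\dagger)$, the inductive step for $A_n$ then follows by the fiberwise reduction: split $U'$ according to which of $(X_1)_{x'},(X_2)_{x'}$ has nonempty interior (possible fiberwise by $A_1$), apply $A_{n-1}$ to this binary partition, and feed the resulting full-dimensional fiber into $(\dagger)$.

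\emph{Proof of $(\dagger)$ and the main obstacle.} Replacing $Z_{x'}$ by the maximal open interval adjacent to its infimum, we may assume $Z_{x'}=(c(x'),d(x'))$ for definable $c,d\colon C'\to\overline{M}$ with $p\le c<d\le q$. Applying $B_{n-1}$ to $d$ produces $t_0>p$ and a sub-box on which $d>t_0$; using Theorem \ref{thm:simple_decomp} in dimension $n-1$ we split this sub-box according to whether $c<t_0$. On the part where $c<t_0$ the height $t_0$ lies in every band $(c(x'),d(x'))$, and two further applications of $B_{n-1}$ (one to $c$ in its order-reversed form, one to $d$) thicken $t_0$ to a fixed interval $(s_0,t_0')\subseteq(c(x'),d(x'))$ over a sub-box $C''$, whence $C''\times(s_0,t_0')\subseteq Z$. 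The genuinely delicate case is the complementary one, in which over every admissible sub-box the band lies entirely above the chosen threshold: here the naive iteration merely shrinks the ambient interval $(p,q)$ and, lacking definable completeness, need not terminate. This coordination of the two endpoint functions $c$ and $d$ is the crux, and is exactly where the hypothesis of uniform local weak o-minimality of the second kind is genuinely needed. I expect to handle it by choosing the threshold extremally in $\overline{M}$ and excluding the bad case by the device of Lemma \ref{lem:mono_key_lemma2} — that no definable $\overline{M}$-valued function has local minima throughout an interval — in the spirit of \cite{DG}. With $(\dagger)$ in hand, the proof of Lemma \ref{lem:func} is complete.
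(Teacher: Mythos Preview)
Your proposal contains a genuine gap: the uniformisation principle $(\dagger)$ is not actually proved. You yourself isolate the ``bad case'' in which, after choosing a threshold $t_0$ for $d$, one has $c\ge t_0$ on every admissible sub-box, and you note that naive iteration need not terminate without definable completeness. The closing sentence (``I expect to handle it by choosing the threshold extremally in $\overline{M}$ and excluding the bad case by the device of Lemma~\ref{lem:mono_key_lemma2}'') is a hope, not an argument; in particular it is not clear how Lemma~\ref{lem:mono_key_lemma2} applies to the pair $(c,d)$ of $\overline{M}$-valued endpoint functions on an $(n-1)$-dimensional box, since that lemma concerns functions of a single variable. As the whole inductive step for both $A_n$ and $B_n$ rests on $(\dagger)$, the proof is incomplete.

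The paper avoids $(\dagger)$ altogether, and it is instructive to see how. For Theorem~\ref{thm:simple_decomp} the paper fixes a single $a$ in the last coordinate and uses local o-minimality so that each fiber of $X_1$ (after a first splitting) contains an interval of the form $(a,y)$ with common left endpoint $a$; then $h(x)=\sup\{y:(a,y)\subseteq (X_1)_x\}$ and a single application of $B_{n-1}$ gives a rectangle in $X_1$. This is exactly your ``good case'' with $c\equiv a$ constant, so the bad case never arises. For Lemma~\ref{lem:func} the paper does not go through fiberwise thresholds at all: it applies the parameterized local monotonicity theorem (Theorem~\ref{thm:mono}) to $f$, uses $A_n$ to shrink $U$ into one of the pieces $X_c,X_+,X_-,U\setminus f^{-1}(\overline{J})$, and after iterating over all $n$ coordinates reduces to an $f$ which is monotone (increasing, decreasing, or constant) in each variable separately on $U=\prod_i(d_i,e_i)$. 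Such an $f$ attains its infimum at a corner $p\in M^n$, and any $b\in M$ with $a<b<f(p)$ works. The role that $(\dagger)$ plays in your outline is thus replaced by Theorem~\ref{thm:mono}; I would suggest reorganising your induction so that $A_n$ is proved first (via the fixed-endpoint trick) and then $B_n$ is proved from $A_n$ and $B_{n-1}$ via the coordinatewise monotonicity reduction.
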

\begin{proof}[Proof of Theorem \ref{thm:simple_decomp} and Lemma \ref{lem:func}]
	When $n=1$, Theorem \ref{thm:simple_decomp} obviously holds because $\mathcal M$ is locally o-minimal by Proposition \ref{prop:local_ominmal}.
	
	We next prove Theorem \ref{thm:simple_decomp} for $n$ under the assumption that Theorem \ref{thm:simple_decomp} and Lemma \ref{lem:func} hold for $n-1$.
	We first reduce to the case in which $X$ is a bounded open box in Theorem \ref{thm:simple_decomp}.
	The definable set $X$ contains a bounded open box $B$. 
	If the theorem holds for the partition $B=(B \cap X_1) \cup (B \cap X_2)$, the theorem holds for the partition $X=X_1 \cup X_2$.
	We may assume that $X$ is a bounded open box without loss of generality.
	We take open intervals $I_1, \ldots, I_n$ so that $X=\prod_{i=1}^n I_i$.
	
	Let $\pi:M^n \to M^{n-1}$ be the projections forgetting the last coordinate.
	Take a point $a \in I_n$.
	Since $\mathcal M$ is locally o-minimal, for each $x \in \pi(X)$, there exists $b_x>a$ such that the interval $(a,b_x)$ is contained in the fibers $(X_1)_x$ or $(X_2)_x$. 
	Set $Y_i=\{x \in \pi(X)\;|\; (X_i)_x \text{ contains the interval }(a,y) \text{ for some }y \in M \text{ with }y>a\}$ for $i=1,2$.
	We have $\pi(X)=Y_1 \cup Y_2$.
	By the induction hypothesis, at least one of $Y_1$ and $Y_2$ has a nonempty interior.
	We may assume that $\myint(Y_1) \neq \emptyset$ without loss of generality.
	Shrinking $\pi(X)$ if necessary, we may assume that $\pi(X)=Y_1$.
	
	Consider the definable map $h:\pi(X) \to \overline{M}$ defined by $h(x)=\sup\{y \in I_n\;|\; y>a \text{ and }\{x\} \times (a,y) \subseteq X_1\}$.
	We have $h(x)>a$ for all $x \in \pi(X)$ because $\pi(X)=Y_1$.
	We can take $b>a$ such that $\{x \in \pi(X)\;|\; h(x)>b\}$ has a nonempty interior by the assumption.
	We may assume that $h(x)>b$ for all $x \in \pi(X)$ by shrinking $\pi(X)$ if necessary.
	We get $\pi(X) \times (a,b) \subseteq X_1$ by the definition of $h$.
	This means that $X_1$ has a nonempty interior.
	We have proven Theorem \ref{thm:simple_decomp} for $n$.
	
	We next prove Lemma \ref{lem:func} for $n$ assuming Theorem \ref{thm:simple_decomp} for $n$ and Lemma \ref{lem:func} for $n-1$.
	We have only to prove that, by shrinking $U$ if necessary, there exists $b >a$ such that $f(x)>b$ for all $x \in U$.
	We take open intervals $I_1, \ldots, I_n$ so that $U=\prod_{i=1}^n I_i$.
	Apply Theorem \ref{thm:mono} to the definable map $f:U=I_1 \times I_2 \times \cdots \times I_n \to \overline{M}$ by setting $A=I_1$ and $P=I_2 \times \cdots \times I_n$ under the terminology in Theorem \ref{thm:mono}.
	We can take open intervals $I, J$ with $a \in J$, an open box $B$ and a mutually disjoint definable partition $\{X_f,X_-,X_+,X_c\}$ of $f^{-1}(\overline{J}) \cap (I \times B)$ satisfying the conditions (1) and (2) in Theorem \ref{thm:mono}. 
	Shrinking $I_i$ for $1 \leq i \leq n$, we may assume that $I \times B=U$.
	We have $U= X_f \cup X_- \cup X_+ \cup X_c \cup (U \setminus f^{-1}(\overline{J}))$.
	The definable set $X_f$ has an empty interior by (2)(i) in Theorem \ref{thm:mono}.
	By the induction hypothesis, at least one of $X_-$, $X_+$, $ X_c$ and $(U \setminus f^{-1}(\overline{J}))$ has a nonempty interior.
	We may assume that $U$ coincides with one of the above four definable sets by shrinking $U$ if necessary.
	
	We first consider the case in which $U=(U\setminus f^{-1}(\overline{J}))$; that is, $U \cap f^{-1}(\overline{J})=\emptyset$.
	Let $b$ be an element in $J$ with $a<b$.
	Since $f(x)>a$ for all $x \in U$ and $U \cap f^{-1}(\overline{J})=\emptyset$, we have $f(x)>b$ for all $x \in U$.
	The lemma holds in this case.
	
	We next consider the case in which $U=X_+$.
	Set $K=I_2 \times \cdots \times I_n$.
	Take a point $c \in I_1$.
	Consider the definable function $g:K \to \overline{M}$ given by $g(x)=\sup\{y \in I_1\;|\; f(\cdot, x) \text{ is strictly increasing on } [c,y]\}$, where $f(\cdot,x)$ is the definable function given by $t \mapsto f(t,x)$.
	Since $U=X_+$, we have $g(x)>c$ for each $x \in K$.
	By the induction hypothesis, we can take $c'>c$ such that the definable set $\{x \in K\;|\;g(x)>c'\}$ has a nonempty interior.
	We may assume that $g(x)>c'$ for all $x \in K$ by shrinking $K$ if necessary.
	The restriction of $f(\cdot,x)$ to $[c,c']$ is strictly increasing for each $x \in K$ by the definition of $g$.
	We may assume that $f(\cdot,x)$ is strictly increasing for each $x \in K$ setting $I_1=(c,c')$.
	Applying the same argument to the cases in which $U=X_-$ and $U=X_c$, we can reduce to the case in which only one of the following three condition holds.
	\begin{enumerate}
		\item[$(1)_1$]  $f(\cdot,x)$ is strictly increasing for each $x \in K$;
		\item[$(2)_1$]  $f(\cdot,x)$ is strictly decreasing for each $x \in K$;
		\item[$(3)_1$]  $f(\cdot,x)$ is constant for each $x \in K$.
	\end{enumerate}
	We may assume that only one of the following three conditions holds for $1 \leq k \leq n$ by induction on $k$ in the same manner as above.
	\begin{enumerate}
		\item[$(1)_k$]  $f(x,\cdot, y)$ is strictly increasing for each $x \in I_1 \times \cdots \times I_{k-1}$ and $y \in I_{k+1} \times \cdots \times I_n$;
		\item[$(2)_k$]  $f(x,\cdot, y)$ is strictly decreasing for each $x \in I_1 \times \cdots \times I_{k-1}$ and $y \in I_{k+1} \times \cdots \times I_n$;
		\item[$(3)_k$]  $f(x,\cdot, y)$ is constant for each $x \in I_1 \times \cdots \times I_{k-1}$ and $y \in I_{k+1} \times \cdots \times I_n$.
	\end{enumerate}
	Here, $f(x, \cdot,y)$ denotes the definable function given by $t \mapsto f(x,t,y)$.
	Take $d_k,e_k \in I_k$ so that $d_k<e_k$ for each $1 \leq k \leq n$.
	Set $U=\prod_{i=1}^n (d_i,e_i)$.
	Set $p_k=d_k$ if the condition $(1)_k$ holds and set $p_k=e_k$ otherwise for every $1 \leq k \leq n$.
	Put $p=(p_1,\ldots, p_n)$.
	We have $f(x) \geq f(p)$ for every $x \in U$.
	We can take $b$ so that $a<b<f(p)$ because $a<f(p)$.
	We have $f(x)>b$ for all $x \in U$.
	We have proven Lemma \ref{lem:func} for $n$.
\end{proof}

\subsection{Locally o-minimal structure enjoying $*$-continuity property}\label{sec:star-cont}
We define univariate $*$-continuity property and $*$-continuity property which are discussed in Section \ref{sec:*-local}. 

\begin{definition}
	Consider an expansion of a dense linear order without endpoints $\mathcal M=(M,<,\ldots)$.
	We say that $\mathcal M$ enjoys the \textit{univariate $*$-continuity property} if, for every definable function $f:I \to \overline{M}$ from a nonempty open interval $I$, there exists a nonempty open subinterval $J$ of $I$ such that the restriction of $f$ to $J$ is continuous.
	We say that $\mathcal M$ enjoys the \textit{$*$-continuity property} if, for every definable function $f:B \to \overline{M}$ from a nonempty open box $B$, there exists a nonempty open box $U$ contained in $B$ such that the restriction of $f$ to $B$ is continuous. 
\end{definition}

%\begin{proposition}\label{prop:continuity}
%	Consider a uniformly locally weakly o-minimal structure of the first kind $\mathcal M=(M,<,\ldots)$.
%	The structure $\mathcal M$ enjoys the $*$-continuity property if it possesses the univariate $*$-continuity property 
%\end{proposition}
%\begin{proof}
%	Using the univariate $*$-continuity property, we may further assume that $f$ is continuous on each set in (ii) through (iv) of Corollary \ref{cor:mono1}.
%	We can prove the proposition in the same manner as \cite[Theorem 2.11(iii)]{Fuji_tame} using Theorem \ref{thm:simple_decomp} and the above fact.
%\end{proof}

\begin{proposition}\label{prop:equiv_*_cont}
	The $*$-continuity property is preserved under elementary equivalence, and the ultraproduct of structures possessing the $*$-continuity property enjoys the $*$-continuity property.
\end{proposition}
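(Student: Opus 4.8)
The plan is to show that the $*$-continuity property is captured by a first-order theory in the language $\mathcal L$ of $\mathcal M$, after which both assertions follow formally: elementarily equivalent structures satisfy the same $\mathcal L$-sentences, and an ultraproduct satisfies an $\mathcal L$-sentence whenever almost all of its factors do, by \L o\'s's theorem. So the real content is the first-order axiomatizability of the property, and the argument runs parallel to Propositions \ref{prop:*-local_elementary} and \ref{prop:*-local_ultraproduct}.

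The one delicate point is that the values of $f$ lie in the definable Dedekind completion $\overline{M}$ rather than in $M$, so neither the values $f(x)$ nor the neighbourhoods of $f(x_0)$ in $\overline{M}$ are directly available as elements over which we may quantify. I would first record the auxiliary fact that $M$ is order-dense in $\overline{M}$, in the strong sense that between any two distinct elements of $\overline{M}$ there is an element of $M$; this is immediate from the defining properties of a gap $(A,B)$, using that $A$ has no largest and $B$ no smallest element. Consequently the basic order-neighbourhoods of any $p \in \overline{M}$ may be taken with endpoints in $M$.

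Next I would translate the ingredients into first-order shape. A definable function $f\colon B \to \overline{M}$ is presented by a formula $\phi(x,y,\overline{w})$ via $f(x)=\sup\{y : \phi(x,y,\overline{w})\}$; for $c_1,c_2 \in M$ the conditions $c_1 < f(x)$ and $f(x) < c_2$ are expressible by $\exists y\,(\phi(x,y,\overline{w}) \wedge y > c_1)$ and by $\exists c'\,(c' < c_2 \wedge \forall y\,(\phi(x,y,\overline{w}) \to y \le c'))$ respectively, the second using density. The hypothesis that $\phi(\cdot,\cdot,\overline{w})$ encodes a definable function on a nonempty open box $B$ (domain an open box with endpoints in $M \cup \{\pm\infty\}$, each fibre nonempty and bounded above) is likewise first-order in $\overline{w}$ and the box endpoints. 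Finally, continuity of $f$ on an open box $U$ with endpoints in $M$ is expressed by
\[
\forall x_0 \in U\ \forall c_1\,\forall c_2\,\Big(c_1 < f(x_0) < c_2 \to \exists V\,\big(x_0 \in V \subseteq U \wedge \forall x \in V\ c_1 < f(x) < c_2\big)\Big),
\]
where the open subbox $V$ is specified by its $2n$ endpoints; by density over $M$ the universally quantified $c_1,c_2 \in M$ range over a neighbourhood basis of $f(x_0)$ in $\overline{M}$, so this genuinely expresses continuity into $\overline{M}$.

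Assembling these, for each formula $\phi$ one obtains an $\mathcal L$-sentence $\sigma_\phi$ asserting: for all parameters $\overline{w}$ and all box endpoints, if $\phi(\cdot,\cdot,\overline{w})$ encodes a definable function on a nonempty open box $B$, then some nonempty open subbox $U \subseteq B$ satisfies the continuity formula above. By construction $\mathcal M$ enjoys the $*$-continuity property if and only if $\mathcal M \models \sigma_\phi$ for every $\phi$, since every definable function into $\overline{M}$ arises from some such $\phi$. The set $\{\sigma_\phi\}$ is a first-order theory, so the property is preserved under elementary equivalence, and it is preserved under ultraproducts by \L o\'s's theorem; this is exactly where the reduction to sentences in the ordered language, rather than in $\overline{M}$, is essential, because \L o\'s applies to the factors and not to their completions. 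The main obstacle, and the only real work, is the faithful first-order encoding of continuity of an $\overline{M}$-valued function; once density of $M$ in $\overline{M}$ is in hand, this is routine.
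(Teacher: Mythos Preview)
Your proof is correct and follows essentially the same approach as the paper: both reduce the $*$-continuity property to a first-order scheme by encoding $\overline{M}$-valued definable functions and their continuity purely in terms of definable subsets of $M^{n+1}$, then invoke elementary equivalence and \L o\'s. The only cosmetic difference is in the encoding---the paper passes to the canonical \emph{semi-graph} $\Gamma'(f)=\{(x,y): y\le f(x)\}$ and expresses ``$f(x_0)\in \overline{I}$'' as ``$Y_{x_0}$ and its complement both meet $I$'', whereas you work directly with an arbitrary presentation $\phi$ and express $c_1<f(x)<c_2$ via existence of witnesses; both rely on the same density of $M$ in $\overline{M}$ that you make explicit.
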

\begin{proof}
	Let $f:X \to \overline{M}$ be a definable function defined on a definable subset $X$ of $M^n$.
	We define the \textit{semi-graph} of $f$ as $\Gamma'(f)=\{(x,y) \in X \times M\;|\; y \leq f(x)\}$.
	A definable subset $Y$ of $M^{n+1}$ is the semi-graph of a definable function if and only if $\forall x \in \pi(Y)\ (  \forall y,y' \in M \ (y \in Y_x \wedge  y'<y) \rightarrow y' \in Y_x) \wedge (\exists \sup Y_x \in M \rightarrow \sup Y_x \in Y_x)$, where $\pi:M^{n+1} \to M^n$ is the projection forgetting the last coordinate and $Y_x:=\{y \in M\;|\; (x,y) \in Y\}$ as usual.
	Consider a definable function $g:B\to \overline{M}$ defined on an open box $B$ in $M^n$ whose semi-graph is $Y$.
	The definable function $g$ is continuous at $x \in B$ if and only if for every open interval $I$ with $Y_x \cap I \neq \emptyset$ and $(M \setminus Y_x) \cap I \neq \emptyset$, there exists an open box $B'$ with $x \in B' \subseteq B$ such that $Y_{x'} \cap I \neq \emptyset$  and $(M \setminus Y_{x'}) \cap I \neq \emptyset$ for every $x' \in B'$. 
	
	Let $\phi(\overline{x},y,\overline{z})$ be a formula, where $\overline{x}$ is a tuple of variables of length $n$ and $\overline{z}$ is a tuple of variables assigned to parameters.
	The $y$ is a single variable.
	Thanks to the observation in the previous paragraph, we can construct a formula $\Phi_{\phi}(\overline{z})$ expressing that the definable set defined by the formula $\phi(\overline{x},y,\overline{z})$ is the semi-graph of a definable function defined on a nonempty open box in $M^n$.
	We can also construct a formula $\Psi_{\phi}(\overline{z})$ saying that the definable function whose semi-graph is the definable set defined by the formula $\phi(\overline{x},y,\overline{z})$ is continuous on some nonempty open box.
	The structure $\mathcal M$ possesses the $*$-continuity property if and only if, for any formula $\phi$, the sentence $\forall \overline{z}\ \Phi_{\phi}(\overline{z}) \rightarrow \Psi_{\phi}(\overline{z})$ holds.
	
	The proposition is now obvious from the above observation.
	\end{proof}

	\begin{remark}\label{rem:equiv_*_cont}
		An assertion similar to Proposition \ref{prop:equiv_*_cont} holds for the univariate $*$-continuity property.
		Its proof is almost the same as that of Proposition \ref{prop:equiv_*_cont}.
		We omit the proof.
	\end{remark}

We next investigate when local monotonicity theorems hold in locally o-minimal structures.
We first introduce a technical definition.
\begin{definition}
	Consider an expansion of a dense linear order without endpoints $\mathcal M=(M,<,\ldots)$.
	A definable set $X$ of $M^2$ is called \textit{dimensionally wild} if the following conditions are satisfied:
	\begin{enumerate}
		\item[(i)] $X$ has an empty interior;
		\item[(ii)] $\pi(X)$ has a nonempty interior, where $\pi$ denotes the projection onto the first coordinate;
		\item[(iii)] $X_x:=\{y \in M\;|\;(x,y) \in X\}$ has a nonempty interior for every $x \in \pi(X)$.
	\end{enumerate}
\end{definition}
We choose the term `dimensionally wild' because a dimensionally wild definable set is a counterexample of the dimension formula called the addition property given in Theorem \ref{thm:dimension_basic}(2).  

We demonstrate two lemmas on the existence of dimensionally wild definable sets.

\begin{lemma}\label{lem:exceptional_case2}
Consider an expansion of a dense linear order without endpoints $\mathcal M=(M,<,\ldots)$.
Assume that there exists a definable function $f:I \to \overline{M}$ defined on an interval $I$ having local minimums throughout the intervals.
Then $\mathcal M$ has a definable function $g:I \to \overline{M}$ which is discontinuous everywhere and a dimensionally wide definable set $X$.
\end{lemma}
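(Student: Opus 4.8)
The plan is to read off both required objects from the family of definable convex neighbourhoods already used in the proof of Lemma~\ref{lem:mono_key_lemma2}. Recall that for each $a\in I$ one has there a definable convex set $U_a$ which, under the single hypothesis that $f$ has (strict) local minimums throughout $I$, is a neighbourhood of $a$ with $a\in\myint(U_a)$; its right part is the maximal interval to the right of $a$ on which $f$ stays strictly above $f(a)$. I would set $l(a)=\inf U_a$ and define $g\colon I\to\overline M$ by $g(a)=\sup U_a$. After replacing $I$ by a bounded open subinterval (which preserves the hypothesis, and is the only mild technical point, needed so that the supremum stays in $\overline M$ rather than reaching $+\infty$), this $g$ is definable, and since $U_a$ contains points to the right of $a$ we have $g(a)>a$ for every $a\in I$.

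The key step, and essentially the only real content of the proof, is a one-line computation showing that $g$ jumps downward as one approaches any point from the left. Fix $a_0\in I$. For $l(a_0)<a<a_0$ convexity of $U_{a_0}$ gives $a\in U_{a_0}$, hence $f(a)>f(a_0)$; therefore at the point $a_0>a$ the function $f$ has already dropped to $f(a_0)<f(a)$, so the maximal interval to the right of $a$ on which $f$ stays above $f(a)$ cannot extend past $a_0$, i.e. $g(a)=\sup U_a\le a_0$. On the other hand $g(a_0)>a_0$. Choosing $m\in M$ with $a_0<m<g(a_0)$ (possible since the order is dense), we get $g(a)\le a_0<m<g(a_0)$ for every $a\in(l(a_0),a_0)$, so no open interval around $a_0$ has $g$-image contained in $(m,+\infty)$. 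Hence $g$ is discontinuous at $a_0$, and as $a_0$ was arbitrary, $g$ is discontinuous everywhere.

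For the dimensionally wild set I would take
\[
X=\{(a,y)\in I^2\;|\; a<y<g(a)\}.
\]
Its fibre $X_a=(a,g(a))$ is a nonempty open interval because $g(a)>a$, which gives condition (iii), while $\pi(X)=I$ gives condition (ii). For the empty-interior condition (i), suppose for contradiction that $(a_1,a_2)\times(y_1,y_2)\subseteq X$. Then $a_2\le y_1$ and $g(a)\ge y_2$ for every $a\in(a_1,a_2)$. Picking any $a_0\in(a_1,a_2)$ and applying the downward-jump computation above to some $a\in(\max(a_1,l(a_0)),a_0)$ (a nonempty interval, and contained in $(a_1,a_2)$) yields
\[
y_2\le g(a)\le a_0<a_2\le y_1<y_2,
\]
a contradiction. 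Hence $X$ has empty interior and is dimensionally wild.

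Thus the whole argument reduces to the single observation that, just to the left of any $a_0$, the value of $f$ must exceed $f(a_0)$ so the neighbourhood $U_a$ cannot reach past $a_0$; everything else is the routine bookkeeping of the three defining clauses of dimensional wildness together with the boundedness reduction that keeps $g$ inside $\overline M$.
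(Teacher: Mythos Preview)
Your proof is correct and is essentially the mirror image of the paper's own argument: the paper defines $g(x)=\inf U_x$ and shows the jump discontinuity occurs on the right (with $X=\{(a,b):g(a)<b<a\}$), whereas you take $g(a)=\sup U_a$ and detect the jump on the left (with $X=\{(a,y):a<y<g(a)\}$). The underlying idea---read off both $g$ and $X$ directly from the convex neighbourhoods $U_a$ of Lemma~\ref{lem:mono_key_lemma2}, after shrinking $I$ to be bounded---is identical.
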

\begin{proof}
	We assume that $I$ is bounded by shrinking $I$ if necessary.
	We define the definable function $g:I \to \overline{M}$ by $g(x)=\inf\{r \in I\;|\; \forall t \in M \ (r<t<x) \rightarrow (f(t) > f(x))\}$.
	We show that, for each point $a \in I$, the inequalities $g(a)<a \leq g(b)$ hold for every $b>a$ sufficiently close to $a$.
	We have $g(a)<a$ and $f(b)>f(a)$ for any  point $b \neq a$ sufficiently close to the point $a$ because $f$ has a local minimum at the point $a$.
	We have $g(b) \geq a$ because $f(b)>f(a)$ for any point $b \in I$ sufficiently close to $a$ with $b>a$.
	The function $g$ is discontinuous at an arbitrary point $a$ because $g(a)<a \leq g(b)$ for any $b>a$ sufficiently close to $a$.
	
	We define $X \subseteq I \times M$ by $X:=\{(a,b) \in I \times X\;|\; g(a)<b<a\}$.
	We prove that $X$ is dimensionally wild.
	Let $\pi:M^2 \to M$ be the coordinate projection onto the first coordinate.
	The image $\pi(X)=I$ has a nonempty interior.
	For each $x \in X$, the fiber $X_x$ coincide with the open set $\{y \in M\;|\;g(x)< y <x\}$ and $X_x$ has a nonempty interior.
	The remaining task is to show that $X$ has an empty interior.
	Assume for contradiction that $X$ contains a nonempty open box $B$ in $M^2$.
	Take $(a,b) \in B$.
	Since $B$ is an open box contained in $X$, the point $(a',b) $ belongs to $X$ for any $a'>a$ sufficiently close to $a$.
	We have $g(a)<b<a$ because $(a,b) \in X$.
	We get $g(a')<b<a'$ because $(a',b) \in X$.
	However, we have $a \leq g(a')$ as we demonstrated.
	We have obtained a contradiction because $b<a \leq g(a') < b$.
\end{proof}

\begin{lemma}\label{lem:exceptional_case}
	Consider an expansion of dense linear order without endpoints $\mathcal M=(M,<,\ldots)$.
	Assume that there exists a definable strictly monotone function $f:I \to \overline{M}$ from an open interval $I$ which is discontinuous everywhere.
	Then $\mathcal M$ has a dimensionally wild definable set $X$.
\end{lemma}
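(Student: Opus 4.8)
The plan is to build the dimensionally wild set directly out of the jump behaviour of $f$, without invoking any decomposition principle. I assume $f$ is strictly increasing; the strictly decreasing case is identical after reversing the inequalities between $y$ and the values of $f$ below. For $x \in I$ write $f(x^-)=\sup\{f(x')\;|\;x'\in I,\ x'<x\}$ and $f(x^+)=\inf\{f(x'')\;|\;x''\in I,\ x''>x\}$, both elements of $\overline M \cup \{\pm\infty\}$; they exist because $I$ is an open interval. By monotonicity $f(x^-)\le f(x)\le f(x^+)$, and continuity of $f$ at $x$ is precisely the equality $f(x^-)=f(x^+)$, so the everywhere-discontinuity hypothesis gives the strict inequality $f(x^-)<f(x^+)$ for every $x\in I$. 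I would then set
\[
X=\{(x,y)\in I\times M\;|\;\forall x'\in I\,(x'<x\to f(x')<y)\ \wedge\ \forall x''\in I\,(x''>x\to y<f(x''))\}.
\]
This is definable, since a comparison of an element of $M$ with a value of $f$ in $\overline M$ is first-order expressible through the defining set of $f$.

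Conditions (ii) and (iii) of dimensional wildness are then immediate. First, $\pi(X)=I$ has nonempty interior. Second, fix $x\in I$: the fibre $X_x$ consists of the $y\in M$ lying strictly above every $f(x')$ with $x'<x$ and strictly below every $f(x'')$ with $x''>x$, a convex set situated between $f(x^-)$ and $f(x^+)$. Here I would use that $M$ is dense in $\overline M$, namely that between any two distinct elements of $\overline M$ there is an element of $M$: if $(A_1,B_1)<(A_2,B_2)$ are definable gaps then any $m\in A_2\setminus A_1$ lies strictly between them, with the obvious variants when an endpoint lies in $M$. Applying this twice produces $m_1<m_2$ in $M$ with $f(x^-)<m_1<m_2<f(x^+)$, and then $(m_1,m_2)\subseteq X_x$. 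As $f(x^-)<f(x^+)$ for every $x$, each fibre $X_x$ contains an open interval, which gives (iii).

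It remains to check (i), that $X$ has empty interior, and this short argument is the clean core of the proof. Suppose to the contrary that $X$ contains an open box $(s_1,s_2)\times(t_1,t_2)$. Since the order is dense I may choose $x_a<x_b<x_c$ in $(s_1,s_2)$ and fix any $y\in(t_1,t_2)$. From $(x_a,y)\in X$ and $x_b>x_a$ I get $y<f(x_b)$, while from $(x_c,y)\in X$ and $x_b<x_c$ I get $f(x_b)<y$; together these yield $f(x_b)<y<f(x_b)$, a contradiction. Hence $X$ has empty interior and is dimensionally wild.

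I expect the only delicate bookkeeping to be the density-in-$\overline M$ step for the fibres, together with the care needed over strict versus non-strict inequalities at the jump endpoints $f(x^-)$ and $f(x^+)$. Once those are settled, the decisive empty-interior estimate is just the three-point monotonicity argument above, and it requires no further hypotheses on $\mathcal M$.
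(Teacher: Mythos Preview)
Your proof is correct and follows essentially the same approach as the paper: your set $X$ is exactly the paper's $\bigcup_{x\in I}\{x\}\times Y\langle x\rangle$ with $Y\langle x\rangle$ defined by the same two-sided inequality, and your three-point argument for empty interior is an unwinding of the paper's assertion that $Y\langle x_1\rangle < Y\langle x_2\rangle$ whenever $x_1<x_2$. You supply more detail than the paper does---in particular the density-of-$M$-in-$\overline M$ step for nonemptiness of the fibres, which the paper leaves implicit---but the construction and verification are the same.
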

\begin{proof}
	Assume that $f$ is strictly increasing.
	We can prove the lemma similarly in the other case.
	Fox any $x \in I$, set $$Y\langle x \rangle :=\{y \in M\;|\; \forall u \in I \ (u<x \rightarrow f(u)<y) \wedge (u>x \rightarrow f(u)>y)\}.$$
	We can easily show the following:
	\begin{enumerate}[(a)]
		\item $Y\langle x \rangle$ is convex and has a nonempty interior for each $x \in I$;
		\item $Y\langle x_1 \rangle < Y\langle x_2 \rangle$ for distinct $x_1,x_2 \in I$ with $x_1<x_2$.
	\end{enumerate}
	Here, the inequality $Y\langle x_1 \rangle < Y\langle x_2 \rangle$ means that $y_1<y_2$ for every $y_1 \in Y\langle x_1 \rangle$ and $y_2 \in Y\langle x_2 \rangle$.
	Both properties follow from the assumption that $f$ is strictly increasing and discontinuous everywhere.
	The set $X:=\bigcup_{x \in I} \{x\} \times Y\langle x \rangle$ is dimensionally wild.
\end{proof}

The following dichotomy theorem is the second main theorem in this section.
\begin{theorem}\label{thm:monotone-star}
Consider a locally o-minimal structure $\mathcal M=(M,M,\ldots)$.
At least one of the following two assertions holds:
\begin{enumerate}
\item[(1)] The structure $\mathcal M$ does not possess the univariate $*$-continuity property and has a dimensionally wide definable set.
\item[(2)] Let $f:I \to \overline{M}$ be an arbitrary  definable function defined on an  arbitrary open interval $I$.
The interval $I$ is decomposed into four definable sets $X_+,X_-,X_c,X_d$ satisfying the following conditions:
\begin{enumerate}
\item[(i)] $X_d$ is discrete and closed.
\item[(ii)] $X_c$ is open and the restriction of $f$ to $X_c$ is locally constant;
\item[(iii)] $X_-$ is open and the restriction of $f$ to $X_-$ is locally strictly decreasing;
\item[(iv)] $X_+$ is open and the restriction of $f$ to $X_+$ is locally strictly increasing.
\end{enumerate}
\end{enumerate}
\end{theorem}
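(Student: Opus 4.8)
The plan is to prove the dichotomy by isolating a single auxiliary property of $\mathcal M$: say $\mathcal M$ is \emph{tame} if no definable function defined on an open interval into $\overline{M}$ has local minimums throughout its domain, and likewise none has local maximums throughout. I would show that if $\mathcal M$ is not tame then (1) holds, and if $\mathcal M$ is tame then (2) holds; since these two cases are exhaustive, at least one of (1), (2) always holds.

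Suppose first that $\mathcal M$ is not tame. If some definable function on an open interval has local minimums throughout, then Lemma \ref{lem:exceptional_case2} directly supplies a dimensionally wild definable set together with a definable function $g:I\to\overline{M}$ that is discontinuous at every point of $I$. Such a $g$ is discontinuous on every open subinterval of $I$, so $\mathcal M$ fails the univariate $*$-continuity property; combined with the dimensionally wild set, this is exactly assertion (1). The local-maximum case is handled by passing to the order-reversed structure $\mathcal M^{\mathrm{op}}$: local maxima of $f$ in $\mathcal M$ are local minima of $f$ in $\mathcal M^{\mathrm{op}}$, while everywhere-discontinuity and dimensional wildness are topological and projection conditions invariant under reversing $<$, so Lemma \ref{lem:exceptional_case2} applied in $\mathcal M^{\mathrm{op}}$ again yields (1).

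Now suppose $\mathcal M$ is tame and fix an arbitrary definable $f:I\to\overline{M}$. I would form the three open definable loci $X_c,X_+,X_-$ on which $f$ is, respectively, locally constant, locally strictly increasing, and locally strictly decreasing; these are open and pairwise disjoint essentially by definition, and setting $X_d=I\setminus(X_c\cup X_+\cup X_-)$ makes conditions (ii)--(iv) immediate. By Lemma \ref{lem:local_dim0}, condition (i) reduces to showing $X_d$ has empty interior. Assume not, so $J\subseteq X_d$ for some open interval $J$. Using local o-minimality (Remark \ref{rem:local_omin}) applied to the definable comparison sets $\{y : f(y)<f(x)\}$, $\{y : f(y)=f(x)\}$, $\{y : f(y)>f(x)\}$, I assign to each $x\in J$ a \emph{type} recording whether $f$ is $<$, $=$, or $>$ than $f(x)$ on an immediate left and an immediate right interval at $x$, splitting $J$ into nine definable pieces. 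A finite union of empty-interior definable subsets of $M$ has empty interior (each is discrete and closed by Lemma \ref{lem:local_dim0}, hence finite on a small interval around any point, which cannot fill an open interval), so one type-set has nonempty interior and I may shrink $J$ into it, so that $f$ has a single type on $J$.

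The heart of the argument is the case analysis on this type. If the type is $(=,=)$ or mixes an $=$ with any relation, then $f$ is constant on a one-sided interval at each point, forcing points of $J$ into $X_c$, contradicting $J\subseteq X_d$. If the type is $(>,>)$ (resp.\ $(<,<)$), then $f$ has local minimums (resp.\ maximums) throughout $J$, contradicting tameness. The two substantial cases are $(<,>)$ and $(>,<)$, where $f$ is only \emph{pointwise} increasing (resp.\ decreasing); promoting this to \emph{local} strict monotonicity off a thin set is exactly what Lemma \ref{lem:mono_key_lemma1} delivers, since tameness gives its standing hypothesis: it produces a discrete definable $Z$ with $f|_{J\setminus Z}$ locally strictly increasing, whence $J\setminus Z\subseteq X_+$ has nonempty interior, contradicting $J\subseteq X_d$; the $(>,<)$ case follows by the symmetric statement obtained in $\mathcal M^{\mathrm{op}}$. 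Every type thus yields a contradiction, so $X_d$ has empty interior and (2) holds. I expect the main obstacle to be precisely the $(<,>)$ and $(>,<)$ cases, where the gap between the pointwise and the local notions of monotonicity must be closed through Lemma \ref{lem:mono_key_lemma1}, together with the bookkeeping needed to verify that the nine type-sets are genuinely definable and that the finite-partition argument is legitimate under mere local o-minimality.
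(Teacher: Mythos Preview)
Your proposal is correct and follows essentially the same route as the paper's proof: both split on whether some definable function into $\overline{M}$ has local extrema throughout an interval, invoke Lemma~\ref{lem:exceptional_case2} in the bad case, and in the good case run the nine-fold type decomposition, eliminating the local-extremum types by the tameness assumption, the mixed ``$=$'' types by an easy argument, and handling the pointwise-monotone types $(<,>)$ and $(>,<)$ via Lemma~\ref{lem:mono_key_lemma1}. The only organizational difference is that you define $X_c,X_+,X_-$ up front and argue by contradiction that $X_d$ has empty interior, whereas the paper builds the pieces $X_{ij}$ first and assembles $X_d$ at the end; your framing is arguably cleaner since applying Lemma~\ref{lem:mono_key_lemma1} on a genuine open interval $J$ (rather than on the possibly non-open $X_{02}$) avoids a small implicit step.
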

\begin{proof}
	We consider the following two separate cases:
	\begin{enumerate}
		\item[(1)] There exists a definable function $f:I \to \overline{M}$ defined on an open interval $I$ such that $f$ has local minimums throughout the interval or $f$ has local maximums throughout the interval.
		\item[(2)] $\mathcal M$ does not have such definable functions.
	\end{enumerate}
	We first consider case (1).
	We assume that $f:I \to \overline{M}$ has local minimums throughout the interval.
	We can similarly treat the case $f:I \to \overline{M}$ has local maximums throughout the interval.
	The structure $\mathcal M$ does not possess the univariate $*$-continuity property and has a dimensionally wide definable set by Lemma \ref{lem:exceptional_case2}.
	
	We next consider case (2).
	Consider the following formulas:
	\begin{align*}
		\phi_0(x) &= \exists x_1\ (x_1>x) \wedge(\forall t\ (x<t<x_1) \to (f(x)<f(t)));\\
		\phi_1(x) &= \exists x_1\ (x_1>x) \wedge(\forall t\ (x<t<x_1) \to (f(x)=f(t)));\\
		\phi_2(x) &= \exists x_1\ (x_1>x) \wedge(\forall t\ (x<t<x_1) \to (f(x)>f(t)));\\
		\psi_0(x) &= \exists x_0\ (x_0<x) \wedge(\forall t\ (x_0<t<x) \to (f(x)<f(t)));\\
		\psi_1(x) &= \exists x_0\ (x_0<x) \wedge(\forall t\ (x_0<t<x) \to (f(x)=f(t)));\\
		\psi_2(x) &= \exists x_0\ (x_0<x) \wedge(\forall t\ (x_0<t<x) \to (f(x)>f(t)));\\
		\theta_{ij}(x) &= \phi_i(x) \wedge \psi_j(x) \ (0 \leq i,j \leq 2).
	\end{align*}
	By local o-minimality, at least one of $\theta_{ij}(x)$ holds for every $x \in I$.
	Put $X_{ij}=\{x \in I\;|\; \mathcal M \models \theta_{ij}(x)\}$ for $0 \leq i,j \leq 2$.
	We can easily show that $X_{01}, X_{21}, X_{10}, X_{12}$ have empty interiors.
	We omit the proof.
	They are discrete and closed by Lemma \ref{lem:local_dim0}.
	The definable set $X_c:=X_{11}$ is open and the restriction of $f$ to it is locally constant.
	By the assumption, $X_{00}$ and $X_{22}$ have empty interiors.
	There exists a definable discrete closed set $D_{02}$ such that $X_{+}:=X_{02} \setminus D_{02}$ is open and the restriction of $f$ to $X_+$ is locally strictly increasing by Lemma \ref{lem:mono_key_lemma1} and the assumption.
	We can construct a definable discrete closed set $D_{20}$ such that $X_{-}:=X_{20} \setminus D_{20}$ is open and the restriction of $f$ to $X_-$ is locally strictly decreasing in the same manner.
	Set $X_d=X_{01} \cup X_{12} \cup X_{01} \cup X_{21} \cup D_{02} \cup D_{20}$.
	It is obvious that $X_d$ is discrete and closed.
\end{proof}

As a corollary of Theorem \ref{thm:monotone-star}, we get a local monotonicity theorem for locally o-minimal structures enjoying the univariate $*$-continuity property.
This corollary was announced in Section \ref{sec:intro}.
\begin{corollary}\label{cor:monotone-star}
	Consider a locally o-minimal structure $\mathcal M=(M,M,\ldots)$ enjoying the univariate $*$-continuity property.
	Let $f:I \to \overline{M}$ be a definable map defined on an open interval $I$.
	The interval $I$ is decomposed into four definable sets $X_+,X_-,X_c,X_d$ satisfying the following conditions:
	\begin{enumerate}
		\item[(i)] $X_d$ is discrete and closed.
		\item[(ii)] $X_c$ is open and the restriction of $f$ to $X_c$ is locally constant;
		\item[(iii)] $X_-$ is open and the restriction of $f$ to $X_-$ is locally strictly decreasing and continuous;
		\item[(iv)] $X_+$ is open and the restriction of $f$ to $X_+$ is locally strictly increasing and continuous.
	\end{enumerate}
\end{corollary}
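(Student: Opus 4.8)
The plan is to derive the corollary from Theorem~\ref{thm:monotone-star} and then upgrade the local strict monotonicity on the increasing and decreasing pieces to genuine continuity by absorbing the discontinuity points into the discrete part. First I would observe that, since $\mathcal M$ enjoys the univariate $*$-continuity property, assertion (1) of Theorem~\ref{thm:monotone-star}---which explicitly requires that $\mathcal M$ fail the univariate $*$-continuity property---cannot hold. Hence assertion (2) holds, and applying it to the given $f:I\to\overline M$ produces a decomposition $I=X_+'\cup X_-'\cup X_c\cup X_d'$ into definable sets with $X_d'$ discrete and closed, $X_c$ open and $f|_{X_c}$ locally constant, $X_-'$ open and $f|_{X_-'}$ locally strictly decreasing, and $X_+'$ open and $f|_{X_+'}$ locally strictly increasing. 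The only thing missing relative to the corollary is continuity of $f$ on $X_+'$ and $X_-'$.

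The key step is to control the discontinuity locus on these two open pieces. Let $Z_+$ be the definable set of points of $X_+'$ at which $f|_{X_+'}$ is discontinuous, and define $Z_-$ analogously on $X_-'$. I claim $Z_+$ has empty interior. Suppose for contradiction that an open interval $I'$ is contained in $Z_+$. Since $I'$ is an open neighbourhood of each of its points inside $X_+'$, discontinuity of $f|_{X_+'}$ and of $f|_{I'}$ coincide pointwise on $I'$; hence $f|_{I'}:I'\to\overline M$ is a definable function on an open interval that is discontinuous at every point. This contradicts the univariate $*$-continuity property, which furnishes an open subinterval $J'\subseteq I'$ on which $f|_{J'}$---and therefore $f|_{I'}$ at each point of the open set $J'$---is continuous. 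Thus $Z_+$ has empty interior, and, $\mathcal M$ being locally o-minimal, Lemma~\ref{lem:local_dim0} shows that $Z_+$ is discrete and closed. The same argument applies to $Z_-$.

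Finally I would set $X_+:=X_+'\setminus Z_+$, $X_-:=X_-'\setminus Z_-$, and $X_d:=X_d'\cup Z_+\cup Z_-$, keeping $X_c$ unchanged. As $Z_+$ and $Z_-$ are closed, $X_+$ and $X_-$ are open, and removing the discontinuity points leaves $f$ continuous on each (continuity relative to the smaller open domain being inherited from continuity relative to the larger one); local strict monotonicity persists because these are open subsets of $X_+'$ and $X_-'$. The new $X_d$ is a finite union of discrete closed sets, hence discrete and closed, so the four sets satisfy conditions (i)--(iv). The main---indeed essentially the only---obstacle is the emptiness-of-interior claim for $Z_+$ (and $Z_-$); everything there rests on matching discontinuity of $f$ on the open piece $X_+'$ with discontinuity of its restriction to a subinterval, so that the univariate $*$-continuity property can be invoked directly.
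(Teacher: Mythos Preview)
Your proposal is correct and follows essentially the same approach as the paper. The paper's proof is extremely terse---it simply cites Theorem~\ref{thm:monotone-star}, the univariate $*$-continuity property, and Lemma~\ref{lem:local_dim0}---and your argument spells out precisely the details one would expect: rule out case~(1) of the dichotomy, take the discontinuity loci $Z_+$, $Z_-$ on the monotone pieces, use the univariate $*$-continuity property to show they have empty interior, apply Lemma~\ref{lem:local_dim0} to conclude they are discrete and closed, and absorb them into $X_d$.
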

\begin{proof}
	The corollary is the same as Theorem \ref{thm:monotone-star}(2) except we require continuity in the items (iii) and (iv).
	The corollary immediately follow from Theorem \ref{thm:monotone-star}, the univariate $*$-continuity property and Proposition \ref{lem:local_dim0}.
\end{proof}

\begin{proposition}\label{prop:continuity2}
	Consider a locally o-minimal structure $\mathcal M=(M,<,\ldots)$.
	The structure $\mathcal M$ enjoys the $*$-continuity property if it possesses the univariate $*$-continuity property 
\end{proposition}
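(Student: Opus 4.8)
The plan is to argue by induction on the arity $n$ of the box $B\subseteq M^n$, the base case $n=1$ being exactly the univariate $*$-continuity property (continuity is always taken with respect to the order topology on $\overline M$). Fix $n\ge 2$ and a definable $f\colon B\to\overline M$, and write $B=B_0\times K$ with $B_0\subseteq M^{n-1}$ an open box and $K\subseteq M$ an open interval; points of $B$ are written $(z,w)$. Since a definable function is continuous on an open box $U$ as soon as it is continuous at every point of $U$, it suffices to find a subbox on which $f$ is continuous at each point, i.e.\ to show that the continuity locus of $f$ has nonempty interior.

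The engine of the inductive step is the classical sandwich argument for functions that are monotone in one variable. First I would apply Corollary \ref{cor:monotone-star} to each fibre function $w\mapsto f(z,w)$ and collect the resulting pieces into definable sets $X_+,X_-,X_c,X_d\subseteq B$ according to whether $f(z,\cdot)$ is locally strictly increasing, locally strictly decreasing, locally constant, or lies in the exceptional discrete set at $w$. By Corollary \ref{cor:monotone-star} the fibre $(X_d)_z$ is discrete and closed for every $z$, so $X_d$ has empty interior in $B$, and $f(z,\cdot)$ is continuous on the complementary fibres. Granting (see below) that one of $X_+,X_-,X_c$ has nonempty interior, say $X_+$, I would pass to a subbox $U_0\times V\subseteq X_+$; a short local-to-global argument in the single variable $w$, valid by local o-minimality, upgrades \emph{locally strictly increasing at each point of $V$} to \emph{strictly increasing and continuous on $V$} for every fixed $z\in U_0$. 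Then, for fixed $w_1<w_2$ in $V$, the inductive hypothesis applied to the $(n-1)$-variable slices $f(\cdot,w_1)$ and $f(\cdot,w_2)$ yields a subbox $W\subseteq U_0$ on which both are continuous, and the bracketing $f(z,w_1)<f(z,w)<f(z,w_2)$ gives joint continuity of $f$ on $W\times(w_1,w_2)$. The cases $X_-$ and $X_c$ are handled identically.

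The main obstacle is the step ``one of $X_+,X_-,X_c$ has nonempty interior'' together with the fact that the bracketing values must be points where the base slices are \emph{simultaneously} continuous; both reduce to the Baire-type statement $N_n$: every definable subset of $M^n$ with empty interior is nowhere dense. Indeed, applying the inductive hypothesis to every subbox shows that for each fixed $w$ the set $\{z:f(\cdot,w)\text{ is discontinuous at }z\}$ has empty interior, whence the total base-discontinuity set $D\subseteq B$ has empty-interior fibres and so empty interior; what the sandwich actually needs is a subbox disjoint from $D$, which is $N_n$ applied to $D$. I would prove $N_n$ by a parallel induction: given definable $\Delta\subseteq M^n$ with empty interior, the set of $z$ whose fibre $\Delta_z$ has nonempty interior is itself of empty interior, since otherwise choosing definable $\overline M$-valued selections $u(z)<v(z)$ with $(u(z),v(z))\subseteq\Delta_z$ and making $u,v$ continuous by the inductive hypothesis produces an open box inside $\Delta$; by $N_{n-1}$ we then shrink $B_0$ so that each fibre $\Delta_z$ has empty interior, hence is discrete and closed by Lemma \ref{lem:local_dim0}. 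The remaining ``union of graphs'' case is cleared by making the branch-selecting functions continuous via the inductive hypothesis and avoiding them, and this multi-branch case is the genuinely delicate point. It is precisely where the definably complete treatment in \cite{FKK} invokes its dimension theory, and the present argument is the same once Corollary \ref{cor:monotone-star} replaces the monotonicity theorem used there and Lemma \ref{lem:local_dim0} plays the role of the dimension-zero case.

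A structurally cleaner alternative, which I would pursue if the direct bookkeeping becomes unwieldy, is to exploit the preservation results: by Remark \ref{rem:equiv_*_cont} the univariate $*$-continuity property passes to a sufficiently saturated elementary extension, in which the various ``for each subbox there is a continuity subbox'' statements can be rendered uniform by saturation, and by Proposition \ref{prop:equiv_*_cont} the $*$-continuity property then descends from that extension back to $\mathcal M$. Either way the crux is controlling how the fibrewise discontinuity loci vary with the parameters, and it is this uniformity, unavailable because no parameterized monotonicity theorem holds for general locally o-minimal structures, that constitutes the hard part.
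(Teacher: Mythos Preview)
Your overall strategy is sound and the key difficulty you flag---that one of $X_+,X_-,X_c$ must have nonempty interior, equivalently your $N_n$---is exactly the crux. However, the paper's organization is rather different and cleaner than yours. Instead of doing monotonicity only in the last variable and then running a sandwich/avoid-$D$ argument against the inductive hypothesis on the base, the paper iterates the monotonicity step over \emph{every} coordinate: at each stage it splits the current box into $X_+^{(i)},X_-^{(i)},X_c^{(i)},X_p^{(i)}$ according to the behavior of $f$ in the $i$-th variable, uses Corollary~\ref{cor:monotone-star} to see that $X_p^{(i)}$ has empty interior, and passes to a subbox contained in one of the other three. After $m$ iterations $f$ is $i$-strictly monotone and $i$-continuous on a subbox for every $i$, and then \cite[Lemma~3.2.16]{vdD} gives joint continuity directly. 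This bypasses your discontinuity set $D$ entirely.

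Both routes need property~(b) (your $N_n$), proved simultaneously with $*$-continuity by induction on the dimension. The step the paper uses to pass to a subbox inside $X_+^{(i)}$ is precisely the argument you sketch: from property~(b) in dimension $m-1$ one finds a subbox $W$ of parameters where, say, $(X_+)_z$ contains an interval to the right of some fixed $a_0$, and then the supremum function $h(z)=\sup\{b:(a_0,b)\subseteq(X_+)_z\}$ is made continuous by $*$-continuity in dimension $m-1$, yielding a uniform $a_1>a_0$ with $(a_0,a_1)\times W''\subseteq X_+$. Your ``union of graphs / multi-branch'' worry in proving $N_n$ is therefore overblown: once the fibres $\Delta_z$ are discrete and closed, the same gap-function trick (take $h(z)=\sup\{b>a_0:(a_0,b)\cap\Delta_z=\emptyset\}$ and make it continuous via the inductive hypothesis) immediately produces a subbox disjoint from $\Delta$. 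No branch enumeration is needed.

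Your saturation alternative is not obviously workable: saturation controls types over parameters in $M$, not uniformity of continuity for $\overline M$-valued functions, and you have not indicated how to extract a first-order statement that saturation would make uniform. The direct induction is both necessary and sufficient here.
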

\begin{proof}
	We can prove the proposition in the same manner as \cite[Theorem 2.11(iii)]{Fuji_tame} using Corollary \ref{cor:monotone-star}.
	We omit the proof.
\end{proof}

\section{Dimension theory}\label{sec:dimension}
\subsection{Definition of topological dimension}\label{sec:topological}
We develop dimension theories for two sorts of structures; that is, locally l-visceral structures and $*$-locally weakly o-minimal structures in some restricted cases.
We first recall the definition of dimension.
\begin{definition}[Dimension]\label{def:dim}
	Consider an expansion of a densely linearly order without endpoints $\mathcal M=(M,<,\ldots)$.
	Let $X$ be a nonempty definable subset of $M^n$.
	The dimension of $X$ is the maximal nonnegative integer $d$ such that $\pi(X)$ has a nonempty interior for some coordinate projection $\pi:M^n \rightarrow M^d$.
	We consider that $M^0$ is a singleton equipped with the trivial topology.
	We set $\dim(X)=-\infty$ when $X$ is an empty set.
\end{definition}
We can find several other definitions of dimension in \cite{vdD,Fuji_uniform,Fuji_pregeo, PS_omin}.
See \cite{Mathews, KTTT, Fuji_pregeo} for the equivalence of these definitions in o-minimal structures and definably complete locally o-minimal structures.

The following proposition holds in our definition of dimension without extra assumptions on structures:

\begin{proposition}\label{prop:dim_basic}
	Consider an expansion of a densely linearly order without endpoints $\mathcal M=(M,<,\ldots)$.
	The following assertions hold true.
	%We assume that the sets $X$ and $Y$ in the assertions are nonempty and definable.
	\begin{enumerate}
			\item[(1)] Let $X \subseteq Y$ be definable sets.
			Then, the inequality $\dim(X) \leq \dim(Y)$ holds true.
			\item[(2)] Let $\sigma$ be a permutation of the set $\{1,\ldots,n\}$.
			The definable map $\overline{\sigma}:M^n \rightarrow M^n$ is defined by $\overline{\sigma}(x_1, \ldots, x_n) = (x_{\sigma(1)},\ldots, x_{\sigma(n)})$.
			Then, we have $\dim(X)=\dim(\overline{\sigma}(X))$ for any definable subset $X$ of $M^n$.
			\item[(3)] Let $X$ and $Y$ be definable sets.
			We have $\dim(X \times Y) = \dim(X)+\dim(Y)$.
		\end{enumerate}
\end{proposition}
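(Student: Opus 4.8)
The plan is to deduce all three assertions directly from Definition~\ref{def:dim} together with elementary facts about the product order topology; no structure-theoretic hypotheses are needed. For (1), the empty case is handled by the convention $\dim(\emptyset)=-\infty$; otherwise set $d=\dim(X)$ and pick a coordinate projection $\pi\colon M^n\to M^d$ with $\myint(\pi(X))\neq\emptyset$. Since $X\subseteq Y$ yields $\pi(X)\subseteq\pi(Y)$, we get $\myint(\pi(Y))\supseteq\myint(\pi(X))\neq\emptyset$, so $\dim(Y)\geq d$. For (2), the map $\overline{\sigma}$ is a homeomorphism of $M^n$, being merely a permutation of coordinates; and for each $d$-element set $S$ of coordinates the composite $\pi_S\circ\overline{\sigma}$ equals $\pi_{\sigma(S)}$ up to a homeomorphic reindexing of the target $M^d$. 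Hence $\pi_S(\overline{\sigma}(X))$ has nonempty interior exactly when $\pi_{\sigma(S)}(X)$ does, and since $S\mapsto\sigma(S)$ is a bijection of the $d$-element subsets of $\{1,\dots,n\}$, the integers $d$ realised by $\overline{\sigma}(X)$ coincide with those realised by $X$, giving $\dim(\overline{\sigma}(X))=\dim(X)$.

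The substance is (3); write $X\subseteq M^m$ and $Y\subseteq M^n$, and dispose of the empty case as before, so assume both factors nonempty. For the inequality $\dim(X\times Y)\geq\dim(X)+\dim(Y)$, choose coordinate projections $\rho\colon M^m\to M^p$ and $\tau\colon M^n\to M^q$ realising $p=\dim(X)$ and $q=\dim(Y)$, and pick nonempty open boxes $U\subseteq\myint(\rho(X))$ and $V\subseteq\myint(\tau(Y))$. Then $\rho\times\tau$ is a coordinate projection $M^{m+n}\to M^{p+q}$ satisfying $(\rho\times\tau)(X\times Y)=\rho(X)\times\tau(Y)\supseteq U\times V$, and $U\times V$ is a nonempty open box, so $\dim(X\times Y)\geq p+q$.

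For the reverse inequality, let $r=\dim(X\times Y)$ and fix a coordinate projection $\pi\colon M^{m+n}\to M^r$ with $\myint(\pi(X\times Y))\neq\emptyset$. Splitting the chosen coordinates into the $p$ of them lying in the first block $\{1,\dots,m\}$ and the $q$ of them lying in the second block $\{m+1,\dots,m+n\}$, we have $r=p+q$ and, up to a homeomorphic reindexing of $M^r$, $\pi(X\times Y)=\pi_1(X)\times\pi_2(Y)$ for the induced projections $\pi_1,\pi_2$. The key step, which I expect to be the main obstacle, is the purely topological lemma that for $A\subseteq M^p$ and $B\subseteq M^q$ the product $A\times B$ has nonempty interior if and only if both $A$ and $B$ do: any nonempty open subset of $M^{p+q}$ contained in $A\times B$ contains a basic open box $U'\times V'$ with $U',V'$ nonempty open, and then $U'\subseteq A$ and $V'\subseteq B$. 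Applying this to $\pi_1(X)\times\pi_2(Y)$ gives $\dim(X)\geq p$ and $\dim(Y)\geq q$, whence $\dim(X)+\dim(Y)\geq r$. Combining the two inequalities proves (3).
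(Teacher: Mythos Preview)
Your proof is correct and follows essentially the same approach as the paper: assertions (1) and (2) are declared obvious there, and your argument for (3) mirrors the one the paper cites from \cite{Fuji_tame} (reproduced in the appendix), namely splitting a coordinate projection $\Pi:M^{m+n}\to M^r$ as $\pi_1\times\pi_2$, writing $\Pi(X\times Y)=\pi_1(X)\times\pi_2(Y)$, and extracting an open box $C\times D$ inside this product to conclude $\dim X\geq |C|$-many and $\dim Y\geq |D|$-many coordinates. Your explicit ``products have nonempty interior iff both factors do'' lemma is exactly the step the paper uses implicitly when it picks $C\times D\subseteq\Pi(X\times Y)$ and deduces $C\subseteq\pi_1(X)$, $D\subseteq\pi_2(Y)$.
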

\begin{proof}
	Assertions (1) and (2) are obvious.
	A proof for (3) is found in \cite[Theorem 3.8(3)]{Fuji_tame}.
\end{proof}

\subsection{Local l-visceral case}\label{sec:preliminary}

We consider locally l-visceral structures in this subsection.
We first consider topological properties (A) and (B) below.
In this subsection, we prove several simple formulas for the dimension function of sets definable in locally l-visceral structures enjoying properties (A) and (B).

\begin{definition}\label{def:tame_top2}
	Consider an expansion of a dense linear order without endpoints $\mathcal M=(M,<,\ldots)$.
	We consider the following properties on $\mathcal M$.
	\begin{enumerate}
		\item[(A)] The image of a nonempty definable discrete set under a coordinate projection	is again discrete.
		\item[(B)] Let $X$ be a definable subset of $M^{n+1}$ and $\pi:M^{n+1} \to M^n$ be the coordinate projection forgetting the last coordinate.
		Assume that $\pi(X)$ has a nonempty interior and the fiber $X_x:=\{y \in M\;|\; (x,y) \in X\}$ has a nonempty interior for each $x \in \pi(X)$.
		Then $X$ has a nonempty interior.
	\end{enumerate}
\end{definition} 
We introduced similar concepts called properties (a) through (d) in \cite{Fuji_tame}.
They are also discussed in this paper.
See Definition \ref{def:tame_top_old}.
Property (A) in this paper is identical to property (a) in \cite{Fuji_tame}.
Property (B) is proven in \cite[Lemma 3.3]{Fuji_tame} using definably completeness, properties (b) and (c).
The author does not know whether property (B) follows only from properties (b) and (c) without assuming definable completeness even when the structure is locally o-minimal.
Therefore, we employ property (B) as an assumption.

We first prove several basic topological properties of locally l-visceral structures enjoying the above properties.
\begin{lemma}\label{lem:key0}
	Consider a locally l-visceral structure with property (A) in Definition \ref{def:tame_top2}.
	A definable discrete set is closed.
\end{lemma}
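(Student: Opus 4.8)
The plan is to prove the statement by induction on the ambient dimension $n$ of the definable discrete set $D\subseteq M^n$, reducing the multivariate case to the univariate one by fibering over the projection that forgets the last coordinate and using property (A) to control the base.

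First I would dispose of the univariate case $n=1$, which is where local l-viscerality enters directly. If $D\subseteq M$ is definable and discrete, Proposition \ref{prop:char_lvisceral} expresses $D=U\cup C$ with $U$ open and $C$ discrete and closed. Since $M$ carries a dense order, no nonempty open interval is discrete, so $\myint(D)=\emptyset$; as $U$ is an open subset of $D$, this forces $U=\emptyset$ and hence $D=C$ is closed. This simultaneously records the fact, used below, that every univariate definable discrete set is closed.

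For the inductive step, assume the statement holds for discrete definable subsets of $M^{n-1}$, and let $D\subseteq M^n$ be definable and discrete (the empty set being trivially closed, assume $D\neq\emptyset$). Let $\pi:M^n\to M^{n-1}$ be the projection forgetting the last coordinate. By property (A) the image $\pi(D)$ is discrete, and by the induction hypothesis it is therefore closed. Fix $p=(p',p_n)\in\mycl(D)$ with $p'\in M^{n-1}$; I must show $p\in D$. Continuity of $\pi$ gives $p'=\pi(p)\in\mycl(\pi(D))=\pi(D)$, and since $\pi(D)$ is discrete the point $p'$ is isolated in it, so there is an open box $V\ni p'$ with $V\cap\pi(D)=\{p'\}$. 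Now any basic neighborhood $V'\times W$ of $p$ with $V'\subseteq V$ meets $D$; writing a witness as $(a',a_n)\in D$, we get $a'\in V'\cap\pi(D)\subseteq V\cap\pi(D)=\{p'\}$, so $a'=p'$ and $a_n\in W\cap D_{p'}$, where $D_{p'}=\{y\in M\;|\;(p',y)\in D\}$. Thus every neighborhood $W$ of $p_n$ meets $D_{p'}$, i.e. $p_n\in\mycl(D_{p'})$. The fiber $D_{p'}$ is a univariate definable set and is discrete, being a subspace of the discrete set $D$; by the base case it is closed, whence $p_n\in D_{p'}$ and $p=(p',p_n)\in D$, completing the induction.

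The main obstacle is precisely that the points of $D$ converging to $p$ need not lie in the single fiber over $p'$: a priori they could approach $p$ through fibers over infinitely many nearby base points, and then no univariate fact about $D_{p'}$ would apply. Property (A) is exactly the hypothesis that removes this difficulty, for it makes $\pi(D)$ discrete, so $p'$ becomes isolated and every point of $D$ sufficiently close to $p$ is forced into the fiber $D_{p'}$, reducing the whole problem to the already-settled univariate closedness.
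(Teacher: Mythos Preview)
Your proof is correct and rests on the same two ingredients as the paper's argument: property (A) to make projections of $D$ discrete, and Proposition \ref{prop:char_lvisceral} to handle the univariate case. The paper organizes the argument slightly differently: rather than inducting on $n$ via the projection $M^n\to M^{n-1}$, it projects simultaneously onto all $n$ coordinates $\pi_k:M^n\to M$, observes each $\pi_k(X)$ is discrete and closed, and then for an accumulation point $x$ isolates $\pi_k(x)$ in $\pi_k(X)$ by an interval $I_k$, so that $X\cap\prod_k I_k\subseteq\prod_k\{\pi_k(x)\}=\{x\}$, forcing $x\in X$. This avoids induction entirely, but your inductive fiber argument is equally valid and essentially the same idea unrolled one coordinate at a time.
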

\begin{proof}
	It was proved in \cite[Lemma 2.4]{Fuji_tame} when the structure is definably complete and locally o-minimal.
	We can prove the lemma similarly to \cite[Lemma 2.4]{Fuji_tame} using Proposition \ref{prop:char_lvisceral} in place of \cite[Lemma 2.3]{Fuji_tame}.
\end{proof}

\begin{lemma}\label{lem:aaa}
	Consider a locally l-visceral structure $\mathcal M=(M,<,\ldots)$ with property (A) in Definition \ref{def:tame_top2}.
	Let $f:X \rightarrow M$ be a definable map.
	If the image $f(X)$ and all fibers of $f$ are discrete, then so is $X$. 
\end{lemma}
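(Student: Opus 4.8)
The plan is to pass to the graph of $f$, where both hypotheses become conditions on a single distinguished coordinate, prove the graph is discrete by an elementary isolation argument, and then transport discreteness back to $X$ by means of property (A). Write $X \subseteq M^n$ and consider the definable set
$\Gamma = \{(x,y) \in X \times M \;|\; y = f(x)\} \subseteq M^{n+1}$, the graph of $f$. If $X$ is empty there is nothing to prove, so I may assume $X \neq \emptyset$.

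First I would show that $\Gamma$ is discrete. Fix a point $p = (a,c) \in \Gamma$, so $a \in X$ and $c = f(a)$. Since $f(X)$ is discrete, there is an open interval $J \ni c$ with $J \cap f(X) = \{c\}$. Since the fiber $f^{-1}(c) \subseteq M^n$ is discrete and contains $a$, there is an open box $B \ni a$ in $M^n$ with $B \cap f^{-1}(c) = \{a\}$. Now take any $(x,f(x)) \in \Gamma$ with $x \in B$ and $f(x) \in J$. Then $f(x) \in J \cap f(X) = \{c\}$, so $f(x) = c$ and hence $x \in f^{-1}(c) \cap B = \{a\}$. Thus the only point of $\Gamma$ lying in the open box $B \times J$ is $p$ itself. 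As $p$ was arbitrary, every point of $\Gamma$ is isolated, that is, $\Gamma$ is discrete.

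Finally I would conclude by observing that $X = \pi(\Gamma)$, where $\pi : M^{n+1} \to M^n$ is the coordinate projection forgetting the last coordinate. Since $\Gamma$ is a nonempty definable discrete set, property (A) in Definition \ref{def:tame_top2} shows that its image $X$ under $\pi$ is discrete, which is the desired conclusion.

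The proof is short precisely because the graph construction converts the clause ``the image and all fibers are discrete'' into the statement that the last-coordinate projection of $\Gamma$ has discrete image and discrete fibers, and this forces $\Gamma$ itself to be discrete by the product-box argument above. The only genuinely delicate step is the last one: a coordinate-projection image of a discrete set need not be discrete in general, so passing from the discreteness of $\Gamma$ to that of $X$ really does require property (A). All the topological input of the lemma is therefore concentrated in that single appeal.
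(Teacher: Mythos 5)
Your proof is correct and follows essentially the same route as the paper's: the paper also passes to the graph $\Gamma(f)$, isolates each of its points in a box of the form $B \times I$ using discreteness of the image and of the fiber, and then invokes property (A) to transfer discreteness from the graph to $X=\pi(\Gamma(f))$. The only difference is cosmetic — the paper phrases the graph step as a reduction to the case where $f$ is a coordinate projection before running the same isolation argument — and your closing remark correctly identifies property (A) as the one indispensable ingredient.
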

\begin{proof}
	It was proved in \cite[Lemma 2.5]{Fuji_tame} when the structure is definably complete and locally o-minimal.
	We can prove the lemma in the same manner using Lemma \ref{lem:key0}.
\end{proof}

\begin{proposition}\label{prop:zero}
	Consider a locally  l-visceral structure satisfying property (A) in Definition \ref{def:tame_top2}.
	A definable set is of dimension zero if and only if it is discrete.
	When it is of dimension zero, it is also closed.
\end{proposition}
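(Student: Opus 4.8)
The plan is to prove the two implications of the equivalence separately and then derive closedness from Lemma~\ref{lem:key0}. Throughout I assume $X\subseteq M^n$ is nonempty, which is automatic once $\dim(X)=0$ since $\dim(\emptyset)=-\infty$. I first record the elementary fact that a discrete subset of $M^d$ with $d\geq 1$ has empty interior: an open box is infinite and has no isolated points in a dense order, so it cannot be contained in a discrete set. Combined with the observation that $\dim(X)\geq 1$ holds if and only if some coordinate projection onto a single coordinate has image with nonempty interior (obtained by projecting an open box in $M^d$ further down onto one factor), this reduces both implications to statements about one-dimensional projections.

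For the implication that a discrete set has dimension zero, I would argue directly from property~(A). If $X$ is discrete and $\pi\colon M^n\to M^d$ is any coordinate projection with $d\geq 1$, then $\pi(X)$ is discrete by property~(A) of Definition~\ref{def:tame_top2}, hence has empty interior by the remark above. Thus no coordinate projection witnesses dimension $\geq 1$, so $\dim(X)=0$.

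The converse, that $\dim(X)=0$ forces $X$ to be discrete, I would prove by induction on $n$. In the base case $n=1$, the condition $\dim(X)=0$ means $\myint(X)=\emptyset$; writing $X$ as the union of an open set $U$ and a discrete closed set via Proposition~\ref{prop:char_lvisceral} and noting $U\subseteq\myint(X)=\emptyset$ shows that $X$ is discrete. For the inductive step, let $f\colon X\to M$ be the projection onto the last coordinate. Its image $f(X)$ has empty interior in $M$ because $\dim(X)=0$, hence is discrete by the base case. For the fibers, $f^{-1}(c)$ is homeomorphic to the slice $X^c=\{x'\in M^{n-1}\mid (x',c)\in X\}$; since each single-coordinate projection of $X^c$ is contained in the corresponding projection of $X$ and therefore has empty interior, I obtain $\dim(X^c)=0$, so $X^c$ is discrete by the induction hypothesis. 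Now $f$ has discrete image and discrete fibers, and Lemma~\ref{lem:aaa} yields that $X$ is discrete.

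Finally, once $\dim(X)=0$ implies $X$ discrete, closedness is immediate from Lemma~\ref{lem:key0}. The main obstacle I anticipate is the bookkeeping in the inductive step: one must verify that passing to the slice $X^c$ genuinely lowers the ambient dimension while preserving the hypothesis $\dim=0$, and that Lemma~\ref{lem:aaa} applies with $f$ the last-coordinate projection, the discreteness of the fibers being exactly where the induction hypothesis is consumed. The reduction of the dimension definition to single-coordinate projections, used silently above, should be stated explicitly to make the fiber arguments rigorous.
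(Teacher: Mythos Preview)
Your proof is correct and follows essentially the same route as the paper: both directions hinge on reducing to single-coordinate projections, invoking property~(A) for the forward direction and local l-viscerality (Proposition~\ref{prop:char_lvisceral}) for the backward one, with closedness coming from Lemma~\ref{lem:key0}. The only cosmetic difference is that for $\dim X=0\Rightarrow X$ discrete you run an induction using Lemma~\ref{lem:aaa}, whereas the paper's (terse) argument implicitly embeds $X$ into the discrete product $\prod_k\pi_k(X)$; both are valid realizations of the same idea.
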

\begin{proof}
	We can prove it in the same manner as \cite[Proposition 3.2]{Fuji_tame} using Lemma \ref{lem:key0}.
\end{proof}

\begin{lemma}\label{lem:l-b}
	Consider a locally l-visceral structure $\mathcal M=(M,<,\ldots)$ enjoying property (B) in Definition \ref{def:tame_top2}.
	Let $X$ be a definable subset of $M^n$ having a nonempty interior.
	Let $X_1$ and $X_2$ be definable subsets of $X$ with $X=X_1 \cup X_2$. 
	Then at least one of $X_1$ and $X_2$ has a nonempty interior.
\end{lemma}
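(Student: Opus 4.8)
The plan is to argue by induction on $n$, mirroring the proof of Theorem \ref{thm:simple_decomp} but invoking property (B) in place of Lemma \ref{lem:func}. After reducing to the case where $X$ is a bounded open box, the idea is to pass to the fibers over the first $n-1$ coordinates, apply the one-dimensional statement fiberwise, then use the induction hypothesis on the base $M^{n-1}$ to locate a sub-box over which one of the two pieces has fat fibers, and finally feed this into property (B).

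For the base case $n=1$ I would take an open interval $I$ contained in $X$ and assume for contradiction that both $X_1$ and $X_2$ have empty interior. Local l-viscerality, via Proposition \ref{prop:char_lvisceral}, lets me shrink around any chosen point $a \in I$ to an open interval $J \ni a$ with $J \cap X_1$ a union of a finite set and an open set; emptiness of the interior of $X_1 \cap I$ forces the open part to vanish, so $J \cap X_1$ is finite. Shrinking once more makes $J \cap X_2$ finite as well, whence $J = (J \cap X_1) \cup (J \cap X_2)$ would be finite, which is impossible since a nonempty open interval in a dense order is infinite.

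For the inductive step I reduce to $X = B = \prod_{i=1}^n I_i$, a bounded open box, since any sub-box witnessing a nonempty interior of $B \cap X_i$ also witnesses one for $X_i$. Writing $\pi : M^n \to M^{n-1}$ for the projection forgetting the last coordinate, the fibers satisfy $(X_1)_x \cup (X_2)_x = I_n$ for every $x \in \pi(B)$, so the base case applied fiberwise shows that for each such $x$ at least one of $(X_1)_x$, $(X_2)_x$ has a nonempty interior. The sets $Y_i = \{x \in \pi(B) \mid (X_i)_x \text{ has nonempty interior}\}$ are definable and cover the open box $\pi(B)$; by the induction hypothesis one of them, say $Y_1$, has a nonempty interior and hence contains an open box $B'$. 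Over $B'$ every fiber of $Z := X_1 \cap (B' \times I_n)$ has a nonempty interior and $\pi(Z) = B'$ has a nonempty interior, so property (B) yields that $Z$, and therefore $X_1$, has a nonempty interior.

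The main obstacle is less conceptual than bookkeeping: I must check that $\pi(Z)$ really equals $B'$, which uses that a fiber with nonempty interior is in particular nonempty, and that each fiber $Z_x$ has a nonempty interior, so that property (B) applies verbatim; I must also confirm that $Y_1$ and $Y_2$ are genuinely definable and that the one-dimensional case is available fiberwise. Once these points are in place, the induction closes immediately, with property (B) doing exactly the work that Lemma \ref{lem:func} performed in the proof of Theorem \ref{thm:simple_decomp}.
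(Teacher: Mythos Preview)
Your proof is correct and follows essentially the same route as the paper: induction on $n$, with the base case handled via Proposition \ref{prop:char_lvisceral}, and the inductive step done by projecting to $M^{n-1}$, applying the $n=1$ case fiberwise to cover the base by the two definable sets $Y_1,Y_2$ (the paper's $U_1,U_2$), invoking the induction hypothesis on the base, and finishing with property (B). Your reduction to an open box and your explicit restriction to $Z=X_1\cap(B'\times I_n)$ before invoking property (B) are harmless bookkeeping choices that make the final step slightly more transparent than the paper's one-line appeal to (B), but the argument is the same.
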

\begin{proof}
	Lemma \ref{lem:l-b} holds for $n=1$ by Proposition \ref{prop:char_lvisceral}.
	We next consider the case in which $n>1$.
	Let $\pi:M^n \to M^{n-1}$ be the projection forgetting the last coordinate.
	For any $x \in M^{n-1}$, we set $X_x=\{y \in M\;|\; (x,y) \in X\}$.
	We define $(X_1)_x$ and $(X_2)_x$ in the same manner.
	Since $X$ has a nonempty interior, the set $U=\{x \in \pi(X)\;|\; X_x \text{ has a nonempty interior}\}$ has a nonempty interior.
	We define $U_i =\{x \in U\;|\; (X_i)_x \text{ has a nonempty interior}\}$ for $i=1,2$.
	We have $U=U_1 \cup U_2$ because the lemma holds for $n=1$.
	At least one of $U_1$ and $U_2$ has a nonempty interior by the induction hypothesis.
	We may assume that $U_1$ has a nonempty interior without loss of generality.
	The definable set $X_1$ has a nonempty interior by property (B).
\end{proof}

\begin{lemma}\label{lem:pre0}
	Consider a locally l-visceral structure $\mathcal M=(M,<,\ldots)$ enjoying properties (A) and (B) in Definition \ref{def:tame_top2}.
	Let $X$ be a definable subset of $M^n$ of dimension $d$ and $\pi:M^n \rightarrow M^d$ be a coordinate projection such that the projection image $\pi(X)$ has a nonempty interior.
	There exists a definable open subset $U$ of $M^d$ contained in $\pi(X)$ such that the fibers $X \cap \pi^{-1}(x)$ are discrete for all $x \in U$. 
	In addition, the definable set $S=\{x \in \pi(X)\;|\; X \cap \pi^{-1}(x) \text{ is not discrete}\}$ has an empty interior.
\end{lemma}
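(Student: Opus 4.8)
The plan is to prove the two assertions in the reverse of the order stated: first that $S$ has empty interior, and then to deduce the existence of $U$ from this together with Lemma \ref{lem:l-b}. By Proposition \ref{prop:dim_basic}(2) I may assume that $\pi$ is the projection onto the first $d$ coordinates, so that $M^n = M^d \times M^{n-d}$ and the fiber $X \cap \pi^{-1}(x)$ is naturally identified with $X_x = \{y \in M^{n-d}\;|\;(x,y) \in X\}$.

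To show that $S$ has empty interior, I argue by contradiction and assume $\myint(S) \neq \emptyset$. For $x \in S$ the fiber $X_x$ is nonempty (as $x \in \pi(X)$) and not discrete, hence $\dim X_x \geq 1$ by Proposition \ref{prop:zero}. Therefore, for each $x \in S$, some coordinate projection $\rho_j : M^{n-d} \to M$ with $1 \leq j \leq n-d$ sends $X_x$ to a set with nonempty interior. Partitioning $S$ into the finitely many definable pieces $S_j = \{x \in S\;|\; \myint(\rho_j(X_x)) \neq \emptyset\}$ and applying Lemma \ref{lem:l-b} finitely many times, I conclude that some $S_j$ has nonempty interior; fix such a $j$.

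Now let $p : M^n \to M^{d+1}$ be the coordinate projection onto the first $d$ coordinates together with the $(d+j)$-th coordinate, and set $W = X \cap (S_j \times M^{n-d})$. Writing $\pi' : M^{d+1} \to M^d$ for the projection forgetting the last coordinate, I have $\pi'(p(W)) = S_j$, which has nonempty interior, and for each $x \in S_j$ the fiber $(p(W))_x = \rho_j(X_x)$ has nonempty interior by the definition of $S_j$. Property (B) applied to $p(W) \subseteq M^{d+1}$ then yields that $p(W)$ has nonempty interior. Since $p(W) \subseteq p(X)$ and $p$ is a coordinate projection onto $d+1$ coordinates, this forces $\dim X \geq d+1$ by Definition \ref{def:dim}, contradicting $\dim X = d$. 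Hence $S$ has empty interior. The only genuinely delicate point is precisely this reduction to a single coordinate $j$ followed by the dimension-raising step via property (B); everything else is bookkeeping.

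Finally, set $V = \myint(\pi(X))$, which is nonempty by hypothesis. Applying Lemma \ref{lem:l-b} to the partition $V = (V \cap S) \cup (V \setminus S)$ of the set $V$ with nonempty interior, and noting that $V \cap S \subseteq S$ has empty interior, I conclude that $V \setminus S$ has nonempty interior. Then $U := \myint(V \setminus S)$ is a nonempty open subset of $\pi(X)$ with $U \cap S = \emptyset$, so every fiber $X \cap \pi^{-1}(x)$ with $x \in U$ is discrete, which establishes the first assertion and completes the proof.
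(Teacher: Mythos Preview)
Your proof is correct and follows essentially the same route as the paper: reduce to $\pi$ being the projection onto the first $d$ coordinates, assume $S$ has nonempty interior, split $S$ into the pieces $S_j$ according to which coordinate of the fiber has a projection with nonempty interior, use Lemma~\ref{lem:l-b} to find one $S_j$ with nonempty interior, and then apply property~(B) to the $(d+1)$-coordinate projection to contradict $\dim X = d$; the existence of $U$ is then obtained from Lemma~\ref{lem:l-b}. The only cosmetic difference is that you pass through the intermediate set $W = X \cap (S_j \times M^{n-d})$ and compute $(p(W))_x = \rho_j(X_x)$ explicitly, whereas the paper applies property~(B) directly to $\Pi(X)$ after observing that the set of $x$ with $(\Pi(X))_x$ containing an interval contains $S_j$.
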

\begin{proof}
	See \cite[Lemma 3.3]{Fuji_tame} and its proof.
	We can prove the lemma in the same manner as \cite[Lemma 3.3]{Fuji_tame} by using Proposition \ref{prop:zero} and Lemma \ref{lem:l-b} instead of Proposition 3.2 and property (b) in \cite{Fuji_tame}.
\end{proof}

We can prove several simple formulas on dimension of definable sets using the above assertions.
\begin{theorem}\label{thm:dimension_basic}
	Consider a locally l-visceral structure $\mathcal M=(M,<,\ldots)$ enjoying properties (A) and (B) in Definition \ref{def:tame_top2}.
	The following assertions hold true.
	We assume that the sets $X$ and $Y$ in the assertions are nonempty and definable.
	\begin{enumerate}
			\item[(1)] Let $X$ and $Y$ be definable subsets of $M^n$.
			We have 
			\begin{align*}
					\dim(X \cup Y)=\max\{\dim(X),\dim(Y)\}\text{.}
				\end{align*}
			\item[(2)] Let $\varphi:X \rightarrow Y$ be a definable surjective map whose fibers are equi-dimensional; that is, the dimensions of the fibers $\varphi^{-1}(y)$ are constant.
			We have $\dim X = \dim Y + \dim \varphi^{-1}(y)$ for all $y \in Y$.  
			This formula is called the addition property in \cite{W}.
			\item[(3)] Let $f:X \rightarrow M^n$ be a definable map. 
			We have $\dim(f(X)) \leq \dim X$.
		\end{enumerate}
\end{theorem}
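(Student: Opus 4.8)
The plan is to dispatch (1) directly and then funnel both (2) and (3) through a single fibered addition lemma for coordinate projections. For (1), monotonicity (Proposition \ref{prop:dim_basic}(1)) gives $\dim(X\cup Y)\ge\max\{\dim X,\dim Y\}$. For the reverse inequality, set $d=\dim(X\cup Y)$ and pick a coordinate projection $\pi$ to $M^d$ with $\myint(\pi(X\cup Y))\neq\emptyset$; since $\pi(X\cup Y)=\pi(X)\cup\pi(Y)$, Lemma \ref{lem:l-b} forces one of $\pi(X),\pi(Y)$ to have nonempty interior, so $\dim X\ge d$ or $\dim Y\ge d$.

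Everything else I would reduce to the following key lemma: \emph{if $\pi\colon M^N\to M^{N'}$ is a coordinate projection and $Z\subseteq M^N$ is definable with $\dim(Z\cap\pi^{-1}(x))=k$ for every $x\in\pi(Z)$, then $\dim Z=\dim\pi(Z)+k$.} Granting this, (2) and (3) follow by passing to graphs, using that a coordinate projection never raises dimension (immediate from Definition \ref{def:dim}). For a definable $f\colon X\to M^n$ with $X\subseteq M^m$, the graph $\Gamma\subseteq M^{m+n}$ projects bijectively onto $X$ under the projection forgetting the last $n$ coordinates, so its fibers are singletons ($0$-dimensional) and the key lemma gives $\dim\Gamma=\dim X$; as $f(X)$ is the image of $\Gamma$ under the complementary projection, $\dim f(X)\le\dim\Gamma=\dim X$, which is (3). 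For (2), the graph $\Gamma(\varphi)$ likewise satisfies $\dim\Gamma(\varphi)=\dim X$, while the projection onto the $Y$-coordinates maps $\Gamma(\varphi)$ onto $Y$ with fiber over $y$ equal to $\varphi^{-1}(y)\times\{y\}$, of dimension $k$ by the product formula (Proposition \ref{prop:dim_basic}(3)); the key lemma then yields $\dim X=\dim Y+k$.

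It remains to prove the key lemma, by induction on the number of forgotten coordinates, the essential case being a single coordinate $\pi\colon M^N\to M^{N-1}$, where each fiber $Z_x\subseteq M$ has dimension $0$ or $1$. Directly from Definition \ref{def:dim} one checks $\dim\pi(Z)\le\dim Z\le\dim\pi(Z)+1$ for any $Z$: a projection witnessing $\dim Z$ either ignores the last coordinate, factoring through $\pi$, or uses it, in which case its remaining coordinates project $\pi(Z)$ onto a set with nonempty interior. When $k=1$, I obtain the matching lower bound from property (B): choosing $\rho'$ with $\myint(\rho'(\pi(Z)))\neq\emptyset$ and forming the coordinate projection $\rho=(\rho',x_N)$, the set $\rho(Z)$ has, over an open subbox $W$ of $\rho'(\pi(Z))$, fibers containing some $Z_x$ and hence of nonempty interior; property (B) then forces $\rho(Z)$ to contain an open box, so $\dim Z\ge\dim\pi(Z)+1$.

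The genuinely hard case is $k=0$, where I must show that a coordinate projection with discrete fibers does not drop dimension, i.e.\ $\dim Z\le\dim\pi(Z)$. Here I would fix a top-dimensional projection $\rho'$ of $\pi(Z)$, apply the fiber lemma (Lemma \ref{lem:pre0}) to shrink $\pi(Z)$ to an open set over which its $\rho'$-fibers are discrete, pull this back, and then combine the discreteness of the $Z_x$ with Lemma \ref{lem:aaa} (discrete image and discrete fibers force a discrete source) and Proposition \ref{prop:zero} to conclude that the corresponding fibers of $Z$ are discrete, ruling out the extra dimension; this is the step that consumes property (A), through Lemmas \ref{lem:key0}, \ref{lem:aaa}, and \ref{lem:pre0}. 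For several forgotten coordinates I would peel off the last variable, split $\pi(Z)$ according to whether the one-variable fiber is discrete or has interior, apply the base case on each piece, and recombine via (1); the delicate point is that the intermediate fibers need not be equidimensional, so the induction must be organized around a fiber-dimension stratification rather than raw equidimensionality. I expect this $k=0$ step to be the main obstacle, exactly as in the corresponding argument of \cite[Theorem 3.8]{Fuji_tame}, to which the hypotheses (A) and (B) of Definition \ref{def:tame_top2} are tailored.
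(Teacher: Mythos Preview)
Your proof of (1) is identical to the paper's. Your single-coordinate case of the key lemma is exactly the paper's Claim: for $\pi\colon M^{n+1}\to M^n$ forgetting one coordinate, $\dim(X\cap(X(i)\times M))=\dim X(i)+i$ for $i=0,1$, and your arguments for $k=1$ (via property~(B)) and $k=0$ (via Lemma~\ref{lem:pre0}, Lemma~\ref{lem:aaa}, and property~(A)) match the paper's. One small imprecision: in the $k=0$ step you fix a top-dimensional projection $\rho'$ of $\pi(Z)$ a priori, but the argument actually needs the projection $\rho$ determined by the hypothetical $(d{+}1)$-dimensional witness $\pi'$ (which must use the last coordinate, so $\pi'=(\rho,\mathrm{id})$); only then does Lemma~\ref{lem:pre0} apply to the right object. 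This is easily fixed and the paper does it this way.

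The genuine difference is in how you pass from the single-coordinate formula to (2) and (3). The paper observes that Proposition~\ref{prop:dim_basic}, assertion~(1), and the Claim together verify the axioms for a dimension function in van den Dries~\cite{vdD-dim}, and then simply invokes \cite[Corollary~1.5]{vdD-dim} to obtain both (2) and (3). You instead propose to extend the single-coordinate lemma to arbitrary coordinate projections by peeling off one variable at a time, stratifying by fiber dimension at each step, and recombining via (1); this is precisely the inductive argument that \cite{vdD-dim} carries out, so you are re-deriving that result rather than citing it. Both routes are correct; the paper's is shorter by outsourcing the induction, while yours is self-contained but requires carefully managing the non-equidimensional intermediate fibers you flag (which forces you to prove the stratified version $\dim(Z\cap\pi^{-1}(Z(k)))=\dim Z(k)+k$ rather than just the equidimensional key lemma as stated).
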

\begin{proof}
	Assertion (1) can be proved in the same manner as \cite[Theorem 3.8(4)]{Fuji_tame} using Lemma \ref{lem:l-b}.
	We omit the details.
	
	We next tackle assertions (2) and (3).
	We first demonstrate the following claim:
	\medskip

	\textbf{Claim.} Let $X$ be a definable subset of $M^{n+1}$.
	We set $X_x:=\{y \in M\;|\; (x,y) \in X\}$ for each $x \in M^n$.
	Put $X(i)=\{x \in M^n\;|\; \dim X_x=i\}$ for $i=0,1$.
	Then the equality $\dim (X \cap (X(i) \times M))=\dim X(i)+i$ holds. 
	\begin{proof}[Proof of Claim]
		Let $\Pi:M^{n+1} \to M^n$ be the coordinate projection forgetting the last coordinate.
		Set $d=\dim \Pi(X)$.
		By considering $X \cap (X(i) \times M)$ instead of $X$, we may assume that $\Pi(X)=X(i)$ for some $i$.
		
		We first consider the case in which $i=1$.
		We first prove $\dim X \leq d+1$.
		Assume for contradiction that $\dim X>d+1$.
		Let $\pi':M^{n+1} \to M^{d+2}$ be a coordinate projection such that $\pi'(X)$ has a nonempty interior.
		The projection does not forget the last coordinate; otherwise, $\pi'(X)$ is the image of $\Pi(X)$ under some coordinate projection, which contradicts the equality $\dim \Pi(X)=d$.
		Let $\pi'':M^{n+1} \to M^{d+1}$ be the composition of $\pi'$ with the projection of $M^{d+2}$ forgetting the last coordinate.
		The image $\pi''(X)$ has a nonempty interior and it coincides with the image of $\Pi(X)$ under some coordinate projection.
		This is also a contradiction. 
		We get $\dim X \geq d+1$.
		
		We can find a coordinate projection $\pi:M^n \to M^d$ such that $\pi(\Pi(X))$ has a nonempty interior.
		We may assume that $\pi$ is the projection onto the first $d$ coordinates without loss of generality.
		Let us consider the projection $\rho:M^{n+1} \to M^{d+1}$ given by $\rho(x,y)=(\pi(x),y)$ for $x \in M^n$ and $y \in M$.
		Set $Y=\rho(X)$ and $Y_t=\{y \in M\;|\; (t,y) \in Y\}$ for each $t \in M^d$.
		The equality $Y_t=\bigcup_{x \in \pi^{-1}(t) \cap \Pi(X)} X_x$ holds and $X_x$ has a nonempty interior for each $x \in \Pi(X)$ by the assumption.
		This implies that the fiber $Y_t$ has a nonempty interior for each $t \in \pi(\Pi(X))$.
		The definable set $Y$ has a nonempty interior by property (B).
		This means that $\dim X \geq d+1$.
		We have shown that $\dim X= d+1$.
		
		We next consider the case in which $i=0$.
		It is obvious that $\dim X \geq d$.
		We have only to show $\dim X \leq d$.
		Assume for contradiction that $\dim X>d$.
		There exists a coordinate projection $\pi':M^{n+1} \to M^{d+1}$ such that $\pi'(X)$ has a nonempty interior.
		We can easily lead to the contradiction that $\dim \Pi(X) >d$ if we assume that $\pi'$ forgets the last coordinate.
		Therefore, there exists a coordinate projection $\pi:M^n \to M^d$ such that $\pi'(x,y)=(\pi(x),y)$ for every $x \in M^n$ and $y \in M$.
		We can take a nonempty open box $B$ in $M^d$ and a nonempty open interval $I$ such that $B \times I \subseteq \pi'(X)$.
		On the other hand, the definable set $S=\{t \in \pi(\Pi(X))\;|\; \Pi(X) \cap \pi^{-1}(t) \text{ is not discrete}\}$ has an empty interior by Lemma \ref{lem:pre0}  because $\dim \Pi(X)=d$.
		The difference $B \setminus S$ has a nonempty interior by Lemma \ref{lem:l-b}.
		We may assume that $\Pi(X) \cap \pi^{-1}(t)$ is discrete for every $t \in B$ by shrinking $B$ so that $B \cap S=\emptyset$ if necessary.
		
		For each $t \in B$, we have $$(\pi'(X))_t:=\{y \in M\;|\; (t,y) \in \pi'(X)\}=\bigcup_{x \in \pi^{-1}(t) \cap \Pi(X)}X_x.$$
		The definable set $\bigcup_{x \in \pi^{-1}(t) \cap \Pi(X)}\{x\} \times X_x$ is discrete by Lemma \ref{lem:aaa}.
		Its image under the coordinate projection onto the last coordinate coincides with  $(\pi'(X))_t$.
		The set $(\pi'(X))_t$ is discrete by property (A).
		On the other hand, $(\pi'(X))_t$ contains an open interval $I$, which is absurd.
	\end{proof}

	We now return to the proof of the theorem.
	The dimension function $\dim$ satisfies the requirements given in Definition of \cite{vdD-dim} by Proposition \ref{prop:dim_basic}, assertion (1) of the theorem and Claim.
	Assertions (2) and (3) follow from \cite[Corollary 1.5]{vdD-dim}.
\end{proof}

We find a sufficient condition for properties (A) and (B).
\begin{definition}
	Consider a model theoretic structure $\mathcal M=(M,\ldots)$.
	We say that $\mathcal M$ enjoys the \textit{definable choice property} if, for any coordinate projection $\pi:M^n \to M^d$ and a nonempty definable subset $X$ of $M^n$, there exists a definable map $\varphi:\pi(X) \to X$ such that the composition $\pi \circ \varphi$ is the identity map on $\pi(X)$.
\end{definition}

\begin{definition}
	Consider an expansion of a dense linear order without endpoints $\mathcal M=(M,<,\ldots)$.
	We say that $\mathcal M$ enjoys the \textit{univariate continuity property} if, for every definable function $f:I \to M$ from a nonempty open interval $I$, there exists a nonempty open subinterval $J$ of $I$ such that the restriction of $f$ to $J$ is continuous.
	We say that $\mathcal M$ enjoys the \textit{continuity property} if, for every definable function $f:B \to M$ from a nonempty open box $B$, there exists a nonempty open box $U$ contained in $B$ such that the restriction of $f$ to $U$ is continuous. 
\end{definition}

\begin{remark}
	An assertion similar to Proposition \ref{prop:equiv_*_cont} holds for the univariate continuity property and the continuity property.
	Its proof is almost the same as that of Proposition \ref{prop:equiv_*_cont}.
	We omit the proof.
\end{remark}

\begin{proposition}\label{prop:suff_visceral}
A locally l-visceral structure enjoys properties (A) and (B) in Definition \ref{def:tame_top2} if it possesses the definable choice property and the continuity property.
\end{proposition}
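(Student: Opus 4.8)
The plan is to verify (B) and (A) separately; in both cases the essential device is to use the definable choice property to manufacture \emph{$M$-valued} auxiliary functions, so that the ordinary continuity property (rather than its $*$-variant) already applies and the potentially $\overline{M}$-valued fiber endpoints never appear explicitly.

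For (B): let $X \subseteq M^{n+1}$ with $\pi(X)$ of nonempty interior and every fiber $X_x$ of nonempty interior, where $\pi\colon M^{n+1}\to M^n$ forgets the last coordinate. First shrink to an open box $W \subseteq \pi(X)$. For $x \in W$ the fiber $X_x$ contains an open interval, so the definable set $T = \{(x,a,b) \in W \times M^2 : a<b,\ (a,b) \subseteq X_x\}$ projects onto $W$; applying the definable choice property to the projection $T \to W$ produces definable functions $a,b\colon W \to M$ with $a(x)<b(x)$ and $(a(x),b(x)) \subseteq X_x$. Applying the continuity property twice I pass to an open box on which both $a$ and $b$ are continuous, fix a point $x_0$ there, choose $c<d$ in $M$ with $a(x_0)<c<d<b(x_0)$, and note that $\{x : a(x)<c\}\cap\{x : b(x)>d\}$ is open and contains $x_0$, hence contains an open box $W^*$. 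Then $W^*\times (c,d) \subseteq X$, so $\myint(X) \neq \emptyset$.

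For (A) I would argue in two steps. \textbf{Step 1} shows that for a discrete definable $D \subseteq M^n$ and \emph{any} coordinate projection $\rho$, the image $\rho(D)$ has empty interior. If not, $\rho(D)$ contains an open box $U_0$; by definable choice there is a section $\varphi\colon \rho(D) \to D$ with $\rho\circ\varphi=\mathrm{id}$, whose non-identity components are definable $M$-valued functions, so finitely many applications of the continuity property give an open box $U^* \subseteq U_0$ on which $\varphi$ is continuous. Then $\varphi|_{U^*}$ is a homeomorphism onto its image (its inverse being the restriction of $\rho$), so $\varphi(U^*)\cong U^*$ is nonempty and dense-in-itself; but $\varphi(U^*)\subseteq D$ is discrete, a contradiction. \textbf{Step 2} upgrades ``empty interior of every projection'' to genuine discreteness. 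I establish, by induction on $n$ and using only local l-viscerality, that every definable set of dimension $0$ is discrete: the base case is Proposition \ref{prop:char_lvisceral}, and in the inductive step I project off the last coordinate, observe the image again has dimension $0$ (hence is discrete by induction), isolate a point of the image in a box $W$, and use that the corresponding fiber is a univariate set of empty interior, hence discrete, to isolate the chosen point of $D$. Finally, applying Step 1 to all projections that factor through the given $\pi$ shows $\dim \pi(D)=0$, and Step 2 then yields that $\pi(D)$ is discrete.

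The main obstacle, and the point where the hypotheses are genuinely used, is twofold. First, one must avoid the $*$-continuity property: a naive definition of the fiberwise endpoints lands in $\overline{M}$, and the resolution is the definable-choice trick above, which keeps all auxiliary functions $M$-valued so that the plain continuity property suffices. Second, in (A) one cannot simply cite Proposition \ref{prop:zero}, since its proof relies on property (A); the delicate part is therefore the self-contained induction of Step 2 deriving ``dimension $0$ implies discrete'' from local l-viscerality alone, phrased so as not to invoke the closedness assertion of Proposition \ref{prop:zero}.
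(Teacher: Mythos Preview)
Your argument is correct. For (B) you follow essentially the paper's route: use definable choice to obtain $M$-valued functions bounding an open interval in each fiber, pass to a box where they are continuous, and read off an open box in $X$. The only cosmetic difference is that you apply definable choice once to the set $T=\{(x,a,b):a<b,\ (a,b)\subseteq X_x\}$, whereas the paper applies it twice (first to pick a point $f(x)\in X_x$, then to pick $g(x)>f(x)$ with $(f(x),g(x))\subseteq X_x$) and then uses the open region between the two continuous graphs directly rather than a constant rectangle.

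For (A) the paper takes a shorter path than your Step~2. Instead of proving by induction that every locally l-visceral definable set of dimension $0$ is discrete, it observes that $\pi(D)\subseteq\prod_{i=1}^d\rho_i(\pi(D))$, where $\rho_i:M^d\to M$ are the coordinate projections; since a product of discrete sets is discrete and a subset of a discrete set is discrete, this immediately reduces to the case $d=1$. In that case ``not discrete'' already implies nonempty interior by Proposition~\ref{prop:char_lvisceral}, and then the section-plus-continuity contradiction (your Step~1) finishes. Your Step~1 in fact already yields that each $\rho_i(\pi(D))\subseteq M$ has empty interior, so the product trick would let you skip Step~2 entirely. That said, your inductive Step~2 is a valid self-contained argument and has the side benefit of showing, without any hypothesis beyond local l-viscerality, that dimension~$0$ implies discrete; the paper does not isolate this fact here.
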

\begin{proof}
	Let $\mathcal M=(M,<,\ldots)$ be a structure satisfying the conditions in the proposition.
	We first show that it possesses property (A).
	Let $X$ be a definable discrete subset of $M^n$ and $\pi:M^n \to M^d$ be a coordinate projection.
	We have to show that $\pi(X)$ is discrete.
	We first reduce to the case in which $d=1$.
	Let $\rho_i:M^d \to M$ be the coordinate projection onto the $i$-th coordinate for $1 \leq i \leq d$.
	Assume that $\rho_i(\pi(X))$ is discrete for every  $1 \leq i \leq d$.
	The product $\prod_{i=1}^d \rho_i(\pi(X))$ of discrete sets is discrete.
	The product $\prod_{i=1}^d \rho_i(\pi(X))$ contains the definable set $\pi(X)$.
	The set $\pi(X)$ is discrete because a subset of a discrete set is discrete.
	We have reduced to the case $d=1$.
	
	We consider the case in which $d=1$.
	Assume for contradiction that $\pi(X)$ is not discrete.
	The set $\pi(X)$ has a nonempty interior by Proposition \ref{prop:char_lvisceral}.
	We can find a definable map $f:\pi(X) \to X$ so that $\pi \circ f$ is the identity map on $\pi(X)$ by the definable choice property.
	We can find an open interval $I$ contained in $\pi(X)$ so that the restriction of $f$ to $I$ is continuous thanks to the continuity property.
	The image $f(I)$ is not discrete and $X$ is not discrete because $X$ contains $f(I)$.
	We get a contradiction.
	
	We next tackle property (B).
	Let $X$ be a definable subset of $M^{n+1}$ and $\pi:M^{n+1} \to M^n$ be the coordinate projection forgetting the last coordinate.
	Assume that $\pi(X)$ has a nonempty interior and the fiber $X_x:=\{y \in M\;|\;(x,y) \in X\}$ has a nonempty interior for each $x \in \pi(X)$.
	We have to show that $X$ has a nonempty interior.
	
	We may assume that $X_x$ is open for every $x \in \pi(X)$ by considering $\bigcup_{x \in \pi(X)} \{x\} \times \myint(X_x)$ instead of $X$.
	Take a nonempty box $B$ contained in $\pi(X)$.
	We may assume that $\pi(X)$ is an open box by replacing $X$ with $X \cap \pi^{-1}(B)$.
	By the definable choice property, we can find a definable function $f:\pi(X) \to M$ such that $(x,f(x)) \in X$ for every $x \in \pi(X)$.
	Consider the set $Y=\{(x,y) \in X\;|\; \forall y'\ (f(x)<y'<y) \rightarrow ((x,y') \in X)\}$.
	Since $X_x$ is open, the fiber $Y_x$ is not empty for every $x \in \pi(X)$.
	Apply the definable choice property to $Y$ again, we can find a definable function $g:\pi(X) \to M$ such that $f(x)<g(x)$ and the interval $(f(x),g(x))$ is contained in $X_x$ for every $x \in \pi(X)$.
	We may assume that both $f$ and $g$ are continuous by shrinking $\pi(X)$ if necessary thanks to the continuity property.
	The definable set $\{(x,y) \in \pi(X) \times M\;|\; f(x)<y<g(x)\}$ is an open set contained in $X$.
	We have proven that $X$ has a nonempty interior.
\end{proof}

Expansions of dense linear orders without endpoints possessing the definable choice property and the continuity property enjoy a dimension formula other than those in Theorem \ref{thm:dimension_basic}.
\begin{proposition}\label{prop:local_dim_well}
	Consider an expansion of a dense linear order without endpoints $\mathcal M=(M,<,\ldots)$ enjoying the definable choice property and the continuity property.
	Let $X$ be a definable subset of $M^n$.
	Then there exists a point $x \in X$ such that $\dim (B \cap X)=\dim X$ for every open box $B$ containing the point $x$.
\end{proposition}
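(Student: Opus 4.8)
The plan is to produce the required point explicitly, as the value of a \emph{continuous} definable section over a generic open box in the projection image, rather than by analyzing the locus where the local dimension drops.

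First I would set $d=\dim X$; we may assume $X\neq\emptyset$, so $d\geq 0$ (otherwise there is no $x\in X$ to exhibit). By the very definition of dimension there is a coordinate projection $\pi\colon M^n\to M^d$ whose image $\pi(X)$ has nonempty interior. Throughout I will use the elementary observation, immediate from Definition \ref{def:dim}, that $\dim\rho(A)\leq\dim A$ for every coordinate projection $\rho$ and every definable $A$: if a coordinate projection $\sigma$ witnesses a lower bound for $\dim\rho(A)$, then $\sigma\circ\rho$ is again a coordinate projection witnessing the same bound for $\dim A$.

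Next, by the definable choice property applied to $\pi$ and $X$, I obtain a definable section $s\colon\pi(X)\to X$ with $\pi\circ s=\mathrm{id}_{\pi(X)}$. Choosing an open box $B_0\subseteq\myint(\pi(X))$ and writing $s=(s_1,\dots,s_n)$ in coordinates, I would apply the continuity property $n$ times, once to each $s_i$, shrinking the box at each step, to obtain an open box $B_1\subseteq B_0$ on which every $s_i$, and hence $s$ itself (the topology on $M^n$ being the product topology), is continuous. Then I fix any $b\in B_1$ and claim that $x:=s(b)\in X$ is the desired point. Let $B$ be an arbitrary open box containing $x$. By continuity of $s$ at $b$ there is an open box $B'$ with $b\in B'\subseteq B_1$ and $s(B')\subseteq B$, so $s(B')\subseteq B\cap X$. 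Since $\pi\circ s=\mathrm{id}$ gives $\pi(s(B'))=B'$, which is a nonempty open box in $M^d$, the chain
\[ d=\dim B'=\dim\pi(s(B'))\leq\dim s(B')\leq\dim(B\cap X)\leq\dim X=d \]
forces $\dim(B\cap X)=d=\dim X$, as required.

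The argument is short, and the only points demanding care are the two preparatory facts: that a definable section can be made continuous simultaneously in all $n$ coordinates (handled by iterating the continuity property and using that restriction preserves continuity), and the monotonicity of dimension under coordinate projections read off directly from Definition \ref{def:dim}. I do not expect to need local l-viscerality or properties (A)/(B) of Definition \ref{def:tame_top2}; the definable choice property and the continuity property alone drive the construction, which is why the hypotheses of the proposition suffice.
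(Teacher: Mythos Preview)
Your proof is correct and follows essentially the same route as the paper: pick a coordinate projection $\pi$ witnessing $\dim X=d$, use definable choice to get a section, use the continuity property to make the section continuous on some open box, and then observe that the preimage of any open box about the chosen point has nonempty interior in $M^d$. Your presentation is in fact slightly more careful than the paper's, since you make explicit the coordinatewise iteration of the continuity property (which is stated only for $M$-valued functions) and spell out the monotonicity $\dim\rho(A)\le\dim A$ directly from the definition.
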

\begin{proof}
	Set $d=\dim X$.
	By the definition of dimension, there exists a coordinate projection $\pi:M^n \to M^d$ such that $\pi(X)$ has a nonempty interior.
	By the definable choice property, we can find a definable map $f:\pi(X) \to X$ such that the composition $\pi \circ f$ is the identity map on $\pi(X)$.
	We can take a nonempty open box $U$ in $M^d$ contained in $\pi(X)$ so that the restriction $g:U \to X$ of $f$ to $U$ is continuous by the continuity property.
	Take a point $t \in U$ and set $x=g(t)$.
	Let $B$ be an arbitrary open box containing the point $x$.
	The projection image $\pi(X \cap B)$ contains the inverse image $g^{-1}(B)$ because $\pi \circ g$ is the identity map on $U$.
	The inverse image $g^{-1}(B)$ is open because $g$ is continuous.
	This implies image $\pi(X \cap B)$  has a nonempty interior and we get $\dim (X \cap B) \geq d$.
	The opposite inequality $\dim (X \cap B) \leq d$ is obvious. 
\end{proof}

\subsection{$*$-locally weakly o-minimal case}\label{sec:*-local}
We prove fundamental formulas for topological dimension of sets definable in $*$-locally weakly o-minimal structure possessing $*$-continuity property. 

We first demonstrate that $*$-locally weakly o-minimal structures enjoying the univariate $*$-continuity property possess properties (A) and (B) in Definition \ref{def:tame_top2}. 
The first task is to prove the following two lemmas simultaneously.
\begin{lemma}\label{lem:*-a}
	Consider a $*$-locally l-visceral structure $\mathcal M=(M,<,\ldots)$ enjoying the univariate continuity property.
	Let $X \subseteq M^n$ be a definable discrete set and $\pi:M^n \to M^d$ be the projection onto the first coordinate.
	Then $\pi(X)$ is also discrete. 
\end{lemma}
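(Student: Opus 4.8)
The plan is to reduce to a single target coordinate and then induct on $n$, the device that replaces definable choice being the selection of a point in each fibre by iterated infima and suprema, which land in $M$ (rather than merely in $\overline{M}$) thanks to Lemma \ref{lem:supinf}. First I would reduce to $d=1$: for a coordinate projection $\pi:M^n\to M^d$ with the single-coordinate projections $\rho_i:M^d\to M$ one has $\pi(X)\subseteq\prod_{i=1}^d(\rho_i\circ\pi)(X)$, and since a finite product of discrete sets is discrete and a subset of a discrete set is discrete, it suffices to show that the image of $X$ under an arbitrary single-coordinate projection is discrete. As a permutation of coordinates is a homeomorphism, hence preserves discreteness, I may assume this projection is $\pi_1:M^n\to M$, $(x_1,\ldots,x_n)\mapsto x_1$; call the resulting statement $S_n$. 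I would prove $S_n$ by induction on $n$, with $S_1$ trivial. Since $\mathcal M$ is $*$-locally l-visceral, hence locally l-visceral, Proposition \ref{prop:char_lvisceral} reduces the discreteness of the univariate set $\pi_1(X)$ to showing it has empty interior, so I assume for contradiction that $\pi_1(X)$ contains a nonempty open interval $I$.

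For each $x\in I$ the fibre $X'_x:=\{u\in M^{n-1}\mid(x,u)\in X\}$ is nonempty and, being a slice of the discrete set $X$, is itself discrete. I would build a definable section $s=(s_1,\ldots,s_{n-1}):I\to M^{n-1}$ with $(x,s(x))\in X$ component by component: fixing some $c_0\in M$, set $s_1(x)$ to be $\inf\big(\rho_1(X'_x)\cap[c_0,+\infty)\big)$ when this set is nonempty and $\sup\rho_1(X'_x)$ otherwise, then pass to the slice $\{u'\in M^{n-2}\mid(s_1(x),u')\in X'_x\}$ and repeat. The induction hypotheses $S_{n-1},S_{n-2},\ldots$ guarantee that every single-coordinate projection arising here is discrete, hence has empty interior; therefore Lemma \ref{lem:supinf} applies at each stage and the $c_0$-device forces every selected value to be a genuine element of $M$ lying in the relevant set. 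Thus each $s_j:I\to M$ is a well-defined definable function and $(x,s(x))\in X$ for all $x\in I$.

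Finally I would promote $s$ to a continuous section and derive the contradiction. Applying the univariate continuity property to the finitely many components $s_1,\ldots,s_{n-1}$ and intersecting the resulting subintervals yields an open subinterval $J\subseteq I$ on which every $s_j$, and hence $s$ itself, is continuous; here I use that continuity of a map from the one-dimensional interval $J$ into the product $M^{n-1}$ is exactly continuity of its component functions. The graph $G=\{(x,s(x))\mid x\in J\}$ is contained in $X$, so it is discrete. On the other hand, continuity of $s$ forces every open box around a point of $G$ to contain $(x,s(x))$ for all $x$ in a subinterval of $J$, hence infinitely many points of $G$, so $G$ has no isolated point. This contradiction shows $\pi_1(X)$ has empty interior and is therefore discrete, completing the induction; $S_n$ for all $n$ then gives the lemma.

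The main obstacle is the failure of definable choice in this weak setting: one cannot directly pick a point of each fibre by a definable rule. The resolution is the iterated $\inf/\sup$ selection, which is definable and, by Lemma \ref{lem:supinf}, actually returns elements of $M$ rather than of the completion $\overline{M}$ — but this works only for sets with empty interior, which is exactly what the induction hypothesis supplies for the intermediate projections. A secondary and easily handled point is that only the univariate continuity property is available; this is enough because the section is defined on a one-dimensional domain, so its joint continuity reduces to that of its finitely many component functions.
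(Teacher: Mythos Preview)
Your proof is correct and follows essentially the same approach as the paper: reduction to $d=1$, induction on $n$, building a definable section via iterated $\inf/\sup$ with Lemma \ref{lem:supinf}, and then the univariate continuity property applied component-wise for the contradiction. The only organizational difference is that the paper packages the section construction as a companion statement (Lemma \ref{lem:*-d}) and proves both lemmas simultaneously by a joint induction on $n$, whereas you inline the section construction directly within the proof of discreteness.
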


\begin{lemma}\label{lem:*-d}
	Consider a $*$-locally l-visceral structure $\mathcal M=(M,<,\ldots)$ enjoying the univariate continuity property.
	Let $X$ be a definable subset of $M^{m+n}$ and $\pi:M^{m+n} \to M^m$ be the coordinate projection onto the first $m$ coordinates.
	Assume that $X_x:=\{y \in M^n\;|\;(x,y) \in X\}$ is discrete for each $x \in \pi(X)$.
	Then there exists a definable map $\varphi:\pi(X) \to X$ such that the composition $\pi \circ \varphi$ is the identity map on $\pi(X)$.
\end{lemma}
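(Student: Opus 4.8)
The plan is to prove the two lemmas together by induction on $n$, the fiber dimension, with the help of an auxiliary statement that isolates the essential one-variable phenomenon: \emph{the image of a definable discrete set under projection onto a single coordinate is discrete}; I denote this, for discrete subsets of $M^p$, by $(\mathrm A'_p)$. I would first record two facts that need no induction. Lemma \ref{lem:*-d} for $n=1$ holds using only $*$-local l-viscerality: given $X\subseteq M^{m+1}$ with every fiber $X_x\subseteq M$ discrete, fix a parameter $c\in M$; for those $x$ with $X_x\cap[c,\infty)\neq\emptyset$ let $\varphi(x)$ have last coordinate $\min(X_x\cap[c,\infty))$, and otherwise (so $X_x\subseteq(-\infty,c)$ is bounded above) use $\sup X_x$. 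Both extrema are attained and definable by Lemma \ref{lem:supinf}, since a discrete fiber has empty interior, so this yields the required definable section. Secondly, Lemma \ref{lem:*-a} reduces, by forgetting one coordinate at a time, to the one-step statement $(\mathrm A_k)$ that forgetting the last coordinate sends discrete subsets of $M^{k+1}$ to discrete subsets of $M^k$; and since a subset of $M^k$ is discrete exactly when its projection to the first coordinate is discrete and all its fibers are discrete, a short induction on $k$ reduces $(\mathrm A_k)$ to the auxiliary statements $(\mathrm A'_j)$ for $j\le k+1$.

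The engine of the argument is the proof of $(\mathrm A'_p)$, and this is where the univariate continuity property enters. Suppose $X\subseteq M^p$ is discrete but its projection $q(X)\subseteq M$ onto the first coordinate is not; by Proposition \ref{prop:char_lvisceral} the set $q(X)$ then contains an open interval $I$. Applying Lemma \ref{lem:*-d} in fiber dimension $p-1$ over the base $I$ (the fibers $X_t\subseteq M^{p-1}$ are discrete), I obtain a definable section $t\mapsto(t,g(t))\in X$ with $g=(g_1,\dots,g_{p-1})$. Shrinking $I$ finitely many times and invoking the univariate continuity property for each coordinate $g_j$, I may assume $g$ is continuous on $I$. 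Then $t\mapsto(t,g(t))$ is a definable homeomorphism from $I$ onto a subset of $X$ (its inverse being a coordinate projection), so $X$ contains a homeomorphic copy of the interval $I$ and is therefore not discrete, a contradiction. Hence $q(X)$ is discrete.

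These pieces assemble into a single induction on $n$. Assuming Lemma \ref{lem:*-d} up to fiber dimension $n-1$, together with $(\mathrm A_k)$ for $k<n$ and $(\mathrm A'_j)$ for $j\le n$, I first prove Lemma \ref{lem:*-d} in dimension $n$: project away the last fiber coordinate to get $X'=\sigma(X)$, whose fibers are discrete by $(\mathrm A_{n-1})$; apply the induction hypothesis of Lemma \ref{lem:*-d} to get a section $x\mapsto(x,g(x))\in X'$; and then use the already-established case $n=1$ over each point $(x,g(x))$ to choose the last coordinate. With Lemma \ref{lem:*-d} now available in dimension $n$, the auxiliary statement $(\mathrm A'_{n+1})$ follows from the engine paragraph, whence $(\mathrm A_n)$ follows from the reduction above; this closes the induction and delivers Lemma \ref{lem:*-a} in full generality. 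The main obstacle is not any single computation but the bookkeeping of this mutual recursion: I must check that every appeal to Lemma \ref{lem:*-a} (used to keep projected fibers discrete) draws only on strictly smaller fiber dimension, while each use of Lemma \ref{lem:*-d} inside the proof of $(\mathrm A')$ is legitimate because it has already been secured at the current stage. Verifying this acyclicity is the delicate point.
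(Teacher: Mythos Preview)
Your proposal is correct and follows essentially the same approach as the paper: both prove Lemma~\ref{lem:*-a} and Lemma~\ref{lem:*-d} by a joint induction on $n$, use Lemma~\ref{lem:supinf} to build the section in the base case $n=1$, obtain the discreteness-of-projections statement by pulling back a section via Lemma~\ref{lem:*-d} in fiber dimension $n-1$ and invoking the univariate continuity property to reach a contradiction, and assemble the higher-$n$ section by composing a section into the image under the ``forget the last coordinate'' map with a one-variable choice of the last coordinate. The only difference is organizational: the paper reduces Lemma~\ref{lem:*-a} to the case $d=1$ once via the product argument and then at each stage proves Lemma~\ref{lem:*-a} for $n$ \emph{before} Lemma~\ref{lem:*-d} for $n$, whereas you introduce the intermediate statements $(\mathrm A_k)$ and $(\mathrm A'_p)$ and at each stage prove Lemma~\ref{lem:*-d} for $n$ first (using $(\mathrm A_{n-1})$ from the induction hypothesis) and then $(\mathrm A'_{n+1})$ and $(\mathrm A_n)$; both orderings are acyclic and valid.
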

\begin{proof}[Proof of Lemma \ref{lem:*-a} and Lemma \ref{lem:*-d}]
	We can reduce to the case in which $d=1$ in Lemma \ref{lem:*-a} in the same manner as the proof of Proposition \ref{prop:suff_visceral}.
	
	We prove the lemmas by induction on $n$.
	We first consider the case in which $n=1$.
	Lemma \ref{lem:*-a} is trivial in this case.
	We prove Lemma \ref{lem:*-d}.
	Take $c \in M$ and define a definable map $\eta:\pi(X) \to \overline{M}$ by 
	\begin{align*}
		& \eta(x)=\left\{\begin{array}{cl}
			c & \text{ if } c \in X_x;\\
			\inf (X_x \cap \{x \in M\;|\; x>c\}) & \text{ if } c \notin X_x \text{ and } X_x \cap \{x \in M\;|\; x>c\} \neq \emptyset;\\
			\sup (X_x \cap \{x \in M\;|\; x<c\}) & \text{ elsewhere.}
		\end{array}\right.
	\end{align*}
	We have $\eta(x) \in X_x$ by Lemma \ref{lem:supinf}.
%	We want to show that $(x,\eta(x)) \in X$ for every $x \in \pi(X)$.
%	This inclusion is obvious when $c \in X_x$.
%	We next consider the case in which $X_x \cap \{x \in M\;|\; x>c\} \neq \emptyset$.
%	Set $u=\inf (X_x \cap \{x \in M\;|\; x>c\}) \in \overline{M}$.
%	Because of $*$-local l-viscerality, there exists an open interval $I$ such that $u \in \overline{I}$ and $I \cap X_x \cap \{x \in M\;|\; x>c\}$ is a union of a finite set and an open set.
%	Since $X_x$ has an empty interior, $I \cap X_x \cap \{x \in M\;|\; x>c\}$ is a finite set.
%	Therefore, we have $u = \inf (X_x \cap \{x \in M\;|\; x>c\}) \in X_x$.
%	We can prove that $\eta(x) \in X_x$ similarly in the remaining case.
	The map $\varphi:\pi(X) \to X$ defined by $\varphi(x)=(x,\eta(x))$ is a desired map.
	
	We next consider the case in which $n>1$.
	We first prove that Lemma \ref{lem:*-a} holds for $n>1$.
	Assume for contradiction that $\pi(X)$ has a nonempty interior.
	Since Lemma \ref{lem:*-d} holds for $n-1$ by the induction hypothesis, we can construct a definable map $\varphi:\pi(X) \to X$.
	By the univariate continuity property, there exists an open interval $I$ contained in $\pi(X)$ such that the map $\varphi$ restricted to $I$ is continuous.
	The image $\varphi(I)$ is not discrete and contained in $X$.
	This contradicts the assumption that $X$ is discrete.	
	
	We next prove Lemma \ref{lem:*-d} for $n>1$.
	Consider the coordinate projections $\Pi_1:M^{m+n} \to M^{m+n-1}$ and $\Pi_2:M^{m+n-1} \to M^m$.
	The projection $\Pi_1$ is the projection forgetting the last coordinate, and $\Pi_2$ is the projection onto the first $m$ coordinates.
	It is obvious that $\pi=\Pi_2 \circ \Pi_1$.
	It is also trivial that $\{y \in M\;|\; (t,y) \in X\}$ is discrete for every $t \in \Pi_1(X)$.
	By applying Lemma \ref{lem:*-d} for $n=1$, there exists a definable map $\varphi_1: \Pi_1(X) \to X$ such that $\Pi_1 \circ \varphi_1$ is the identity map on $\Pi_1(X)$.
	
	The fiber $(\Pi_1(X))_x=\{y \in M^{n-1}\;|\; (x,y) \in \Pi_1(X)\}$ is discrete for every $x \in \pi(X)$ by Lemma \ref{lem:*-a} for $n$.	
	Apply Lemma \ref{lem:*-d} for $n-1$ to $\Pi_2$.
	There exists a definable map $\varphi_2:\pi(X) \to \Pi_1(X)$ such that the composition $\Pi_2 \circ \varphi_2$ is the identity map on $\pi(X)$.
	The composition $\varphi=\varphi_1 \circ \varphi_2$ is a desired map.  
\end{proof}

We next prove the following lemmas simultaneously.
\begin{lemma}\label{lem:*-key}
Consider a $*$-locally weakly o-minimal structure $\mathcal M=(M,<,\ldots)$ enjoying the univariate $*$-continuity property.
Let $X$ be a definable subset of $M^{n+1}$ and $\pi:M^{n+1} \to M^n$ be the coordinate projection forgetting the last coordinate.
Assume that $\pi(X)$ has a nonempty interior and the fiber $X_x$ has a nonempty interior for each $x \in \pi(X)$.
Then $X$ has a nonempty interior.
\end{lemma}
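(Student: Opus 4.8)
The plan is to produce, after shrinking, a definable open box sitting inside $X$ by sandwiching each fibre between two continuous definable functions into $\overline{M}$. First I would record the tools available: since $\mathcal M$ is locally weakly o-minimal it is locally o-minimal (Proposition \ref{prop:local_ominmal}), so by Proposition \ref{prop:continuity2} it enjoys the full $*$-continuity property, and by Lemma \ref{lem:*-a} it has property (A). I would then reduce to the clean situation: replacing $X$ by $\bigcup_{x}\{x\}\times\myint(X_x)$ I may assume every fibre $X_x$ is open, and replacing $\pi(X)$ by an open box $B\subseteq\myint(\pi(X))$ and $X$ by $X\cap(B\times M)$ I may assume $\pi(X)=B$ is an open box in $M^n$ with $X_x$ a nonempty open subset of $M$ for every $x\in B$.

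The heart of the argument is to choose, definably and uniformly in $x$, one open convex component of $X_x$. Here $*$-local weak o-minimality is what makes this possible: at the cut $\overline{a}=\inf X_x\in\overline{M}$ the set $X_x$ meets a suitable interval in finitely many convex pieces, so when $X_x$ is bounded below its infimum is the left endpoint of a genuine leftmost convex component, and I may set $f(x)=\inf X_x$ and $g(x)=\sup\{y\in M\mid (f(x),y)\cap M\subseteq X_x\}$, both definable functions into $\overline{M}\cup\{+\infty\}$ with $f(x)<g(x)$ and $(f(x),g(x))\cap M\subseteq X_x$. The fibres that are unbounded below are handled symmetrically using $\sup X_x$ and a rightmost component, and the doubly unbounded fibres are handled by fixing a base point $c\in M$ and selecting the component of $X_x$ nearest to $c$. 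To pass to a single one of these cases on a subbox I need a two- and three-piece splitting statement in $M^n$ (the analogue of Lemma \ref{lem:l-b}); since that splitting in turn rests on property (B) in the same dimension, I would prove Lemma \ref{lem:*-key} and the splitting lemma simultaneously by induction on $n$, establishing the splitting at stage $n$ from the dimension $n-1$ case of Lemma \ref{lem:*-key} together with lower-dimensional splitting, and only then deducing Lemma \ref{lem:*-key} at stage $n$.

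Once $f$ and $g$ are in hand on a subbox, I would apply the $*$-continuity property (restricting first, via the splitting lemma, to a subbox on which each of $f,g$ is either $\overline{M}$-valued or constantly $\pm\infty$) to obtain an open box $U\subseteq B$ on which both $f$ and $g$ are continuous. It then remains to check that
\[
W=\{(x,y)\in U\times M\mid f(x)<y<g(x)\}
\]
is open, nonempty and contained in $X$. Containment and nonemptiness are immediate: $W_x$ is the chosen component, and between two distinct elements of $\overline{M}$ there always lies an element of $M$. For openness, given $(x_0,y_0)\in W$ I would pick $y',y''\in M$ with $f(x_0)<y'<y_0<y''<g(x_0)$, possible because the upper part of a gap has no least element and the lower part no greatest; continuity of $f,g$ makes $f^{-1}(\{z<y'\})$ and $g^{-1}(\{z>y''\})$ open neighbourhoods of $x_0$, and on their intersection $V$ one has $V\times(y',y'')\subseteq W$. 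Hence $W$ is a nonempty open subset of $X$.

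The main obstacle is the definable selection of a single convex component of the fibre together with the bookkeeping of the boundedness cases: each case requires $*$-local weak o-minimality to guarantee that an infimum or supremum really is an endpoint of an honest convex piece, and passing to a subbox on which one case holds (and on which the selected endpoints avoid $\pm\infty$) forces the simultaneous induction with the splitting lemma. By comparison, the final ``continuous sandwich is open'' step is routine once the density of $M$ in $\overline{M}$ and the order topology on $\overline{M}$ are used correctly.
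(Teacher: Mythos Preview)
Your proposal is correct and follows essentially the same route as the paper: a simultaneous induction with the splitting lemma (Lemma \ref{lem:*-b}), selection of one convex fibre-component via $\inf/\sup$ in $\overline{M}$ using $*$-local weak o-minimality, and the $*$-continuity property (via Proposition \ref{prop:continuity2}) to make the two endpoint functions continuous so that the resulting strip is open. The paper bypasses your three-way boundedness case split by the simpler device of first replacing $X$ with $X\cap\{(x,y):y>c\}$ or $X\cap\{(x,y):y<c\}$ for a fixed $c\in M$ and using the splitting lemma on the two projections, which forces every surviving fibre to be bounded below (resp.\ above) and reduces directly to the single ``leftmost component'' case $f_1(x)=\inf X_x$, $f_2(x)=\sup\{y:\forall t\,(f_1(x)<t<y\rightarrow(x,t)\in X)\}$.
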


\begin{lemma}\label{lem:*-b}
	Consider a $*$-locally weakly o-minimal structure $\mathcal M=(M,<,\ldots)$ enjoying the univariate $*$-continuity property.
	Let $X$ be a definable subset of $M^n$ having a nonempty interior.
	Let $X_1$ and $X_2$ be definable subsets of $X$ with $X=X_1 \cup X_2$. 
	Then at least one of $X_1$ and $X_2$ has a nonempty interior.
\end{lemma}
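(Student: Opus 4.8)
The plan is to prove Lemma \ref{lem:*-key} and Lemma \ref{lem:*-b} together by induction on $n$, in the same spirit as the simultaneous proof of Theorem \ref{thm:simple_decomp} and Lemma \ref{lem:func}. Recall that a $*$-locally weakly o-minimal structure is locally o-minimal by Proposition \ref{prop:local_ominmal}, that it possesses the $*$-continuity property by Proposition \ref{prop:continuity2}, and that it is $*$-locally l-visceral, so that Lemma \ref{lem:supinf} and Lemma \ref{lem:*-technical} are available. For the base case of Lemma \ref{lem:*-b} ($n=1$) I would argue as follows: an open interval $I\subseteq X$ is written as $(X_1\cap I)\cup(X_2\cap I)$, and if both summands had empty interior they would be discrete and closed by Lemma \ref{lem:local_dim0}, whence $I$ itself would be discrete, contradicting that $I$ is an open interval.

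For the inductive step of Lemma \ref{lem:*-b} with $n\geq 2$ I would copy the argument of Lemma \ref{lem:l-b}: let $\pi\colon M^n\to M^{n-1}$ forget the last coordinate, put $U=\{x\in\pi(X) : X_x \text{ has nonempty interior}\}$, which has nonempty interior, and split $U=U_1\cup U_2$ according to which $(X_i)_x$ has nonempty interior (the case $n=1$ applied fibrewise shows $U=U_1\cup U_2$). By Lemma \ref{lem:*-b} for $n-1$ one of them, say $U_1$, has nonempty interior; applying Lemma \ref{lem:*-key} for $n-1$ (that is, property (B) on $M^n$) to $X_1$ then gives that $X_1$ has nonempty interior. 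Thus Lemma \ref{lem:*-b} in dimension $n$ only consumes the two lemmas in strictly lower dimension, so the induction is consistent once the corresponding case of Lemma \ref{lem:*-key} is in hand.

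The real content is Lemma \ref{lem:*-key}, and here I would reduce it to a single analytic construction rather than a projection recursion (the latter loops, since peeling a coordinate always returns a property--(B) instance of the original size). Shrinking $\pi(X)$ to an open box $B_0\subseteq M^n$ and replacing each $X_x$ by $\myint(X_x)$, I may assume $B_0$ is an open box and every fibre $X_x$ is a nonempty open set; the goal becomes the production of an open box inside $X$. Fix $c\in M$; using Lemma \ref{lem:*-b} in dimension $n$ (already available) I pass to a sub-box on which all fibres behave uniformly with respect to $c$, say all meet $(c,+\infty)$, and set $\ell(x)=\inf\bigl(X_x\cap(c,+\infty)\bigr)\in\overline M$. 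Whether $\ell(x)$ lies in $M$ or is a gap, local o-minimality together with Lemma \ref{lem:*-technical} guarantees that $X_x$ contains an interval $\{y\in M : \ell(x)<y<d\}$ for some $d>\ell(x)$, so I may define $u(x)=\sup\{d\in M : \{y : \ell(x)<y<d\}\subseteq X_x\}\in\overline M\cup\{+\infty\}$ with $\ell(x)<u(x)$. Applying the $*$-continuity property (Proposition \ref{prop:continuity2}) and, where needed, Corollary \ref{cor:monotone-star}, and shrinking $B_0$ once more, I arrange that $\ell$ and $u$ are continuous on $B_0$. Choosing $x_0\in B_0$ and $p,q\in M$ with $\ell(x_0)<p<q<u(x_0)$ (such $p,q$ exist because distinct elements of $\overline M$ always have an element of $M$ strictly between them), continuity yields a sub-box $B_1\ni x_0$ with $\ell(x)<p$ and $u(x)>q$ throughout, whence $B_1\times(p,q)\subseteq X$ is the required open box.

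I expect the selection step just described to be the main obstacle. The difficulty is that, lacking definable choice, I must extract a fibrewise interval by a \emph{canonical} definable recipe; the infimum $\ell(x)$ does this, but two subtleties must be controlled: the infimum may be a definable gap rather than an element of $M$ (handled by Lemma \ref{lem:*-technical}, which forces $X_x$ to coincide with a genuine interval just above the gap), and the fibres may be unbounded below (handled by first localizing with the fixed auxiliary point $c$ and the decomposition Lemma \ref{lem:*-b}). Once the canonical endpoints $\ell,u$ are seen to be definable and the $*$-continuity property is invoked to make them continuous, the passage to an open box is routine.
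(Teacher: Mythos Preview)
Your proposal is correct and follows essentially the same simultaneous induction as the paper: \ref{lem:*-b} for $n=1$ is immediate from local o-minimality, \ref{lem:*-b} for $n$ follows from \ref{lem:*-b} and \ref{lem:*-key} in dimension $n-1$, and \ref{lem:*-key} for $n$ is proved using \ref{lem:*-b} for $n$ together with the $\inf$/$\sup$ construction and the $*$-continuity property. Your functions $\ell$ and $u$ are exactly the paper's $f_1$ and $f_2$, and your preliminary localization around a fixed $c\in M$ via \ref{lem:*-b} matches the paper's splitting into $X_{>c}$ and $X_{<c}$.

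Two small remarks. First, for the inductive step of \ref{lem:*-b} you invoke \ref{lem:*-key} in dimension $n-1$ (that is, property~(B) on $M^n$), which is the clean route of Lemma~\ref{lem:l-b}; the paper instead re-runs the hands-on argument of Theorem~\ref{thm:simple_decomp}, constructing a continuous $h$ with $a<h(x)$ and $\{x\}\times(a,y)\subseteq X_1$ for $y<h(x)$. Both are fine, and your version is arguably tidier. Second, before applying the $*$-continuity property to $u$ you should explicitly dispose of the locus $u^{-1}(+\infty)$: if it has nonempty interior you are already done, and otherwise \ref{lem:*-b} for $n$ lets you shrink to a box on which $u$ takes values in $\overline M$; the paper does exactly this case split for $f_2$. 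The reference to Corollary~\ref{cor:monotone-star} is unnecessary here, since the $*$-continuity property alone suffices.
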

\begin{proof}
	Lemma \ref{lem:*-b} holds for $n=1$ because $\mathcal M$ is locally o-minimal by Proposition \ref{prop:local_ominmal}.
	Recall that the univariate $*$-continuity property implies the $*$-continuity property in this setting by Proposition \ref{prop:continuity2}.
	We next prove Lemma \ref{lem:*-key} for arbitrary $n$ assuming that Lemma \ref{lem:*-b} holds for $n$.
	
	We may assume that $X_x$ is open for each $x \in \pi(X)$ considering the definable set $\bigcup_{x \in \pi(X)} (\{x\} \times \myint(X_x))$ instead of $X$.
	Fix an arbitrary point $c \in M$.
	Set $X_{>c}=\{(x,y) \in \pi(X) \times M\;|\; (x,y) \in X, y>c\}$ and $X_{<c}=\{(x,y) \in \pi(X) \times M\;|\; (x,y) \in X, y<c\}$.
	We have $\pi(X)=\pi(X_{>c}) \cup \pi(X_{<c})$.
	At least one of $\pi(X_{>c})$ and $\pi(X_{<c})$ has a nonempty interior by Lemma \ref{lem:*-b} for $n$.
	We consider the case in which $\pi(X_{>c})$ has a nonempty interior.
	We can treat the other case similarly, and we omit this case in the proof.
	We may assume that $\pi(X_{>c})$ has a nonempty interior and assume that each element in $X_x$ is larger than $c$ for each $x \in \pi(X)$ by considering $X_{>c}$ in place of $X$.
	
	We first consider the definable map $f_1:\pi(X) \to \overline{M}$ given by $f_1(x)=\inf X_x$.
	By the $*$-continuity property, the set of points at which $f_1$ is continuous has a nonempty interior.
	Shrinking $\pi(X)$ if necessary, we may assume that $f_1$ is continuous.
	We next consider the definable map $f_2 :\pi(X) \to \overline{M} \cup \{\infty\}$ given by $f_2(x)=\sup\{y \in M\;|\; \forall t \in M\ (f_1(x)<t<y) \rightarrow (x,t) \in X  \}$.
	By $*$-local weak o-minimality, for every $x \in \pi(X)$, there exists an open interval $I$ such that $f_1(x) \in \overline{I}$ and $I \cap X_x$ is a union of a finite set and finitely many open convex sets.
	The intersection $I \cap X_x$ is a union of finitely many open convex set because $X_x$ is open.
	By the definition of $f_1$ and the above fact, the set $\{y \in M\;|\; \forall t \in M\ (f_1(x)<t<y) \rightarrow (x,t) \in X  \}$ contains the leftmost nonempty open convex set in $I \cap X_x$.
	In particular, it is not empty.
	This implies the inequality $f_2(x)>f_1(x)$ for each $x \in \pi(X)$.
	We show that the set $Y:=\{(x,y) \in \pi(X) \times M\;|\; f_1(x)<y<f_2(x)\} \subseteq X$ has a nonempty interior.
	The set $Y$ obviously has a nonempty interior when $f_2^{-1}(\infty)$ has a nonempty interior.
	In the remaining case, $f_2^{-1}(\overline{M})$ has a nonempty interior by the hypothesis and Lemma \ref{lem:*-b} for $n-1$.
	We may assume that $\pi(X)$ is open and $f_2$ is continuous by shrinking $\pi(X)$ if necessary.
	The set $Y$ is open in this case.
	We have proven Lemma \ref{lem:*-key}.
	
	We next prove Lemma \ref{lem:*-b} for arbitrary $n$ assuming that Lemma \ref{lem:*-b} and Lemma \ref{lem:*-key} hold for $n-1$.
	We may assume that $X$ is an open box in the same manner as Theorem \ref{thm:simple_decomp}.
	Let $\pi_1:M^n \to M^{n-1}$ be the projection forgetting the last coordinate.
	We can choose a point $a \in M$ and construct definable function $h:\pi_1(X) \to \overline{M} \cup \{\infty\}$ so that $a<h(x)$ and $\{x\} \times (a,y) \subseteq X_1$ for every $x \in \pi(X)$ and $a<y<h(x)$ in the same manner as Theorem \ref{thm:simple_decomp}.
	Shrinking $\pi(X)$ if necessary, we may assume that $h$ is continuous by the $*$-continuity property and Lemma \ref{lem:*-b} for $n-1$ in the same manner as above.
	The set $\{(x,y) \in \pi(X) \times M\;|\; a < y <h(x)\}$ is open and contained in $X_1$.
\end{proof}

We proved fundamental formulas on the dimension function in definably complete locally o-minimal structures in \cite{Fuji_tame} assuming the following properties (a) through (d).  

\begin{definition}\label{def:tame_top_old}
	Consider an expansion of a dense linear order without endpoints $\mathcal M=(M,<,\ldots)$.
	We consider the following properties on $\mathcal M$.
	\begin{enumerate}
		\item[(a)] Same as property (A) in Definition \ref{def:tame_top2}.
		\item[(b)] Let $X_1$ and $X_2$ be definable subsets of $M^m$.
		Set $X=X_1 \cup X_2$.
		Assume that $X$ has a nonempty interior.
		At least one of $X_1$ and $X_2$ has a nonempty interior.
		\item[(c)] Let $A$ be a definable subset of $M^m$ with a nonempty interior and $f:A \rightarrow M^n$ be a definable map.
		There exists a definable open subset $U$ of $M^m$ contained in $A$ such that the restriction of $f$ to $U$ is continuous.
		\item[(d)] Let $X$ be a definable subset of $M^n$ and $\pi: M^n \rightarrow M^d$ be a coordinate projection such that the the fibers $X \cap \pi^{-1}(x)$ are discrete for all $x \in \pi(X)$.
		Then, there exists a definable map $\tau:\pi(X) \rightarrow X$ such that $\pi(\tau(x))=x$ for all $x \in \pi(X)$.
	\end{enumerate}
\end{definition} 
\begin{remark}\label{rem:cccc}
	Property (b) follows from property (c).
	Let $c_1,c_2,c_3$ be distinct elements in $M$.
	We define a definable function $f:X \to M$ by $f(x)=c_1$ if $x \in X_1 \cap X_2$, $f(x)=c_2$ if $x \in X_1 \setminus X_2$ and $f(x)=c_3$ if $x \in X_2 \setminus X_1$.
	Let $U$ be a definable open subset of $M^n$ contained $X$ such that the restriction of $f$ to $U$ is continuous.
	It is obvious that $f$ is constant on $U$.
	It implies that $U$ is entirely contained in at least one of $X_1$ and $X_2$.
	
	Property (a) follows from property (d) and the univariate continuity property when the structure is locally l-visceral.
	We can prove this fact in the same manner as \cite[Theorem 2.11(iv)]{Fuji_tame} and Proposition \ref{prop:suff_visceral}.
	We omit the proof.
\end{remark}

We have already proven that a $*$-locally weakly o-minimal structure possesses  properties (a), (b) and (d) in Lemma \ref{lem:*-a}, Lemma \ref{lem:*-b} and Lemma \ref{lem:*-d}, respectively, when it enjoys the univariate $*$-continuity property.
The property (c) is nothing but a weaker version of the $*$-continuity property.
Lemma \ref{lem:*-key} is nothing but property (B) in Definition \ref{def:tame_top2}.
The assumptions of Theorem \ref{thm:dimension_basic} are satisfied in $*$-locally weakly o-minimal structure enjoying the univariate $*$-continuity property.
Therefore, the formulas in Theorem \ref{thm:dimension_basic} hold. 
In addition, some extra formulas can be proven in this case.
We recall a technical lemma so as to prove the extra formulas.

\begin{lemma}\label{lem:pre1}
	Consider a locally l-visceral structure $\mathcal M=(M,<,\ldots)$ enjoying the $*$-continuity property and property (d) in Definition \ref{def:tame_top_old}.
	Let $X \subseteq Y$ be definable subsets of $M^n$.
	Assume that there exist a coordinate projection $\pi:M^n \rightarrow M^d$ and a definable open subset $U$ of $M^d$ contained in $\pi(X)$ such that the fibers $Y_x$ are discrete for all $x \in U$. 
	Then, there exist
	\begin{itemize}
		\item a definable open subset $V$ of $U$,
		\item a definable open subset $W$ of $M^n$ and 
		\item a definable continuous map $f:V \rightarrow X$
	\end{itemize}
	such that 
	\begin{itemize}
		\item $\pi(W)=V$, 
		\item $Y \cap W = f(V)$ and 
		\item the composition $\pi \circ f$ is the identity map on $V$.  
	\end{itemize}
\end{lemma}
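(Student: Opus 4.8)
The plan is to argue by induction on the fibre dimension $k=n-d$, after permuting coordinates so that $\pi$ is the projection onto the first $d$ coordinates; I write a point of $M^n$ as $(x,y)$ with $x\in M^d$ and $y\in M^{k}$. Before the induction I would fix, once and for all, a continuous definable section. Applying property (d) to the definable set $X\cap\pi^{-1}(U)$ (whose fibres lie inside the discrete sets $Y_x$ and are therefore discrete, and which projects onto $U$) yields a definable $\tau\colon U\to X$ with $\pi\circ\tau=\operatorname{id}_U$. Writing $\tau=(\operatorname{id},\gamma)$ with $\gamma=(\gamma_1,\dots,\gamma_k)\colon U\to M^{k}$, I would apply the $*$-continuity property to $\gamma_1,\dots,\gamma_k$ in turn, shrinking the domain to a smaller open box at each of the $k$ steps, so as to obtain an open box $V_0\subseteq U$ on which $\tau$, hence the section $f:=\tau|_{V_0}$, is continuous. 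This produces the map $f\colon V_0\to X$ with $\pi\circ f=\operatorname{id}$ demanded by the conclusion; it remains to isolate its image inside $Y$.

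For the base case $k=1$ I would separate the sheet $f(V_0)=\{(x,g(x))\}$ (here $g=\gamma\colon V_0\to M$) from the rest of $Y$ fibrewise. Since each $Y_x$ is discrete, $g(x)$ is isolated in $Y_x$, so the definable functions
\[
\alpha(x)=\sup\bigl(Y_x\cap(-\infty,g(x))\bigr),\qquad \beta(x)=\inf\bigl(Y_x\cap(g(x),\infty)\bigr)
\]
take values in $\overline M\cup\{\pm\infty\}$ and satisfy $\alpha(x)<g(x)<\beta(x)$ and $(\alpha(x),\beta(x))\cap Y_x=\{g(x)\}$. To turn the fibrewise intervals into a genuinely open tube I would first partition $V_0$ according to whether $\beta$ is $+\infty$ or lies in $\overline M$ (and likewise for $\alpha$), use Lemma \ref{lem:*-b} to pass to an open piece on which this behaviour is constant, and then apply the $*$-continuity property to make $\alpha$ and $\beta$ continuous on a common open $V\subseteq V_0$. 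The set $W=\{(x,y)\in V\times M : \alpha(x)<y<\beta(x)\}$ is then open (because $\alpha$ and $\beta$ are continuous $\overline M\cup\{\pm\infty\}$-valued functions, so each inequality cuts out an open set), satisfies $\pi(W)=V$ since each fibre contains $g(x)$, and $Y\cap W=f(V)$ since $(\alpha(x),\beta(x))\cap Y_x=\{g(x)\}$.

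For the inductive step I would peel off the last coordinate. Let $\pi'\colon M^n\to M^{n-1}$ forget it and $\pi''\colon M^{n-1}\to M^d$ be the projection onto the first $d$ coordinates, so $\pi=\pi''\circ\pi'$. Since the structure has property (A) (which, by Remark \ref{rem:cccc}, follows from property (d) and the univariate continuity property implied by the $*$-continuity property), the fibres of $Y':=\pi'(Y)$ over $U$ are projections of the discrete sets $Y_x$ and hence discrete. Applying the inductive hypothesis to $Y'$, the projection $\pi''$, and the single sheet $X'':=\pi'(f(V_0))$ (a definable subset of $Y'$ projecting onto $V_0$) isolates the first $n-1$ coordinates: it returns an open $V_1\subseteq V_0$ and an open $W'\subseteq M^{n-1}$ with $\pi''(W')=V_1$ and $Y'\cap W'=\{(x,g'(x)):x\in V_1\}$, where $g'=(\gamma_1,\dots,\gamma_{k-1})$. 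Over $W'$ the remaining coordinate again has discrete fibres $Y_{(x,g'(x))}\subseteq M$, so the base-case construction applied to $g_k=\gamma_k$ produces continuous barriers $\alpha,\beta$ on a further open $V_2\subseteq V_1$; setting
\[
W=\{(t,y)\in M^{n-1}\times M : t\in W'\cap(\pi'')^{-1}(V_2),\ \alpha(\pi''(t))<y<\beta(\pi''(t))\}
\]
and $V=V_2$ gives an open set with $\pi(W)=V$, and the computation $Y\cap W=f(V)$ goes through as in the base case once one notes that $(t,y)\in Y\cap W$ forces $t=\pi'((t,y))\in Y'\cap W'$, hence $t=(x,g'(x))$, and then $y=g_k(x)$.

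The main obstacle I anticipate is not the bookkeeping of the induction but the step that converts the fibrewise isolating intervals into an ambient open set $W$: a naive fibrewise choice of neighbourhood need not have open union, and across-fibre accumulation of $Y\setminus f(V)$ onto the sheet is exactly the dimensionally wild behaviour that must be excluded. The $*$-continuity property is what rules this out, by upgrading the barriers $\alpha,\beta$ to continuous $\overline M\cup\{\pm\infty\}$-valued functions after shrinking via Lemma \ref{lem:*-b}, and is the essential hypothesis making the tube open; property (d) and property (A) serve only to supply the continuous section and to keep the projected fibres discrete through the induction.
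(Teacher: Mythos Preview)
Your argument is correct, but it is organised differently from the paper's proof. You run an induction on $k=n-d$, peeling off the last coordinate and invoking property~(A) at each step to keep the projected fibres discrete. The paper avoids the induction entirely: after fixing the continuous section $g$ exactly as you do, it projects each discrete fibre $Y_x\subseteq M^{n-d}$ onto its $j$-th coordinate via $\rho_j$ (these images $\rho_j(Y_x)$ are discrete and closed by property~(a) and Lemma~\ref{lem:key0}), defines $\kappa_j^{\pm}(x)$ as the immediate neighbours of $\rho_j(g(x))$ inside $\rho_j(Y_x)$, and takes $W=\{(x,y)\in V\times M^{n-d}:\kappa_j^-(x)<y_j<\kappa_j^+(x)\text{ for all }d<j\le n\}$. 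If $(x,y)\in Y\cap W$ then each $y_j\in\rho_j(Y_x)$ is squeezed to equal $\rho_j(g(x))$, hence $y$ coincides with $g(x)$ coordinatewise; then property~(b) and the $*$-continuity property, applied to the finitely many functions $g,\kappa_{d+1}^{\pm},\dots,\kappa_n^{\pm}$ with successive shrinking, produce the open $V$ on which they are all continuous. This one-shot product-box construction is shorter and spares the bookkeeping of your inductive step, though your approach has the minor virtue of never needing to project the fibres onto individual coordinates. One small correction: you invoke Lemma~\ref{lem:*-b} to pass to an open piece where the $\pm\infty$ behaviour of $\alpha,\beta$ is constant, but that lemma is stated only for $*$-locally weakly o-minimal structures; under the present hypotheses the relevant fact is property~(b), which follows from the $*$-continuity property via Remark~\ref{rem:cccc}.
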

\begin{proof}
	The same proof as \cite[Lemma 3.4]{Fuji_tame} works except a minor point.
	The proof of this lemma is brief, and we give a complete proof here for readers' convenience.
	Note that properties (a) and (b) hold under this assumption by Remark \ref{rem:cccc}.
	
	Permuting the coordinates if necessary, we may assume that $\pi$ is the projection onto the first $d$ coordinates.
	Let $\rho_j:M^n \rightarrow M$ be the coordinate projections onto the $j$-th coordinate for all $d<j \leq n$.
	The fiber $Y_x$ is discrete for any $x \in U$ by the assumption.
	Since $X_x$ is a definable subset of $Y_x$, $X_x$ is also a discrete set.
	There exists a definable map $g:U \rightarrow X$ such that the composition $\pi \circ g$ is the identity map on $U$ by property (d).
	The projection image $\rho_j(Y_x)$ is discrete and closed by property (a) and Lemma \ref{lem:key0}.
	Consider the definable functions $\kappa_j^+:U \rightarrow \overline{M} \cup \{+\infty\}$ defined by 
	\[
	\kappa_j^+(x)=\left\{
	\begin{array}{ll}
		\inf \{t \in \rho_j(Y_x)\;|\; t>\rho_j(g(x))\} & \text{if } \{t \in \rho_j(Y_x)\;|\; t>\rho_j(g(x))\}\not=\emptyset \text{,}\\
		+\infty & \text{otherwise}
	\end{array}
	\right.
	\]
	for all $d<j \leq n$.
	We define $\kappa_j^-:U \rightarrow \overline{M} \cup \{-\infty\}$ similarly.
	Note that $\kappa_j^-(x) \neq \rho_j(g(x))$ and $\kappa_j^+(x) \neq \rho_j(g(x))$ for each $x \in U$ because $\rho_j(Y_x)$ is discrete and closed.
	Then, we have
	\[
	\pi^{-1}(x) \cap Y \cap (\{x\} \times (\kappa_{d+1}^-(x),\kappa_{d+1}^+(x)) \times \cdots \times (\kappa_{n}^-(x),\kappa_{n}^+(x))) = \{g(x)\}
	\]
	for all $x \in U$.
	Here,  $(\kappa_{i}^-(x),\kappa_{i}^+(x))$ denotes the definable open set $\{t \in M\;|\; \kappa_{i}^-(x)<t<\kappa_{i}^+(x)\}$, which is not necessarily an open interval, for every $d<i \leq n$.
	There exists a  definable open subset $V$ of $U$ such that the restriction $f$ of $g$ to $V$ and the restrictions of $\kappa_j^-$ and $\kappa_j^+$ to $V$ are all continuous by properties (b) and the $*$-continuity property.
	Set $W=\{(x,y_{d+1},\ldots, y_n) \in V \times M^{n-d}\;|\; \kappa_j^-(x) < y_j < \kappa_j^+(x) \text{ for all } d<j \leq n\}$.
	The definable sets $V$ and $W$ and a definable continuous map $f$ satisfy the requirements.
\end{proof}

%\begin{lemma}\label{lem:key4}
%	Let $\mathcal M=(M,<,\ldots)$ be a locally o-minimal structure.
%	Assume further that $\mathcal M$ enjoys properties (d) in Definition \ref{def:tame_top_old}, (B) in Definition \ref{def:tame_top2} and the $*$-continuity property.
%	Let $C \subseteq M^n$ be a definable open subset and $f:C \rightarrow M^n$ be a definable injective continuous map.
%	The image $f(C)$ has a nonempty interior.  
%\end{lemma}
%\begin{proof}
%	The lemma is a counterpart to \cite[Lemma 3.6]{Fuji_tame}.
%	Set $d=\dim f(C)$.
%	We prove \cite[Lemma 3.6]{Fuji_tame} by leading to a contradiction under the assumption that $d<n$.
%	We can lead to a contradiction in the same way using Lemma \ref{lem:pre0} and Lemma \ref{lem:pre1} when $d \neq 0$.
%	
%	Let us consider the case in which $d=0$.
%	In the proof of \cite[Lemma 3.6]{Fuji_tame}, we essentially use definable completeness.
%	However, we can immediately obtain a contradiction.
%	In fact, the image $f(C)$ is discrete by Proposition \ref{prop:zero}.
%	By Lemma \ref{lem:aaa}, $C$ is discrete, which is absurd.
%\end{proof}

In \cite[Section 3]{Fuji_tame}, we proved four formulas on dimension of sets definable in definably complete locally o-minimal structures which are not found in Proposition \ref{prop:dim_basic} and Theorem \ref{thm:dimension_basic}.
We only use a counterpart to Lemma \ref{lem:pre1} for their proofs.
We can prove the following theorem using Lemma \ref{lem:pre1} instead of its counterpart.
\begin{theorem}\label{thm:dim}
	Consider a locally l-visceral structure $\mathcal M=(M,<,\ldots)$.
	Assume further that $\mathcal M$ enjoys property (d) in Definition \ref{def:tame_top_old}, property (B) in Definition \ref{def:tame_top2} and the $*$-continuity property.
	The following assertions hold true.
	We assume that the sets $X$ and $Y$ in the assertions are nonempty and definable.
	\begin{enumerate}
		\item[(1)] Let $f:X \rightarrow M^n$ be a definable map. 
		The notation $\mathcal D(f)$ denotes the set of points at which the map $f$ is discontinuous. 
		The inequality $\dim(\mathcal D(f)) < \dim X$ holds true.
		\item[(2)] Let $X$ be a definable set.
		The notation $\partial X$ denotes the frontier of $X$ defined by $\partial X = \mycl(X) \setminus X$.
		We have $\dim (\partial X) < \dim X$.
		\item[(3)] A definable set $X$ is of dimension $d$ if and only if the nonnegative integer $d$ is the maximum of nonnegative integers $e$ such that there exist an open box $B$ in $M^e$ and 
		a definable injective continuous map $\varphi:B \rightarrow X$ homeomorphic onto its image. 
		\item[(4)] Let $X$ be a definable subset of $M^n$.
		There exists a point $x \in X$ such that we have $\dim(X \cap B)=\dim(X)$ for any open box $B$ containing the point $x$.
	\end{enumerate}
\end{theorem}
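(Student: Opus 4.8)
The plan is to re-run, essentially unchanged, the proofs of the four formulas established in \cite[Section~3]{Fuji_tame} for definably complete locally o-minimal structures. The only ingredient those arguments draw from the definably complete setting is a local parametrization statement, and Lemma~\ref{lem:pre1} is tailored to be its exact replacement: over a definable open subset $V$ of the base it produces a definable continuous section $f\colon V\to X$ together with an open set $W$ satisfying $\pi(W)=V$, $Y\cap W=f(V)$ and $\pi\circ f=\mathrm{id}_V$. Before using it I would first check that the ambient hypotheses of the earlier results are all available here: property (A) holds by Remark~\ref{rem:cccc}, since the $*$-continuity property yields the univariate continuity property and property (d) is assumed; property (B) is assumed outright; consequently Proposition~\ref{prop:zero}, Lemma~\ref{lem:pre0} and Theorem~\ref{thm:dimension_basic} are all at my disposal.

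For assertion (4) I would follow Proposition~\ref{prop:local_dim_well} but replace its appeal to the definable choice property by Lemma~\ref{lem:pre1}. Writing $d=\dim X$, pick a coordinate projection $\pi\colon M^n\to M^d$ with $\pi(X)$ of nonempty interior; Lemma~\ref{lem:pre0} supplies an open $U\subseteq\pi(X)$ over which the fibers are discrete, and Lemma~\ref{lem:pre1} then yields a continuous section $f\colon V\to X$ over some open $V\subseteq U$. Any $x=f(t)$ with $t\in V$ works: for an open box $B\ni x$, continuity of $f$ makes $f^{-1}(B)$ open, and since $\pi\circ f=\mathrm{id}_V$ the image $\pi(X\cap B)$ contains $f^{-1}(B)$, so $\dim(X\cap B)\geq d$; the reverse inequality is monotonicity. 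For assertion (3), the bound $e\leq\dim X$ for any homeomorphic embedding of an open box of $M^e$ into $X$ follows from the addition property (Theorem~\ref{thm:dimension_basic}(2)), applied to the bijection onto the image, together with monotonicity (Proposition~\ref{prop:dim_basic}(1)); for the reverse direction, the section $f\colon V\to X$ built above is automatically injective and a homeomorphism onto its image, because $\pi$ is a continuous left inverse, so restricting $f$ to an open box inside $V$ realizes $e=d$.

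Assertion (1) would follow the \cite{Fuji_tame} argument for the discontinuity formula: assuming for contradiction that $\dim\mathcal D(f)=\dim X$, I would apply Lemma~\ref{lem:pre0} and Lemma~\ref{lem:pre1} to $\mathcal D(f)$ to isolate a top-dimensional parametrized piece of $\mathcal D(f)$ inside the open set $W$, and then use the continuity property on that piece to contradict the fact that all of its points are discontinuity points of $f$. The step I expect to need the most care is assertion (2), the strict drop $\dim(\partial X)<\dim X$ on the frontier $\partial X=\mycl(X)\setminus X$: reproducing the \cite{Fuji_tame} proof means parametrizing a hypothetical top-dimensional piece of $\partial X$ by Lemma~\ref{lem:pre1} and exploiting that frontier points are approached by $X$ within the isolating set $W$, so that the dimension bookkeeping of the Claim inside Theorem~\ref{thm:dimension_basic} is violated. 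The genuine obstruction there---namely that, without definable completeness, the suprema and infima of fibers used to cut out $W$ need not lie in $M$---has, however, already been absorbed into Lemma~\ref{lem:pre1}, whose boundary functions $\kappa_j^\pm$ are allowed to take values in $\overline M$ thanks to the $*$-continuity property. Once Lemma~\ref{lem:pre1} is in hand the remaining reasoning is formally identical to the definably complete case, and I would simply cite it.
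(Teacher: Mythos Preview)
Your overall plan and your treatments of (1), (3) and (4) are correct and coincide with the paper's approach: isolate a top-dimensional discrete-fibered piece via Lemma~\ref{lem:pre0}, parametrize it with the continuous section of Lemma~\ref{lem:pre1}, and use that $\pi$ is a continuous left inverse to the section. The minor cosmetic differences (you invoke the addition property for the bound $e\le d$ in (3), where the paper cites $\dim f(X)\le\dim X$; you derive (4) directly rather than via (3)) are immaterial.

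Where you diverge from the paper is assertion~(2), and there you have made life much harder than necessary. The proof in \cite{Fuji_tame}, reproduced verbatim in the appendix here, is not a direct parametrization of $\partial X$ at all: it is a two-line reduction to~(1). Pick distinct $c,d\in M$ and define $f:\mycl(X)\to M$ by $f(x)=c$ if $x\in X$ and $f(x)=d$ otherwise. Every point of $\partial X$ is a discontinuity point of $f$, so $\partial X\subseteq\mathcal D(f)$; assertion~(1) applied to $f$ on the domain $\mycl(X)$ gives $\dim\partial X\le\dim\mathcal D(f)<\dim\mycl(X)$, and since $\mycl(X)=X\cup\partial X$ the max formula (Theorem~\ref{thm:dimension_basic}(1)) forces $\dim\mycl(X)=\dim X$.

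Your proposed direct route for~(2) is also underspecified as written. Lemma~\ref{lem:pre1} applied with $Y=\partial X$ isolates $f(V)$ from the rest of $\partial X$ inside $W$, but it says nothing about how $X$ itself meets $W$; the sentence ``frontier points are approached by $X$ within the isolating set $W$'' is precisely what would still need to be argued, and it does not follow from the conclusion of Lemma~\ref{lem:pre1}. The obstruction you flag---$\overline M$-valued $\kappa_j^\pm$---is a red herring for this step; the real work you are missing is controlling $X\cap W$, which the indicator-function trick sidesteps entirely.
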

\begin{proof}
	Note that $\mathcal M$ possess property (c) because it enjoys the $*$-continuity property which is stronger than property (c).
	In addition, properties (a) and (b) in Definition \ref{def:tame_top_old} follow from property (d) and the $*$-continuity property by Remark \ref{rem:cccc}.
	Observe that the assumptions in Theorem \ref{thm:dimension_basic} hold in this setting.
	
	Assertions (1) and (2) correspond to \cite[Theorem 3.8(6),(7)]{Fuji_tame}.
	We can prove them in the same manner as their counterparts using Lemma \ref{lem:pre1}.
	%Observe that other assertions in \cite[Theorem 3.8]{Fuji_tame} are already proven in Proposition \ref{prop:dim_basic} and Theorem \ref{thm:dimension_basic}.
	
	Assertion (3) is a counterpart to \cite[Theorem 3.11]{Fuji_tame}.
	It can be proven in the same manner as \cite[Theorem 3.11]{Fuji_tame} using Lemma \ref{lem:pre1} and the formulas in Proposition \ref{prop:dim_basic} and Theorem \ref{thm:dimension_basic}.
	Assertion (4) is a corollary of assertion (3), and it is proven in the same manner as \cite[Corollary 3.12]{Fuji_tame}. 
\end{proof}

We get the following corollary, which is a main theorem of this paper, by summarizing the assertions in this section.
\begin{corollary}\label{cor:*-dim}
	Assertions (1) through (3) in Theorem \ref{thm:dimension_basic} and assertions (1) through (4) in Theorem \ref{thm:dim} hold in $*$-locally weakly o-minimal structures enjoying the univariate $*$-continuity property.
\end{corollary}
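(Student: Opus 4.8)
The plan is to check that a $*$-locally weakly o-minimal structure $\mathcal M=(M,<,\ldots)$ enjoying the univariate $*$-continuity property satisfies every hypothesis of Theorem \ref{thm:dimension_basic} and of Theorem \ref{thm:dim}; once this is done the corollary follows by quoting those two theorems directly. No new computation is needed, so the entire content is an assembly of the lemmas established in this subsection. First I would secure the background hypothesis of local l-viscerality. Since a $*$-locally weakly o-minimal structure is in particular locally weakly o-minimal, it is locally o-minimal by Proposition \ref{prop:local_ominmal}, and hence locally l-visceral (as used in the proof of Lemma \ref{lem:local_dim0}). I would moreover record that it is $*$-locally l-visceral: any finite union of open convex sets is open, so the defining condition of $*$-local weak o-minimality immediately specializes to that of $*$-local l-viscerality. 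This is what is required to feed Lemma \ref{lem:*-a} and Lemma \ref{lem:*-d}.

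Next I would treat the two continuity hypotheses. The univariate $*$-continuity property concerns functions valued in $\overline{M}$, and it implies the univariate continuity property, because an $M$-valued definable function that is continuous into $\overline{M}$ on a subinterval is continuous into $M$ there (the order topology on $M$ agrees with the subspace topology inherited from $\overline{M}$). Together with local o-minimality, Proposition \ref{prop:continuity2} then upgrades the univariate $*$-continuity property to the full $*$-continuity property, which is stronger than property (c) of Definition \ref{def:tame_top_old}. With these reductions in hand, the remaining properties are exactly the lemmas proved above: property (A), identical to property (a), is Lemma \ref{lem:*-a}; property (d) is Lemma \ref{lem:*-d}; and property (B) is Lemma \ref{lem:*-key}. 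Hence the hypotheses of Theorem \ref{thm:dimension_basic} (local l-viscerality together with (A) and (B)) are met, giving assertions (1) through (3) there, and the hypotheses of Theorem \ref{thm:dim} (local l-viscerality together with (d), (B) and the $*$-continuity property) are met, giving assertions (1) through (4) there.

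I expect no genuine obstacle in this proof, since the substantive work has already been carried out in Lemma \ref{lem:*-a}, Lemma \ref{lem:*-d}, Lemma \ref{lem:*-key} and Proposition \ref{prop:continuity2}. The one point that demands care is the comparison between the starred and unstarred continuity notions: Lemmas \ref{lem:*-a} and \ref{lem:*-d} are stated under the univariate continuity property, whereas the hypothesis of the corollary is the univariate $*$-continuity property, so I must verify explicitly that the latter supplies the former before invoking those lemmas. Everything else is a matter of matching each assumption of the two source theorems to an already-proved statement.
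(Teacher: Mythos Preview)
Your proposal is correct and follows essentially the same route as the paper's own proof: verify local l-viscerality via Proposition \ref{prop:local_ominmal}, upgrade to the $*$-continuity property via Proposition \ref{prop:continuity2}, and then invoke Lemmas \ref{lem:*-a}, \ref{lem:*-d}, \ref{lem:*-key} to supply properties (A), (d), (B) so that Theorems \ref{thm:dimension_basic} and \ref{thm:dim} apply. Your added observations that $*$-local weak o-minimality yields $*$-local l-viscerality and that the univariate $*$-continuity property implies the univariate continuity property are exactly the small checks the paper leaves implicit when citing Lemmas \ref{lem:*-a} and \ref{lem:*-d}.
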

\begin{proof}
	Note that a $*$-local weakly o-minimal structure is locally o-minimal by Proposition \ref{prop:local_ominmal}.
	In particular, it is also locally l-visceral by Proposition \ref{prop:char_lvisceral}.
	It also enjoys the $*$-continuity property by Proposition \ref{prop:continuity2}.
	The corollary follows from Theorem \ref{thm:dimension_basic}, Theorem \ref{thm:dim}, Lemma \ref{lem:*-a}, Lemma \ref{lem:*-d} and Lemma \ref{lem:*-key}.
\end{proof}

Note that a weakly o-minimal structure is $*$-locally weakly o-minimal.
In \cite[Theorem 4.2]{W}, Wencel gave a necessary and sufficient condition for a weakly o-minimal structure to satisfy the equality in Theorem \ref{thm:dimension_basic}(2).
The following corollary is a generalization of Wencel's result, and it is another main theorem of this paper.
\begin{corollary}\label{cor:wencel}
Let $\mathcal M=(M,<,\ldots)$ be a $*$-locally weakly o-minimal structure.
The equality in Theorem \ref{thm:dimension_basic}(2) holds if and only if $\mathcal M$ has the univariate $*$-continuity property.
\end{corollary}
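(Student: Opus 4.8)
The plan is to read the equality in Theorem~\ref{thm:dimension_basic}(2) as the \emph{addition property} and to exploit the observation, recorded just after the definition of dimensionally wild sets, that the existence of a dimensionally wild set is incompatible with it. Concretely, if $X \subseteq M^2$ is dimensionally wild then I would compute its dimension directly from Definition~\ref{def:dim}: condition (i) forces $\dim X \neq 2$, while condition (ii) gives $\dim \pi(X)=1$, so $\dim X = 1$, where $\pi$ is the projection onto the first coordinate. Applying the addition property to the surjection $\pi\colon X \to \pi(X)$, whose fibers $\{x\}\times X_x$ all have dimension $1$ by condition (iii) and Proposition~\ref{prop:dim_basic}(3), would yield $\dim X = \dim \pi(X) + 1 = 2$, a contradiction. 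Hence a dimensionally wild set witnesses the failure of the addition property, and the whole corollary reduces to matching the existence of such a set with the failure of the univariate $*$-continuity property.

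For the direction asserting that the univariate $*$-continuity property implies the equality, I would simply invoke Corollary~\ref{cor:*-dim}: under that hypothesis $\mathcal M$ satisfies all of Theorem~\ref{thm:dimension_basic}, in particular assertion (2).

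For the converse I would argue by contraposition, assuming the univariate $*$-continuity property fails and constructing a dimensionally wild set. Since $\mathcal M$ is locally o-minimal by Proposition~\ref{prop:local_ominmal}, the dichotomy of Theorem~\ref{thm:monotone-star} applies. If its alternative (1) holds, a dimensionally wild set is produced outright. If alternative (2) holds, I would fix a definable $f\colon I \to \overline{M}$ having no subinterval of continuity (a witness to the failed property) and apply the decomposition $I = X_+ \cup X_- \cup X_c \cup X_d$: the set $X_c$ must be empty, since $f$ is locally constant, hence continuous, on it, while $X_d$ has empty interior, so one of $X_+, X_-$ contains an open subinterval $I'$, say $X_+$. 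On $I'$ the restriction of $f$ is locally strictly increasing and still nowhere continuous on subintervals; its continuity set has empty interior, hence is discrete and closed by Lemma~\ref{lem:local_dim0}, so the discontinuity set contains an open subinterval $I''$.

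The main obstacle is the last upgrade. On $I''$ the function $f$ is only \emph{locally} strictly increasing, whereas Lemma~\ref{lem:exceptional_case} demands a genuinely strictly monotone function that is discontinuous everywhere. I would overcome this by choosing, for a point of $I''$, a neighborhood on which the local strict monotonicity is realized and intersecting it with $I''$ to obtain an open subinterval $I'''$ on which $f$ is honestly strictly increasing; discontinuity at every point of $I'''$ is inherited from $I''$ because continuity at an interior point of an interval is a local property and is therefore unaffected by restriction to an open subinterval. Lemma~\ref{lem:exceptional_case} then furnishes a dimensionally wild set, completing the contrapositive, the case in which $X_-$ has nonempty interior being symmetric via strict decrease.
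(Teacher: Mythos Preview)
Your proposal is correct and follows essentially the same route as the paper: the `if' direction via Corollary~\ref{cor:*-dim}, and the contrapositive of the `only if' direction by producing a dimensionally wild set through the dichotomy of Theorem~\ref{thm:monotone-star}, handling alternative~(2) by passing to a subinterval on which $f$ is strictly monotone and everywhere discontinuous before invoking Lemma~\ref{lem:exceptional_case}. The only cosmetic difference is that the paper first shrinks $I$ to make $f$ discontinuous everywhere and then invokes the monotonicity decomposition, whereas you apply the decomposition first and remove the continuity set afterwards; both orderings work.
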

\begin{proof}
The `if' part follows from Corollary \ref{cor:*-dim}.
We next show the contraposition of the `only if' part.
We have only to show that $\mathcal M$ has a dimensionally wild set, which obviously violates the equality in Theorem \ref{thm:dimension_basic}(2).
We consider two separate cases.
We first consider the case in which assertion (1) in Theorem \ref{thm:monotone-star} holds.
The structure $\mathcal M$ has a dimensionally wild set in this case.

We next consider the case in which assertion (2) in Theorem \ref{thm:monotone-star} holds.
Let $f:I \to \overline{M}$ be a definable function from an open interval $I$ such that the set $C$ of points at which $f$ is continuous has an empty interior.
The set $C$ is discrete and closed by Proposition \ref{prop:char_lvisceral} because $\mathcal M$ is locally l-visceral.
We may assume that $f$ is discontinuous everywhere by replacing $I$ with an open interval contained in $I$ and having an empty intersection with $C$.
We may assume that $f$ is strictly monotone by shrinking $I$ if necessary because assertion (2) in Theorem \ref{thm:monotone-star} holds in this case.
The structure $\mathcal M$ has a dimensionally wild definable set by Lemma \ref{lem:exceptional_case}.
\end{proof}

\subsection{Decomposition into definable submanifolds}\label{sec:decomposition}
We showed that any definable set is partitioned into finitely many quasi-special submanifolds, which are definable submanifolds satisfying an extra property, in \cite[Section 3]{Fuji_tame} when the structure is a definably complete locally o-minimal structure.
We prove a weaker decomposition theorem when the structure is a $*$-locally weakly o-minimal structure enjoying the univariate $*$-continuity property.

\begin{definition}
	Consider an expansion of a densely linearly order without endpoints $M = (M, <,\ldots)$. 
	Let $X$ be a definable subset of $M^n$ and $\pi : M^n \to M^d$
	be a coordinate projection. 
	A point $x \in X$ is \textit{$(X,\pi)$-normal} if there exists an open
	box $B$ in $M^n$ containing the point $x$ such that $B \cap X$ is the graph of a continuous map defined on $\pi(B)$ after permuting the coordinates so that $\pi$ is the projection	onto the first $d$ coordinates.
	The definable subset $X$ of $M^n$ is called a \textit{definable submanifold} if, for any point $x \in X$, there exists a definable open subsets $U$ and $V$ of $M^n$ and a definable homeomorphism $\varphi:U \to V$ such that $U$ contains the $x$, $V$ contains the origin $\overline{0}$ of $M^n$, $\varphi(x)=\overline{0}$ and $\varphi(X \cap U)=V \cap (M^d \times \{(0,\ldots,0)\})$.
	It is obvious that the definable set $X$ is a definable submanifold if every point $x$ in $X$ is $(X,\pi)$-normal.
	We call $X$ a \textit{$\pi$-normal submanifold} or simply a \textit{normal submanifold} if every point in $X$ is $(X,\pi)$-normal.
	
	Let $\{X_i\}_{i=1}^m$ be a finite family of definable subsets of $M^n$. A decomposition of $M^n$ into normal submanifolds partitioning $\{X_i\}_{i=1}^m$ is a finite family of normal submanifolds $\{C_i \}_{i=1}^N$ such that
	either $C_i$ has an empty intersection with $X_j$ or is contained in $X_j$ for any $1 \leq i \leq N$ and $1 \leq j \leq m$. A decomposition $\{C_i \}_{i=1}^N$ into normal submanifolds satisfies the frontier condition if the closure of any normal submanifold $C_i$ is the union of a subfamily of the decomposition.
\end{definition}
The notion of normal submanifolds is a weaker concept than the notion of quasi-special submanifolds defined in \cite[Definition 4.1]{Fuji_tame}.
This means that a quasi-special submanifold is always a normal submanifold.
The opposite implication holds by \cite[Lemma 4.2]{Fuji_tame} and \cite[Theorem 2.5]{FKK} when the structure is definably complete and locally o-minimal.

We give a decomposition theorem below:

\begin{theorem}\label{thm:decomposition}
	Consider a $*$-locally weakly o-minimal structure $\mathcal M=(M,<,\ldots)$ enjoying the $*$-continuity property.
	Let $\{X_i\}_{i=1}^m$ be a finite family of definable subsets of $M^n$.  
	There exists a decomposition $\{C_i \}_{i=1}^N$ into normal submanifolds
	partitioning $\{X_i\}_{i=1}^m$ and satisfying the frontier condition. 
	Furthermore,
	the number $N$ of normal submanifolds is not greater than the number uniquely
	determined only by $m$ and $n$.
\end{theorem}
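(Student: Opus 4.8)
The plan is to prove the theorem by induction on $n$, following the argument of \cite[Theorem 4.5]{Fuji_tame} closely but feeding it the counterparts established in this paper. First I would record that the hypotheses put all of the dimension theory of Section \ref{sec:*-local} at our disposal: $\mathcal M$ is locally o-minimal by Proposition \ref{prop:local_ominmal}, hence locally l-visceral by Proposition \ref{prop:char_lvisceral}; the $*$-continuity property contains its univariate version as the case $n=1$, so that all the assumptions summarized in Corollary \ref{cor:*-dim} hold, and in particular Theorem \ref{thm:dim}, Theorem \ref{thm:dimension_basic} and Lemma \ref{lem:pre1} are available. The base case $n=1$ is handled directly: for a family $\{X_i\}_{i=1}^m$ of subsets of $M$ one forms the at most $2^m$ Boolean atoms, and by Proposition \ref{prop:char_lvisceral} each atom is the disjoint union of its interior, a $1$-dimensional normal submanifold, and a discrete closed remainder, a $0$-dimensional normal submanifold.

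For the inductive step I would peel off pieces in order of decreasing dimension. Let $d$ be the maximal dimension occurring among the sets to be partitioned. For each of the $\binom{n}{d}$ coordinate projections $\pi:M^n\to M^d$ and each set $X$ in the family, the locus $X^{\mathrm{norm}}_{\pi}$ of $(X,\pi)$-normal points is relatively open in $X$ and is a single $\pi$-normal submanifold. The decisive point, which is what permits a finite decomposition in the absence of definable completeness, is that $X^{\mathrm{norm}}_{\pi}$ counts as \emph{one} normal submanifold regardless of how many sheets it has: a definable discrete fibre may well be infinite in the merely locally o-minimal setting, yet all such sheets are absorbed into the same normal submanifold. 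Using Lemma \ref{lem:pre1} together with the dimension formulas of Theorem \ref{thm:dimension_basic} and Theorem \ref{thm:dim}(1), one shows that the set of points of $X$ failing to be $(X,\pi)$-normal for \emph{every} coordinate projection $\pi:M^n\to M^d$ has dimension strictly less than $d$; the union over all $\pi$ is essential here, since a fibre that is one-dimensional in the $\pi$-direction becomes a graph for a projection that includes that direction. After disjointifying the finitely many $X^{\mathrm{norm}}_{\pi}$ — processing the projections one at a time and pushing the lower-dimensional boundaries between them into the remainder — the uncovered part has strictly smaller dimension and is fed back into the procedure, so the recursion has depth at most $n$.

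The frontier condition is then imposed by a standard refinement. Given a decomposition into normal submanifolds, the frontier $\partial C_i=\mycl(C_i)\setminus C_i$ of each piece has dimension strictly smaller than $\dim C_i$ by Theorem \ref{thm:dim}(2); adjoining all these frontiers to the family and re-running the decomposition from the top stratum downward makes every such frontier a union of lower-dimensional pieces. Because each adjunction strictly lowers the dimension of the sets whose compatibility remains to be arranged, this loop terminates after boundedly many rounds, while Theorem \ref{thm:dim}(1) keeps the relevant graphs genuinely continuous.

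The main obstacle, and the part demanding the most care, is the final clause that $N$ is bounded by a quantity depending only on $m$ and $n$. This bound cannot be read off from any finiteness of sheets or connected components, which are genuinely absent here; it must instead be extracted from the recursion itself. I would track it explicitly: the base case yields at most a function of $m$ pieces; each dimension-peeling stage contributes at most $\binom{n}{d}$ normal submanifolds per set together with a disjointification overhead controlled by $m$ and $n$; the recursion depth is at most $n$; and the frontier-refinement loop runs a bounded number of rounds, each multiplying the family size by a factor determined by $m$ and $n$. Composing these estimates yields a function $N_0(m,n)$ independent of $\mathcal M$ and of the particular sets $X_i$. Verifying that each Boolean and difference operation used in the disjointification keeps the running count inside such a function is the bookkeeping that must be carried out with care.
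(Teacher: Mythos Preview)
Your approach is essentially the paper's: extract the $(X,\pi)$-normal loci one projection at a time, show the residue drops in a suitable complexity measure, then enforce the frontier condition by iterated refinement using Theorem~\ref{thm:dim}(2). One organizational remark: the paper's induction parameter is not $n$ but the \emph{full dimension} $\myfdim(X)=(d,e)$, where $e$ counts the coordinate projections $\pi:M^n\to M^d$ with $\myint(\pi(X))\neq\emptyset$; the key step is that the non-$(X,\pi)$-normal locus $B$ satisfies $\myint(\pi(B))=\emptyset$, which drops $e$ by one, and exhausting all projections in $\Pi_{n,d}$ forces $d$ to drop. This yields the clean bound $N\le \sum_d\binom{n}{d}=2^n$ for a single set (hence $2^{m+n}$ after forming the $2^m$ Boolean atoms) prior to the frontier refinement---matching your bookkeeping but packaged more tightly than an outer induction on $n$, which your described procedure does not actually use.
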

\begin{proof}
	A similar assertion is already proven in \cite[Section 4]{Fuji_tame}.
	We can prove the theorem in the same manner as Lemma 4.3 through Theorem 4.5 in \cite{Fuji_tame} using Corollary \ref{cor:*-dim}.
	We omit the proof.
\end{proof}

\subsection{Other definition of dimension and their equivalence}\label{sec:other}
We introduced two other definitions of dimension and showed their equivalence to the topological dimension when the structure is definably complete and locally o-minimal.
We can show analogous results for $*$-locally weakly o-minimal structures possessing the univariate $*$-continuity property.

 We recall the notion of a pregeometry.
 It is found in \cite[Chapter 2]{Pillay} and \cite[Appendix C]{TZ}.
 
 \begin{definition}[Pregeometry]\label{def:pregeometry}
 	Consider a set $S$ and a map $\mycl:\mathcal P(S) \rightarrow \mathcal P(S)$, where $\mathcal P(S)$ denotes the power set of $S$.
 	The pair $(S,\mycl)$ is a \textit{(combinatorial) pregeometry} if the following conditions are satisfied for any subset $A$ of $S$:
 	\begin{enumerate}
 		\item[(i)] $A \subseteq \mycl(A)$;
 		\item[(ii)] $\mycl(\mycl(A))=\mycl(A)$;
 		\item[(iii)] For any $a,b \in S$, if $a \in \mycl(A \cup \{b\}) \setminus \mycl(A)$, then $b \in \mycl(A \cup \{a\})$;
 		\item[(iv)] For any $a \in \mycl(A)$, there exists a finite subset $Y$ of $A$ such that $a \in \mycl(Y)$. 
 	\end{enumerate} 
 	The condition (iii) is called the \textit{exchange property}.
 	
 	Consider a pregeometry $(S,\mycl)$.
 	Let $A$ and $B$ be subsets of $S$.
 	The set $A$ is $\mycl$-\textit{independent} over $B$ if, for any $a \in A$, we have $a \not\in \mycl((A \setminus \{a\}) \cup B)$.
 	A subset $A_0$ of $A$ is a $\mycl$-\textit{basis} for $A$ over $B$ if $A$ is contained in $\mycl(A_0 \cup B)$ and $A_0$ is $\mycl$-independent over $B$.
 	Each basis for $A$ over $B$ has the same cardinality and it is denoted by $\myrk^{\mycl}(A/B)$.
 	%	We simply denote it by $\myrk(A/B)$ when the closure operator $\mycl$ is clear from the context.
 	%The set $A$ is $\mycl$-\textit{independent} from $B$ over $C$ and denoted by $A \downarrow_C^{\mycl}B$ if, for any finite subset $A_0$ of $A$, we have $\myrk(A_0/B \cup C)=\myrk(A_0/C)$.
 	%We  also omit the superscript $\mycl$ if it is clear from the context.
 	%For subsets $A \subseteq B \subseteq C$ and $D$ of $S$, we immediately get $C \downarrow_A^{\mycl} D$ when the conditions $C \downarrow_B^{\mycl} D$ and $B \downarrow_A^{\mycl} D$ are satisfied.
 	%We also have $A \downarrow_C^{\mycl} B$ if and only if $B \downarrow_C^{\mycl} A$ for all subsets $A$, $B$ and $C$ of $S$.
 \end{definition}

\begin{definition}[Discrete closure]\label{def:dcl}
	Let $\mathcal L$ be a language containing a binary predicate $<$.
	Consider an $\mathcal L$-structure $\mathcal M=(M,<,\ldots)$ which is an expansion of an order without endpoints.
	The \textit{discrete closure} $\mydcl_{\mathcal M}(A)$ of a subset $A$ of $M$ is the set of points $x$ in $M$ having an $\mathcal L(A)$-formula $\phi(t)$ such that 
	\begin{enumerate}
		\item[(a)] the set $\phi(\mathcal M)$ contains the point $x$ and 
		\item[(b)] it is discrete and closed.
	\end{enumerate}
	We often omit the subscript $\mathcal M$ of $\mydcl_{\mathcal M}(A)$.
\end{definition}

\begin{theorem}\label{thm:pregeometry}
	Let $\mathcal L$ be a language containing the binary predicate $<$.
	Consider a $*$-locally weakly o-minimal $\mathcal L$-structure $\mathcal M=(M,<,\ldots)$ possessing the univariate $*$-continuity property.
	The pair $(M,\mydcl)$ is a pregeometry.
\end{theorem}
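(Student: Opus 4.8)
The plan is to verify the four axioms (i)--(iv) of Definition \ref{def:pregeometry} for the operator $\mydcl$. The working dictionary throughout is the characterization of unary discrete closed sets: since $\mathcal M$ is locally o-minimal (Proposition \ref{prop:local_ominmal}) and hence locally l-visceral, a univariate definable set is discrete and closed if and only if it has empty interior (Proposition \ref{prop:char_lvisceral}, Lemma \ref{lem:local_dim0}). Consequently, for any parameter set $B$, a point $x \notin \mydcl(B)$ must lie in $\myint(\phi(\mathcal M))$ for every $\mathcal L(B)$-formula $\phi$ with $x \in \phi(\mathcal M)$, because the set $\phi(\mathcal M) \setminus \myint(\phi(\mathcal M))$ is always discrete and closed. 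I will also use freely that $\mathcal M$ satisfies property (A) (Lemma \ref{lem:*-a}) and property (B) (Lemma \ref{lem:*-key}), both of which hold under the present hypotheses.

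Axioms (i) and (iv) are immediate: $x \in A$ is witnessed by $\phi(t) := (t=x)$, whose solution set $\{x\}$ is discrete and closed, while finiteness of the parameter tuple of any witnessing formula gives finite character. For idempotence (ii), the nontrivial inclusion $\mydcl(\mydcl(A)) \subseteq \mydcl(A)$ reduces by (iv) to showing $\mydcl(A \cup \{b\}) \subseteq \mydcl(A)$ whenever $b \in \mydcl(A)$, which one then iterates. So fix $b \in \mydcl(A)$ witnessed by an $\mathcal L(A)$-formula $\delta$ with $D := \delta(\mathcal M)$ discrete and closed, and let $x \in \mydcl(Ab)$ be witnessed by $\psi(t,s)$ over $A$. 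After conjoining $\psi(t,s)$ with a first-order formula expressing that $\psi(\cdot,s)$ has empty interior, every fibre $\psi(\mathcal M,s)$ becomes discrete and closed while the witness at $s=b$ is unchanged. The $A$-definable set $Z = \{(t,s) : \delta(s) \wedge \psi(t,s)\}$ has discrete image and discrete fibres under the projection to the $s$-coordinate, hence is discrete by Lemma \ref{lem:aaa}; its projection $E$ to the $t$-coordinate is then discrete by property (A) and closed by Lemma \ref{lem:key0}. Since $x \in E$, this yields $x \in \mydcl(A)$.

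The crux is the exchange property (iii), and the plan is to package the hypothesis $a \in \mydcl(Ab)$ as a planar definable set with discrete vertical fibres and then invoke property (B). Assume $a \in \mydcl(Ab) \setminus \mydcl(A)$ and, for contradiction, $b \notin \mydcl(Aa)$. Choosing $\psi(t,s)$ over $A$ with $a \in \psi(\mathcal M,b)$ discrete and closed, I again arrange that $Q := \{(t,s) : \psi(t,s)\}$ has all vertical fibres $Q_s = \{t : \psi(t,s)\}$ of empty interior; then an open box inside $Q$ would force some vertical fibre to have nonempty interior, so $Q$ itself has empty interior. Now $P := \{s : \psi(a,s)\}$ is $Aa$-definable and contains $b$, and since $b \notin \mydcl(Aa)$ it is not discrete and closed, hence has nonempty interior. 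This shows $a$ lies in the $\mathcal L(A)$-definable set $R := \{t : \{s : \psi(t,s)\}\text{ has nonempty interior}\}$. Because $a \notin \mydcl(A)$, the set $R$ cannot have empty interior, for otherwise it would be discrete and closed and witness $a \in \mydcl(A)$; thus $R$ contains an open interval $I_0$. For every $t \in I_0$ the fibre $\{s : \psi(t,s)\}$ has nonempty interior, so property (B) applied to $Q \cap (I_0 \times M)$ with the projection onto the first coordinate yields that $Q \cap (I_0 \times M)$ has nonempty interior, contradicting that $Q$ has empty interior. Hence $b \in \mydcl(Aa)$, establishing (iii).

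The main obstacle is (iii), and its resolution depends essentially on property (B) (Lemma \ref{lem:*-key}), which is precisely where the univariate $*$-continuity property is consumed; the remaining axioms rest only on the projection and discreteness lemmas available already for $*$-locally l-visceral structures with the univariate continuity property. In particular, no further appeal to local monotonicity is needed beyond what is already embedded in Lemmas \ref{lem:*-a} and \ref{lem:*-key}.
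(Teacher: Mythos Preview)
Your proof is correct, and for the exchange property it is substantially cleaner than the paper's own argument. For axioms (i), (ii), and (iv) your treatment agrees with the paper's (your use of Lemma~\ref{lem:aaa} together with property~(A) for transitivity is just a repackaging of the paper's appeal to Proposition~\ref{prop:zero} and Theorem~\ref{thm:dimension_basic}(2),(3)).

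The real divergence is in (iii). The paper, following the strategy of \cite{Fuji_pregeo}, sets up the planar set $\Gamma = \{(y,x):\varphi(x,y)\}$ and then carries out a fairly intricate decomposition: it isolates the locus $U$ of points where $\Gamma$ is locally the graph of a monotone continuous function, proves the complement $V$ has $\dim\pi_1(V)\leq 0$ via a further splitting $V=V'\cup W$, invokes the local monotonicity theorem (Corollary~\ref{cor:monotone-star}) to handle $W$, and finally strips off the locally-constant locus $X$ before reading off that the horizontal fibre through $a$ is discrete. Your argument sidesteps this entirely: once the vertical fibres $Q_s$ are forced to have empty interior, $Q$ itself has empty interior, while the assumptions $a\notin\mydcl(A)$ and $b\notin\mydcl(Aa)$ produce an interval $I_0$ of $t$'s whose horizontal fibres have nonempty interior, and a single application of property~(B) (Lemma~\ref{lem:*-key}) gives the contradiction. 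This is a genuinely different and shorter route; the trade-off is only that the paper's argument makes visible exactly how the monotonicity theorem enters, whereas in your proof that dependence is hidden inside the proof of Lemma~\ref{lem:*-key} (via Proposition~\ref{prop:continuity2}).
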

\begin{proof}
We have already proven a similar theorem when the structure is definably complete and locally o-minimal in \cite[Theorem 3.2]{Fuji_pregeo}.
We can prove the theorem using Proposition \ref{prop:zero}, Lemma \ref{lem:*-d}, Corollary \ref{cor:monotone-star} and the dimension formulas in Proposition \ref{prop:dim_basic}, Theorem \ref{thm:dimension_basic} and Theorem \ref{thm:dim}.
We omit the proof.
\end{proof}

\begin{definition}\label{def:rank}
	Let $\mathcal L$ be a language and $\mathcal M=(M,\ldots)$ be an $\mathcal L$-structure.
	Let $\mycl:\mathcal P(M) \rightarrow \mathcal P(M)$ be a closure operator such that the pair $(M,\mycl)$ is a pregeometry.
	Consider subsets $A$ and $S$ of $M$ and $M^n$, respectively.
	We define 
	$$\myrk_{\mathcal M}^{\mycl}(S/A)=\max \{\myrk^{\mycl}(\{a_1,\ldots,a_n\}/A)\;|\; (a_1,\ldots, a_n) \in S\}.$$
	
	Let $\mathcal L$ be a language and $T$ be its theory.
	Consider a model $\mathcal M=(M,\ldots)$ of $T$ and a monster model $\mathbb M$ of $T$.
	The universe of $\mathbb M$ is denoted by the same symbol $\mathbb M$ for simplicity.
	Assume that there exists a closure operator $\mycl:\mathcal P(\mathbb M) \rightarrow \mathcal P(\mathbb M)$ such that the pair $(\mathbb M,\mycl)$ is a pregeometry. 
	Let $A$ be a subset of $M$ and $S$ be a definable set.
	We set $$\myrk_T^{\mycl} (S/A)=\myrk_{\mathbb M}^{\mycl}(S^{\mathbb M}/A).$$
	We simply denote it by $\myrk(S/A)$ when $T$ and $\mycl$ are clear from the context.
	It is often called the \textit{rank of $S$ over $A$}, but we call it \textit{$\mycl$-dimension of $S$ over $A$} in this paper in order to avoid the confusion with Pillay's dimension rank introduced later.
	Here, the prefix $\mycl$ of $\mycl$-dimension is the closure operator of the pregeometry $(\mathbb M,\mycl)$.
	%
	%Let $p(x)$ be an $n$-type over $A$, where $A$ is a subset of the universe $M$ of the model $\mathcal M$.
	%We set 
	%\begin{align*}
	%\myrk^{\mycl}(p)&=\max \{\myrk^{\mycl}(\{a_1,\ldots,a_n\}/A)\;|\; (a_1,\ldots, a_n) \in \mathbb M^n \text{ with }\\
	%&\quad \mytype((a_1,\ldots, a_n)/A)=p(x)\}.
	%\end{align*}
\end{definition}

\begin{definition}
	Let $\mathcal L$ be a language containing a binary predicate $<$ and $T$ be a complete $\mathcal L$-theory.
	The theory $T$ is called a \textit{$*$-locally weakly o-minimal $\mathcal L$-theory possessing the univariate $*$-continuity property} if every model of $T$ is a $*$-locally weakly o-minimal structure possessing the univariate $*$-continuity property.
	The theory $T$ is a $*$-locally weakly o-minimal $\mathcal L$-theory possessing the univariate $*$-continuity property if a model of $T$ is a $*$-locally weakly o-minimal structure possessing the univariate $*$-continuity property by Proposition \ref{prop:*-local_elementary} and Remark \ref{rem:equiv_*_cont}.
\end{definition}

\begin{theorem}\label{thm:rank}
	Let $\mathcal L$ be a language containing a binary predicate $<$ and $T$ be a $*$-locally weakly o-minimal $\mathcal L$-theory possessing the univariate $*$-continuity property.
	Let $\mathcal M=(M,\ldots)$ be a model of $T$.
	Let $A$ be a subset of $M$ and $X$ be a subset of $M^n$ definable over $A$.
	We have $\dim X=\myrk_T^{\mydcl}(X/A)$.
	%revision
	In particular, the $\mydcl$-dimension of $X$ over $A$ is independent of the parameter set $A$.
\end{theorem}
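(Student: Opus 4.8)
The plan is to pass to a monster model $\mathbb{M}\models T$ and prove the equality $\dim X^{\mathbb M}=\myrk^{\mydcl}(X^{\mathbb M}/A)$ there, after first observing that topological dimension is an elementary invariant. Indeed, the condition ``$\dim Y\geq e$'' asserts that some coordinate projection of $Y$ to $M^e$ contains an open box, which is an existential first-order condition on the defining parameters; hence $\dim$ is preserved under elementary equivalence and $\dim X=\dim X^{\mathbb M}$. Writing $\myrk(a/A):=\myrk^{\mydcl}(\{a_1,\dots,a_n\}/A)$ for a tuple $a=(a_1,\dots,a_n)\in\mathbb M^n$, it then suffices to show $\dim X^{\mathbb M}=\max\{\myrk(a/A)\mid a\in X^{\mathbb M}\}$. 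Throughout I will use that $\mathbb M$, being a model of $T$, is $*$-locally weakly o-minimal with the univariate $*$-continuity property, so that Proposition \ref{prop:zero}, Lemma \ref{lem:pre0}, Lemma \ref{lem:*-b} and the dimension formulas of Theorem \ref{thm:dimension_basic} and Theorem \ref{thm:dim} all apply.

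The combinatorial core is the following claim: if $Z\subseteq\mathbb M^k$ is $A$-definable with $\dim Z<k$, then every $a\in Z$ has $\mydcl$-dependent coordinates over $A$. I would prove this by induction on $k$, using the fiber-dimension formula established inside the proof of Theorem \ref{thm:dimension_basic}. Let $\pi\colon\mathbb M^k\to\mathbb M^{k-1}$ forget the last coordinate and put $S=\{x\in\pi(Z)\mid Z_x\text{ is not discrete}\}$. A nonempty interior of $S$ would force a $1$-dimensional generic fiber and hence $\dim Z=k$, a contradiction, so $\dim S<k-1$. For $a\in Z$ with $\pi(a)\in S$ the induction hypothesis makes $a_1,\dots,a_{k-1}$ dependent over $A$; for $\pi(a)\notin S$ the fiber $Z_{\pi(a)}$ is discrete and closed by Proposition \ref{prop:zero}, so $a_k\in\mydcl(A,a_1,\dots,a_{k-1})$ by the definition of discrete closure. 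Either way the coordinates are dependent. Granting the claim, fix $a\in X^{\mathbb M}$, let $a_{i_1},\dots,a_{i_k}$ be a $\mydcl$-basis of $\{a_1,\dots,a_n\}$ over $A$ (so $k=\myrk(a/A)$), and let $p$ be the projection onto these coordinates. Since the coordinates of $p(a)$ are $\mydcl$-independent over $A$, the contrapositive of the claim applied to the $A$-definable set $p(X^{\mathbb M})\ni p(a)$ forbids $\dim p(X^{\mathbb M})<k$; as projections do not raise dimension (Theorem \ref{thm:dimension_basic}(3)), $\dim X^{\mathbb M}\geq\dim p(X^{\mathbb M})\geq k=\myrk(a/A)$. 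This yields $\dim X^{\mathbb M}\geq\max_a\myrk(a/A)$.

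For the reverse inequality let $d=\dim X^{\mathbb M}$ and choose a coordinate projection $\pi$ with $\pi(X^{\mathbb M})$ of nonempty interior, so that $\pi(X^{\mathbb M})$ contains an open box $B=\prod_{i=1}^d(c_i,d_i)$; set $A^{*}=A\cup\{c_i,d_i\mid 1\leq i\leq d\}$. I would build a point $b=(b_1,\dots,b_d)\in B$ recursively so that $b_i\notin\mydcl(A^{*},b_1,\dots,b_{i-1})$ for each $i$, using the saturation of $\mathbb M$: the relevant partial type over $A^{*}\cup\{b_1,\dots,b_{i-1}\}$ says $b_i\in(c_i,d_i)$ and $b_i\notin D$ for every such definable discrete closed $D$, and it is consistent because deleting finitely many of these empty-interior sets from the interval $(c_i,d_i)$ still leaves a set of nonempty interior by Lemma \ref{lem:*-b}. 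Then $b$ is $\mydcl$-independent over $A^{*}$, hence over the smaller set $A$ by monotonicity of the closure, so $\myrk(b/A)=d$. Finally lift $b\in\pi(X^{\mathbb M})$ to some $a\in X^{\mathbb M}$ with $\pi(a)=b$; since $\pi$ is a coordinate projection, the coordinates of $b$ form a subtuple of those of $a$, whence $\myrk(a/A)\geq\myrk(b/A)=d$. Thus $\max_a\myrk(a/A)\geq d=\dim X^{\mathbb M}$, and combining the two inequalities gives the equality. The ``in particular'' clause is then immediate, since $\dim X$ makes no reference to $A$.

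The step I expect to be most delicate is the generic-point construction in the third paragraph: one must know that $\mathbb M$ is sufficiently saturated relative to $|A^{*}|$ and, more importantly, that the partial types built coordinate by coordinate are consistent. The consistency rests on the fact that an interval cannot be covered by finitely many definable sets of empty interior, which is exactly Lemma \ref{lem:*-b} specialized to the line; and keeping track that each deleted fiber set is genuinely $0$-dimensional, i.e.\ discrete and closed, so that membership of $b_i$ in it would indeed place $b_i$ in $\mydcl(A^{*},b_1,\dots,b_{i-1})$, is where Proposition \ref{prop:zero} and the definition of $\mydcl$ must be invoked with care. A secondary point needing attention is the elementary invariance of $\dim$ used in the reduction, which should be phrased so that ``contains an open box'' is manifestly first order in the defining parameters.
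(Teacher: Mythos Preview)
Your argument is correct and takes a genuinely different route from the paper's. For the inequality $\myrk\leq\dim$, the paper first invokes the decomposition theorem (Theorem~\ref{thm:decomposition}) to reduce to the case where $X$ is a $\pi$-normal submanifold, so that every fiber over $\pi(X)$ is discrete and the late coordinates land in $\mydcl$ of the early ones; your inductive claim on the ambient dimension $k$ accomplishes the same thing without appealing to any decomposition, using only property~(B) and Proposition~\ref{prop:zero}. For $\myrk\geq\dim$, the paper builds a chain of elementary extensions realizing the complete types $a_i^+$ (Lemma~\ref{lem:infetesimal}) and then argues separately that such realizations fall outside $\mydcl(M)$ (Lemma~\ref{lem:infetesimal2}); you instead realize, in one stroke, the partial type over $A^*\cup\{b_1,\dots,b_{i-1}\}$ asserting membership in $(c_i,d_i)$ and avoidance of every definable discrete closed set. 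Your consistency check via Lemma~\ref{lem:*-b} is exactly right. The paper's approach has the virtue of making the ``generic'' point concrete (an infinitesimal above a rational point of $X$), while yours is shorter and shows that Theorem~\ref{thm:decomposition} is not actually needed for this result.

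One small point worth making explicit in your write-up: in the inductive claim you should record the base case $k=1$ (a zero-dimensional $A$-definable subset of $\mathbb M$ is discrete and closed, so any element lies in $\mydcl(A)$), and in the step you are implicitly using that $S$ is itself $A$-definable, which it is since $Z$ is.
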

\begin{proof}
	The theorem follows in the same manner as \cite[Theorem 3.5]{Fuji_pregeo}.
	We used a decomposition into quasi-special submanifolds in the proof of \cite[Theorem 3.5]{Fuji_pregeo}, but a decomposition into normal submanifold in Theorem \ref{thm:decomposition} is sufficient for our purpose.
	We also need to use Lemma \ref{lem:supinf}.
	We omit the complete proof here.
\end{proof}

Pillay defined a first-order topological structure and the dimension rank for definable sets \cite{Pillay3}.
\begin{definition}
	Let $\mathcal L$ be a language and $\mathcal M=(M,\ldots)$ be an $\mathcal L$-structure.
	The structure $\mathcal M$ is called a \textit{first-order topological structure} if there exists an $\mathcal L$-formula $\phi(x,\overline{y})$ such that the family $\{\phi(x,\overline{a})\;|\; \overline{a} \subseteq M\}$ is a basis for a topology on $M$.
	When $\mathcal M$ is an expansion of a dense linear order, then $\mathcal M$ is a first-order topological structure.
	
	%revision
	Pillay developed the theory of dimension rank for first-order topological structures whose definable sets are constructible.
	Recall that a set is \textit{constructible} if it is a finite boolean combination of open sets.
	Note that any definable set is constructible thanks to the formula in Theorem \ref{thm:dim}(2) when the structure is a $*$-locally weakly o-minimal structure and it possess the univariate $*$-continuity property.
	%For a definable set $X$, an ordinary valued dimension rank $\mydim(X)$ is defined as follows:
	For a definable set $X$, a dimension rank $\mydim(X)$ is defined as follows:
	\begin{enumerate}
		\item[(1)] If $X$ is nonempty, then $\mydim(X) \geq 0$. Otherwise, set $\mydim(X)=-\infty$.
		\item[(2)] If $\mydim(X) \geq \alpha$ for all $\alpha<\delta$, where $\delta$ is limit, then $\mydim(X) \geq \delta$.
		\item[(3)] $\mydim(X) \geq \alpha+1$ if there exists a definable closed subset $Y$ of $X$ such that $Y$ has an empty interior in $X$ and $\mydim(Y) \geq \alpha$.
	\end{enumerate}
	We put $\mydim(X)=\alpha$ if $\mydim(X) \geq \alpha$ and $\mydim(X) \not\geq \alpha+1$.
	We set $\mydim(X)=\infty$ when $\mydim(X) \geq \alpha$ for all $\alpha$.
\end{definition}

\begin{proposition}\label{prop:top_st_dim}
	Let $\mathcal M=(M,<,\ldots)$ be a $*$-locally weakly o-minimal structures enjoying the univariate $*$-continuity property and $X$ be a definable set.
	We have $\dim X=\mydim(X)$.
\end{proposition}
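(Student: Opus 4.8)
The plan is to prove the two inequalities $\mydim(X)\le \dim X$ and $\mydim(X)\ge \dim X$ simultaneously, exploiting the fact that under our hypotheses $\mathcal M$ inherits the whole dimension-theoretic toolkit of the definably complete locally o-minimal case through Corollary \ref{cor:*-dim}, Theorem \ref{thm:dim} and Theorem \ref{thm:decomposition}. Since $\dim X\le n$ is always finite, it suffices to compare truncated ranks: I will show that for every nonnegative integer $m$ one has $\mydim(X)\ge m \Leftrightarrow \dim X\ge m$, by induction on $m$. This forces $\mydim(X)=\dim X$ and simultaneously rules out infinite or limit values of $\mydim$. The base case $m=0$ is trivial, as both $\mydim(X)\ge 0$ and $\dim X\ge 0$ are equivalent to $X\neq\emptyset$.

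For the step $\mydim(X)\ge m+1 \Rightarrow \dim X\ge m+1$, the key point is the following lemma, which I would establish first: \emph{if $Y\subseteq X$ is definable, closed in $X$ and has empty interior in $X$, then $\dim Y<\dim X$}. Granting it, if $\mydim(X)\ge m+1$ there is a definable $Y$ closed in $X$ with empty interior in $X$ and $\mydim(Y)\ge m$; the inductive hypothesis gives $\dim Y\ge m$, and the lemma gives $\dim X>\dim Y\ge m$. To prove the lemma, assume for contradiction that $\dim Y=\dim X=d$, choose a coordinate projection $\pi\colon M^n\to M^d$ realizing $\dim Y$ (so it realizes $\dim X$ as well, since $\pi(X)\supseteq\pi(Y)$), and shrink $\pi(Y)$ to an open box on which, by Lemma \ref{lem:pre0}, the fibers $Y_x$ and $X_x$ are both discrete. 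Applying Lemma \ref{lem:pre1} to the pair $Y\subseteq X$ over this box produces an open set $W\subseteq M^n$ for which $X\cap W$ is the graph of a continuous section taking values in $Y$; hence $X\cap W\subseteq Y$ is a nonempty subset of $Y$ that is open in $X$, contradicting the emptiness of the interior of $Y$ in $X$.

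For the reverse step $\dim X\ge m+1\Rightarrow\mydim(X)\ge m+1$, I would construct a definable $Z\subseteq X$ that is closed in $X$, nowhere dense in $X$, and of dimension $d-1$, where $d=\dim X\ge m+1$; the inductive hypothesis then gives $\mydim(Z)\ge d-1\ge m$, and the defining clause of $\mydim$ yields $\mydim(X)\ge m+1$. To build $Z$ I would apply Theorem \ref{thm:decomposition} to the family $\{X\}$, obtaining a partition of $X$ into normal submanifolds $\{C_i\}$ satisfying the frontier condition. The union $G$ of the $d$-dimensional cells is open in $X$: at a point of a top cell $C_i$, the frontier condition together with the inequality $\dim\partial C_j<\dim C_j$ of Theorem \ref{thm:dim}(2) shows that no other cell can accumulate there, so a sufficiently small box meets $X$ only inside $C_i\subseteq G$. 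I would then take $Z$ to be the relative frontier $\mycl_X(G)\setminus G$ (closed and nowhere dense in $X$, being the frontier of an open subset) together with, inside each top cell, a relatively closed hyperplane slice of dimension $d-1$ obtained by transporting a slice through the homeomorphism furnished by Theorem \ref{thm:dim}(3); the frontier condition makes these pieces assemble into a set closed in $X$, nowhere dense because it meets each top cell in a slice of empty interior, and of dimension exactly $d-1$ by Proposition \ref{prop:dim_basic} and Theorem \ref{thm:dimension_basic}(1).

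The main obstacle is precisely this construction of $Z$, and within it the verification that the lower-dimensional pieces are genuinely closed in $X$. Because $\mathcal M$ is not assumed definably complete, a bounded slice of a cell need not be closed in $M^n$ — there is no compactness to invoke — so closedness cannot be obtained by the usual boundedness argument. This is exactly where the frontier condition of Theorem \ref{thm:decomposition} is indispensable, and where Lemma \ref{lem:supinf}, controlling the suprema and infima of discrete definable sets, is needed to keep the relevant endpoints inside $M$. Everything else reduces, via Corollary \ref{cor:*-dim}, to the dimension identities already proved, so once $Z$ is produced both inductive steps close and give $\dim X=\mydim(X)$.
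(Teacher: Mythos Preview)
Your overall strategy matches the paper's: both directions hinge on the same key lemma (the contrapositive of your ``$\dim Y<\dim X$ when $Y$ has empty interior in $X$'' is exactly the paper's auxiliary lemma), and both construct a closed nowhere-dense $(d-1)$-dimensional subset for the reverse inequality. Your proof of the lemma via Lemma~\ref{lem:pre1} is correct and slightly different from the paper's, which instead notes that $\mycl(X\setminus Y)\cap Y\subseteq\partial(X\setminus Y)$ has dimension $<d$ by Theorem~\ref{thm:dim}(2) and picks a point of $Y$ outside it.

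The gap is in your construction of $Z$. Theorem~\ref{thm:dim}(3) gives a homeomorphism $\varphi\colon B\to X$ from an \emph{open} box, and nothing guarantees that $\varphi(B)$ is closed in $X$ or coincides with a full cell $C_i$. If you transport a hyperplane slice through $\varphi$, the result is closed only in $\varphi(B)$; its $X$-closure may leak into $C_i\setminus\varphi(B)\subseteq G$, and those points are \emph{not} absorbed by $\mycl_X(G)\setminus G$. The frontier condition of Theorem~\ref{thm:decomposition} controls accumulation \emph{between} cells, not within a cell, so it does not rescue the argument; and Lemma~\ref{lem:supinf} plays no role here at all.

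The paper's fix is simpler and sidesteps both the frontier condition and the global patching. Work inside a single top-dimensional normal submanifold $X_1$: using only $\dim(\mycl(X\setminus X_1)\cap X_1)<d$, locate an open box $U$ where $U\cap X=U\cap X_1$ is the graph of a continuous $f$ on $\pi(U)$; then take a \emph{closed} box $B\subseteq\pi(U)$. The graph $W$ of $f|_B$ is closed in $M^n$ by continuity of $f$ and closedness of $B$, hence closed in $X$; intersecting $W$ with $\pi^{-1}(H)$ for a hyperplane $H$ through an interior point of $B$ gives the required $Y$ directly. No assembling of slices across cells is needed, and the endpoints of the closed box lie in $M$ by construction, so the absence of definable completeness causes no trouble.
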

\begin{proof}
	We have already proven the proposition in \cite[Proposition 4.3]{Fuji_pregeo} under the assumption that the structure is definably complete and locally o-minimal.
	We only use the dimension formulas in Proposition \ref{prop:dim_basic}, Theorem \ref{thm:dimension_basic} and Theorem \ref{thm:dim} in the proof of \cite[Proposition 4.3]{Fuji_pregeo}.
	Therefore, we can prove this proposition in the same manner.
	We omit the proof.
\end{proof}

\newpage
\appendix
\section{Complete proofs of assertions}
We skipped several proofs of the assertions in the main body of this paper because they are almost the same as those in the published papers \cite{Fuji_uniform, Fuji_tame, Fuji_pregeo} though the assumptions employed in the published papers are not identical to ours.
The author gives complete proofs of such assertions here in order to clarify that the proofs in the published papers work in our setting.
A large part of this section consists of copies from the published papers.
This appendix will be removed when this paper is submitted to a journal.
The author does not care about redundancy of this part, he may prove the same claim multiple times. 

\begin{proof}[Proof of Proposition \ref{prop:char_lvisceral}]
	The proof of `in particular' part is missing in the main body.
	We give a proof for them.
	
	We first show that local l-viscerality is preserved under elementary equivalence.
	Let $\phi(x,\overline{z})$ be an formula, where $x$ is a single variable and $\overline{z}$ is a tuple of variables.
	We can construct a formula $\Phi_{\phi}(\overline{z})$ saying that the definable subset $X_{\overline{z}}$ of $M$ defined by $\phi(x,\overline{z})$ is discrete and closed after removing its interior when the tuple of variables $\overline{z}$ is fixed as parameters.
	
	We show that a structure $\mathcal N=(N,<,\ldots)$ elementary equivalent to a locally l-visceral structure $\mathcal M=(M,<,\ldots)$ is locally l-visceral.
	We have $\mathcal M \models \forall z \ \Phi_{\phi}(\overline{z})$ for every formula of the form $\phi(x,\overline{z})$ by the first part of the proposition.
	Since $\mathcal N$ is elementary equivalent to $\mathcal M$, we have $\mathcal N \models \forall z \ \Phi_{\phi}(\overline{z})$.
	This implies that $\mathcal N$ is locally l-visceral by the first part of the proposition.
	
	We next show that the ultraproduct $\mathcal N:=\prod_{i \in I}\mathcal M_i/\mathcal F$ is locally l-visceral when every $\mathcal M_i$ is locally l-visceral.
	Here, $\mathcal F$ is an ultrafilter on the set $I$.
	Since $\mathcal M_i$ is locally l-visceral, we have $\mathcal M_i \models  \forall z \ \Phi_{\phi}(\overline{z})$ for every $i \in I$.
	We have $\mathcal N \models  \forall z \ \Phi_{\phi}(\overline{z})$ by the \L o\'s's theorem.
\end{proof}

\begin{proof}[Proof of Proposition \ref{prop:*-local_ultraproduct}]
	We show that the ultraproduct $\mathcal N:=\prod_{i \in I}\mathcal M_i/\mathcal F$ is $*$-locally weakly o-minimal when every $\mathcal M_i$ is so.
	Here, $\mathcal F$ is an ultrafilter on the set $I$.
	Let $\phi(x,\overline{z})$ be an formula, where $x$ is a single variable and $\overline{z}$ is a tuple of variables.
	
	We can construct a formula $\Pi_{\phi}(\overline{z})$ saying that, for any point $a$, there exists an open interval $(b_1,b_2)$ containing the point $a$ so that $X \cap (b_1,b_2)$ satisfies the condition in Remark \ref{rem:local_omin}.
	Let $\Phi, \Psi$ be the formulas constructed in the proof of Proposition \ref{prop:*-local_elementary}.
	
	We have $\mathcal M_i \models \forall \overline{z} \ \Pi_{\phi}(\overline{z})$ and $\mathcal M_i \models \forall \overline{y} (\Phi(\overline{y}) \to \Psi(\overline{y}))$ for every $i \in I$.
	By \L o\'s's theorem, we have $\mathcal N \models \forall \overline{z} \ \Pi_{\phi}(\overline{z})$ and $\mathcal N \models \forall \overline{y} (\Phi(\overline{y}) \to \Psi(\overline{y}))$.
	These imply that $\mathcal N$ is $*$-locally weakly o-minimal.
\end{proof}

\begin{proof}[Proof of Lemma \ref{lem:mono_key_lemma2}]
	Recall several definitions.
	For any $a \in I$, set $U_a$ as follows:
	\begin{align*}
		U_a&=\{x \in I\;|\; x>a \text{ and } f(y)>f(a) \text{ for all } a < y \leq x\} \cup \{a \} \cup\\
		& \{x \in M\;|\; x<a \text{ and }   f(y)>f(a) \text{ for all } x \leq y <a\}\text{.}
	\end{align*}
	The definable set $U_a$ is convex and it is a neighborhood of the point $a$.
	The notation $a \prec b$ denotes the relation $U_a \varsupsetneq U_b$.
	We prove the following assertions (i) through (ix) only using the assumption that $f$  has local minimums throughout $I$.
	\begin{enumerate}[(i)]
		\item $U_a \not= U_b$ if $a \not= b$;
		\begin{proof}
			We may assume that $f(a) \leq f(b)$ without loss of generality.
			We have $a \notin U_b$ and $a \in U_a$.
			This implies the inequality $U_a \not= U_b$.
		\end{proof}
		\item $a \in \myint(U_a)$;
		\begin{proof}
			It follows from the assumption that $f$ attains local minimum at the point $a$.
		\end{proof}
		\item $a \prec b \Leftrightarrow b \in U_a$;
		\begin{proof}
			We first recall that $a \prec b \Leftrightarrow U_a \supsetneq U_b$. 
			The implication $a \prec b  \Rightarrow b \in U_a$ is obvious because $b \in U_b$.
			We prove the opposite implication.
			We have $f(y)>f(a)$  for every element $y$ between $b$ and $a$ not equal to $a$ because $b \in U_a$.
			Take an arbitrary point $x \in U_b$.
			We have $f(y)> f(b) > f(a)$ for every element $y$ between $b$ and $x$ not equal to $b$ because $x \in U_b$.
			We have proven that $f(y)>f(a)$ for every element between $x$ and $a$ not equal to $a$.
			This means that $x \in U_a$.
		\end{proof}
		\item $U_a \cap U_b \not= \emptyset \Rightarrow a \prec b \text{ or } b \prec a$;
		\begin{proof}
			We assume that $f(a) \leq f(b)$ without loss of generality.
			Take $c \in U_a \cap U_b$.
			We have $f(y)> f(b) \geq f(a)$ for every element between $b$ and $c$ not equal to $b$ because $c \in U_b$.
			We have $f(y) > f(a)$ for every element between $a$ and $c$ not equal to $a$ because $c \in U_a$.
			In summary,  We have $f(y)> f(a)$ for every element between $b$ and $a$ not equal to $a$.
			It means that $b \in U_a$.
			We get $a \prec b$ by (iii).
		\end{proof}
		\item $b \prec a \text{ and } c \prec a \Rightarrow b \prec c \text{ or } c \prec b$;
		\begin{proof}
			By (iii), we have $a \in U_b \cap U_c$.
			The assertion follows from (iv).
		\end{proof}
		\item $a \prec b \prec c \text{ and } a < c \Rightarrow a \leq b $;
		\begin{proof}
			Assume for contradiction that $b<a$.
			Since $b,c \in U_b$ and $U_b$ is convex, we have $a \in U_b$.
			By (iii), we have $b \prec a$.
			It implies that $U_a \supsetneq U_b$.
			On the other hand, $U_b \supsetneq U_a$ because $a \prec b$.
			We get a contradiction.
		\end{proof}
		\item $a \prec b \prec c \text{ and } a > c \Rightarrow a \geq b$;
		\begin{proof}
			We can prove it in the same manner as (vi).
		\end{proof}
		\item $a \prec b \prec c \text{ and } a < b \Rightarrow a<c$;
		\begin{proof}
			Assume for contradiction that $c \leq a$.
			We can lead to a contradiction in the same manner as (vi).
		\end{proof}
		\item the definable set 
		\begin{equation*}
			C_a = \{x \in I\;|\; x \prec a\}
		\end{equation*}
		does not contain an open interval for any $a \in I$.
		\begin{proof}
			We lead to a contradiction assuming that $C_a$ contains an
			open interval $J$ for some $a \in I$. Let $b \in J$. Take $c, d \in U_b \cap J$ with $c < b < d$. Since $c, d \in C_a$ , we have $c \prec a$
			and $d \prec a$; hence, we get $c \prec d$ or $d \prec c$ by (v). We only consider the case in which $c \prec d$ because
			the proof is similar when $d \prec c$. The point $b$ is an element of $U_c$ because $U_c$ is convex, $c < b < d$ and $d \in U_c$.
			It is a contradiction to the conditions that $c \in U_b$ and the assertions (i) and (iii).
		\end{proof}
	\end{enumerate}
	Consider the definable set $C=\{(a,x) \in I^2\;|\; x \in C_a\}$.
	By shrinking $I$ if necessary, we may assume that $C_a$ is a finite set.

	We considered the following sets in the proof in the main body:
	\begin{align*}
		K &= \{ x \in I\;|\; y \not\prec x \text{ for all } y \in I\}\text{ and }\\
		\widetilde{a} &= \{ x \in I\;|\; a \prec x \text{ and } \nexists y \in I \ a \prec y \prec x\}\text{,}
	\end{align*}
	We proved the following equality:
	\begin{equation}\label{eq:5aaa}
		I \setminus K = \displaystyle\mathop{\dot{\bigcup}}_{a \in I}\widetilde{a}\text{.}
	\end{equation}
	We also proved that $\widetilde{a}$ and $K$ are locally finite.
	We skipped the proof for that $\widetilde{a}$ is finite by shrinking $I$ if necessary.
	Consider the definable set
	\begin{align*}
		X&=\{(a,x,\alpha,\beta) \in I^4\;|\;\alpha < \beta,\ a \prec x,\ \alpha < a < \beta,\ \alpha < x < \beta,\\
		&\quad \nexists y \ \alpha < y < \beta \text{ and }\ a \prec y \prec x\}\text{.}
	\end{align*}
	Taking a sufficiently small subinterval $I'$ of $I$, the definable set $X_{(a,\alpha,\beta)} \cap I'$ is a union of a finite set and an open set for any $a, \alpha', \beta' \in I'$, where $X_{(a,\alpha,\beta)}= \{x \in M\;|\; (a,x,\alpha,\beta) \in X\}$.
	Consequently, $X_{(a,\alpha,\beta)} \cap I'$ consists of finite points because $\widetilde{a}$ does not contain an open interval.
	Take $\alpha, \beta \in I'$ with $\alpha < \beta$ and set $I=(\alpha,\beta)$.
	Then, $\widetilde{a}$ is a finite set for any $a \in I$.

	By the equality (\ref{eq:5aaa}), the family $\{ \widetilde{a} \}_{a \in I}$ is infinite because $I \setminus K$ is infinite and the set $\widetilde{a}$ is finite for any $a \in I$.
	
	We define a definable relation $E$ on $I \setminus K$ by
	\begin{equation*}
		E(a,b) \Leftrightarrow \mathcal M \models \exists c\ ( a \in \widetilde{c} \wedge b \in \widetilde{c})\text{.}
	\end{equation*}
	It is an equivalence relation by the equality (\ref{eq:5aaa}).
	Set 
	\begin{equation*}
		Y=\{x \in I \setminus K\;|\;\  \mathcal M \models \forall y \in I \setminus K \ (E(x,y) \rightarrow x \leq y)\}\text{.}
	\end{equation*}
	We showed that $Y$ is an infinite set and reduced to the case in which $Y$ is a finite union of points and open convex sets.
	We can lead to a contradiction as follows:
	
	Let $Z$ be the largest convex subset of $Y$ with $t <Z$ for all $t \in Y \setminus Z$.
	In other word, $Z$ is the rightmost convex set in $Y$.
	Take $a \in Z$ and $b_1, b_2 \in U_a$ with $b_1 < a < b_2$.
	We have $a \prec b_1$ and $a \prec b_2$ by (iii).
	Since $C_{b_1}$ and $C_{b_2}$ are finite sets, there are $b_1'$ and $b_2'$ with $b_1' \in \widetilde{a}$, $b_2' \in \widetilde{a}$, $b_1' \preceq b_1$ and $b_2' \preceq b_2$.
	$E(b_1',b_2')$ holds true.
	We get $b_1' <  a < b_2'$ by the relations (vi) and (vii) and the definition of $\widetilde{a}$. 
	We have $b_2' \not\in Y$ by the definition of $Y$.
	Take an element $c \in U_{b_2'}$ with $b_2' < c$.
	There exists a $d \in \widetilde{c}$ with $d \in Y$ because the smallest element of $\widetilde{c}$ is contained in the set $Y$.
	We obtain $b_2' < d$ by the claim (viii) because $b_2' \prec c \prec d$ and $b_2'<c$.
	Since $a <b_2' < d$, $a \in Z$, $d \in Z$ and $Z$ is convex, we finally get $b_2' \in Z \subseteq Y$. 
	It is a contradiction.
\end{proof}

\begin{proof}[Proof of Theorem \ref{thm:mono}]
	In the course of the proof of Claim 1 in Theorem \ref{thm:mono}, we did not give a proof for that $(X_n)_z$ is dense $Y_z$ for $z \in B$.
	We give a proof for this part here.
	Recall that we constructed a definable set $Y$ so that the function $f_z$ is finite-to-one on the fiber $Y_z$ for any $z \in B$.

	Set $X_n=\{(x,z) \in Y\;|\; \exists x_1 < x, \exists x_2>x, f_z \text{ is injective on the interval }(x_1,x_2)\}$.
	We show that $(X_n)_z$ is dense in $Y_z$ for any $z \in B$.
	Fix arbitrary points $z \in B$ and $x' \in Y_z$. 
	There exists $x_2 \in Y_z$ such that the open interval $I_z=(x',x_2)$ is contained in $Y_z$.
	Consider the definable map $g_z:f_z(I_z) \rightarrow I_z$ given by $g_z(y)=\inf\{x \in I_z\;|\; f(x)=y\}$.
	We can take the infimum without assuming that $\mathcal M$ is definably complete because the set $f_z$ is finite-to-one on $Y_z$.
	The image of $g_z$ contains an open interval one of whose endpoints is $x'$.
	Otherwise, there exists an open interval $(x', x_2') \subseteq I_z$ with $(x',x_2') \cap g_z(f_z(I_z)) = \emptyset$.
	Take a point $u_1 \in (x', x_2')$.
	There exists a point $u_2 \in (x', x_2')$ with $u_2 < u_1$ and $f(u_1)=f(u_2)$ by the definition of $g_z$.
	We can take a point $u_3 \in (x', x_2')$ such that $u_3 < u_2$ and $f(u_2)=f(u_3)$ similarly.
	We can get infinite number of points in $I_z$ at which the value of $f_z$ is identical in this way. 
	Contradiction to the fact that $f_z$ is finite-to-one.
	Shrinking $I_z$ if necessary, $f_z$ is injective on the open interval $I_z$.
	We have shown that $(X_n)_z$ is dense in $Y_z$. 
\end{proof}

\begin{proof}[Proof of Proposition \ref{prop:continuity2}]
	We use the following technical definition in the proof.
	\begin{definition}
		Consider an expansion of a dense linear order without endpoints $\mathcal M=(M,<,\ldots)$.
		Let $A$ be a definable subset of $M^m$ and $f:A \rightarrow M$ be a definable function.
		Let $1 \leq i \leq m$.
		The function $f$ is \textit{$i$-constant} if, for any $a_1, \ldots, a_{i-1},a_{i+1},\ldots, a_n \in M$, the univariate function $f(a_1,\ldots, a_{i-1}, x, a_{i+1},\ldots, a_n)$ is constant.
		We define that the function is \textit{$i$-strictly increasing} and \textit{$i$-strictly decreasing} in the same way.
		The function is \textit{$i$-strictly monotone} if it is $i$-constant, $i$-strictly increasing or $i$-strictly decreasing.
		The function $f$ is \textit{$i$-continuous} if, for any $a_1, \ldots, a_{i-1},a_{i+1},\ldots, a_n \in M$, the univariate function $f(a_1,\ldots, a_{i-1}, x, a_{i+1},\ldots, a_n)$ is continuous.
	\end{definition}
	
	Let $f:B \to \overline{M}$ be a definable function defined on an open box $B$.
	We define an open interval $I_1$ and an open box $B_1$ so that $B=I_1 \times B_1$.
	Set 
	\begin{align*}
		X_+ &= \{(x,x') \in I_1 \times B_1\;|\; \text{the univariate function } f(\cdot, x') \text{ is }\\
		& \quad \text{strictly increasing and continuous on a neighborhood of }x\}\text{,}\\
		X_- &= \{(x,x') \in I_1 \times B_1\;|\; \text{the univariate function } f(\cdot, x') \text{ is }\\
		& \quad \text{strictly decreasing and continuous on a neighborhood of }x\}\text{,}\\
		X_c &= \{(x,x') \in I_1 \times B_1\;|\; \text{the univariate function } f(\cdot, x') \text{ is }\\
		& \quad \text{constant on a neighborhood of }x\}\text{ and }\\
		X_p &= B \setminus (X_+ \cup X_- \cup X_c) \text{.}
	\end{align*}
	The fibers $(X_p)_x$ are discrete for all $x \in B_1$ by Corollary \ref{cor:monotone-star}.
	In particular, $X_p$ has an empty interior.
	Recall that property (b) for $n-1$ holds by the proof in Remark \ref{rem:cccc}.
	At least one of $X_+$, $X_-$ and $X_c$ has a nonempty interior by property (b).
	Therefore, we may assume that $f$ is $1$-strictly monotone and $1$-continuous by considering an open box contained in one of them instead of $B$.
	Applying the same argument $(m-1)$-times, we may assume that $f$ is $i$-strictly monotone and $i$-continuous for all $1 \leq i \leq m$.
	The function $f$ is continuous on $B$ by \cite[Lemma 3.2.16]{vdD}.
\end{proof}

\begin{proof}[Proof of Proposition \ref{prop:dim_basic}(3)]
	Assume that $X$ and $Y$ are definable subsets of $M^m$ and $M^n$, respectively.
	Set $d=\dim(X)$, $e=\dim(Y)$ and $f=\dim(X \times Y)$.
	We first show that $d+e \leq f$.
	In fact, let $\pi:M^m \rightarrow M^d$ and $\rho:M^n \rightarrow M^e$ be coordinate projections such that both $\pi(X)$ and $\rho(Y)$ have nonempty interiors.
	The definable set $(\pi \times \rho)(X \times Y)$ has a nonempty interior.
	Therefore, we have $d+e \leq f$.
	We show the opposite inequality.
	Let $\Pi:M^{m+n} \rightarrow M^f$ be a coordinate projection with $\myint(\Pi(X \times Y)) \not=\emptyset$.
	There exist coordinate projections $\pi_1:M^m \rightarrow M^{f_1}$ and $\pi_2:M^n \rightarrow M^{f_2}$ with $\Pi=\pi_1 \times \pi_2$.
	In particular, we get $f=f_1+f_2$.
	Since $\Pi(X \times Y)$ has a nonempty interior, there exist open boxes $C \subseteq M^{f_1}$ and $D \subseteq M^{f_2}$ with $C \times D \subseteq \Pi(X \times Y)$.
	We get $C \subseteq \pi_1(X)$ and $D \subseteq \pi_2(Y)$.
	Hence, we have $d \geq f_1$ and $e \geq f_2$.
	We finally obtain $d+e \geq f_1+f_2 =f$.
\end{proof}

\begin{proof}[Proof of Lemma \ref{lem:key0}]
	Let $\mathcal M=(M,<,\ldots)$ be the structure in consideration.
	Let $X$ be a nonempty discrete definable subset of $M^n$.
	Let $\pi_k:M^n \rightarrow M$ be the coordinate projection onto the $k$-th coordinate for all $1 \leq k \leq n$.
	The images $\pi_k(X)$ are discrete by property (A).
	They are closed by Proposition \ref{prop:char_lvisceral}.
	Let $x$ be an accumulation point of $X$.
	We have $\pi_k(x) \in \pi_k(X)$ for all $1 \leq k \leq n$ because $\pi_k(x)$ are accumulation points of $\pi_k(X)$ and $\pi_k(X)$ are closed.
	We can take open intervals $I_k$ so that $\pi_k(X) \cap I_k =\{\pi_k(x)\}$ because $\pi_k(X)$ are discrete.
	It implies that $X \cap (I_1 \times \cdots \times I_n)$ consists of at most one point $x$ because $X \cap (I_1 \times \cdots \times I_n) \subseteq \prod_{k=1}^n \pi_k(X) \cap I_k = \{x\}$.
	It means that $x \in X$ because $x$ is an accumulation point of $X$.
\end{proof}

\begin{proof}[Proof of Lemma \ref{lem:aaa}]
	We first reduce to the case in which $f$ is the restriction of a coordinate projection.
	Let $X$ be a definable subset of $M^n$ and $\pi:M^{n+1} \rightarrow M$ be the coordinate projection onto the last coordinate.
	Consider the graph $\Gamma(f)$ of the definable map $f$.
	The image $\pi(\Gamma(f))=f(X)$ and all the fibers $\Gamma(f) \cap \pi^{-1}(x)$ are discrete by the assumption.
	If the graph $\Gamma(f)$ is discrete, the definable set $X$ is also discrete by property (A) because $X$ is the projection image of the discrete set  $\Gamma(f)$.
	We have reduced to the case in which $f$ is the restriction of the coordinate projection onto the last coordinate $\pi:M^{n+1} \rightarrow M$ to a definable subset $X$ of $M^{n+1}$.
	
	Take an arbitrary point $x \in X$.
	Since $\pi(X)$ is discrete by the assumption, we can take an open interval $I$ containing the point $\pi(x)$ such that $\pi(X) \cap I$ is a singleton.
	Since the fiber $\pi^{-1}(\pi(x)) \cap X$ is discrete, there exists an open box $B$ containing the point $x$ such that $X \cap (B \times \{\pi(x)\})$ is a singleton.
	The open box $B \times I$ contains the point $x$ and the intersection of $X$ with $B \times I$ is a singleton.
	We have demonstrated that $X$ is discrete.
\end{proof}

\begin{proof}[Proof of Proposition \ref{prop:zero}]
	Let $X$ be a definable subset of $M^n$.
	The definable set $X$ is discrete if and only if the projection image $\pi(X)$ has an empty interior for all the coordinate projections $\pi:M^n \rightarrow M$ by the property (a).
	Therefore, $X$ is discrete if and only if $\dim X=0$.
	A discrete definable set is always closed by Lemma \ref{lem:key0}.
\end{proof}

\begin{proof}[Proof of Lemma \ref{lem:pre0}]
Permuting the coordinates if necessary, we may assume that $\pi$ is the projection onto the first $d$ coordinates.
Set $$S=\{x \in \pi(X)\;|\; \text{the fiber }X \cap \pi^{-1}(x) \text{ is not discrete}\}\text{.}$$
We have $S=\{x \in \pi(X)\;|\; \dim(X \cap \pi^{-1}(x))>0\}$ by Proposition \ref{prop:zero}.
We want to show that $S$ has an empty interior.
Assume the contrary.
Let $\rho_j:M^n \rightarrow M$ be the coordinate projections onto the $j$-th coordinate for all $d<j \leq n$.
Set $$S_j=\{x \in \pi(X)\;|\; \rho_j(X \cap \pi^{-1}(x)) \text{ contains an open interval}\}\text{.}$$
We have $S=\bigcup_{d<j \leq n}S_j$ by the definition of dimension.
The definable set $S_j$ has a nonempty interior by Lemma \ref{lem:l-b} for some $d<j \leq n$.
Fix such $j$.
Let $\Pi:M^n \rightarrow M^{d+1}$ be the coordinate projection given by $\Pi(x)=(\pi(x),\rho_j(x))$.
The definable set $T=\{x \in M^d \;|\; \text{the fiber }(\Pi(X))_x \text{ contains an open interval}\}$ contains $S_j$ and it has a nonempty interior.
Therefore, the projection image $\Pi(X)$ has a nonempty interior by property (B).
It contradicts the assumption that $\dim(X)=d$.
We have shown that $S$ has an empty interior.
Since $\pi(X)$ has a nonempty interior, there exists a definable open subset $U$ of $\pi(X)$ with $U \cap S=\emptyset$ by Lemma \ref{lem:l-b}.
\end{proof}

\begin{proof}[Proof of Theorem \ref{thm:dimension_basic}(1)]
	The inequality $\dim(X \cup Y) \geq \max\{\dim(X),\dim(Y)\}$ is obvious by Proposition \ref{prop:dim_basic}(1).
	We show the opposite inequality.
	Set $d=\dim(X \cup Y)$.
	There exists a coordinate projection $\pi:M^n \rightarrow M^d$ such that $\pi(X \cup Y)$ has a nonempty interior by the definition of dimension.
	At least one of $\pi(X)$ and $\pi(Y)$ has a nonempty interior by Lemma \ref{lem:l-b} because $\pi(X \cup Y)=\pi(X) \cup \pi(Y)$.
	We may assume that $\pi(X)$ has a nonempty interior without loss of generality.
	We have $d \leq \dim(X)$ by the definition of dimension.
	We have demonstrated that $\dim(X \cup Y) \leq \max\{\dim(X),\dim(Y)\}$.
\end{proof}

\begin{proof}[Proof of Remark \ref{rem:cccc}]
We prove that property (a) holds for a locally l-visceral structure enjoying the univariate continuity property $\mathcal M=(M,<,\ldots)$.

Let $X$ be a discrete definable subset of $M^n$.
Let $\pi:M^n \rightarrow M^d$ be a coordinate projection.
We prove that $\pi(X)$ is discrete.
We can reduce to the case in which $d=1$ in the same manner as the proof of Proposition \ref{prop:suff_visceral}.

When $d=1$, there exists a definable map $\tau:\pi(X) \rightarrow X$ such that the composition $\pi \circ \tau$ is an identity map by the property (d).
If $\pi(X)$ is not discrete, it contains an open interval $I$ because of local o-minimality.
Shrinking the interval $I$ if necessary, the restriction of $\tau$ to $I$ is continuous by the univariate continuity property.
It means that $X$ contains the graph of a continuous map defined on an open interval.
It contradicts the assumption that $X$ is discrete.
\end{proof}

\begin{proof}[Proof of Theorem \ref{thm:dim}]
	We first prove assertion (1).
	Let $X$ be a definable subset of $M^m$.
	We lead to a contradiction assuming that $d=\dim X=\dim \mathcal D(f)$.
	By Lemma \ref{lem:pre0} and Lemma \ref{lem:pre1}, there exist a coordinate projection $\pi:M^m \rightarrow M^d$, definable open subsets $V \subseteq \pi(\mathcal D(f))$ and $W \subseteq M^m$ and a definable continuous function $g:V \rightarrow \mathcal D(f)$ such that $\pi(W)=V$, $X \cap W=g(V)$ and $\pi \circ g$ is the identity map on $V$.
	Shrinking $V$ and replacing $W$ with $W \cap \pi^{-1}(V)$ if necessary, we may assume that $f \circ g$ is continuous by the $*$-continuity property.
	Since $g$ is a definable homeomorphism onto its image, the function $f$ is continuous on $g(V)= X \cap W$.
	On the other hand, $f$ is discontinuous everywhere on $X \cap W$ because $X \cap W$ is open in $X$ and $X \cap W =g(V)$ is contained in $\mathcal D(f)$.
	Contradiction.
	
	We next prove assertion (2).
	Take distinct elements $c,d \in M$.
	Consider the definable function $f:\mycl(X) \rightarrow M$ given by
	\[
	f(x)= \left\{
	\begin{array}{ll}
		c & \text{if } x \in X \text{ and }\\
		d & \text{otherwise.} 
	\end{array}
	\right.
	\]
	It is obvious that $\mathcal D(f)$ contains $\partial X$.
	Assertion (2) follows from assertion (1) and Proposition \ref{prop:dim_basic}(1). 
	 
	Our next task is to prove assertion (3). 
	Let $d'$ be the maximum of nonnegative integers $e$ satisfying the condition given in the assertion.
	We first demonstrate $d' \leq d$.
	In fact, let $B$ be an open box contained in $M^{d'}$ and $\varphi:B \rightarrow X$ be a definable injective continuous map homeomorphic onto its image.
	We have $\dim \varphi(B)=\dim B=d'$ by Theorem \ref{thm:dimension_basic}(3).
	We get $d = \dim X \geq \dim(\varphi(B)) = d'$ by Proposition \ref{prop:dim_basic}(1).
	
	We next demonstrate $d \leq d'$.
	Applying Lemma \ref{lem:pre0} and Lemma \ref{lem:pre1} to the definable set $X$, we can get a coordinate projection $\pi:M^n \rightarrow M^d$, a definable open box $U$ in $\pi(X)$ and a definable continuous map $\tau:U \rightarrow X$ such that $\pi \circ \tau$ is the identity map on $U$.
	In particular, $\tau$ is a definable continuous injective map homeomorphic onto its image.
	Therefore, we have $d \leq d'$ by the definition of $d'$.  
	
	The final task is to prove assertion (4).
	Set $d=\dim(X)$.
	There exists an open box $U$ in $M^d$ and a definable continuous injective map $\varphi:U \rightarrow X$ homeomorphic onto its image by assertion (3).
	Take an arbitrary point $t \in U$ and set $x=\varphi(t)$.
	For any open box $B$ containing the point $x$, the inverse image $\varphi^{-1}(B)$ is a definable open set.
	Take a open box $V$ with $t \in V \subseteq \varphi^{-1}(B)$.
	The restriction $\varphi|_{V}: V \rightarrow X \cap B$ is a definable continuous injective map homeomorphic onto its image.
	Hence, we have $\dim(X \cap B) \geq d$ by Theorem \ref{thm:dim}(3)
	The opposite inequality follows from Proposition \ref{prop:dim_basic}(1).
\end{proof}

\begin{proof}[Proof of Theorem \ref{thm:decomposition}]
	We prove the theorem step by step.
	We first show the following claim:
	\medskip
	
	\textbf{Claim 1.} Let $X$ be a definable subset of $M^n$.
	There exists a family $\{C_i\}_{i=1}^N$ of mutually disjoint normal submanifolds with $X= \bigcup_{i=1}^N C_i$ and $N \leq 2^{n}$.
	\begin{proof}[Proof of Claim 1]
		We first define the full dimension of a definable subset $X$ of $M^n$.
		Set $d=\dim X$.
		The notation $\Pi_{n,d}$ denotes the set of all the coordinate projections of $M^n$ onto $M^d$.
		The set $\Pi_{n,d}$ is a finite set. 
		The \textit{full dimension} $\myfdim(X)$ is $(d,e)$ by definition if $d=\dim(X)$ and $e$ is the number of elements in $\Pi_{n,d}$ under which the projection image of $X$ has a nonempty interior.
		The pairs $(d,e)$ are ordered by the lexicographic order.
		
		We prove the the claim by induction on $\myfdim(X)$.
		When $\dim(X)=0$, $X$ is closed and discrete by Proposition \ref{prop:zero}.
		The definable set $X$ is obviously a normal submanifold in this case.
		
		We consider the case in which $\dim(X) >0$.
		Set $(d,e)=\myfdim(X)$.
		Take a coordinate projection $\pi:M^n \rightarrow M^d$ such that $\pi(X)$ has a nonempty interior.
		Set $G=\{x \in X\;|\; x \text{ is } (X,\pi) \text{-normal}\}$ and $B=X \setminus G$.
		It is obvious that any point $x \in G$ is $(G,\pi)$-normal.
		
		We demonstrate that $\pi(B)$ has an empty interior.
		Assume the contrary.
		There exists an open box $U$ such that the fibers $B_x=\pi^{-1}(x) \cap B$ are discrete for all $x \in U$ by Lemma \ref{lem:pre0}.
		We can take a definable map $\tau:U \rightarrow B$ with $\pi(\tau(x))=x$ for all $x \in U$ by Lemma \ref{lem:*-d}.
		The dimension of points at which the map $\tau$ is discontinuous is of dimension smaller than $d$ by Theorem \ref{thm:dim}(1).
		We may assume that the restriction of $\tau$ to $U$ is continuous shrinking $U$ if necessary.
		
		Set $Z=\partial (X \setminus \tau(U))$.
		We get $\dim Z = \dim \partial (X \setminus \tau(U)) < \dim (X \setminus \tau(U)) \leq \dim X=d$ by Proposition \ref{prop:dim_basic}(1) and Theorem \ref{thm:dim}(2).
		We have $\dim \mycl(Z) = \dim Z <d$ again by Theorem \ref{thm:dimension_basic}(1) and Theorem \ref{thm:dim}(2). 
		On the other hand, we have $d=\dim U =\dim \pi(\tau(U)) \leq \dim \tau(U) \leq \dim X =d$ by Proposition \ref{prop:dim_basic}(1) and Theorem \ref{thm:dim}(3). 
		We get $\dim(\tau(U))=d$.
		It means that $\tau(U) \not\subseteq \mycl(Z)$ by Proposition \ref{prop:dim_basic}(1).
		
		Take a point $p$ in $\tau(U) \setminus \mycl(Z)$.
		Take a sufficiently small open box $V$ containing the point $p$. 
		We have $X \cap V=\tau(U) \cap V$ by the definition of $Z$ and $p$.
		Since the restriction of $\tau$ to $U$ is continuous, there exists an open box $U'$ contained in $U \cap \tau^{-1}(V)$.
		Consider the open box $V'=V \cap \pi^{-1}(U')$.
		It is obvious that $X \cap V'=\tau(U) \cap V'$ is the graph of the restriction of $\tau$ to $U'$ by the definition.
		Any point $\tau(U) \cap V'$ is $(X,\pi)$-normal, but it contradicts to the definition of $B$ and the inclusion $\tau(U) \subseteq B$.
		We have shown that $\pi(B)$ has an empty interior.
		In particular, we get $\myfdim(B) < \myfdim(X)$.
		
		There exists a decomposition $B=C_1 \cup \ldots \cup C_k$ of $B$ satisfying the conditions in Claim 1 by the induction hypothesis.
		The decomposition $X=G \cup C_1 \cup \ldots \cup C_k$ is a desired decomposition of $X$.
		
		It is obvious that the number of normal submanifolds $N$ is not greater than $$\sum_{d=0}^n (\text{the cardinality of }\Pi_{n,d})=\sum_{d=0}^n \left(\begin{array}{c}n\\d\end{array}\right)=2^n\text{.}$$
	\end{proof}

	\textbf{Claim 2.} Let $\{X_i\}_{i=1}^m$ be a finite family of definable subsets of $M^n$.
	There exists a decomposition $\{C_i\}_{i=1}^N$ of $M^n$ into normal submanifolds partitioning $\{X_i\}_{i=1}^m$ with $N \leq 2^{m+n}$.
	\begin{proof}[Proof of Claim 2]
	Set $X_i^0=X_i$ and $X_i^1=M^n \setminus X_i$ for all $1 \leq i \leq m$.
	For any $\sigma \in \{0,1\}^m$, the notation $\sigma(i)$ denotes the $i$-th component of $\sigma$.
	Set $X_\sigma = \bigcap_{i=1}^m X_i^{\sigma(i)}$ for any $\sigma \in \{0,1\}^m$.
	The family $\{X_\sigma\}_{\sigma \in \{0,1\}^m}$ is mutually disjoint and satisfies the equality $M^n=\bigcup_{\sigma \in \{0,1\}^m} X_\sigma$.
	For all $\sigma \in \{0,1\}^m$, there exist families $\{C_{\sigma,j}\}_{j=1}^{N_\sigma}$ of mutually disjoint normal submanifolds with $X_\sigma= \bigcup_{j=1}^{N_\sigma} C_{\sigma,j}$ and $N_\sigma \leq 2^n$ by Claim 1.
	The family $\bigcup_{\sigma  \in \{0,1\}^m} \{C_{\sigma,j}\}_{j=1}^{N_\sigma}$ gives the decomposition we are looking for.
	\end{proof}

	We are now ready to prove the theorem.
	By reverse induction on $d$, we construct a decomposition $\{C_{\lambda}\}_{\lambda \in \Lambda_d}$ of $M^n$ into normal submanifolds partitioning $\{X_i\}_{i=1}^m$ such that the closures of all the normal submanifolds of dimension not smaller than $d$ are the unions of subfamilies of the decomposition. 
	
	When $d=n$, take a decomposition $\{D_\lambda\}_{\lambda \in \Lambda}$ of $M^n$ into normal submanifolds partitioning $\{X_i\}_{i=1}^m$ by Claim 2.
	Set $\Lambda_n'=\{\lambda \in \Lambda\;|\;\dim(D_{\lambda})=n\}$.
	Get a decomposition $\{E_\lambda\}_{\lambda \in \widetilde{\Lambda_n}}$ of $M^n$ into normal submanifolds partitioning the family $\{D_\lambda\}_{\lambda \in \Lambda} \cup \{\mycl({D_{\lambda}})\setminus D_{\lambda}\}_{\lambda \in \Lambda_n'}$.
	Consider the set 
	$$
	\widetilde{\Lambda_n}'=\{\lambda \in \widetilde{\Lambda_n} \;|\; E_\lambda \text{ is not contained in any }D_{\lambda'}\text{ with }\lambda' \in \Lambda_n'\}\text{.}
	$$
	We always have $\dim(E_\lambda)<n$ for all $\lambda \in \widetilde{\Lambda_n}'$ by Theorem \ref{thm:dim}(2).
	Hence, the family $\{D_\lambda\}_{\lambda \in \Lambda_n'} \cup \{E_\lambda\}_{\lambda \in \widetilde{\Lambda_n}'}$
	is trivially a decomposition of $M^n$ into normal submanifolds partitioning $\{X_i\}_{i=1}^m$ we are looking for.
	
	We next consider the case in which $d < n$.
	Let $\{D_{\lambda}\}_{\lambda \in \Lambda_{d+1}}$ be a decomposition of $M^n$ into normal submanifolds partitioning $\{X_i\}_{i=1}^m$ such that the closures of all the normal submanifolds of dimension not smaller than $d+1$ are the unions of subfamilies of the decomposition. 
	It exists by the induction hypothesis.
	Set $\Lambda_d'=\{\lambda \in \Lambda_{d+1}\;|\;\dim(D_{\lambda})=d\}$ and $\Lambda_d''=\{\lambda \in \Lambda_{d+1}\;|\;\dim(D_{\lambda}) \geq d\}$.
	Get a decomposition $\{E^d_\lambda\}_{\lambda \in \widetilde{\Lambda_d}}$ of $M^n$ into normal submanifolds partitioning the family $\{D_\lambda\}_{\lambda \in \Lambda_{d+1}} \cup \{\mycl({D_{\lambda}})\setminus D_{\lambda}\}_{\lambda \in \Lambda_d'}$.
	Set 
	$\widetilde{\Lambda_d}'=\{\lambda \in \widetilde{\Lambda_d} \;|\; E_\lambda \text{ is not contained in any }D_{\lambda'}\text{ with }\lambda' \in \Lambda_d''\}\text{.}
	$
	The family $\{D_\lambda\}_{\lambda \in \Lambda_d''} \cup \{E_\lambda\}_{\lambda \in \widetilde{\Lambda_d}'}$
	is a decomposition of $M^n$ into normal submanifolds partitioning $\{X_i\}_{i=1}^m$ we want to construct.
	
	The `furthermore' part of the theorem is obvious from the proof.
\end{proof}

\begin{proof}[Proof of Theorem \ref{thm:pregeometry}]
The conditions (i) and (iv) in Definition \ref{def:pregeometry} are obviously satisfied.

We show that that the condition (ii) in Definition \ref{def:pregeometry} is satisfied.
The inclusion $\mydcl(A) \subseteq \mydcl(\mydcl(A))$ is obvious by Definition \ref{def:pregeometry}(i).
We demonstrate the opposite inclusion.
Take $b \in \mydcl(\mydcl(A))$.
There exists a tuple $a=(a_1,\ldots, a_m)$ of elements in $\mydcl(A)$ and an $\mathcal L(A)$-formula $\phi(x,y_1,\ldots, y_m)$ such that $\mathcal M \models \phi(b,a_1,\ldots, a_m)$ and the definable set $\{x \in M\;|\; \mathcal M \models \phi(x,a_1,\ldots, a_m)\}$ is discrete and closed.
Since $a_i \in \mydcl(A)$, there exists an $\mathcal L(A)$-formula $\psi_i(x)$ such that $\mathcal M \models \psi_i(a_i)$ and the definable set $\{x \in M\;|\; \mathcal M \models \psi_i(x)\}$ is discrete and closed for any $1 \leq i \leq m$.
Consider the $\mathcal L(A)$-formula $\eta(y_1,\ldots, y_m)$ representing that the set $$E_{y_1,\ldots, y_m}:=\{x \in M\;|\; \mathcal M \models \phi(x,y_1,\ldots, y_m)\}$$
is neither empty nor contains an open interval.
By Proposition \ref{prop:zero}, we have $\mathcal M \models \eta(y_1,\ldots, y_m)$ if and only if the definable set $E_{y_1,\ldots, y_m}$ is nonempty, discrete and closed.
Consider the definable sets 
\begin{align*}
	&F=\{(y_1,\ldots, y_m) \in M^m\;|\; \mathcal M \models \bigwedge_{i=1}^m \psi_i(y_i) \wedge \eta(y_1,\ldots, y_m)\} \text{ and }\\
	&G =\{(x,y_1,\ldots,y_m) \in M^{m+1}\;|\; \mathcal M \models \phi(x,y_1,\ldots, y_m)\ \text{ and } (y_1,\ldots, y_m) \in F\}.
\end{align*}
By the assumption, the set $F$ is discrete and closed.
The set $F$ is the image of $G$ under the coordinate projection forgetting the first coordinate and its fibers are discrete and closed by the definition of the formula $\eta$.
By Proposition \ref{prop:zero} and Theorem \ref{thm:dimension_basic}(2), $G$ is also discrete and closed.
The projection image $H$ of $G$ defined by
$$H=\{x \in M\;|\; \mathcal M \models \exists y_1, \ldots, y_m\ (x,y_1,\ldots, y_m) \in G\}$$
is also discrete and closed by Proposition \ref{prop:zero} and Theorem \ref{thm:dimension_basic}(3).
The definable set $H$ is defined by an $\mathcal L(A)$-formula by its construction, and we have $b \in H$.
This implies that $b \in \mydcl(A)$.
We have demonstrated that the condition (ii) is satisfied.

The remaining task is to demonstrate the exchange property (iii).
Fix elements $a,b \in M$ and a subset $A \subseteq M$ with $a \in \mydcl(A \cup \{b\}) \setminus \mydcl(A)$.
We demonstrate that $b \in \mydcl(A \cup \{a\})$.
Take an $\mathcal L(A)$-formula $\varphi(x,y)$ such that $D_b$ is discrete and closed, and it contains the point $a$.
Here, the notation $D_y$ denotes the set $\{x \in M\;|\; \mathcal M \models \varphi(x,y)\}$ for any $y \in M$.
Consider the definable sets
\begin{align*}
	& S= \{y \in M\;|\; \mathcal M \models \exists x\  \varphi(x,y)\}\text{ and }\\
	& T=\{y \in S\;|\; \text{ the set } D_y \text{ is discrete and closed}\}.
\end{align*}
We get $b \in T$ by the assumption.

We first consider the case in which there exist no open intervals contained in $T$ and containing the point $b$.
By Proposition \ref{prop:local_ominmal} and Remark \ref{rem:local_omin}, the point $b$ is either an isolated point of $T$ or a point in the boundary of $T$.
Consider the set of isolated points of $T$ and the boundary in $T$.
It is obviously defined by an $\mathcal L(A)$-formula and contains the point $b$.
It does not contain an open interval by its definition.
Therefore, it is discrete and closed by Proposition \ref{prop:zero}.
It implies that $b \in \mydcl(A) \subseteq \mydcl(A \cup \{a\})$.

We next treat the case in which there exists an open interval $I$ contained in $T$ and containing the point $b$.
We may assume that $S$ is open, $S=T$ and the sets $D_y$ are discrete and closed for all $y \in S$ by considering the $\mathcal L(A)$-formula $\varphi(x,y) \wedge (y \in \myint(T))$ instead of $\varphi(x,y)$.
Let $\pi_i:M^2 \rightarrow M$ be the coordinate projections onto the $i$-th coordinates for $i=1,2$.
Set 
\begin{align*}
	&\Gamma = \{(y,x) \in M^2\;|\; \mathcal M \models \varphi(x,y)\},\\
	& U=\{(y,x) \in \Gamma\;|\; \text{ there exists an open box }B \subseteq M^2 \text{ such that } (y,x) \in B \text{ and }\\
	&\qquad B \cap \Gamma \text{ is the graph of a monotone definable continuous function}\\
	&\qquad\text{defined on }\pi_1(B)\} \text{ and }\\
	&V = \Gamma \setminus U.
\end{align*}
We have $\dim \Gamma =1$ by Proposition \ref{prop:zero}, Theorem \ref{thm:dimension_basic}(2) and the assumption that $S=T$.
Note that the fiber $\Gamma_y=\{x \in M\;|\; (y,x) \in \Gamma\}$ is discrete for any $y \in \pi_1(\Gamma)$ by the above assumption.
We prove that $\pi_1(V)$ is discrete and closed.
By Proposition \ref{prop:zero} and Theorem \ref{thm:dimension_basic}(3), we have only to demonstrate that $\dim V \leq 0$.
We also consider the definable sets
\begin{align*}
	& U'=\{(y,x) \in \Gamma\;|\; \text{ there exists an open box }B \subseteq M^2 \text{ such that } (y,x) \in B \text{ and }\\
	&\qquad B \cap \Gamma \text{ is the graph of a (not necessarily monotone) definable}\\
	&\qquad\text{continuous function defined on }\pi_1(B)\} \text{,}\\
	&V' = \Gamma \setminus U' \text{ and }\\
	& W= V \setminus V'.
\end{align*}
It is obvious that $V=V' \cup W$.
The inequality $\dim \pi_1(V) \leq 0$ is equivalent to the condition that $\dim \pi_1(V') \leq 0$ and $\dim \pi_1(W) \leq 0$ by Theorem \ref{thm:dimension_basic}(1).

We demonstrate that $\pi_1(V')$ has an empty interior.
Assume the contrary.
There exists an open interval $B_1$ such that the fibers $V'_x=\pi_1^{-1}(x) \cap V'$ are discrete for all $x \in B_1$ by Lemma \ref{lem:pre0}.
We can take a definable map $\tau:B_1 \rightarrow V'$ with $\pi_1(\tau(x))=x$ for all $x \in B_1$ by Lemma \ref{lem:*-d}.
The dimension of points at which the map $\tau$ is discontinuous is of dimension smaller than one by Theorem \ref{thm:dim}(1).
We may assume that the restriction of $\tau$ to $B_1$ is continuous shrinking $B_1$ if necessary.

Set $Z=\partial (\Gamma \setminus \tau(B_1))$.
We get $\dim Z = \dim \partial (\Gamma \setminus \tau(B_1)) < \dim (\Gamma \setminus \tau(B_1)) \leq \dim \Gamma=1$ by Proposition \ref{prop:dim_basic}(1) and Theorem \ref{thm:dim}(2).
This implies that $Z$ is discrete and closed by Proposition \ref{prop:zero}.
On the other hand, we have $1=\dim B_1 =\dim \pi_1(\tau(B_1)) \leq \dim \tau(B_1) \leq \dim \Gamma =1$ by Proposition \ref{prop:dim_basic}(1) and Theorem \ref{thm:dim}(3). 
We get $\dim(\tau(B_1))=1$.
It means that $\tau(B_1) \not\subseteq Z$ by Proposition \ref{prop:dim_basic}(1).

Take a point $p$ in $\tau(B_1) \setminus Z$.
Take a sufficiently small open box $B_2$ containing the point $p$. 
We have $\Gamma \cap B_2=\tau(B_1) \cap B_2$.
Since the restriction of $\tau$ to $B_1$ is continuous, there exists an open interval $B_3$ contained in $B_1 \cap \tau^{-1}(B_2)$.
Consider the open box $B_4=B_2 \cap \pi^{-1}(B_3)$.
It is obvious that $X \cap B_4=\tau(B_3) \cap B_4$ is the graph of the restriction of $\tau$ to $B_3$ by the definition.
Any point $\tau(B_3) \cap B_4$ is $(\Gamma,\pi_1)$-normal, but it contradicts to the definition of $V'$ and the inclusion $\tau(B_3) \subseteq V'$.
We have shown that $\pi_1(V')$ has an empty interior.
This means the inequality $\dim \pi_1(V')\leq 0$.

The remaining task is to demonstrate the inequality $\dim \pi_1(W) \leq 0$.
Assume that $\dim \pi_1(W)=1$ for contradiction.
A nonempty interval $J$ is contained in $\pi_1(W)$.
Note that the fiber $W_y=\{x \in M\;|\; (y,x) \in W\}$ for any $y \in J$ is discrete because $W$ is a subset of $\Gamma$ and the fiber of $\Gamma$ is also discrete.
We can find a definable map $f: J \rightarrow W$ such that the composition $\pi_1 \circ f$ is the identity map on $J$ by Lemma \ref{lem:*-d}.
By Corollary \ref{cor:monotone-star}, we may assume that $g:=\pi_2 \circ f:J \rightarrow M$ is monotone and continuous by shrinking the interval $J$ if necessary.
We prove that one of the following conditions is satisfied by shrinking $J$ if necessary.
\begin{itemize}
	\item For any $s \in J$, there are no $t \in M$ with $(s,t) \in \Gamma$ and $t<g(s)$;
	\item There exists a definable continuous map $g_1:J \rightarrow M$ such that, for any $s \in J$, $(s,g_1(s)) \in \Gamma$, $g_1(s)<g(s)$ and there are no $t \in M$ with $(s,t) \in \Gamma$ and $g_1(s)<t<g(s)$.
\end{itemize}
Set $J'=\{s \in J\;|\; \neg \exists t,\ ((s,t) \in \Gamma) \wedge (t<g(s))\}$.
If $J'$ contains an open interval, the first condition is satisfied by replacing $J$ with the open interval contained in $J'$.
Otherwise, $J'$ is discrete and closed by Proposition \ref{prop:zero}.
We may assume that $J'=\emptyset$ by shrinking $J$ if necessary.
Consider the definable function $g_1:J \rightarrow M$ given by $g_1(s)=\sup\{t \in M\;|\; (s,t) \in \Gamma \text{ and } t<g(s)\}$.
This function is well-defined and we have $(s,g_1(s)) \in \Gamma$ by Lemma \ref{lem:supinf}.
By the univariate $*$-continuity property, we may assume that $g_1$ is continuous by shrinking $J$ once again.
The function $g_1$ obviously satisfies the second condition.

In the same manner, we can demonstrate that one of  the following conditions is satisfied by shrinking $J$ if necessary.
We omit the proof.
\begin{itemize}
	\item For any $s \in J$, there are no $t \in M$ with $(s,t) \in \Gamma$ and $t>g(s)$;
	\item There exists a definable continuous map $g_2:J \rightarrow M$ such that, for any $s \in J$, $(s,g_2(s)) \in \Gamma$, $g_2(s)>g(s)$ and there are no $t \in M$ with $(s,t) \in \Gamma$ and $g(s)<t<g_2(s)$.
\end{itemize}
Under this circumstance, the graph of $g$ is contained in both $U$ and $W$, which is a contradiction to the fact that $U \cap W=\emptyset$. 
We have demonstrated that $\dim \pi_1(V) \leq 0$.

When $b \in \pi_1(V)$, we have $b \in \mydcl(A) \subseteq \mydcl(A \cup \{a\})$.
We consider the other case, that is, the case in which $b \not\in \pi_1(V)$.
%By modifying the formula $\varphi$ appropriately, we may assume that $\Gamma=U$ without loss of generality.
%revision
By replacing $\varphi(x,y)$ with the $\mathcal L(A)$-formula $\varphi(x,y) \wedge ((y,x) \notin V)$ if necessary, we may assume that $\Gamma=U$ without loss of generality.
Consider the set 
\begin{align*}
	& X=\{(y,x) \in \Gamma\;|\; \text{ there exists an open box }B \subseteq M^2 \text{ such that } (y,x) \in B \text{ and }\\
	&\qquad B \cap \Gamma \text{ is the graph of a constant function defined on }\pi_1(B)\}.
\end{align*}
It is defined by an $\mathcal L(A)$-formula.
If $\pi_2(X)$ is not discrete, we have $\dim \pi_2(X)=1$ by Proposition \ref{prop:zero}.
Therefore, we have $\dim \Gamma =2$ by Theorem \ref{thm:dimension_basic}(2).
On the other hand, we have $\dim \Gamma \leq 1$ by the same theorem because $\dim \pi_1(\Gamma) \leq 1$ and $\dim (\Gamma \cap \pi_1^{-1}(y))=0$ for all $y \in \pi_1(\Gamma)$.
A contradiction.
It implies that $\pi_2(X)$ is discrete and closed.

If the point $a$ is contained in $\pi_2(X)$, we have $a \in \mydcl(A)$, which is a contradiction to the assumption. 
We obtain $a \not\in \pi_2(X)$.
Replacing $\Gamma$ with $\Gamma \setminus X$, we may assume that, for any $(y,x) \in \Gamma$, there exists an open box $B$ containing the point $(y,x)$ such that $B \cap \Gamma$ is the graph of a strictly monotone continuous definable function defined on $\pi_1(B)$.
The set $\pi_1(\Gamma \cap \pi_2^{-1}(a))$ is defined by an $\mathcal L(A \cup \{a\})$-formula. 
By the above assumption, it does not contain an open interval.
It is discrete and closed by Proposition \ref{prop:zero}.
%The projection image $\pi_1(\Gamma \cap \pi_2^{-1}(a))$ is also discrete and closed by Proposition \ref{prop:prelim_dim}(1),(3).
On the other hand, we have $(b,a) \in \Gamma$.
It implies that $b \in \pi_1(\Gamma \cap \pi_2^{-1}(a))$.
We finally get $b \in \mydcl(A \cup \{a\})$.
\end{proof}

\begin{proof}[Proof of Theorem \ref{thm:rank}]
	We prepare two lemmas for our proof of the theorem.
	\begin{lemma}\label{lem:infetesimal}
		Let $\mathcal M=(M,<,\ldots)$ be a model of $T$.
		Let $(a_1,\ldots, a_n) \in M^n$.
		There exist a sequence of elementary extensions $$\mathcal M_0=\mathcal M \preceq \mathcal M_1 \preceq \cdots \preceq \mathcal M_n$$ and elements $b_i \in M_i$ for all $1 \leq i \leq n$ such that $b_i>a_i$ and the inequalities $t > b_i$ hold for all $t \in M_{i-1}$ with $t>a_i$.
		Here, $M_i$ denotes the universe of $\mathcal M_i$ for all $1 \leq i \leq n$.
	\end{lemma}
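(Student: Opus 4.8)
The plan is to build the extensions one at a time by a routine compactness argument, realizing at each stage a suitable ``infinitesimal from above'' type. First I would set up an induction on $i$, constructing $\mathcal M_i$ and $b_i$ out of $\mathcal M_{i-1}$, starting from $\mathcal M_0 = \mathcal M$. Since each elementary extension of a model of $T$ is again a model of $T$, every $\mathcal M_{i-1}$ remains an expansion of a dense linear order without endpoints.

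For the inductive step I would fix $\mathcal M_{i-1}$ and note that $a_i \in M \subseteq M_{i-1}$. The plan is to consider the partial type over $M_{i-1}$ in a single variable $x$,
\[
p_i(x) = \{\, a_i < x \,\} \cup \{\, x < t : t \in M_{i-1},\ a_i < t \,\}.
\]
An element realizing $p_i$ is exactly one lying strictly above $a_i$ yet strictly below every element of $M_{i-1}$ that exceeds $a_i$, which is precisely the $b_i$ demanded by the lemma.

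The key step is checking that $p_i$ is finitely satisfiable in $M_{i-1}$. A finite subset of $p_i$ consists of $a_i < x$ together with finitely many conditions $x < t_1, \ldots, x < t_k$, where each $t_j \in M_{i-1}$ satisfies $a_i < t_j$. Putting $t = \min\{t_1, \ldots, t_k\}$ we have $a_i < t$, and by density of the order on $M_{i-1}$ there is some $c \in M_{i-1}$ with $a_i < c < t$; this $c$ satisfies the chosen finite fragment. Hence $p_i$ is finitely satisfiable, so by the standard fact that a finitely satisfiable type over a model is realized in an elementary extension, there exist $\mathcal M_i \succeq \mathcal M_{i-1}$ and $b_i \in M_i$ realizing $p_i$. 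By construction $b_i > a_i$, and $t > b_i$ for every $t \in M_{i-1}$ with $t > a_i$.

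Finally, transitivity of the elementary-extension relation yields $\mathcal M_0 \preceq \mathcal M_1 \preceq \cdots \preceq \mathcal M_n$, which completes the induction. I do not expect a genuine obstacle here: the only substantive point is the finite satisfiability of $p_i$, and that rests solely on density of the order (a first-order property inherited by every $\mathcal M_{i-1}$). Neither $*$-local weak o-minimality nor the univariate $*$-continuity property is used in this lemma; they will enter only in the subsequent application within the proof of Theorem \ref{thm:rank}.
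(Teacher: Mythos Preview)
Your proof is correct and follows the same overall shape as the paper's argument: reduce to the single-step case by induction and realize an ``infinitesimal above $a_i$'' type in an elementary extension. The one noteworthy difference is in how consistency of the type is obtained. The paper works with the type $a_i^+$ consisting of all formulas that hold on some right neighborhood of $a_i$, and invokes local o-minimality (via Proposition~\ref{prop:local_ominmal}) to conclude that $a_i^+$ is a \emph{complete} type, hence realized in an elementary extension. You instead write down the explicit partial type $p_i(x)=\{a_i<x\}\cup\{x<t:t\in M_{i-1},\ t>a_i\}$ and check finite satisfiability directly from density of the order. Your route is strictly more elementary---it uses no hypothesis on $T$ beyond the order being dense---and your closing remark that local o-minimality is not actually needed for this lemma is well taken. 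The paper's phrasing via completeness of $a_i^+$ is natural in context (models of TC are defined precisely by this property), but for the bare existence statement your compactness argument suffices.
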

	\begin{proof}
		%The proof is easy and left to readers. 
		% revision
%		We can easily reduce to the case in which $n=1$ by induction on $n$.
%		We can also easily construct $M_1$ and $b_1$ using the fact that the set of $\mathcal L(M_0)$-formulas $\Phi(x)$ satisfying $\mathcal M \models \exists s\ s>a_1 \wedge (\forall t\  a_1<t<s \rightarrow \Phi(t))$ is a complete $1$-type.
				We can easily reduce to the case in which $n=1$ by induction on $n$.
				Therefore, we only consider the case in which $n=1$.
				Consider the set $a_1^+$ of $\mathcal L(M_0)$-formulas $\Phi(x)$ with a single free variable $x$ satisfying $$\mathcal M \models \exists s\ s>a_1 \wedge (\forall t\  a_1<t<s \rightarrow \Phi(t)).$$
				Since $\mathcal M$ is locally o-minimal by Proposition \ref{prop:local_ominmal}, it is a complete $1$-type.
				There exist an elementary extension $\mathcal M_1=(M_1,\ldots)$ and an element $b_1 \in M_1$ realizing the $1$-type $a_1^+$.
				They satisfy the requirements of the lemma.
	\end{proof}
	
	\begin{lemma}\label{lem:infetesimal2}
		Let $\mathcal M=(M,<,\ldots)$ be a model of $T$ and $\mathcal N=(N,<,\ldots)$ be its elementary extension.
		Let $a \in M$ and $b \in N$ be points such that $a<b$ and, $t>b$ for all $t \in M$ with $t>a$.
		Then we have $b \not\in \mydcl_{\mathcal N}(M)$.
	\end{lemma}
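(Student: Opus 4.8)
The plan is to argue by contradiction, exploiting that $b$ realizes the cut immediately to the right of $a$. Suppose $b \in \mydcl_{\mathcal N}(M)$. Then there is an $\mathcal L(M)$-formula $\phi(t)$, with a tuple of parameters $\overline{m}$ from $M$, such that the set $D := \{t \in N \;|\; \mathcal N \models \phi(t)\}$ is discrete and closed and contains $b$. In particular $D$ has empty interior, and since ``having empty interior'' is expressed by the first-order formula $\neg \exists u\, \exists v\,(u < v \wedge \forall t\,(u < t < v \to \phi(t)))$, elementarity of the extension $\mathcal M \preceq \mathcal N$ (all parameters lying in $M$) yields that $\phi(\mathcal M)$ also has empty interior.

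The second step is to locate the least element of $D$ lying above $a$. Since $b \in D$ and $b > a$, we have $\mathcal N \models \exists t\,(\phi(t) \wedge t > a)$, which descends to $\mathcal M$; thus the definable set $D_M := \{t \in M \;|\; \mathcal M \models \phi(t) \text{ and } t > a\}$ is nonempty and, being contained in $\phi(\mathcal M)$, has empty interior. Here is where the standing hypotheses enter: as a model of $T$, the structure $\mathcal M$ is $*$-locally weakly o-minimal, hence $*$-locally l-visceral (the weakly o-minimal property on intervals trivially implies the l-visceral one), so Lemma \ref{lem:supinf} applies to $D_M$. Since $D_M$ is nonempty and bounded below by $a$, its infimum is not $\pm\infty$, so it is a genuine minimum $\mu_0 \in M$ with $\mu_0 > a$ and $\mu_0 \in D_M$.

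The final step transfers this minimum back up and collides it with the cut condition on $b$. The statement ``$\mu_0$ is the least element of $\{t \;|\; \phi(t) \wedge t > a\}$'' is a first-order property of $\mu_0, \overline{m}, a$, all lying in $M$; by $\mathcal M \preceq \mathcal N$ it holds in $\mathcal N$ as well, so $\mu_0$ is the least element of $\{t \in N \;|\; \mathcal N \models \phi(t),\ t > a\}$. As $b$ belongs to this set, $\mu_0 \leq b$. On the other hand $\mu_0 \in M$ and $\mu_0 > a$, so the hypothesis of the lemma forces $\mu_0 > b$, a contradiction. Hence $b \notin \mydcl_{\mathcal N}(M)$.

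I expect the main obstacle to be the middle step: one must guarantee that the infimum of $D_M$ above $a$ is genuinely attained inside $M$, rather than escaping into a definable gap of $\overline{M}$, and this is precisely the content of Lemma \ref{lem:supinf}, which is available only because the empty interior of $D_M$ has first been secured by elementary transfer. Once the minimum is known to live in $M$ and to be preserved under the elementary extension, the cut property of $b$ closes the argument at once; the rest of the reasoning is routine transfer between $\mathcal M$ and $\mathcal N$.
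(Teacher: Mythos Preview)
Your proof is correct and follows the same overall strategy as the paper's: assume $b\in\mydcl_{\mathcal N}(M)$, pull the defining discrete closed set back to $\mathcal M$ by elementarity, and derive a contradiction with the cut condition on $b$. The only real difference is tactical. The paper simply uses local o-minimality (equivalently, that $X$ is discrete and closed in $\mathcal M$) to find some $c\in M$ with $c>a$ and $(a,c)\cap X=\emptyset$; since $c>a$ forces $c>b$, this puts $b$ in an interval that, by transfer, misses $X^{\mathcal N}$. You instead locate the minimum $\mu_0$ of $X\cap(a,\infty)$ in $\mathcal M$ via Lemma~\ref{lem:supinf} and transfer the minimality statement. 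Both routes work, but the paper's is a touch shorter: it avoids the detour through nonemptiness of $D_M$ and the appeal to Lemma~\ref{lem:supinf}, since one does not actually need the infimum to be attained---any $c>a$ with $(a,c)\cap X=\emptyset$ suffices.
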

	\begin{proof}
		Assume that $b \in \mydcl_{\mathcal N}(M)$ for contradiction.
		There exists a definable closed discrete subset $X$ of $M$ such that $b \in X^{\mathcal N}$.
		Here, $X^{\mathcal N}$ denotes the definable subset of $N$ defined by the same formula as $X$.
		Since $\mathcal M$ is locally o-minimal by Proposition \ref{prop:local_ominmal}, there exists an element $c \in M$ such that $c>a$ and the open interval $(a,c)$ does not intersect with $X$.
		In particular, for any $x \in (a,c)^{\mathcal N}$, we have $x \not\in X^{\mathcal N}$.
		It contradicts the condition that $b \in X^{\mathcal N}$ because the inequalities $a<b<c$ hold by the assumption.
	\end{proof}
	
	We now return to the proof of the theorem.
	
		Let $\mathbb M$ be a monster model of $T$.
	We use the same notation for its universe.
	We first reduce the theorem to a simple case.
	There exists a family $\{C_i\}_{i=1}^N$ of mutually disjoint normal submanifolds which are definable over $A$ such that $X= \bigcup_{i=1}^N C_i$ by Theorem \ref{thm:decomposition} and its proof.
	We obviously have $\myrk(X/A)=\max\{\myrk(C_i/A)\;|\; 1 \leq i \leq N\}$ by the definition of $\myrk$.
	On the other hand, we also have $\dim X = \max \{\dim C_i\;|\; 1 \leq i \leq N\}$ by Theorem \ref{thm:dimension_basic}(1).
	Hence, we may assume that $X$ is $\pi$-normal manifold without loss of generality, where $d=\dim X$ and $\pi:M^n \rightarrow M^d$ is a coordinate projection.
	Permuting the coordinates if necessary, we may assume that $\pi$ is the projection onto the first $d$ coordinates.
	
	We show that $\myrk(X/A) \leq \dim X$.
	We have nothing to prove when $d=\dim X=n$.
	We consider the other case.
	Take an arbitrary point $(a_1,\ldots, a_n) \in X^{\mathbb M}$.
	We have only to demonstrate that $a_k \in  \mydcl(\{a_1, \ldots, a_d\} \cup A)$ for any $d<k \leq n$.
	Fix $d<k \leq n$.
	For any $t \in \mathbb M^d$, the fiber $X_t^{\mathbb M} =\{ y \in \mathbb M^{n-d}\;|\; (t,y) \in X^{\mathbb M}\}$ is discrete and closed because $X$ is a $\pi$-normal submanifold.
	Let $\pi_1:M^n \rightarrow M^{d+1}$ be the coordinate projection given by $\pi_1(x_1,\ldots,x_n)=(x_1,\ldots, x_d,x_k)$.
	The projection image of a discrete closed definable set is again discrete and closed by Proposition \ref{prop:zero} and Theorem \ref{thm:dimension_basic}(3).
	Therefore, the fiber $\pi_1(X^{\mathbb M})_{(a_1,\ldots, a_d)}=\{y \in \mathbb M\;|\; (a_1,\ldots, a_d,y) \in \pi_1(X^{\mathbb M})\}$ is discrete and closed.
	The point $a_k$ is obviously contained in $\pi_1(X^{\mathbb M})_{(a_1,\ldots, a_d)}$.
	They imply that $a_k \in  \mydcl(\{a_1, \ldots, a_d\} \cup A)$.
	We have obtained the inequality $\myrk(X/A) \leq d$.
	
	We next prove the opposite inequality.
	Take $a=(a_1,\ldots, a_n) \in X$.
	The intersection $V \cap X$ is the graph of a definable continuous map $f=(f_1,\ldots,f_{n-d}):\pi(V) \rightarrow M^{n-d}$ for some open box $V$ containing the point $a$ because $X$ is a $\pi$-normal manifold.
	Apply Lemma \ref{lem:infetesimal} to the sequence $(a_1,\ldots, a_d)$.
	We get elementary extensions $\mathcal M_0=\mathcal M \preceq \mathcal M_1 \preceq \cdots \preceq \mathcal M_d$ and elements $b_1, \ldots, b_d$ satisfying the conditions in Lemma \ref{lem:infetesimal}.
	We may assume that $\mathcal M_i$ are elementary substructures of the monster model $\mathbb M$ for all $1 \leq i \leq d$.
	We also have $b_d \not\in \mydcl(M_{d-1})$ by Lemma \ref{lem:infetesimal2}.
	It implies that the set $\{b_1,\ldots, b_d\}$ is $\mydcl$-independent over $M$.
	It immediately follows from the definition of $\mydcl$-independence that $\{b_1,\ldots, b_d\}$ is $\mydcl$-independent over $A$.
	We have $(b_1,\ldots, b_d) \in \pi(X)$ because $\pi(X)$ is open.
	Set $b_{j+d}=f_j(b_1,\ldots, b_d)$ for all $1 \leq j \leq n-d$.
	Since $\{b_1,\ldots, b_d\}$ is $\mydcl$-independent over $A$, we have $\myrk(\{b_1,\ldots, b_n\}/A) \geq d$.
	We also have $(b_1,\ldots, b_n) \in X^{\mathbb M}$ because the graph of $f$ is contained in $X$.
	We have demonstrated the inequality $\myrk(X/A) \geq d$.
	
	The `in particular' part is obvious because  $\myrk(X/A)=\dim X$ and $\dim X$ is independent of the parameter set $A$.
\end{proof}

\begin{proof}[Proof of Proposition \ref{prop:top_st_dim}]
	We first prepare a lemma.
	\begin{lemma}\label{lem:for_top_st_dim}
		Let $X$ and $Y$ be definable subsets of $M^n$ with $Y \subseteq X$ and $\dim X=\dim Y$.
		Then, the set $Y$ has a nonempty interior in $X$.
	\end{lemma}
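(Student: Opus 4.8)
The plan is to reduce the statement to a single application of Lemma \ref{lem:pre1}, after first arranging that the fibres of the larger set $X$ over a suitable open set are discrete. Set $d=\dim X=\dim Y$. Since $\dim Y=d$, by Definition \ref{def:dim} there is a coordinate projection $\pi:M^n\to M^d$ for which $\pi(Y)$ has a nonempty interior; because $Y\subseteq X$ we have $\pi(Y)\subseteq\pi(X)$, so $\pi(X)$ has a nonempty interior as well and $\pi$ realizes the dimension of $X$. Note that in the setting of Proposition \ref{prop:top_st_dim} the structure is locally l-visceral and enjoys properties (A) and (B) as well as property (d) and the $*$-continuity property, so the hypotheses of Lemma \ref{lem:pre0} and Lemma \ref{lem:pre1} are satisfied.

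First I would apply Lemma \ref{lem:pre0} to $X$ and this $\pi$. This yields that the definable set $S=\{x\in\pi(X)\mid X\cap\pi^{-1}(x)\text{ is not discrete}\}$ has an empty interior. Writing $\myint(\pi(Y))=(\myint(\pi(Y))\cap S)\cup(\myint(\pi(Y))\setminus S)$ and observing that the first piece, being contained in $S$, has an empty interior, Lemma \ref{lem:*-b} forces $\myint(\pi(Y))\setminus S$ to have a nonempty interior. I would then choose a nonempty definable open set $U\subseteq\pi(Y)$ contained in $\myint(\pi(Y))\setminus S$; for every $x\in U$ the fibre $X\cap\pi^{-1}(x)$ is discrete.

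Next I would invoke Lemma \ref{lem:pre1} with the roles reversed: take the lemma's inner set to be our $Y$ and its outer set to be our $X$ (so the inclusion $Y\subseteq X$ matches the lemma's hypothesis), together with the projection $\pi$ and the open set $U\subseteq\pi(Y)$ just produced, on which the fibres of the outer set $X$ are discrete. The lemma then provides a definable open $V\subseteq U$, a definable open $W\subseteq M^n$, and a definable continuous map $f:V\to Y$ such that $\pi(W)=V$, $X\cap W=f(V)$ and $\pi\circ f=\mathrm{id}_V$. In particular $f(V)\subseteq Y$, whence $X\cap W=f(V)\subseteq Y$. Since $W$ is open in $M^n$ and $V$ is nonempty, $X\cap W$ is a nonempty relatively open subset of $X$ contained in $Y$; therefore $Y$ has a nonempty interior in $X$, as desired.

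The only delicate point is the bookkeeping: Lemma \ref{lem:pre1} is stated with the inner set contained in the outer set and requires the fibres of the \emph{outer} set to be discrete over an open subset of the projection of the \emph{inner} set. Thus one must secure discreteness of the fibres of $X$ over an open subset of $\pi(Y)$, not merely of $\pi(X)$, and this is exactly what the combination of Lemma \ref{lem:pre0} (applied to $X$) with Lemma \ref{lem:*-b} delivers. Once the hypotheses are matched correctly, the conclusion is immediate.
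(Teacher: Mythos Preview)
Your argument is correct: you carefully arrange an open $U\subseteq\pi(Y)$ over which the fibres of $X$ are discrete and then feed the pair $Y\subseteq X$ into Lemma~\ref{lem:pre1}, extracting an open $W$ with $X\cap W=f(V)\subseteq Y$. The bookkeeping you flag (that Lemma~\ref{lem:pre1} needs discreteness of the \emph{outer} set over an open subset of the projection of the \emph{inner} set) is handled properly via Lemma~\ref{lem:pre0} and Lemma~\ref{lem:*-b}.

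The paper takes a shorter, more top-down route. It observes that $Z:=\mycl(X\setminus Y)\cap Y$ is contained in the frontier $\partial(X\setminus Y)$, so the frontier inequality (Theorem~\ref{thm:dim}(2)) gives $\dim Z<d$; hence $Z\subsetneq Y$ by Theorem~\ref{thm:dimension_basic}(1), and any point of $Y\setminus Z$ has an open box $B$ with $B\cap(X\setminus Y)=\emptyset$, i.e.\ $X\cap B\subseteq Y$. This avoids invoking the technical Lemma~\ref{lem:pre1} directly and is essentially a two-line argument once the frontier inequality is available. Your proof, by contrast, bypasses the frontier inequality and works directly with the fibrewise machinery; since Theorem~\ref{thm:dim}(2) is itself proved using Lemma~\ref{lem:pre1}, you are in effect unwinding one layer of abstraction. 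Both approaches are valid; the paper's is cleaner once the dimension theory is in place, while yours is more self-contained relative to the earlier technical lemmas.
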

	\begin{proof}
		Set $d=\dim X=\dim Y$.
		Consider the set $Z=\mycl({X \setminus Y}) \cap Y$.
		We have $\dim Z<d$ by Theorem \ref{thm:dim}(2) because $Z \subseteq \partial (X \setminus Y)$.
		The set $Z$ is strictly contained in $Y$ by Theorem \ref{thm:dimension_basic}(1).
		Take a point $x \in Y \setminus Z$.
		By the definition of $Z$, there exists an open box $B$ containing the point $x$ such that $B$ has an empty intersection with $X \setminus Y$.
		In other words, the intersection $X \cap B$ is contained  in $Y$.
		It means that $Y$ has a nonempty interior in $X$.
	\end{proof}

	We start to prove the proposition.
	It is obvious when $X$ is an empty set.
	We concentrate on the case in which $X$ is not empty.
	Let $M^n$ be the ambient space of $X$.
	Set $d=\dim(X)$.
	We prove the proposition by induction on $d$.
	When $d=0$, the definable set $X$ is discrete and closed.
	In particular, any nonempty definable subset of $X$ has a nonempty interior in $X$.
	It implies that $\mydim(X)=0$.
	
	We next consider the case in which $d>0$.
	Let $Y$ be an arbitrary definable closed subset of $X$ having an empty interior in $X$.
	We have $\dim Y<d$ by Lemma \ref{lem:for_top_st_dim}.
	We get $\mydim(Y)<d$ by the induction hypothesis.
	It means that $\mydim(X) \leq d$.
	
	We demonstrate the opposite inequality $\mydim(X) \geq d$.
	We have only to construct a definable closed subset $Y$ of $X$ such that $\mydim(Y)=d-1$ and $Y$ has an empty interior in $X$.
	We decompose $X$ into normal submanifolds $X_1, \ldots, X_t$ by Theorem \ref{thm:decomposition}.
	At least one of $X_1,\ldots, X_t$ is of dimension $d$  by Theorem \ref{thm:dimension_basic}(1).
	We may assume that $\dim X_1=d$ without loss of generality.
	Let $\pi:M^n \rightarrow M^d$ be a coordinate projection such that $X_1$ is a $\pi$-normal submanifold.
	Note that $\pi(X_1)$ is an open set, and, as a consequence, it is of dimension $d$.
	We may assume that $\pi$ is the projection onto the first $d$ coordinates by permuting the coordinates if necessary.
	
	Set $Z_1=\mycl({X \setminus X_1}) \cap X_1$.
	It is of dimension smaller than $d$ by Theorem \ref{thm:dimension_basic}(1) and Theorem \ref{thm:dim}(2) because $Z_1$ is contained in the frontier $\partial (X \setminus X_1)$.
	Its projection image $\pi(Z_1)$ is also of dimension smaller than $d$ by Theorem \ref{thm:dimension_basic}(3).
	We have $\dim \pi(X_1) \setminus \pi(Z_1)=d$ by Theorem \ref{thm:dimension_basic}(1).
	It implies that $\pi(X_1) \setminus \pi(Z_1)$ is not an empty set.
	Take a point $x \in X_1$ with $\pi(x) \not\in \pi(Z_1)$.
	By the definition of $Z_1$ and the assumption that $X_1$ is $\pi$-normal submanifold, we can take an open box $U$ in $M^n$ containing the point $x$ such that $U \cap (X \setminus X_1)=\emptyset$ and $U \cap X_1$ is the graph of a definable continuous map defined on $\pi(U)$.
	In particular, we have $U \cap X \subseteq X_1$.
	
	Take a nonempty closed box $B$ contained in $\pi(U)$.
	The definable set $W=\pi^{-1}(B) \cap U \cap X$ is a definable subset of $X$ which is the graph of a definable continuous function $f$ defined on $B$.
	In particular, the continuity of $f$ implies that the definable set $W$ is closed in $X$.
	Take a point $a=(a_1,\ldots,a_d) \in \myint(B)$ and consider the hyperplane $H=\{x=(x_1,\ldots, x_d) \in M^d\;|\; x_d=a_d\}$.
	We obviously have $\dim H \cap B = d-1$.
	Put $Y= \pi^{-1}(H \cap B) \cap W$.
	It is closed in $X$.
	The fiber $\pi^{-1}(x) \cap Y$ is always a singleton for any point $x \in H \cap B$ because $W$ is the graph of a function.
	In particular, the fiber is of dimension zero.
	Using Theorem \ref{thm:dimension_basic}(2), we get $\dim Y=d-1$.
	The induction hypothesis implies that $\mydim(Y)=d-1$.
	%By the construction, it is obvious that $Y$ has an empty interior in $X$.
	%revision
	It is obvious that the set $H \cap B$ has an empty interior in $B$.
	The definable set $W$ is definably homeomorphic to $B$ because it is the graph of a definable continuous map defined on $B$.
	The set $Y$ has an empty interior in $W$ because $Y$ is the image of $H \cap B$ under the above definable homeomorphism.
	Since $W$ is a subset of $X$,  $Y$ has an empty interior in $X$.
	We have finally demonstrated the inequality $\mydim(X) \geq d$.
\end{proof}

\end{document}